\newtheorem{theorem}{Theorem}[section]
\newtheorem*{theorem*}{Theorem}
\newtheorem{lemma}[theorem]{Lemma}
\newtheorem*{lemma*}{Lemma}
\newtheorem{example}[theorem]{Example}
\newtheorem*{example*}{Example}
\newtheorem{corollary}[theorem]{Corollary}
\newtheorem*{corollary*}{Corollary}
\newtheorem{definition}[theorem]{Definition}
\newtheorem*{definition*}{Definition}
\newtheorem{lemma-definition}[theorem]{Lemma-Definition}
\newtheorem*{lemma-definition*}{Lemma-Definition}
\newtheorem{definition-remark}[theorem]{Definition-Remark}
\newtheorem*{definition-remark*}{Definition-Remark}
\newtheorem{remark}[theorem]{Remark}
\newtheorem*{remark*}{Remark}
\newtheorem{notation}[theorem]{Notation}
\newtheorem*{notation*}{Notation}
\newtheorem{proposition}[theorem]{Proposition}
\newtheorem*{proposition*}{Proposition}
\def \g  {\mathfrak{g}}   
\def \h  {\mathfrak{h}}
\def \n  {\mathfrak{n}}
\def \b  {\mathfrak{b}}
\def \a  {\mathfrak{a}}
\def \sl {\mathfrak{sl}}
\def \t  {\mathfrak{t}}
\def \l  {\mathfrak{l}}
\def \Cset {{\mathbb C}}
\def \CC {{\mathbb C}}
\def \ZZ {{\mathbb Z}}
\def \dw {{\dot{w}}}
\def \bfu {{\bf u}}
\def \sA {{\scriptscriptstyle{A}}}
\def \piA {{\pi_{\sA}}}
\def \piAs {{\pi_{\sA^*}}}
\def \sG {{\scriptscriptstyle{G}}}
\def \sZ {{\scriptscriptstyle{Z}}}
\def \piZ {\pi_{\sZ}}
\def \sX {{\scriptscriptstyle{X}}}
\def \piX {\pi_{\sX}}
\def \sY {{\scriptscriptstyle{Y}}}
\def \piY {\pi_{\sY}}
\def \sB {{\scriptscriptstyle{B}}}
\def \pist {\pi_{\rm st}}
\def \Pist {\Pi_{\rm st}}
\def \Zu {Z_{\bfu}}
\def \la {\langle}
\def \ra {\rangle}
\def \lara {\la \, , \, \ra}
\def \O {\mathcal{O}}
\def \Upu {\Upsilon_{\bfu}}
\def \hs {\hspace{.2in}}
\def \hhs {\hspace{.12in}}
\def \bs {\bar{s}}
\def \lrw {\longrightarrow}
\def \sT {\scriptscriptstyle{T}}
\def \al {\alpha}
\def \lam {\lambda}
\def \calA {{\mathcal{A}}}
\def \calY {{\mathcal{Y}}}
\def \calU {{\mathcal{U}}}
\def \calP {{\mathcal{P}}}
\def \calQ {{\mathcal{Q}}}
\def \calG {{\mathcal{G}}}
\def \Guv {G^{u, v}}
\def \bu {{\overline{u}}}
\def \bs {{\overline{s}}}
\def \al {{\alpha}}
\def \bbv {{\overline{\overline{v}}}}
\def \calO {{\mathcal{O}}}
\def \Ovmu {\O^{(\bfv^{-1}, \bfu)}}
\def \bfv  {{\bf v}}
\def \piGu {\pi_{({\scriptscriptstyle G}, \bfu)}}
\def \bbv {{\overline{\overline{v}}}}
\def \Hol {{\mathcal{H}}}
\def \cYGu {{\calY^{\bfu}}}
\def \tcYGu {{\widetilde{\calY}^{\bfu}}}
\def \yij {y_{[i, j]}^{\lam}}
\def \bpi {0 \bowtie \pi_n}
\begin{document}
\setlength{\baselineskip}{1.2\baselineskip}

\title[Generalized Bruhat Cells and
Kogan-Zelevinsky Integrable Systems]{Generalized Bruhat Cells and Completeness of Hamiltonian Flows of
Kogan-Zelevinsky Integrable Systems}
\author{Jiang-Hua Lu}
\address{
Department of Mathematics   \\
The University of Hong Kong \\
Pokfulam Road, Hong Kong}
\email{jhlu@maths.hku.hk}
\author{Yipeng Mi}
\address{
Department of Mathematics   \\
The University of Hong Kong\\
Pokfulam Road, Hong Kong}
\email{littlemi@connect.hku.hk}
\date{}
\begin{abstract} Let $G$ be any connected and simply connected complex semisimple Lie group, equipped with a standard
holomorphic multiplicative Poisson structure. We show that the Hamiltonian
flows of all the Fomin-Zelevinsky twisted generalized minors on every double Bruhat cell of $G$
are complete in the sense that all the integral curves of their Hamiltonian vector fields
are defined on ${\mathbb{C}}$. It follows that the
Kogan-Zelevinsky integrable systems on $G$ have complete Hamiltonian flows, generalizing the result of Gekhtman and Yakimov
for the case of $SL(n, {\mathbb{C}})$. We in fact construct a class of integrable systems with complete
Hamiltonian flows associated to {\it generalized Bruhat cells} which are defined using
arbitrary sequences of elements in the Weyl group of $G$, and we
obtain the results for double Bruhat cells
through the so-called open {\it Fomin-Zelevinsky embeddings} of (reduced) double Bruhat cells in generalized Bruhat cells.
The Fomin-Zelevinsky embeddings are proved to be Poisson, and they provide global coordinates
on double Bruhat cells, called
{\it Bott-Samelson coordinates}, in which all the Fomin-Zelevinsky minors become polynomials
and the Poisson structure can be computed explicitly.

\end{abstract}
\maketitle
\hfill
In memory of Bertram Kostant

\section{Introduction}\label{sec-intro}
\subsection{Introduction}\label{subsec-intro}
Complex Poisson manifolds provide a natural setting for complex integrable systems. Here
recall that a complex Poisson manifold is a pair $(X, \pi)$, where $X$ is a complex manifold and
$\pi$, the {\it Poisson structure}, is a holomorphic section of $\wedge^2 T_X$ such that the bilinear map
$\Hol_X \otimes_\CC \Hol_X\to \Hol_X$ given by
\begin{equation}\label{eq-poi-fg}
\{f, \, g\} \; \stackrel{{\rm def}}{=} \; \pi (df, \, dg), \hs f, g \in \Hol_X,
\end{equation}
makes the sheaf $\Hol_X$ of local holomorphic functions on $X$ into a sheaf of Poisson algebras.
In such a case, the operation $\{\, , \, \}$  is called a {\it Poisson bracket} on ${\Hol}_X$, and $f, g \in \Hol_X$
are said to Poisson commute if $\{f, g\} = 0$. Given a complex Poisson manifold $(X, \pi)$
and a (local) holomorphic function $f$ on $X$, the
{\it Hamiltonian vector field} $H_f$ of $f$ is
the holomorphic vector field on $X$ defined by
\[
H_f(g) = \{f, \, g\}, \hs g \in \Hol_X,
\]
and $f$ is said to have {\it complete Hamiltonian flow}
if all the integral curves of $H_f$ on $X$ are defined on the whole of $\CC$. If the generic symplectic leaves
of $\pi$ in $X$ have dimension $2m$, a set $(y_1, \ldots, y_m)$ of $m$ holomorphic functions on $X$ is said to
define an {\it integrable
system on $X$ with complete Hamiltonian flows} if they are functionally independent when restricted to generic symplectic leaves, pairwise Poisson commute, and each $y_j$ has
complete Hamiltonian flow on $X$. An integrable system on $X$ with complete Hamiltonian flows
thus defines an analytic action of $\CC^m$ on $X$.

In this paper, we are mainly concerned with Hamiltonian flows and integrable
systems defined by regular functions on a smooth complex affine variety, equipped with
a Poisson structure that is algebraic.
We refer to \cite{Camille-Anne-Pol} for a systematic study of complex Poisson manifolds, algebraic Poisson varieties,
as well as their relations to integrable systems.

For an integer $m \geq 1$, let $M_m(\CC)$ be the vector space of all $m \times m$ matrices with complex entries.
In \cite{KW1, KW2}, B. Kostant and N. Wallach proved the remarkable result that the
complex Gelfand-Zeitlin
integrable system on $M_m(\CC)$ has complete Hamiltonian flows, thus giving rise to an analytic action of
$\CC^{m(m-1)/2}$ on $M_m(\CC)$. Here the Hamiltonian vector fields are defined with respect to the linear Kostant-Kirillov-Souriau
Poisson structure on $M_m(\CC)$, by identifying $M_m(\CC)$ with its dual vector space via the trace form and by using the
natural Lie algebra structure on $M_m(\CC)$.
Various aspects of this (complex) Gelfand-Zeitlin system on $M_m(\CC)$, such as its relation to representation theory and
the geometry of the orbits of the action by
$\CC^{m(m-1)/2}$, have been studied in detail by Kostant and Wallach \cite{KW1, KW2} and by M. Colarusso and S. Evens
\cite{Co-1, Co-Sam-1, Co-Sam-2, Co-Sam-3, Co-Sam-4}.

The vector space
$M_m(\CC)$ also has a well-known quadratic Poisson structure with its origin in quantum groups \cite{Misha-Milen}.
In \cite{Misha-Milen},  M. Gekhtman and M. Yakimov gave an analog of the Gelfand-Zeitlin integrable system
on $M_m(\CC)$ with respect to the quadratic Poisson structure and proved its completeness in a certain sense.
In the same paper \cite{Misha-Milen}, they also showed that the Hamiltonian flows
(with respect to the quadratic Poisson structure) of all minors on $M_m(\CC)$ are complete, and that the
Kogan-Zelevinsky integrable systems,
originally introduced by M. Kogan and A. Zelevinsky
\cite{KZ:integrable} for any connected and simply connected complex semisimple Lie group (see Example 
\ref{ex-KZ-system} for more detail),
have complete Hamiltonian flows for the case of $SL_m(\CC)$.

Let $\calQ$ be the algebra of all quasi-polynomials in one complex variable \cite[$\S$26]{Arnold:ODE}, i.e.,
$\calQ$ consists of all holomorphic functions on $\CC$ of the form
\begin{equation}\label{eq-gamma-t}
\gamma(c) = \sum_{k=1}^N q_k(c)e^{a_kc}, \hs c \in \CC,
\end{equation}
with each $q_k(c) \in \CC[c]$ and the $a_k$'s pairwise distinct complex numbers. It is in fact shown in \cite{Misha-Milen} that all the integral curves of the Hamiltonian vector field
(with respect to the quadratic Poisson structure on $M_n(\CC)$)
of every minor on $M_m(\CC)$ are of the form
\[
c \longmapsto (\gamma_{jk}(c))_{j,k = 1, \ldots, m}, \hs c \in \CC,
\]
where $\gamma_{jk} \in \calQ$ for all $j, k = 1, \ldots, m$. Motivated by the results in \cite{Misha-Milen}, we make the
following definition.


\begin{definition}\label{de-calS}
{\rm Let $X$ be a smooth affine variety  with an algebraic Poisson structure, and assume that $X \subset \CC^n$.
A regular function $y$ on $X$ is said to have {\it complete Hamiltonian flow with property $\calQ$} (with respect to the given embedding of $X$ in $\CC^n$) if
all the integral curves of the Hamiltonian vector field of $y$ are defined on $\CC$ and are of the form
\[
c \longmapsto (\gamma_1(c), \ldots, \gamma_n(c)), \hs c \in \CC,
\]
with $\gamma_j \in \calQ$ for every $j = 1, 2, \ldots, n$. An integrable system on $X$ defined by a set
$(y_1, \ldots, y_m)$ of regular functions on $X$ is said to have {\it complete Hamiltonian flows with property $\calQ$} if each $y_j$
has complete Hamiltonian flow with property $\calQ$.
\hfill $\diamond$
}
\end{definition}

As $\calQ$ is an algebra, the property for a regular function, and thus also for an integrable system,
to have complete Hamiltonian flow with property $\calQ$ is independent of the
embedding of $X$ into affine spaces, and is thus also invariant under biregular Poisson isomorphisms between
affine Poisson varieties (see Lemma \ref{le-simple-1}). In view of Definition \ref{de-calS},
the results in \cite{Misha-Milen} on the minors on $M_m(\CC)$ can be rephrased as saying that, with respect to the quadratic Poisson
structure on $M_m(\CC)$, all the minors on $M_m(\CC)$ have complete Hamiltonian flows with property $\calQ$, and that
the Kogan-Zelevinsky integrable systems for $SL(m, \CC) \subset M_m(\CC)$ have complete Hamiltonian flows
with property $\calQ$.

In this paper, we generalize the above result of \cite{Misha-Milen} for $SL(m, \CC)$ to any
connected and simply connected complex semisimple Lie group $G$.
More precisely, recall that double Bruhat cells in $G$ are defined as
\[
\Guv = (B u B) \cap (B_- v B_-),
\]
where $(B, B_-)$ is a pair of opposite Borel subgroups of $G$, and $u, v \in W$,
the Weyl group of $G$ with respect to
$T = B \cap B_-$. By \cite[Proposition 2.8]{BFZ}, each double Bruhat cell $G^{u, v}$ is an affine variety
(see also \cite[Proposition 3.1]{FZ:total} and \cite[Corollary 2.5]{FZ:total}, which imply that $G^{u, v}$
is biregularly isomorphic to
a principal Zariski open subset of an affine space).
We show that all the
{\it Fomin-Zelevinsky twisted generalized minors} (see Definition
\ref{de-FZ-minors}) on every double Bruhat cell $\Guv$ in $G$
have complete Hamiltonian flows with property $\calQ$. Consequently, the Kogan-Zelevinsky integrable
systems on every $G^{u, u}$, defined by certain Fomin-Zelevinsky twisted generalized minors and using
reduced words of $u$, have
complete Hamiltonian flows with property $\calQ$.
Here the group $G$ is equipped with the so-called
{\it standard multiplicative Poisson structure $\pist$}, the definition of which depends on the
choice of $(B, B_-)$ and the additional choice of a non-degenerate symmetric bilinear form $\lara_\g$
on the Lie algebra $\g$ of $G$ (see $\S$\ref{subsec-pist} for its precise definition). For $G = SL(m, \CC)$
and the standard choices of $B, B_-$, and $\lara_\g$,
$\pist$ extends to the
quadratic Poisson structure on $M_m(\CC)$ used in \cite{Misha-Milen}.

We remark that the Fomin-Zelevinsky twisted generalized minors on double Bruhat cells
have been studied extensively in the literature
(see, for example,
\cite{BZ, BFZ, FZ:total, GY,  GY:PNAS, GY:AMSbook, GY:Poi} and references therein),
specially because of their important
roles in the theories of total positivity, crystal bases,  and (classical and quantum)
cluster algebras.
In particular,
certain Fomin-Zelevinsky minors on $\Guv$
form an initial cluster of
the Berenstein-Fomin-Zelevinsky upper cluster algebra structure on the coordinate ring of
$\Guv$ (see \cite{BFZ}), which is compatible with the Poisson structure $\pist$ on
$\Guv$ in the sense \cite{GSV:book} that any two functions $f_1, f_2$ in the same cluster have {\it log-canonical} Poisson bracket, i.e.
\[
\{f_1, f_2\} = a f_1f_2
\]
for some constant $a$. We also refer to \cite{GY:Poi} for a remarkable proof, {\it using the Poisson structure $\pist$},
that the Berenstein-Fomin-Zelevinsky upper cluster algebra structure on the coordinate ring of $\Guv$
 coincides with
the corresponding cluster algebra structure.

Given a double Bruhat cell $\Guv$, our main tool for studying the Poisson manifold $(\Guv, \pist)$ is
a modification of the map defined by Fomin and Zelevinsky in \cite[Proposition 3.1]{FZ:total},
which  we call the (open) {\it Fomin-Zelevinsky embedding} (see Definition \ref{de-FZ-map})
\[
F^{u, v}: \;\; \Guv \lrw T \times \O^{(v^{-1}, u)},
\]
and the resulting global {\it Bott-Samelson coordinates} on $\Guv$. Here
$\O^{(v^{-1}, u)}$ is the so-called {\it generalized Bruhat cell} associated to
the elements $v^{-1}$ and $u$ of the Weyl group $W$.
The generalized Bruhat cells $\O^{\bfu}$, where $\bfu = (u_1, \ldots, u_n)$ is {\it any} sequence
of elements in $W$, is defined in \cite[$\S$1.3]{LM:flags}  as
\[
\O^\bfu = B u_1B \times_B \cdots \times_B B u_n B/B \;\subset \;F_n \; \stackrel{{\rm def}}{=}\;
G \times_B \cdots \times_B G/B,
\]
where $F_n$ is the quotient space of $G^n$ by a certain right action of the product group $B^n$
(see $\S$\ref{subsec-gBC} for detail).
The product Poisson structure $\pi_{{\rm st}}^n$ on $G^n$
projects to a well-defined Poisson structure $\pi_n$ on $F_n$, with respect to which every generalized Bruhat cell $\O^\bfu$
is a Poisson submanifold.
Fixing a pinning for $(G, T)$ (see $\S$\ref{subsec-nota-intro}),
the choice of a reduced word for each $u_j$ then gives rise to a parametrization of
$\O^{(u_1, \ldots, u_n)}$ by $\CC^{l(u_1) + \cdots + l(u_n)}$, where $l$ is the length function on $W$,
and we refer to the resulting coordinates on
$\O^{(u_1, \ldots, u_n)}$ as {\it Bott-Samelson coordinates}  (see Definition \ref{de-BS-coor-1}).
Explicit formulas  for the Poisson
structure $\pi_n$ on $\O^{(u_1, \ldots, u_n)}$ in Bott-Samelson coordinates have been studied in \cite{Balazs:thesis, EL:BS}.
By {\it Bott-Samelson coordinates on a double Bruhat cell $\Guv$} we mean the coordinates
on $\Guv$ obtained through the open Fomin-Zelevinsky embedding $F^{u, v}$ and the combination of coordinates on $T$ and
Bott-Samelson coordinates on  $\O^{(v^{-1}, u)}$.
We prove in Theorem \ref{thm-FZ-Poi} that the open Fomin-Zelevinsky embedding
\[
F^{u, v}: \;\; (\Guv, \; \pist) \lrw (T \times \O^{(v^{-1}, u)}, \; 0 \bowtie \pi_2)
\]
is Poisson, its image being a single $T$-orbit of symplectic leaves of
$(T \times \O^{(v^{-1}, u)}, \; 0 \bowtie \pi_2)$. Here $0 \bowtie \pi_2$ is the sum of $0 \times \pi_2$, the
product of the zero Poisson structure on $T$ with the Poisson structure $\pi_2$
on $\O^{(v^{-1}, u)}$, and a certain {\it mixed term} defined using the $T$-action on $\O^{(v^{-1}, u)}$
(see $\S$\ref{subsec-T-O} for detail). We then show that,
in Bott-Samelson coordinates on $\Guv$,
all the Fomin-Zelevinsky twisted generalized minors on $\Guv$ belong to a special class of polynomials
whose Hamiltonian flows are,
{\it manifestly},
complete with property $\calQ$.

We remark that for any sequence $(u_1, \ldots, u_n) \in W^n$, it is shown in \cite[$\S$5]{EL:BS} that,
through the identification of
the algebra of regular functions
on $\O^{(u_1, \ldots, u_n)}$ with the polynomial algebra $\CC[x_1, \ldots, x_l]$ via Bott-Samelson coordinates,
where $l=l(u_1) + \cdots + l(u_n)$,
the Poisson structure $\pi_n$ on $\O^{(u_1, \ldots, u_n)}$ makes $\CC[x_1, \ldots, x_l]$ into
a so-called {\it symmetric nilpotent semi-quadratic Poisson algebra} as defined in \cite[Definition 4]{GY:PNAS},
also called a {\it Poisson CGL extension} in \cite{GY:Poi}.
The Fomin-Zelevinsky embedding,
being an open Poisson embedding, thus gives an explicit identification of the coordinate ring
of $\Guv$, as a Poisson algebra, with a certain explicit localization of a Poisson CGL extension. Identifications
of this kind,
and their quantum analogs, have been used and play a crucial role in
the work of K. Goodearl and M. Yakimov in their proof that the
Berenstein-Fomin-Zelevinsky upper cluster algebra structure on the coordinate ring of $\Guv$ coincides with
the corresponding cluster algebra structure (both at the classical and quantum levels). See
\cite{GY, GY:PNAS, GY:AMSbook, GY:Poi}.
We thank M. Yakimov for pointing out to us that their identification in \cite{GY:Poi} of the coordinate ring of $\Guv$
with the localization
of a Poisson CGL extension  is through a Poisson analog of the method in
\cite[Proposition 4.4]{GY} to prove \cite[Theorem 4.1]{GY}.

We also point out that, based on the results in \cite{Balazs:thesis, EL:BS}, B. Elek has
written a computer program in the language of GAP that computes the Poisson structure $\pi_n$ 
on $\O^{(u_1, \ldots, u_n)}$ in the 
Bott-Samelson coordinates for any $G$ and any sequence $(u_1, \ldots, u_n) \in W^n$. This, in principle, allows
one to compute explicitly the Hamiltonian flows of the Fomin-Zelevinsky minors in Bott-Samelson coordinates.
We give examples for $G = SL(2, \CC)$, $G = SL(3, \CC)$, and $G = G_2$ in
Example \ref{ex-ss-0}, Example \ref{ex-A2-121} and Example \ref{ex-G2-212}, respectively.

In the first part of the paper, $\S$\ref{sec-GBC}, we develop a general theory on
complete Hamiltonian flows associated to an {\it arbitrary} generalized Bruhat cell
$\O^\bfu$ and the Poisson structure $\pi_n$, where $\bfu = (u_1, \ldots, u_n) \in W^n$.
We introduce a collection
$\calY^\bfu$ (see Definition \ref{de-class-Y}) of regular functions on $\O^\bfu$ and prove in
Theorem \ref{thm-Y-calS}, using Bott-Samelson coordinates
on $\O^\bfu$, that every  $y \in \calY^\bfu$ has complete Hamiltonian flow with property $\calQ$.
We also study a Poisson structure $0 \bowtie \pi_n$ on
$T \times \O^\bfu$ and introduce a collection $\tcYGu$ (see Definition \ref{nota-class-tY}) of regular functions
on $T \times \O^\bfu$, which are shown in Theorem \ref{thm-tcYGu} to all
have complete Hamiltonian flows with property $\calQ$.
Moreover, in $\S$\ref{subsec-int-O} we construct on $(\O^{(\bfu^{-1}, \bfu)}, \pi_{2n})$
an integrable system that has complete Hamiltonian flows with property $\calQ$, where
$\bfu^{-1} = (u_n^{-1}, \ldots, u_1^{-1})$, and $(\O^{(\bfu^{-1}, \bfu)}, \pi_{2n})$
can be regarded as a {\it double} of $(\O^\bfu, \pi_n)$.

In $\S$\ref{sec-KZ}, the second part of the paper, we relate double Bruhat cells and generalized Bruhat cells through the
Fomin-Zelevinsky embeddings.
Given a double Bruhat cell $\Guv$ and respective reduced words $\bfu$ and $\bfv$ of $u$ and $v$, we introduce in $\S$\ref{subsec-FZ-map} the
Fomin-Zelevinsky embeddings
\begin{align*}
&\hat{F}^{\bfu, \bfv}:\;\; \Guv/T \lrw \O^{(\bfv^{-1}, \bfu)} \cong \O^{(v^{-1}, u)},\\
&F^{\bfu, \bfv}: \;\; \Guv \lrw T \times \O^{(\bfv^{-1}, \bfu)} \cong T \times \O^{(v^{-1}, u)},
\end{align*}
where recall that $\Guv/T$ is the {\it reduced double Bruhat cell} corresponding to $\Guv$.
We prove in Proposition
\ref{pr-M-ij-y} that all the Fomin-Zelevinsky twisted generalized minors on $\Guv$
 are pullbacks by $F^{\bfu, \bfv}$ of
regular functions on
$T \times \O^{(\bfv^{-1}, \bfu)}$ from the collection $\widetilde{\calY}^{(\bfv^{-1}, \bfu)}$.
After proving in Theorem \ref{thm-bf-Fuv-Poi} that
 the Fomin-Zelevinsky embeddings are Poisson, we establish in Corollary \ref{co-FZ-complete}
and Corollary \ref{co-KZ-complete} that all the Fomin-Zelevinsky minors on every $\Guv$ and all the
Kogan-Zelevinsky integrable systems on every  $G^{u, u}$
have complete Hamiltonian flows with property $\calQ$. For each reduced word $\bfu$ of $u$,
by pulling back
the integrable system on $\O^{(\bfu^{-1}, \bfu)}$ constructed in
$\S$\ref{subsec-int-O} using $\hat{F}^{\bfu, \bfu}$, we also obtain an integrable system on $G^{u, u}/T$ that
has complete Hamiltonian flows
with property $\calQ$.
See $\S$\ref{subsec-KZ-system}.

The proof of the fact that the Fomin-Zelevinsky embeddings are Poisson, as given in $\S$\ref{subsec-FZ-Poi-1},
uses certain formulas for Poisson brackets between Fomin-Zelevinsky minors
by Kogan and Zelevinsky \cite{KZ:integrable} and their explicit expressions in
Bott-Samelson coordinates. As such a proof depends on many computations in different coordinates,
we give a more conceptual proof in the Appendix, which
involves some general theory on Poisson Lie groups and certain other interesting facts about the Poisson Lie group $(G, \pist)$.

\subsection{Further studies}\label{subsec-further}
We expect most of the results in $\S$\ref{sec-GBC} on the standard Poisson structure on 
generalized Bruhat cells to
hold for arbitrary symmetric Poisson CGL extensions. In particular, it
would be very interesting to construct a {\it double} of a symmetric Poisson CGL extension, as we have done 
from $(\O^\bfu,\! \pi_n)$
to $(\O^{(\bfu^{\!-1},\! \bfu)}, \pi_{2n})$, and produce an
integrable system on the double that has complete Hamiltonian flows with property $\calQ$.

Consider now a double Bruhat cell $G^{u, u}$, where $u \in W$. In addition to carrying Kogan-Zelevinsky
integrable systems, which we have now proved to have complete Hamiltonian flows with property $\calQ$, it is shown in
\cite{LM:groupoids} that $G^{u, u}$ also has the natural structure of a {\it Poisson groupoid} over the
Bruhat cell $BuB/B \subset G/B$. In fact, each symplectic leaf of $\pist$ in $G^{u, u}$
is shown in \cite{LM:groupoids} to be a symplectic groupoid over $BuB/B$.
It is very natural, then, to formulate any compatibility between the Kogan-Zelevinsky
integrable systems and the groupoid structure on $G^{u, u}$, and to ask whether there is any groupoid-theoretical
interpretation of the $\CC^{l(u)}$-action on $G^{u, u}$ defined by the Kogan-Zelevinsky systems.

Finally, as with the complex Gelfand-Zeitlin integrable system on the matrix space $M_m(\CC)$ pioneered by Kostant
and Wallach, it would be very interesting to see when the analytic action of $\CC^{l(u)}$ on $G^{u, u}$ descends to an algebraic action of $(\CC^\times)^{l(u)}$
and what roles
the property $\calQ$ of the Hamiltonian flows plays in understanding the geometry of the orbits of the action.

\subsection{Why property $\calQ$?}\label{subsec-simple-lemma}
Recall Definition \ref{de-calS} of property $\calQ$
of Hamiltonian flows. We first make a simple observation which explains the appearance of property $\calQ$
in this paper.
Let $(x_1, \ldots, x_n)$ be the standard coordinates on $\CC^n$ and let
$A = \CC[x_1, \ldots, x_n]$.
We identify an algebraic
Poisson structure on $\CC^n$ with the corresponding Poisson bracket $\{ \, , \, \}$ on the polynomial
algebra $A$.

\begin{lemma}\label{le-simple}
Let $\{\, , \}$ be any algebraic Poisson structure on $\CC^n$ and
let $y \in A$. Assume that there is a set
$\calG = (\tilde{x}_1, \ldots, \tilde{x}_n)$ of
generators for $A$ such that
for each $j = 1, 2, \ldots, n$, there exist $\kappa_j \in \CC$
and $f_j \in \CC[\tilde{x}_1, \ldots, \tilde{x}_{j-1}]$ ($f_j$ is a constant when $j =1$) such that
\begin{equation}\label{eq-y-tildex}
\{y, \, \tilde{x}_j\} = \kappa_j \tilde{x}_jy + f_j.
\end{equation}
Then $y$ has complete Hamiltonian flow with property $\calQ$.
\end{lemma}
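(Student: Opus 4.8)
The plan is to exhibit every integral curve of $H_y$ explicitly as a vector of quasi-polynomials, by exploiting the triangular structure of \eqref{eq-y-tildex} together with the conservation of $y$ along its own flow. Fix an initial point $p_0 \in \CC^n$ and let $\phi$ denote the maximal holomorphic integral curve of $H_y$ through $p_0$, defined a priori only on some disc $D \ni 0$ in $\CC$. Since $\frac{d}{dc}\, y(\phi(c)) = \{y, y\}(\phi(c)) = 0$, the function $y$ is constant along $\phi$; write $y_0 = y(p_0)$ for this value. Setting $\xi_j(c) = \tilde{x}_j(\phi(c))$, the defining relation \eqref{eq-y-tildex} yields, on $D$, the system
\[
\xi_j'(c) = \{y, \tilde{x}_j\}(\phi(c)) = (\kappa_j y_0)\, \xi_j(c) + f_j(\xi_1(c), \ldots, \xi_{j-1}(c)), \hs j = 1, \ldots, n,
\]
in which the coefficient $\kappa_j y_0$ is now a \emph{constant}. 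This is a triangular system of inhomogeneous linear ODEs with constant coefficients.

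I would then solve this system by induction on $j$, proving at each stage that $\xi_j \in \calQ$ and that the solution extends to all of $\CC$. The key closure property is that $\calQ$ is stable under the operation of solving $\xi' = \lambda \xi + g$ for $\lambda \in \CC$ and $g \in \calQ$: the homogeneous part contributes $Ce^{\lambda c} \in \calQ$, while variation of parameters gives the particular solution $e^{\lambda c}\int_0^c e^{-\lambda s} g(s)\, ds$, and since $e^{-\lambda s}g(s) \in \calQ$ and $\calQ$ is closed under antidifferentiation (integrate $\sum_k q_k(s)e^{a_k s}$ term by term, using integration by parts on each summand), this particular solution again lies in $\calQ$. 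For $j = 1$ the term $f_1$ is a constant, so $\xi_1 \in \calQ$; for general $j$, the inductive hypothesis $\xi_1, \ldots, \xi_{j-1} \in \calQ$ together with the fact that $\calQ$ is an algebra make $g_j(c) := f_j(\xi_1(c), \ldots, \xi_{j-1}(c))$ a quasi-polynomial, whence $\xi_j \in \calQ$. All the $\xi_j$ are thus genuine quasi-polynomials on $\CC$, determined by the initial data $\xi_j(0) = \tilde{x}_j(p_0)$, and by uniqueness for these scalar linear ODEs they agree on $D$ with $\tilde{x}_j \circ \phi$.

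Finally I would recover the standard coordinates and settle completeness. Because $\calG = (\tilde{x}_1, \ldots, \tilde{x}_n)$ generates $A$, each $x_i$ is a polynomial $x_i = P_i(\tilde{x}_1, \ldots, \tilde{x}_n)$, so the functions $\gamma_i := P_i(\xi_1, \ldots, \xi_n)$ lie in $\calQ$ and satisfy $\gamma_i|_D = x_i \circ \phi$. Set $\hat\phi = (\gamma_1, \ldots, \gamma_n)\colon \CC \to \CC^n$, a holomorphic curve with all components in $\calQ$ agreeing with $\phi$ on $D$. Since both $\hat\phi$ and $c \mapsto H_y(\hat\phi(c))$ are holomorphic on the connected set $\CC$ and the relation $\hat\phi'(c) = H_y(\hat\phi(c))$ holds on the nonempty open set $D$, the identity theorem forces it to hold on all of $\CC$; hence $\hat\phi$ is an integral curve of $H_y$ through $p_0$ defined on all of $\CC$, and by uniqueness of holomorphic integral curves it is \emph{the} integral curve through $p_0$. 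As $p_0$ was arbitrary, $y$ has complete Hamiltonian flow with property $\calQ$.

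I expect the only genuinely delicate point to be the completeness step: a priori a holomorphic integral curve need only exist on a small disc and may fail to extend (as for $\dot z = z^2$), so it is essential that the explicit quasi-polynomial candidate $\hat\phi$, being globally defined and analytically continuing the local solution, lets the identity theorem upgrade ``solves the ODE near $0$'' to ``solves the ODE on all of $\CC$.'' The algebraic closure properties of $\calQ$ and the triangularity of \eqref{eq-y-tildex} are what make the explicit solution possible, but the passage from local to global existence is where care is needed.
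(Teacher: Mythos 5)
Your proposal is correct and follows essentially the same route as the paper: use the constancy of $y$ along its own flow to turn the Hamiltonian system into a triangular linear ODE system with constant coefficients in the generators $\tilde{x}_j$, solve it successively within $\calQ$, and conclude completeness. The paper's proof (after reducing to $\tilde{x}_j = x_j$ via the biregular change of coordinates) leaves implicit the closure of $\calQ$ under variation of parameters and the local-to-global step that you make explicit with the identity theorem, but the substance is identical.
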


\begin{proof}
As the change of coordinates $(x_1, \ldots, x_n) \to (\tilde{x}_1, \ldots, \tilde{x}_n)$ is biregular,
we may assume that
$(\tilde{x}_1, \ldots, \tilde{x}_n) = (x_1, \ldots, x_n)$. The Hamiltonian vector field of $y$
with respect to $\{ \, , \}$ is then given by
\[
H_y = \{y, x_1\} \frac{\partial}{\partial x_1} +
\{y, x_2\} \frac{\partial}{\partial x_2} + \cdots +
\{y, x_n\} \frac{\partial}{\partial x_n}.
\]
As $y$ is constant on any integral curve of $H_y$, solving for
an integral curve of $H_y$ is the same as solving for the functions
$(x_1(c), \ldots, x_n(c))$ from the system of ODEs
\begin{align*}
\frac{d x_1}{dc} &= \kappa_1 x_1 y_0 + f_1,\\
\frac{d x_2}{dc} &= \kappa_2 x_2 y_0 + f_2(x_1),\hs \cdots, \\
\frac{d x_n}{dc} &= \kappa_n x_n y_0 + f_n(x_1, \ldots, x_{n-1}),
\end{align*}
where $y_0$ and $f_1$ are constants. Solving for the $x_j$'s successively, one sees that
each $x_j$ is defined on  $\CC$ and is in $\calQ$.
\end{proof}

\begin{remark}\label{rm-shuffle}
{\rm In our applications of Lemma \ref{le-simple}, for example,
in the proofs of Theorem \ref{thm-Y-calS} and Theorem \ref{thm-tcYGu}, the set
$(\tilde{x}_1, \ldots, \tilde{x}_n)$ of generators of $\CC[x_1, \ldots, x_n]$ is often a shuffle of $(x_1, \ldots, x_n)$.
\hfill $\diamond$
}
\end{remark}

\begin{lemma}\label{le-simple-2}
With the same assumptions as in Lemma \ref{le-simple},
suppose that $X$ is a  smooth
affine Poisson subvariety of $\CC^n$ and
$g \in A$
is such that  $g|_X \neq 0$. Consider
${X'} = \{x \in X: g(x) \neq 0\} \subset X$,
regarded as an affine variety in $\CC^{n+1}$ via the embedding
\begin{equation}\label{eq-X-embed}
{X'} \lrw \CC^{n+1}, \;\; x \longmapsto (x, \;1/g(x)), \hs x \in {X'}.
\end{equation}
Equip ${X'}$ with the (algebraic) Poisson structure from $X$ by restriction. If
$\{y, g\} = \kappa yg$ for some $\kappa \in \CC$, then $y|_{X'}$
has complete Hamiltonian flow in $X'$ with property $\calQ$ (with respect to the embedding
in \eqref{eq-X-embed}).
\end{lemma}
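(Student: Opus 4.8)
The plan is to deduce the statement from Lemma \ref{le-simple} applied to the ambient space $\CC^n$, combined with a direct analysis of the single extra coordinate $x_{n+1} = 1/g$ introduced by the embedding \eqref{eq-X-embed}. Since the hypotheses of Lemma \ref{le-simple} are assumed to hold, that lemma already guarantees that $y$ has complete Hamiltonian flow with property $\calQ$ on $\CC^n$ with respect to the standard coordinates: every integral curve $c \mapsto (x_1(c), \ldots, x_n(c))$ of $H_y$ on $\CC^n$ is defined on all of $\CC$ and satisfies $x_j(c) \in \calQ$ for each $j = 1, \ldots, n$.

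First I would record that integral curves of $H_y$ through points of $X'$ remain in $X$ and are defined on all of $\CC$. Because $X$ is a Poisson subvariety of $\CC^n$, its defining ideal is a Poisson ideal, so $H_y$ is tangent to $X$ and restricts there to the Hamiltonian vector field of $y|_X$; hence any integral curve of $H_y$ starting at a point of $X$ stays in $X$. As $X' = \{g \neq 0\}$ is a principal open subset of $X$ carrying the restricted Poisson structure, the Hamiltonian vector field $H_{y|_{X'}}$ is simply the restriction of $H_{y|_X}$ to $X'$, so its integral curves are exactly those integral curves of $H_y$ that avoid the locus $\{g = 0\}$.

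The key step is to control $g$ along such a curve, and this is where the hypothesis $\{y, g\} = \kappa y g$ enters. Since $\{y, y\} = 0$, the function $y$ is constant, say equal to $y_0$, along any integral curve $c \mapsto x(c)$ of $H_y$, so that $g$ obeys the linear ODE
\[
\frac{d}{dc}\, g(x(c)) = \{y, g\}(x(c)) = \kappa y_0\, g(x(c)),
\]
whose solution is $g(x(c)) = g(x(0))\, e^{\kappa y_0 c}$. As the curve starts in $X'$, we have $g(x(0)) \neq 0$, so $g(x(c))$ never vanishes; this shows simultaneously that the curve stays in $X'$ for all $c \in \CC$ (completeness in $X'$) and that the extra coordinate satisfies
\[
x_{n+1}(c) = \frac{1}{g(x(c))} = \frac{1}{g(x(0))}\, e^{-\kappa y_0 c} \in \calQ.
\]
Combining this with $x_j(c) \in \calQ$ for $j = 1, \ldots, n$ from Lemma \ref{le-simple}, every component of the integral curve of $H_{y|_{X'}}$, read through the embedding \eqref{eq-X-embed}, lies in $\calQ$, which is precisely the assertion that $y|_{X'}$ has complete Hamiltonian flow with property $\calQ$.

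I do not expect a serious obstacle here: once the tangency of $H_y$ to $X$ is invoked, the entire difficulty concentrates in verifying that integral curves cannot escape through $\{g = 0\}$, and the log-canonical relation $\{y, g\} = \kappa y g$ forces $g$ to evolve as a nowhere-vanishing exponential, which settles completeness and property $\calQ$ at one stroke. The only point requiring mild care is the bookkeeping that the Poisson structure transported to $X'$ via the graph embedding $x \mapsto (x, 1/g(x))$ agrees with the restriction from $X$, so that the $(n+1)$-st coordinate is genuinely $1/g$ with evolution governed by $\{y, 1/g\} = -\kappa y/g$.
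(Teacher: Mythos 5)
Your proposal is correct and follows essentially the same route as the paper: first invariance of $X$ under the flow of $H_y$, then the exponential evolution $g(\gamma(c)) = g(\gamma(0))\,e^{\kappa y_0 c}$ forced by $\{y,g\} = \kappa y g$, which gives both that the curve never meets $\{g=0\}$ and that the extra coordinate $1/g(\gamma(c))$ lies in $\calQ$. The only cosmetic difference is in the invariance step: the paper writes $\{y, a_k\} = \sum_j b_{j,k} a_j$ for generators $a_k$ of the ideal of $X$ and solves the resulting linear ODE system with zero initial data, whereas you invoke tangency of $H_y$ to the closed smooth subvariety $X$ together with uniqueness of integral curves --- equivalent reasoning.
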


\begin{proof} Let $I=\la a_1, \ldots, a_m\ra$ be the radical ideal of $A$ defining $X$.
Then there exist $b_{j, k} \in A$, $j, k = 1, \ldots, m$, such
that $\{y, \, a_k\} = \sum_{j=1}^m b_{j, k} a_j$ for each $k = 1, \ldots, m$.
Let $\gamma: \CC \to \CC^n$ be an integral curve of $H_y$
with $\gamma(0) \in X$. For $j, k = 1, \ldots, m$, let $f_k = a_k\circ \gamma$ and $g_{j,k} = b_{j, k} \circ \gamma$.
Then the functions $f_1, \ldots, f_m$ on $\CC$
satisfy the system of linear ODEs
\[
\frac{df_k}{dc} = \sum_{j=1}^m g_{j, k}(c) f_j, \hs k = 1, \ldots, m,
\]
with the initial conditions $f_1(0) = \cdots = f_m(0) = 0$. By the uniqueness of solutions of ODEs,
$f_k=0$ for $k = 1, \ldots, m$. Thus $\gamma(\CC) \subset X$.

Identifying the algebra $\O_{X'}$ of regular functions on ${X'}$ with the localization
$\O_X[(g|_X\!)^{-1}\!]$, the extension of $\{\, ,\, \}$ from $\O_X$ to $\O_{X'}$ indeed makes ${X'}$ into a (smooth) affine
Poisson variety.
Let $p \in {X'}$ and let $\gamma: \CC \to X$ be the integral curve of $H_y$ through $p$
in $X$.
Consider the function $g(\gamma(c))$, $c \in \CC$. It follows from $\{y, g\} = \kappa y g$ that
\[
\frac{d}{dc}(g(\gamma(c))) = \{y, g\}(\gamma(c)) = \kappa y(p) g(\gamma(c)), \hs c \in \CC.
\]
Thus $g(\gamma(c)) = g(p) e^{\kappa y(p) c}$ for all $c \in \CC$. In particular, $g(\gamma(c)) \neq 0$ for all $c \in \CC$ and thus
$\gamma(\CC) \subset {X'}$. Moreover,
as $1/g(\gamma(c)) =  e^{-\kappa y(p) c}/g(p)$ for all $c \in \CC$, it
follows by definition that $y|_{X'} \in \O_{X'}$ has complete Hamiltonian flow in ${X'}$ with property $\calQ$.
\end{proof}

We now show that property $\calQ$ for a regular function on an affine Poisson variety $X$ is independent of
the embedding of $X$ into an affine space.

\begin{lemma}\label{le-simple-1}
Let $X$ be a smooth affine variety with an algebraic Poisson structure $\pi$, and let
$\phi: X \to \CC^n$ and $\psi: X \to \CC^m$ be embeddings of $X$ as affine varieties. Suppose that a regular
function $y$ on $X$ has complete Hamiltonian flow with property $\calQ$ with respect to $\phi$. Then
$y$ also has complete Hamiltonian flow with property $\calQ$ with respect to $\psi$.
\end{lemma}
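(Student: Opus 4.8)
The plan is to exploit a clean separation between the two ingredients of property $\calQ$: completeness of the integral curves, which is intrinsic to the Poisson manifold $(X, \pi)$ and hence blind to the embedding, and the requirement that the coordinate expressions of those curves be quasi-polynomials, which is the only embedding-dependent clause. The key algebraic inputs are that $\calQ$ is a $\CC$-algebra and that a closed embedding of an affine variety realizes its coordinate ring as generated by the pullbacks of the ambient coordinate functions.

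First I would observe that the Hamiltonian vector field $H_y$, being defined through the Poisson bracket $\{y, \cdot\}$ on $\O_X$, is an intrinsic object on $X$; consequently its integral curves and the set of complex numbers on which they are defined do not depend on any embedding. Thus the statement that every integral curve of $H_y$ is defined on all of $\CC$ holds simultaneously for both $\phi$ and $\psi$, and it suffices to fix an arbitrary integral curve $\gamma\colon \CC \lrw X$ of $H_y$ and to show that $\psi \circ \gamma$ has all coordinate functions in $\calQ$, using that $\phi \circ \gamma$ does. Write $\phi = (\phi_1, \ldots, \phi_n)$ and $\psi = (\psi_1, \ldots, \psi_m)$, where $\phi_j, \psi_i \in \O_X$ are the pullbacks of the standard coordinate functions under $\phi$ and $\psi$ respectively. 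Since $\phi$ is a closed embedding of affine varieties, the induced algebra homomorphism $\CC[x_1, \ldots, x_n] \lrw \O_X$ is surjective, i.e. $\phi_1, \ldots, \phi_n$ generate $\O_X$ as a $\CC$-algebra. Hence each $\psi_i$, being a regular function on $X$, admits an expression $\psi_i = P_i(\phi_1, \ldots, \phi_n)$ for some polynomial $P_i \in \CC[x_1, \ldots, x_n]$.

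It then remains to compose with $\gamma$: we have $\psi_i \circ \gamma = P_i(\phi_1 \circ \gamma, \ldots, \phi_n \circ \gamma)$ for each $i$. By the hypothesis that $y$ has property $\calQ$ with respect to $\phi$, every $\phi_j \circ \gamma$ lies in $\calQ$, and since $\calQ$ is a $\CC$-algebra, the polynomial combination $P_i(\phi_1 \circ \gamma, \ldots, \phi_n \circ \gamma)$ again lies in $\calQ$; thus $\psi \circ \gamma$ has property $\calQ$, as desired. I expect no genuine obstacle here: the only point meriting even a line of verification is the closure of $\calQ$ under multiplication, which is immediate from $q_1(c)e^{a_1 c}\, q_2(c)e^{a_2 c} = (q_1 q_2)(c)\, e^{(a_1 + a_2)c}$ followed by collecting terms with equal exponents, and closure under addition and scalar multiplication is evident. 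Everything else in the argument is formal, so the lemma should fall out directly once the intrinsic nature of completeness and the generation property of closed embeddings are recorded.
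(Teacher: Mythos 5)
Your proposal is correct and follows essentially the same route as the paper: express the $\psi$-coordinate functions as polynomials in the $\phi$-coordinate functions (using surjectivity of $\phi^*$), compose with the integral curve, and invoke that $\calQ$ is a subalgebra of the functions on $\CC$. Your explicit remark that completeness of the flow is intrinsic to $(X,\pi)$ is left implicit in the paper but is the same observation.
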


\begin{proof}
Let $\gamma: \CC \to X, c \mapsto \gamma(c)$, be an integral curve of the Hamiltonian vector field of $y$.
Writing $\phi \circ \gamma: \CC \to \CC^n$ as
\[
\phi(\gamma(c)) = (\gamma_1(c), \, \ldots, \gamma_n(c)), \hs c \in \CC,
\]
we know by assumption that $\gamma_j \in \calQ$ for each $j = 1, \ldots, n$. Let $z_1, \ldots, z_m$ be
the coordinates on $\CC^m$. Then for each $k = 1, \ldots, m$, $\psi^*(z_k)$ is a regular function on $X$, so
there exists $f_k \in \CC[x_1, \ldots, x_n]$ such that $\psi^*(z_k) = \phi^*(f_k)$
as regular functions on $X$. Define
\[
F: \; \; \; \CC^n \lrw \CC^m, \; \; F(x) = (f_1(x), \ldots, f_m(x)), \hs x \in \CC^n.
\]
Then $F \circ \phi = \psi: X \to \CC^m$, and thus
\[
\psi(\gamma(c)) = F(\phi(\gamma(c)) = F(\gamma_1(c), \ldots, \gamma_j(c)), \hs c \in \CC.
\]
As $\calQ$ is a subalgebra of the algebra of all $\CC$-valued functions on $\CC$, one has $\psi \circ \gamma \in
\calQ^m$.
\end{proof}

\subsection{Notation}\label{subsec-nota-intro}
We now fix some notation that will be used throughout the paper.

We will fix the connected and simply connected complex semisimple Lie group $G$ and also
fix a pair $(B, B_-)$ of opposite Borel subgroups of $G$. Let $N$ and $N_-$ be the respective uniradicals
of $B$ and $B_-$.
Let $T = B \cap B_-$, and let
$\t$ be the Lie algebra of $T$. Let $\Delta^+ \subset \t^*$ be the system of positive roots determined by $B$, and let
\[
\g = \t + \sum_{\alpha \in \Delta^+} (\g_\al + \g_{-\al})
\]
be the root decomposition of $\g$.
Let $N_G(T)$ be the normalizer subgroup of $T$ in $G$, and let $W = N_G(T)/T$
be the Weyl group. Define the right action of $W$ on $T$ by
\[
t^w = \dw^{-1} t \dw,  \hs t \in T,
\]
where for $w \in W$, $\dw$ is any representative of $w$ in $N_G(T)$.

Let $\Gamma \subset \Delta^+$ be
the set of all simple roots in $\Delta^+$.
For each $\alpha \in \Gamma$,
we will fix root vectors $e_\alpha \in \g_\al$  and $e_{-\alpha} \in \g_{-\al}$ such that
$h_\alpha \stackrel{{\rm def}}{=}
[e_\alpha, e_{-\alpha}]$ satisfies $\alpha(h_\alpha) = 2$. Recall that the collection
$(T, \, B, \; \{e_\al, \, e_{-\al}: \; \al \in \Gamma\})$ is called a {\it pinning} of $G$.
For $\alpha \in \Gamma$, let
\[
u_{\alpha}, \, u_{-\alpha}: \;\;\CC \longrightarrow G, \;\; u_{\alpha}(c) = \exp(c \,e_{\alpha})
\;\;\; \mbox{and} \;\;\; u_{-\alpha}(c) = \exp(c\,e_{-\alpha}), \hs c \in \CC,
\]
be the corresponding one-parameter unipotent subgroups of $G$, and define the representative of the simple reflection
$s_\alpha$ in $N_G(T)$ by
\[
\overline{s_\alpha} = u_\alpha(-1) u_{-\al}(1) u_\al(-1).
\]
By \cite{FZ:total}, if $w \in W$ and $w = s_{\al_1}s_{\al_2} \cdots s_{\al_l}$ is a reduced
decomposition of $w$, where $\al_j \in \Gamma$ for $j = 1, \ldots, l$, the representatives
\[
\overline{w} \; \stackrel{{\rm def}}{=}\; \overline{{s}_{\al_1}} \;\,\overline{{s}_{\al_2}}\; \cdots\; \overline{{s}_{\al_l}} \hs \mbox{and}
\hs
\overline{\overline{w}} \; \stackrel{{\rm def}}{=}\; \left(\overline{w^{-1}}\right)^{-1}
\]
of $w$ in $N_G(T)$ are independent of the choices of the reduced decompositions of
$w$, and $\overline{w_1w_2} = \overline{w_1} \,\overline{w_2}$ and $\overline{\overline{w_1w_2}} =
\overline{\overline{w_1}} \,\overline{\overline{w_2}}$
if $l(w_1w_2) = l(w_1) + l(w_2)$.

Let $X(T)$ be the character group of $T$. For $\lam \in X(T)$, denote the corresponding
group homomorphism $T \to \CC^\times$ by $t \mapsto t^\lam$ for $t \in T$. Define the left action of $W$ on $X(T)$ by
\[
(t^w)^{\lam} =t^{w(\lam)}, \hs t \in T, \, w \in W, \, \lam \in X(T).
\]
For $\lam \in X(T)$, we will also use $\lam$ to denote the
corresponding element in $\t^* = \CC \otimes_{\ZZ} X(T)$. The linear pairing between $\t$ and $\t^*$
is denoted by $(\, , \, )$.

Let $\calP^+ \subset X(T)$ be the set of all dominant weights, and for $\al \in \Gamma$, let
$\omega_\al \subset \calP^+$ be the corresponding fundamental weight. Recall from \cite{FZ:total} that
corresponding to each $\al \in \Gamma$ one has the {\it principal minor} $\Delta^{\omega_\al}$, which is the
regular function on $G$ whose restriction to $B_-B$ is given by
\[
\Delta^{\omega_{\al}}(g) = [g]_0^{\omega_\al},
\]
where for $g \in B_-B = N_- T N$, we write
\begin{equation}\label{eq-ggg}
g = [g]_- [g]_0 [g]_+, \hs \mbox{where} \;\;\; [g]_- \in N_-, \, [g]_0 \in T, \, [g]_+ \in N.
\end{equation}
Recall also from \cite{FZ:total} that a {\it generalized minor} on $G$ is a regular function on $G$ of the form
\[
g \longmapsto \Delta_{w_1\omega_\al, w_2\omega_\al}(g) \;\stackrel{{\rm def}}{=}\;
\Delta^{\omega_\al} (\overline{w_1}^{\,-1}g \overline{w_2}), \hs g \in G,
\]
where $w_1, w_2 \in W$ and $\al \in \Gamma$.
Let $\lam \in \calP^+$, and write $\lam$ as $\lam = \sum_{\al} n_\al \omega_{\al}$ with $n_\al \in {\mathbb{N}}$ for each $\al \in \Gamma$.
One then has the regular function $\Delta^\lam$ on $G$ defined by
\begin{equation}\label{eq-Delta-lam}
\Delta^{\lam} = \prod_{\al \in \Gamma} (\Delta^{\omega_{\al}})^{n_\al}.
\end{equation}
For $w_1, w_2 \in W$, define the regular function $\Delta_{w_1\lambda, w_2\lambda}$ on $G$ by
\begin{equation}\label{eq-Delta-ww-lam}
\Delta_{w_1\lambda, w_2\lambda}(g) = \Delta^\lambda(\overline{w_1}^{\,-1}g \overline{w_2}), \hs g \in G.
\end{equation}
It is easy to see that $\Delta_{w_1\lambda, w_2\lambda}$ is a matrix coefficient of the highest weight representation of
$G$ with highest weight $\lambda$ (see \cite[Definition 6.2]{MR:flag}), and
\begin{equation}\label{eq-Delta-Delta}
\Delta_{w_1\lam, w_2\lam} = \prod_{\al \in \Gamma} (\Delta_{w_1\omega_\al, w_2\omega_\al})^{n_\alpha}.
\end{equation}
Thus $\Delta_{w_1\lam, w_2\lam}$ is a monomial, with non-negative exponents, of generalized minors.

\subsection{Acknowledgments}\label{subsec-acknow} Research in this paper was partially supported by
the Research Grants Council of the Hong Kong SAR, China (GRF HKU 703712 and 17304415). We are grateful to M. Yakimov
for answering our questions on double Bruhat cells and for pointing out to us certain references. We
would also like to thank
Jun Peng and Shizhuo Yu for helpful discussions, and we thank the referee for helpful comments.

\section{Complete Hamiltonian Flows and Integrable Systems on Generalized Bruhat Cells}\label{sec-GBC}

\subsection{The complex Poisson Lie group $(G, \pist)$}\label{subsec-pist}
Let $G$ be any connected and simply connected complex semisimple Lie group with Lie algebra $\g$, and let
the notation
be as in $\S$\ref{subsec-nota-intro}.
We will also fix a non-degenerate symmetric invariant bilinear form $\lara_\g$ on  $\g$.
The restriction of $\lara_\g$ to $\t$ will be denoted by $\lara$. As $\lara$ is non-degenerate, one has the
isomorphism $\#: \t^* \to \t, \lam \mapsto \lam^\#$, given by
\begin{equation}\label{eq-sharp}
\la \lam^\#, \; x\ra = \lam(x) = (\lam, x), \hs \lam \in \t^*, \; x \in \t.
\end{equation}
Let $\lara$ also denote the bilinear form on $\t^*$ given by
\[
\la \lam_1, \, \lam_2\ra = \la \lam_1^\#, \; \lam_2^\#\ra, \hs \lam_1, \lam_2 \in \t^*.
\]
The choice of the triple $(T, B, \lara_\g)$ gives rise to the
{\it standard quasi-triangular $r$-matrix} $r_{\rm st}$ on $\g$ defined by (see \cite{dr:quantum, etingof-schiffmann})
\[
r_{\rm st} = \sum_{i=1}^r h_i \otimes h_i + \sum_{\alpha \in \Delta^+} \la \al, \, \al \ra
e_{-\al} \otimes e_\al  \in \g \otimes \g,
\]
where $\{h_i\}_{i=1}^r$ is any orthonormal basis of $\h$ with respect to $\lara$, and for $\al \in \Delta^+$,
$e_{\alpha} \in \g_\al$ and $e_{-\alpha}\in \g_{-\al}$ are such that
$\alpha([e_\al, e_{-\al}]) = 2$. Correspondingly, one has the
{\it standard multiplicative} holomorphic Poisson bi-vector field  $\pist$ on $G$ given by
\begin{equation}\label{eq-pist}
\pist(g) = l_g r_{\rm st}- r_g r_{\rm st} = l_g \Lambda_{\rm st} - r_g \Lambda_{\rm st}, \hs g \in G,
\end{equation}
where
$l_g$ (resp. $r_g$) for $g \in G$ is the left (resp. right) translation on $G$ by $g$, and
\[
\Lambda_{\rm st} = \sum_{\al \in \Delta^+} \frac{\la \al, \, \al \ra}{2}
(e_{-\al} \otimes e_{\al} - e_{\al} \otimes e_{-\al}) \in
\wedge^2\g
\]
is the skew-symmetric part of $r_{\rm st}$.
We refer to \cite{chari-pressley, dr:quantum, etingof-schiffmann} for the relation between the Poisson Lie group
$(G, \pist)$ and the corresponding quantum group of $G$. By, for example, \cite{hodges, reshe-4, KZ:integrable},
the double Bruhat cells $\Guv = (BuB) \cap (B_-vB_-)$,
where  $u, v \in W$, are precisely the $T$-orbits (under left or right
translation) of symplectic leaves
of $\pist$ in $G$. In particular, $BuB$ and $B_-uB_-$, for any $u \in W$, are
Poisson submanifolds
of $(G, \pist)$. The restriction of $\pist$ to $\Guv$ will still be denoted by $\pist$.

\begin{remark}\label{rk-compare-KZ}
{\rm
The Poisson structure $\pist$ in \eqref{eq-pist} is the negative of the one defined 
in \cite[$\S$2.5]{KZ:integrable} by Kogan and Zelevinsky, where the bilinear form $\lara$ on $\t^*$ is denoted as $(\, , \, )$.
\hfill $\diamond$
}
\end{remark}

\subsection{The Poisson structure $\pi_n$ on the generalized Bruhat cell $\O^\bfu$}\label{subsec-gBC}
For an integer $n \geq 1$, let the product group $B^n$ act on $G^n$ by
\[
(g_1, \, g_2, \, \ldots, \, g_n) \cdot (b_1, \, b_2, \ldots, b_n) =
(g_1b_1, \, b_1^{-1}g_2b_2, \, \ldots, \, b_{n-1}^{-1}g_nb_n), \hs g_j\in G, \, b_j \in B,
\]
and denote the corresponding quotient space by
\begin{equation}\label{eq-Fn}
F_n = G \times_B G \times_B \cdots \times_B G/B.
\end{equation}
Let $\varpi_n: G^n \to F_n$ be the natural projection. For a sequence $\bfu = (u_1, \ldots, u_n) \in W^n$,
let
\begin{equation}\label{eq-de-Ou}
\calO^\bfu = Bu_1B\times_B \cdots \times_B Bu_nB/B
\stackrel{{\rm def}}{=} \varpi_n((Bu_1B) \times \cdots \times (Bu_nB)) \subset F_n.
\end{equation}
The Bruhat decomposition $G = \bigsqcup_{u \in W} BuB$ of $G$ then gives the decomposition
\begin{equation}\label{eq-gBD}
F_n = \bigsqcup_{\bfu \in W^n} \calO^\bfu \hs (\mbox{disjoint union}).
\end{equation}
Following \cite{LM:flags}, each $\calO^\bfu \subset F_n$ is called
a {\it generalized Bruhat cell}, and when $\bfu$ is
a sequence of {\it simple reflections}, the generalized Bruhat cell $\calO^\bfu$ is said to be
{\it of Bott-Samelson type}. Let $l: W \to {\mathbb{N}}$ be again the length function on $W$.
It is clear that
\[
\dim \O^{\bfu} = l(\bfu) = l(u_1) + \cdots + l(u_n).
\]
It is shown in \cite[$\S$7.1]{LM:mixed} (see also \cite[Theorem 1.1]{LM:flags})
that
\[
\pi_n \; \stackrel{{\rm def}}{=}\; \varpi_n(\pi_{\rm st}^n),
\]
where $\pi_{\rm st}^n$ is the
product Poisson structure on $G^n$,
is a well-defined Poisson structure on $F_n$.
As $BuB$ is a Poisson
submanifold of $G$ with respect to $\pist$ for every $u \in W$,
all the generalized
Bruhat cells $\calO^\bfu$ in $F_n$ are Poisson submanifolds with respect to $\pi_n$.
The restriction of $\pi_n$ to
each $\calO^\bfu$, still denoted by $\pi_n$, will be referred to as the {\it standard Poisson
structure} on the generalized Bruhat cell $\calO^\bfu$.

Note that the action of $T$ on $F_n$ given by
\begin{equation}\label{eq-T-Fn}
t \cdot [g_1, g_2, \ldots, g_n] = [tg_1, g_2, \ldots, g_n], \hs t\in T, \;  g_1, g_2, \ldots, g_n \in G,
\end{equation}
preserves the Poisson structure $\pi_n$ on $F_n$, where for $(g_1, \ldots, g_n) \in G^n$,
\[
[g_1,\ldots, g_n] = \varpi_n(g_1, \ldots, g_n) \in F_n.
\]
Define a {\it $T$-leaf} of $(F_n, \pi_n)$  to be the union $\cup_{t \in T} (t\cdot \Sigma)$, where $\Sigma$ is
a symplectic leaf of $(F_n, \pi_n)$. For $\bfu \in W^n$ and $w \in W$, define
\begin{equation}\label{eq-O-bfu-w}
\O^\bfu_w = \{[g_1, g_2, \ldots, g_n] \in \O^\bfu: \; g_1g_2 \cdots g_n \in B_-wB\} \subset \O^\bfu.
\end{equation}
It is shown in \cite[Theorem 1.1]{LM:flags} that the decomposition
\begin{equation}\label{eq-Fn-T-leaves}
F_n = \bigsqcup_{\bfu \in W^n, \, w \in W} \O^\bfu_w \hs (\mbox{disjoint union})
\end{equation}
is that of $F_n$ into the $T$-leaves of $\pi_n$ (see \cite[Theorem 1.1]{LM:flags} for 
a criterion for $\O^\bfu_w
\neq \emptyset$). In particular, for each $\bfu \in W^n$, one has the (unique) open $T$-leaf
\begin{equation}\label{eq-Ou-e}
\O^\bfu_e = \{[g_1, g_2, \ldots, g_n] \in \O^\bfu: \; g_1g_2 \cdots g_n \in B_-B\}
\end{equation}
of $\pi_n$ in $\O^\bfu$, which will play an important role in this paper.

\begin{remark}\label{rk-Fn-Fn}
{\rm
We remark that every generalized Bruhat cell $\calO^\bfu \subset F_n$ with the Poisson structure $\pi_n$
is Poisson isomorphic to a generalized Bruhat cell of Bott-Samelson type with
the Poisson structure $\pi_{l(\bfu)}$,
where $l(\bfu) = l(u_1) + \cdots l(u_n)$.
Indeed, if $u_j = s_{{j, 1}} s_{{j, 2}} \cdots s_{{j, l(u_j)}}$
is a reduced decomposition of $u_j$, one then has the sequence
\[
\tilde{\bfu} = (s_{{1, 1}}, \,  s_{{1, 2}}, \,  \ldots, \,  s_{{1, l(u_1)}},\; \ldots, \;
s_{{n, 1}}, \,  s_{{n, 2}}, \,  \ldots s_{{n, l(u_n)}})
\]
of simple reflections of length $l(\bfu)$, and
the map $G^{l(u_1)} \times \cdots \times G^{l(u_n)} \to G^n$ given by
\[
(g_{1, 1}, \ldots, g_{1, l(u_1)}, \; \ldots, \; g_{n, 1}, \ldots, g_{n, l(u_n)})
\longmapsto (g_{1, 1}g_{1, 2} \cdots g_{1, l(u_1)}, \; \ldots, \; g_{n, 1} g_{n, 2} \cdots g_{n, l(u_n)})
\]
induces a $T$-equivariant Poisson isomorphism  (see \cite[$\S$1.3]{LM:flags})
\begin{equation}\label{eq-Fn-Fn}
(F_{l(\bfu)}, \,\pi_{l(\bfu)}) \supset (\O^{\tilde{\bfu}}, \, \pi_{l(\bfu)})
\longrightarrow (\calO^\bfu, \, \pi_n) \subset (F_n, \, \pi_n).
\end{equation}
Consequently, to study the Poisson manifold $(\O^\bfu, \pi_n)$ for an arbitrary
generalized Bruhat cell $\O^\bfu$, it is enough to study the case when $\O^\bfu$ is of Bott-Samelson type.
\hfill $\diamond$
}
\end{remark}

\subsection{The Poisson structure $\pi_n$ in Bott-Samelson coordinates on $\O^\bfu$}\label{subsec-Zu}
Assume first that $\bfu = (s_{\al_1},  s_{\al_2}, \, \ldots, \, s_{\al_n})$ is any sequence
of simple reflections in $W$, and for notational simplicity, we will also write
\begin{equation}\label{eq-u-si}
\bfu = (s_{\al_1},  s_{\al_2}, \, \ldots, \, s_{\al_n})= (s_1, \; s_2, \, \ldots, \; s_n).
\end{equation}
Corresponding to $\bfu$, one has the
Bott-Samelson variety
\[
Z_\bfu = \varpi_n(P_1 \times P_2 \times \cdots \times P_n) \subset F_n,
\]
where $P_j = B \cup Bs_jB$ for each $j$. It is easy to see that $\Zu$ is a (smooth and projective)
Poisson submanifold of $F_n$ with respect to the Poisson structure $\pi_n$, and
$(\O^\bfu, \pi_n)$ is embedded in $(\Zu, \pi_n)$ as an open Poisson submanifold.

The choice of the pinning in $\S$\ref{subsec-nota-intro}
 gives rise \cite{Dudas, EL:BS} to the atlas
\[
\calA = \{(\phi^\gamma: \; \CC^n \lrw \phi^\gamma(\CC^n)): \gamma \in \Upu\}
\]
on $\Zu$, where $\Upu$ is the set of all  {\it subexpressions of $\bfu$}, i.e.,
the set of all sequences
\[
\gamma = (\gamma_1, \ldots, \gamma_n) \in W^n,
\]
where  $\gamma_j =e$ or $\gamma_j = s_{\al_j}$ for each $j = 1, \ldots, n$, with $e$ being the identity element of $W$.
More specifically, for $\gamma = (\gamma_1,\ldots, \gamma_n) \in \Upu$, one has
the embedding $\phi^\gamma: \CC^n \to \Zu$,
\[
\phi^\gamma(x_1, \ldots, x_n) =
[u_{-\gamma_1(\alpha_1)}(x_1) \overline{{\gamma}_1}, \, u_{-\gamma_2(\alpha_2)}(x_2) \overline{{\gamma}_2}, \, \ldots, \,
u_{-\gamma_n(\alpha_n)}(x_n) \overline{{\gamma}_n}] \in \Zu,\\
\]
where $\bar{e} = e \in G$. For each $\gamma \in \Upu$, it is shown in \cite{Balazs:thesis, EL:BS} that
the Poisson structure $\pi_n$ is algebraic in the coordinate chart $\phi^\gamma: \CC^n \to \phi^\gamma(\CC^n)$,
and the Poisson brackets among the coordinate functions are
expressed using root strings and structure constants of the Lie algebra $\g$.
Note, in particular, that when $\gamma = \bfu$, $\phi^\bfu(\CC^n) = \O^\bfu$.

\begin{definition}\label{de-BS-coor}
{\rm
For a sequence $\bfu = (s_1, s_2, \ldots, s_n)$ of simple reflections,
we call the coordinates $(x_1, \ldots, x_n)$ on $\O^\bfu$ via $\phi^\bfu: \CC^n \to \O^\bfu$ given by
\begin{equation}\label{eq-phiu}
\phi^\bfu(x_1, \ldots, x_n) =
[u_{\al_1}(x_1)\bs_{1}, \,u_{\al_2}(x_2)\bs_{2}, \, \ldots, \,u_{\al_n}(x_n)\bs_{n}],\hs (x_1, \ldots, x_n) \in \CC^n,
\end{equation}
{\it Bott-Samelson coordinates} on $\O^\bfu$, and  we call
$\piGu = (\phi^{\bfu})^{-1}(\pi_n)$
the {\it Bott-Samelson Poisson structure on $\CC^n$ associated to $(G, \bfu)$}.
When necessary, we will denote the induced Poisson polynomial algebra by $(\CC[x_1, \ldots, x_n], \piGu)$
or
$(\CC[x_1, \ldots, x_n], \{\, , \, \}_{(\sG, \bfu)})$.
\hfill $\diamond$
}
\end{definition}

Note that in the Bott-Samelson coordinates, the $T$-action on $\O^\bfu$ in \eqref{eq-T-Fn}
is given by
\begin{equation}\label{eq-phi-ga-t}
t \cdot \phi^\bfu(x_1, x_2, \ldots, x_n) = \phi^\bfu(t^{\al_1}x_1, \; t^{s_1(\al_2)}x_2, \, \ldots,
\, t^{s_1s_2\cdots s_{n-1}(\al_n)}x_n), \hs t \in T.
\end{equation}

\begin{remark}\label{rk-T-alg}
{\rm
By a {\it $T$-Poisson algebra} we mean a Poisson algebra with a $T$-action by Poisson automorphisms.
By \eqref{eq-phi-ga-t}, the Poisson polynomial algebra $(\CC[x_1, \ldots, x_n], \piGu)$ is  a $T$-Poisson algebra with
the $T$-action
\begin{equation}\label{eq-t-xj}
t \cdot_\bfu x_j = t^{s_1s_2 \cdots s_{j-1}(\al_j)} x_j, \hs t \in T, \,\; j = 1, \ldots, n.
\end{equation}
For $1 \leq i \leq k \leq n$, let $\bfu_{[i, k]}=(s_i, s_{i+1}, \ldots, s_k)$, so one also has the $T$-Poisson algebra
$(\CC[x_i, x_{i+1}, \ldots, x_k], \, \pi_{\bfu_{[i, k]}})$ with the $T$-action
\begin{equation}\label{eq-t-xj-1}
t\cdot_{\bfu_{[i,k]}} x_j = t^{s_is_{i+1} \cdots s_{j-1}(\al_j)} x_j, \hs t \in T, \,\; j = i,  \ldots, k.
\end{equation}
By \cite[Theorem 4.14]{EL:BS}, the inclusion
\[
(\CC[x_i, \ldots, x_k], \;\pi_{(\sG, \bfu_{[i, k]})}) \hookrightarrow (\CC[x_1, \ldots, x_n], \;\piGu)
\]
is a Poisson algebra embedding but not $T$-equivariant due to \eqref{eq-t-xj} and \eqref{eq-t-xj-1}.
\hfill $\diamond$
}
\end{remark}

We do not need the full strength of the
explicit formulas for $\{\, ,\, \}_{(\sG, \bfu)}$ given in \cite[Theorem 4.14]{EL:BS},
but we will need the following property.

\begin{lemma}\label{le-iterated-0} Assume that $f \in \CC[x_1, \ldots, x_n]$ is a $T$-weight vector with weight
$\lam_f \in X(T)$.

1) If $f \in \CC[x_2, \ldots, x_n]$, then
\[
\{x_1, f\}_{(\sG, \bfu)} + \la \lam_f, \, \al_1\ra \,x_1 f \in \CC[x_2, \ldots, x_n];
\]

2) If $f \in \CC[x_1, \ldots, x_{n-1}]$, then
\[
\{f, x_n\}_{(\sG, \bfu)} + \la \lam_f, \; s_1s_2 \cdots s_{n-1}(\al_n)\ra \,x_nf \in \CC[x_1, \ldots, x_{n-1}].
\]
\end{lemma}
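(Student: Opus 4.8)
The plan is to reduce each of the two assertions, by a Leibniz (derivation) argument built from the torus action, to the special case in which $f$ is one of the coordinate generators $x_j$, and then to read off that special case from the explicit Bott-Samelson bracket formulas of \cite[Theorem 4.14]{EL:BS}. Throughout I write $\mu_j := s_1 s_2 \cdots s_{j-1}(\al_j)$ for the $T$-weight of $x_j$, as recorded in \eqref{eq-t-xj}; note that the pairing appearing in 1) is $\la \lam_f, \al_1\ra = \la \lam_f, \mu_1\ra$ and the one in 2) is $\la \lam_f, \mu_n\ra$, so that in both statements $\lam_f$ is paired with the weight of the extreme coordinate.

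For 1), I would introduce the derivation $\partial$ of $\CC[x_2, \ldots, x_n]$ determined by $\partial(x_j) = \la \mu_j, \al_1\ra x_j$, which is the infinitesimal generator of the $T$-action \eqref{eq-t-xj} in the direction $\al_1^\# \in \t$; equivalently $\partial(f) = \la \lam_f, \al_1\ra f$ for every $T$-weight vector $f$. Consider then the $\CC$-linear map
\[
\Phi: \; \CC[x_2, \ldots, x_n] \lrw \CC[x_1, \ldots, x_n], \qquad \Phi(f) = \{x_1, f\}_{(\sG, \bfu)} + x_1\, \partial(f).
\]
Since $\{x_1, \cdot\}_{(\sG, \bfu)}$ is a derivation and $f \mapsto x_1 \partial(f)$ is a derivation of $\CC[x_2, \ldots, x_n]$ into the module $\CC[x_1, \ldots, x_n]$, the sum $\Phi$ is again a derivation, and for a $T$-weight vector $f$ one has $\Phi(f) = \{x_1, f\}_{(\sG, \bfu)} + \la \lam_f, \al_1\ra x_1 f$, which is precisely the element that 1) asserts to lie in $\CC[x_2, \ldots, x_n]$. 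Because $\Phi$ is a derivation and $\CC[x_2, \ldots, x_n]$ is generated by $x_2, \ldots, x_n$, it suffices to prove $\Phi(x_j) \in \CC[x_2, \ldots, x_n]$ for each $j \geq 2$, i.e.
\[
\{x_1, x_j\}_{(\sG, \bfu)} + \la \mu_j, \al_1\ra\, x_1 x_j \;\in\; \CC[x_2, \ldots, x_n].
\]
Statement 2) I would treat symmetrically, using the derivation $\{\cdot, x_n\}_{(\sG, \bfu)}$ of $\CC[x_1, \ldots, x_{n-1}]$ together with the toric derivation $\partial'(f) = \la \lam_f, \mu_n\ra f$; the same argument reduces 2) to the generator statement $\{x_i, x_n\}_{(\sG, \bfu)} + \la \mu_i, \mu_n\ra\, x_i x_n \in \CC[x_1, \ldots, x_{n-1}]$ for $1 \leq i \leq n-1$.

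It remains to settle these two generator-level inclusions, and this is the step I expect to be the main obstacle, since it is the only place where genuine information about the bracket is used. I would extract from \cite[Theorem 4.14]{EL:BS} that $\{x_1, x_j\}_{(\sG, \bfu)}$ has degree at most one in the extreme variable $x_1$, with the coefficient of $x_1 x_j$ equal to the log-canonical value $-\la \mu_j, \al_1\ra$ and with all remaining terms lying in $\CC[x_2, \ldots, x_n]$ (in fact in the intermediate variables $x_2, \ldots, x_{j-1}$, which by Remark \ref{rk-T-alg} sit inside $\CC[x_2, \ldots, x_n]$); the claim for $\{x_i, x_n\}_{(\sG, \bfu)}$ is identical with $x_n$ in the role of the extreme variable and log-canonical coefficient $-\la \mu_i, \mu_n\ra$. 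Conceptually this is the statement that $(\CC[x_1, \ldots, x_n], \{\,,\,\}_{(\sG,\bfu)})$ is, at each of its two ends, a Poisson-Ore (CGL) extension of the corresponding Poisson subalgebra of Remark \ref{rk-T-alg} whose principal derivation is exactly the toric derivation determined by the weight of the adjoined variable; once this is granted, the Leibniz reduction above completes the proof with no further computation involving the structure constants of $\g$.
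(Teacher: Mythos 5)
Your proposal is correct, and its backbone is the same structural fact the paper uses: that $(\CC[x_1, \ldots, x_n], \{\,,\,\}_{(\sG, \bfu)})$ is a Poisson--Ore (CGL) extension at each of its two ends, with principal derivation the toric derivation attached to the weight of the extreme variable. The difference is where that fact is taken from and how much of it you rebuild yourself. The paper's proof is a one-liner on top of \cite[Theorem 5.12]{EL:BS}, which already supplies the decomposition
\[
\{x_1, \; f\}_{(\sG, \bfu)} = -\tfrac{\la \al_1, \al_1\ra}{2}\, x_1\partial_{h_{\al_1}}(f) + \delta(f), \hs f \in \CC[x_2, \ldots, x_n],
\]
with $\delta$ a derivation of $\CC[x_2, \ldots, x_n]$ (and the analogous statement at the other end); the lemma then follows immediately from the identity $\tfrac{\la \al_1, \al_1\ra}{2}(\lam_f, h_{\al_1}) = \la \lam_f, \al_1\ra$ for a weight vector $f$. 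You instead cite the explicit generator formulas of \cite[Theorem 4.14]{EL:BS} and supply the Leibniz bootstrap yourself: since $\{x_1, \cdot\}$ plus the toric term is a derivation of $\CC[x_2, \ldots, x_n]$ into the module $\CC[x_1, \ldots, x_n]$, and $\CC[x_2, \ldots, x_n]$ is a submodule over itself, it suffices to check generators. That reduction is sound, and the generator-level inclusion you isolate --- $\{x_1, x_j\}_{(\sG, \bfu)}$ having $x_1$-degree one, with log-canonical coefficient $-\la s_1 \cdots s_{j-1}(\al_j), \al_1\ra$ and remainder in the intermediate variables --- is indeed what the explicit formulas give (your coefficient for part 2 also matches, since $W$-invariance of $\lara$ turns $\la \mu_i, \mu_n\ra$ into the coefficient computed inside the subalgebra $\CC[x_i, \ldots, x_n]$ of Remark \ref{rk-T-alg}; the paper's $SL(2,\CC)$ and $G_2$ examples confirm the shape). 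In effect you re-prove the relevant half of \cite[Theorem 5.12]{EL:BS} from \cite[Theorem 4.14]{EL:BS}: what this buys is a more self-contained argument resting only on the explicit bracket formulas, at the cost of the reduction step the paper gets for free by quoting the stronger theorem.
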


\begin{proof}
For $\zeta \in \t$, let $\partial_\zeta$ be the derivation of $\CC[x_1, \ldots, x_n]$ given by
\[
\partial_\zeta(x_j) = (s_1s_2 \cdots s_{j-1} (\al_j), \; \zeta) \, x_j, \hs j = 1, 2, \ldots, n.
\]
By \cite[Theorem 5.12]{EL:BS}, there exist a derivation $\delta$ of $\CC[x_2, \ldots, x_n]$ and a derivation
$\delta'$ on $\CC[x_1, \ldots, x_{n-1}]$ such that
\begin{align}\label{eq-ite-1}
\{x_1, \; f\}_{(\sG, \bfu)} &= -\frac{\la \al_1, \al_1\ra}{2} x_1\partial_{h_{\al_1}}(f) + \delta(f),\hs f \in
\CC[x_2, \ldots, x_n],\\
\label{eq-ite-2}
\{f, \; x_n\}_{(\sG, \bfu)} & = -\frac{\la \al_n, \al_n\ra}{2} x_n \partial_{s_1s_2 \cdots s_{n-1}(h_{\al_n})}(f) + \delta'(f),
\hs f \in
\CC[x_1, \ldots, x_{n-1}],
\end{align}
from which both assertions of Lemma \ref{le-iterated-0} now follow.
\end{proof}

One advantage of embedding $(\O^\bfu, \pi_n)$ into $(\Zu, \pi_n)$ is that we can make use of other
coordinate charts on $\Zu$ in which the Poisson structure $\pi_n$ may be simple.
Indeed, let
\[
(e) \, \stackrel{{\rm def}}{=} \, (e, e, \ldots, e)
\]
be the subexpression of $\bfu$ in which all the entries of $(e)$ are the identity element of $W$, and
consider the coordinate chart
$\phi^{(e)}: \CC^n \to \phi^{(e)}(\CC^n) \subset \Zu$
\begin{equation}\label{eq-phi-e}
\phi^{(e)} (\varepsilon_1, \ldots, \varepsilon_n) = [u_{-\al_1}(\varepsilon_1), \; u_{-\al_2}(\varepsilon_2), \; \ldots, \;
u_{-\al_n}(\varepsilon_n)], \hs (\varepsilon_1, \ldots, \varepsilon_n) \in \CC^n.
\end{equation}

\begin{lemma}\label{le-e-u} \cite[Theorem 4.14]{EL:BS}
In the coordinates $(\varepsilon_1, \varepsilon_2, \ldots, \varepsilon_n)$ on $\phi^{(e)}(\CC^n)$, the Poisson structure $\pi_n$ is {\it log-canonical}. More precisely,
\[
\{\varepsilon_i, \; \varepsilon_j\} = \la \al_i, \, \al_j\ra \varepsilon_i\varepsilon_j, \hs 1 \leq i < j \leq n.
\]
\end{lemma}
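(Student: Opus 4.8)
The plan is to compute the bracket directly from the defining property $\pi_n = \varpi_n(\pi_{\rm st}^n)$, working on the Bott-Samelson variety $\Zu$ (which contains the chart $\phi^{(e)}(\CC^n)$ and is a Poisson submanifold of $(F_n, \pi_n)$) rather than on $\O^\bfu$. I would lift to the total space $\tilde{Z} = P_1 \times \cdots \times P_n$, on which $B^n$ acts freely with quotient $\Zu$ and on which $\pi_{\rm st}^n$ restricts to a Poisson structure, each $P_j = B \cup Bs_{\al_j}B$ being a (parabolic) subgroup and a Poisson submanifold of $(G, \pist)$. The section $(\varepsilon_1, \ldots, \varepsilon_n) \mapsto (u_{-\al_1}(\varepsilon_1), \ldots, u_{-\al_n}(\varepsilon_n))$ realizes the chart as a slice $S$ transverse to the $B^n$-orbits. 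Thus each $\varepsilon_i$ extends to a $B^n$-invariant function $\tilde{\varepsilon}_i$ near $S$, and at $p = (g_1, \ldots, g_n) \in S$ the invariance lets one evaluate the product tensor factorwise:
\[
\{\varepsilon_i, \varepsilon_j\}(p) \;=\; \sum_{k=1}^n \pist(g_k)\big((d\tilde{\varepsilon}_i)_k,\; (d\tilde{\varepsilon}_j)_k\big),
\]
where $(d\tilde{\varepsilon}_i)_k$ is the $k$-th partial differential.

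The next step is to pin down these differentials. Left-translating to the identity and using $\lara_\g$ to identify $\g^* \cong \g$, let $\mu_{i,k} \in \p_k^*$ be the resulting functional on $\p_k = \b \oplus \CC e_{-\al_k}$. Invariance of $\tilde{\varepsilon}_i$ under the infinitesimal $B^n$-action, together with the normalization imposed by the slice, translates (using $\b^\perp = \n$) into a downward recursion that determines each $\mu_{i,k}$ uniquely: one has $\mu_{i,k}(e_{-\al_k}) = \delta_{ik}$ and $\mu_{i,k}|_\b = (\mathrm{Ad}_{g_{k+1}}\mu_{i,k+1})|_\b$, with boundary condition $\mu_{i,n}|_\b = 0$. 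Here $\mathrm{Ad}_{g_{k+1}^{-1}}\b \subset \p_{k+1}$ makes the transport well-defined, and since $g_k = u_{-\al_k}(\varepsilon_k)$ is a simple-root unipotent, $\mathrm{Ad}_{g_k} = \exp(\varepsilon_k\,\mathrm{ad}_{e_{-\al_k}})$ acts explicitly; in the relevant $\sl_2$ one has $\mathrm{Ad}_{u_{-\al}(\varepsilon)}e_\al = e_\al - \varepsilon h_{\al} - \varepsilon^2 e_{-\al}$, so the recursion is solvable in closed form.

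The third step is to substitute into the $r$-matrix. Writing $\pist(g) = l_g\Lambda_{\rm st} - r_g\Lambda_{\rm st}$ and noting that left-translation sends the two terms to $\mu$ and $\mathrm{Ad}_g\mu$, each summand becomes
\[
\pist(g_k)\big((d\tilde{\varepsilon}_i)_k, (d\tilde{\varepsilon}_j)_k\big) = \la \Lambda_{\rm st},\, \mu_{i,k}\wedge\mu_{j,k}\ra - \la \Lambda_{\rm st},\, \mathrm{Ad}_{g_k}\mu_{i,k}\wedge \mathrm{Ad}_{g_k}\mu_{j,k}\ra .
\]
Since $\Lambda_{\rm st}$ pairs only $\n$ against $\n_-$, and the transport relation forces $\mathrm{Ad}_{g_k}\mu_{i,k}$ and $\mu_{i,k-1}$ to agree on $\b$ (hence on their $\n$-parts), consecutive summands are strongly linked, and the residual contribution is governed by the $\n_-$-components produced by the explicit unipotent adjoint actions. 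I expect this final bookkeeping to be the main obstacle: one must show that after summing over $k$ all terms cancel except a single monomial, and that the constants $\frac{\la\al,\al\ra}{2}$ in $\Lambda_{\rm st}$ together with the Cartan elements $h_{\al}$ thrown off by the $\sl_2$-formula assemble to exactly the coefficient $\la\al_i,\al_j\ra$, with no lower-degree remainder, yielding $\{\varepsilon_i,\varepsilon_j\} = \la\al_i,\al_j\ra\,\varepsilon_i\varepsilon_j$.

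Finally, to reduce the bookkeeping I would run this as an induction on $n$ via the Poisson projection $F_n \to F_{n-1}$, $[g_1, \ldots, g_n] \mapsto [g_1, \ldots, g_{n-1}]$, which is Poisson because it is covered by the coordinate projection $G^n \to G^{n-1}$. Since $\varepsilon_1, \ldots, \varepsilon_{n-1}$ are pulled back along this map (the projected chart is precisely $\phi^{(e)}$ for the truncated sequence), the brackets $\{\varepsilon_i, \varepsilon_j\}$ with $i, j < n$ follow from the inductive hypothesis, so only the $n-1$ brackets $\{\varepsilon_i, \varepsilon_n\}$ with $i < n$ require the explicit computation of the previous paragraphs.
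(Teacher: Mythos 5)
Your proposal cannot be compared against a proof in the paper, because the paper does not prove this lemma at all: it is imported wholesale from \cite[Theorem 4.14]{EL:BS} (where it is a special case of a formula for $\pi_n$ in every chart $\phi^\gamma$, proved there by an $r$-matrix computation). So yours is necessarily a self-contained route, and its scaffolding is correct. The slice $(\varepsilon_1,\ldots,\varepsilon_n)\mapsto (u_{-\al_1}(\varepsilon_1),\ldots,u_{-\al_n}(\varepsilon_n))$ does trivialize the $B^n$-action over the chart; the invariance relations determine the left-translated partial differentials $\mu_{i,k}\in\p_k^*$, $\p_k=\b\oplus\CC e_{-\al_k}$, by exactly the downward recursion you state ($\mu_{i,k}(e_{-\al_k})=\delta_{ik}$, $\mu_{i,k}|_\b=\mu_{i,k+1}\circ \mathrm{Ad}_{g_{k+1}^{-1}}|_\b$, $\mu_{i,n}|_\b=0$); your $\mathfrak{sl}_2$ identity and the factorwise formula for the bracket are both right; and the reduction through the Poisson projection $F_n\to F_{n-1}$, covered by the Poisson projection $G^n\to G^{n-1}$, validly cuts the problem to the brackets $\{\varepsilon_i,\varepsilon_n\}$ with $i<n$.

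The genuine gap is that you stop exactly where the content of the lemma lies: ``one must show that after summing over $k$ all terms cancel except a single monomial'' is stated as an expectation, not proved, so as written this is a plan rather than a proof. The gap is fillable, and more cheaply than you fear, because no cross-$k$ cancellation is in fact needed. Write the bracket as $\sum_k (A_k-B_k)$ with $A_k=\Lambda_{\rm st}(\mu_{i,k},\mu_{n,k})$ and $B_k=\Lambda_{\rm st}(\mathrm{Ad}_{g_k}\mu_{i,k},\mathrm{Ad}_{g_k}\mu_{n,k})$. Each difference $A_k-B_k$ is the pairing of $\Lambda_{\rm st}-\mathrm{Ad}_{g_k^{-1}}\Lambda_{\rm st}\in\wedge^2\p_k$ with $\mu_{i,k}\wedge\mu_{n,k}$, hence depends only on restrictions to $\p_k$; you may therefore extend every $\mu_{\cdot,k}$ by zero on the root spaces $\g_{-\al}$, $\al\neq\al_k$. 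With this convention: for $k>i$ one has $\mu_{i,k}=0$ outright; for $k<i$ both $\mu_{i,k}$ and $\mu_{n,k}$ vanish on all of $\n_-$ (their value on $e_{-\al_k}$ is $\delta_{ik}=\delta_{nk}=0$), and since $\mathrm{Ad}_{g_k^{-1}}\n_-=\n_-$ so do their transports; as $\Lambda_{\rm st}$ pairs $\n$ only against $\n_-$, every such summand vanishes identically. Only $k=i$ survives, where $\mu_{i,i}$ is the delta functional at $e_{-\al_i}$ and your $\mathfrak{sl}_2$ identity gives
\[
A_i-B_i \;=\; -\frac{\la \al_i,\al_i\ra}{2}\,\varepsilon_i\,\mu_{n,i}(h_{\al_i}).
\]
Finally, the Cartan part of the recursion integrates exactly: $\mathrm{Ad}_{g_k^{-1}}h=h-\varepsilon_k\al_k(h)e_{-\al_k}$ for $h\in\t$ forces $\mu_{n,k}(h)=-\varepsilon_n\al_n(h)$ for every $k<n$, whence
\[
\{\varepsilon_i,\varepsilon_n\} \;=\; \frac{\la \al_i,\al_i\ra}{2}\,\varepsilon_i\varepsilon_n\,\al_n(h_{\al_i})
\;=\; \la \al_i,\al_n\ra\,\varepsilon_i\varepsilon_n,
\]
using $\al_n(h_{\al_i})=2\la\al_n,\al_i\ra/\la\al_i,\al_i\ra$. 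With this paragraph supplied, your outline becomes a complete and correct proof; without it, the decisive computation is missing.
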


Lemma \ref{le-e-u} will be used in the proof of Proposition \ref{pr-y-lam} to compute the log-Hamiltonian
vector fields for certain regular functions on $\O^\bfu$.

\begin{definition}\label{de-BS-coor-1}
{\rm
For an arbitrary sequence $\bfu = (u_1, \ldots, u_n) \in W^n$, choose any reduced word for each $u_j$ and
form the sequence $\tilde{\bfu}$ of simple reflections as in Remark \ref{rk-Fn-Fn}.
Through the
Poisson isomorphism
$(\O^{\tilde{\bfu}}, \pi_{l(\bfu)}) \to (\O^\bfu, \pi_n)$ in \eqref{eq-Fn-Fn} in Remark \ref{rk-Fn-Fn}, the Bott-Samelson
coordinates on $\O^{\tilde{\bfu}}$ will also be called Bott-Samelson coordinates on $\O^\bfu$.
\hfill $\diamond$
}
\end{definition}

\begin{remark}\label{rm-Fn-Fn-1}
{\rm
For the remaining of $\S$\ref{sec-GBC}, we will work with generalized Bruhat cells $\O^\bfu$ of Bott-Samelson type,
i.e.,  we will assume that $\bfu$ is a
sequence of {\it simple reflections}. The results are trivially extended to arbitrary generalized
Bruhat cells by the Poisson isomorphism \eqref{eq-Fn-Fn} in Remark \ref{rk-Fn-Fn}. See also
Remark \ref{rk-Fn-Fn-2}.
\hfill $\diamond$
}
\end{remark}

\subsection{Homogeneous Poisson regular functions on $(\O^\bfu, \pi_n)$}\label{subsec-y-lam}
Let $(X, \pi)$ be a smooth affine Poisson variety, and let $(\O_X, \{\, , \,\})$ be the corresponding
Poisson algebra of regular functions on $X$. Recall that $y \in \O_X$ is
said to be {\it Poisson}  if the principal ideal of $\O_X$ generated by $y$ is a Poisson ideal.
 For such a Poisson
element $y$, define the {\it log-Hamiltonian vector field} $H_{\log (y)}$ on $X$ by
\begin{equation}\label{eq-log-Ha}
H_{\log(y)}(f) = \frac{1}{y} \{y, \; f\}, \hs f \in \O_X.
\end{equation}
If a complex algebraic torus $T$ acts on $(X, \pi)$ preserving the Poisson structure, let $T$ act on $\O_X$ by
\[
(t \cdot f)(x) = f(t \cdot x), \hs t \in T, \,\, x \in X,
\]
so that $\O_X$ becomes a $T$-Poisson algebra.
An element $y \in \O_X$ that is a $T$-weight vector will also be
said to be {\it $T$-homogeneous}.

\begin{lemma}\label{le-homo-poi} Let $(\CC[x_1, \ldots, x_n], \{ \, , \, \})$ be a polynomial $T$-Poisson algebra for
which  each $x_j$ is $T$-homogeneous. If $y \in \CC[x_1, \ldots, x_n]$ is $T$-homogeneous and Poisson, so is
every one of its prime factors.
\end{lemma}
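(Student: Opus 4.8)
The plan is to work in the unique factorization domain $R = \CC[x_1, \ldots, x_n]$ and to treat the two assertions---that each prime factor is $T$-homogeneous and that each is Poisson---essentially separately, since the first uses only the $T$-action and the second only the bracket. Write a prime factorization $y = c\, p_1^{m_1} \cdots p_k^{m_k}$ with $c \in \CC^\times$, the $p_i$ pairwise non-associate primes, and each $m_i \geq 1$. It then suffices to fix one prime factor $p = p_i$ and show it is simultaneously $T$-homogeneous and Poisson.

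For $T$-homogeneity I would use that $T$ is connected. Since $T$ acts by $\CC$-algebra automorphisms, each $t \in T$ carries the prime $p_i$ to another prime $t\cdot p_i$, and because $t \cdot y$ is a scalar multiple of $y$, unique factorization forces $t$ to permute the lines $\CC^\times p_1, \ldots, \CC^\times p_k$; geometrically $t$ permutes the irreducible components $V(p_1), \ldots, V(p_k)$ of the hypersurface $V(y)$. The stabilizer in $T$ of a given component $V(p_i)$ is a subgroup, and it is open: choosing a smooth point $x_0$ of $V(y)$ lying on $V(p_i)$ and on no other component, a neighborhood of $x_0$ in $V(y)$ lies in $V(p_i)$, so the orbit map $t \mapsto t\cdot x_0$ keeps $t\cdot x_0$ on $V(p_i)$ for $t$ near $e$, whence such $t$ fix $V(p_i)$. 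An open subgroup of the connected group $T$ is all of $T$, so $t \cdot p_i \in \CC^\times p_i$ for every $t$, i.e. $p_i$ is a $T$-weight vector. (Alternatively, one could argue purely algebraically by refining the $X(T)$-grading of $R$ to a single $\ZZ$-grading injective on the finitely many degrees occurring in $y$ and its factors, and invoking the standard fact that a factor of a homogeneous element of a graded domain is homogeneous.)

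For the Poisson property, fix $p = p_i$, let $m = m_i \geq 1$ be its multiplicity in $y$, and write $y = p^m z$ with $z \in R$ and $p \nmid z$. Since $\{\,\cdot\,, f\}$ is a derivation for each $f \in R$, the Leibniz rule gives
\[
\{y, \, f\} \;=\; p^m \{z, \, f\} + m\, p^{m-1} z\, \{p, \, f\}, \hs f \in R.
\]
As $y$ is Poisson, $y \mid \{y, f\}$, so the $p$-adic valuation obeys $v_p(\{y,f\}) \geq m$. The first term on the right has $v_p \geq m$, hence so does the second term $m\, p^{m-1} z\, \{p, f\}$. Here the hypothesis of characteristic zero is essential: $m \neq 0$ in $\CC$ and $p \nmid z$, so $v_p(m\, p^{m-1} z) = m-1$, forcing $v_p(\{p, f\}) \geq 1$, i.e. $p \mid \{p, f\}$. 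As $f$ was arbitrary, the principal ideal $(p)$ is a Poisson ideal, so $p$ is Poisson.

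I expect the only genuinely non-formal step to be the $T$-homogeneity of the factors: it rests on the connectedness of $T$ (equivalently, on the theory of semi-invariants), and this is the sole place where the presence of the torus and the hypothesis that each $x_j$ is $T$-homogeneous enter. The Poisson step, by contrast, is a short valuation computation requiring no homogeneity, whose only subtlety is the use of characteristic zero to guarantee $m \neq 0$.
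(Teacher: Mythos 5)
Your proof is correct, and its two halves compare differently with the paper. The Poisson half coincides with the paper's own argument: Lu--Mi also write $y = y_1^k y_2$ with $y_1 \nmid y_2$, expand $\{y_1^k y_2, f\}$ by the Leibniz rule, and deduce $y_1 \mid \{y_1, f\}$ from $y \mid \{y, f\}$; your $p$-adic valuation phrasing, including the observation that $m \neq 0$ in $\CC$, is the same computation. Where you genuinely diverge is the homogeneity of the prime factors. The paper's argument is the one you relegate to a parenthesis: it equips $X(T)$ with a total order compatible with addition (possible because $X(T) \cong \ZZ^r$ is torsion-free), uses that the minimal and maximal $T$-weights occurring in a product of nonzero polynomials are the sums of the extremal weights of the factors, and concludes from $\lam_{y,\min} = \lam_{y,\max}$ that $y_1^k$ and $y_2$, hence $y_1$, are weight vectors. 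Your main argument is instead geometric: connectedness of $T$, the permutation of the irreducible components $V(p_1), \ldots, V(p_k)$ of $V(y)$ forced by unique factorization, and openness of the stabilizer of a component --- for which, incidentally, you do not need a smooth point of $V(y)$: any point of $V(p_i)$ off $\bigcup_{j \neq i} V(p_j)$ works, since on the complement of the other components the sets $V(y)$ and $V(p_i)$ agree. Two small steps you elide are routine but worth naming: passing from ``the line $\CC p_i$ is $T$-stable'' to ``$p_i$ is a weight vector'' uses rationality of the action (guaranteed here because the $x_j$ are weight vectors, so $\CC[x_1,\ldots,x_n]$ is a sum of weight spaces), which forces the scalar cocycle to be an algebraic character. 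As for what each route buys: the paper's ordering argument is shorter, purely algebraic, and applies verbatim to any grading by a torsion-free abelian group; yours makes transparent exactly where connectedness of $T$ enters --- and it must enter, since the lemma fails for disconnected groups (e.g. $\ZZ/2$ acting on $\CC[x]$ by $x \mapsto -x$ with the zero bracket: $y = x^2 - 1$ is homogeneous and Poisson, but its prime factor $x-1$ is not homogeneous).
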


\begin{proof}
Let $y_1 \in \CC[x_1, \ldots, x_n]$ be a prime factor of $y$ and write
$y = y^k_1y_2$, where $k$ is a positive integer, $y_2 \in \CC[x_1, \ldots, x_n]$, and $y_1$ does not divide $y_2$. For any $f \in \CC[x_1, \ldots, x_n]$, it follows from
\[
y^k_1y_2 H_{\log(y)}(f) = \{y^k_1y_2, \, f\} = y^k_1\{y_2, \, f\} + ky^{k-1}_1y_2 \{y_1 , f\}
\]
that $y_1|\{y_1, f\}$. Thus $y_1$ is Poisson. Equip
the (algebraic) character group $X(T)$ of $T$ with
a total ordering $\preceq$
such that
\[
\lam_1 \preceq \lam_2 \;\; \mbox{and} \;\; \lam_3 \preceq \lam_4\;\; \Longrightarrow\;\;
\lam_1 + \lam_3 \preceq \lam_2 + \lam_4
\]
for any $\lam_j \in X(T)$, $j = 1, 2, 3, 4$.
For each non-zero $f \in \CC[x_1, \ldots, x_n]$, let $\lam_{f, {\rm min}}$ and $\lam_{f, {\rm max}}$ be the
respective minimal and maximal weights appearing in the decomposition of $f$ into the sum of
$T$-weight vectors. It follows from
\[
\lam_{y_1^k, {\rm min}} \lam_{y_2, {\rm min}} = \lam_{y, {\rm min}} = \lam_{y, {\rm max}} =
\lam_{y_1^k, {\rm max}}\lam_{y_2, {\rm max}}
\]
that both $y_1^k$ and $y_2$, and thus $y_1$, are $T$-weight vectors.
\end{proof}

Let again $\bfu = (s_{\al_1}, \ldots, s_{\al_n}) = (s_1, \ldots, s_n)$.
We will identify all the  regular functions on $(\O^\bfu, \pi_n)$ that
are $T$-homogeneous and Poisson,  and we will compute their log-Hamiltonian vector fields.
For $\al \in \Gamma$ and $c \in \CC$, set
\begin{equation}\label{eq-p-al-c}
p_\al(c) = u_\al(c) \bs_\al \in G.
\end{equation}
Then the parametrization $\phi^\bfu: \CC^n \to \O^\bfu$ can be written as
\begin{equation}\label{eq-phi-u-p}
\phi^\bfu(x_1, x_2, \cdots, x_n) = [p_{\al_1}(x_1), \; p_{\al_2}(x_2), \; \ldots, \; p_{\al_n}(x_n)], \hs
(x_1, \ldots, x_n) \in \CC^n.
\end{equation}

Recall from $\S$\ref{subsec-nota-intro} that $\calP^+ \subset X(T)$ is the set of all dominant weights
on $T$, and that for $\lam \in \calP^+$, $\Delta^\lam$ is the regular function on $G$ defined in \eqref{eq-Delta-lam}.


\begin{definition}\label{de-y-lam}
{\rm For $\lam \in \calP^+$, let $y^\lam$ be the regular function on $\O^\bfu$ given by
\begin{equation}\label{eq-y-lam-1}
y^\lam([p_{\al_1}(x_1), \; p_{\al_2}(x_2), \; \ldots, \; p_{\al_n}(x_n)])=
\Delta^\lam(p_{\al_1}(x_1) p_{\al_2}(x_2) \cdots p_{\al_n}(x_n)),
\end{equation}
where $(x_1, \ldots, x_n) \in \CC^n$.
\hfill $\diamond$
}
\end{definition}

\begin{lemma}\label{le-y-lam-weight}
For any $\lam \in \calP^+$, the regular function $y^\lam$ on $\O^\bfu$ is
$T$-homogeneous with weight $\lam - s_{1}s_{2}\cdots s_{{n}}(\lam)$.
\end{lemma}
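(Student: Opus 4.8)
The plan is to compute the $T$-weight of $y^\lam$ directly from its defining formula \eqref{eq-y-lam-1}, using the explicit description \eqref{eq-phi-ga-t} of the $T$-action on $\O^\bfu$ in Bott-Samelson coordinates. Recall that the $T$-action is
\[
t \cdot \phi^\bfu(x_1, \ldots, x_n) = \phi^\bfu(t^{\al_1}x_1, \; t^{s_1(\al_2)}x_2, \, \ldots, \, t^{s_1\cdots s_{n-1}(\al_n)}x_n),
\]
so that $(t \cdot y^\lam)(\phi^\bfu(x_1, \ldots, x_n)) = y^\lam(\phi^\bfu(t^{\al_1}x_1, \ldots, t^{s_1\cdots s_{n-1}(\al_n)}x_n))$. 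The task is thus to track how rescaling the coordinates $x_j$ affects the value $\Delta^\lam(p_{\al_1}(x_1) \cdots p_{\al_n}(x_n))$.

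First I would establish a transformation rule for the individual factors $p_{\al_j}(x_j) = u_{\al_j}(x_j)\bs_{\al_j}$ under rescaling by the relevant torus character. The key computation is how $u_{\al}(c)$ transforms under conjugation by $T$: since $u_\al(c) = \exp(c\,e_\al)$ and $e_\al \in \g_\al$, one has $t\, u_\al(c)\, t^{-1} = u_\al(t^\al c)$ for $t \in T$. I would then feed the torus elements $t^{s_1\cdots s_{j-1}(\al_j)}$ through the product $p_{\al_1}(x_1)\cdots p_{\al_n}(x_n)$ one factor at a time, using that each $\bs_{\al_j}$ conjugates torus characters by the simple reflection $s_{\al_j}$ (by definition $t^{s} = \bs^{-1} t \bs$, so conjugation by $\bs_j$ on the left sends a weight through $s_j$). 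The upshot should be that inserting the prescribed scalings telescopes: the product $p_{\al_1}(t^{\al_1}x_1)\cdots p_{\al_n}(t^{s_1\cdots s_{n-1}(\al_n)}x_n)$ equals $t\, p_{\al_1}(x_1)\cdots p_{\al_n}(x_n)\, \bs_{1}^{-1}\cdots\bs_{n}^{-1}\, t^{-1}\cdot(\text{correction})$, or more cleanly $t \cdot g \cdot (\bs_1 \cdots \bs_n)^{-1} t^{-1} (\bs_1\cdots\bs_n)$ where $g = p_{\al_1}(x_1)\cdots p_{\al_n}(x_n)$. I expect this telescoping to leave precisely a left factor $t$ and a right factor of the form $s_1\cdots s_n$ acting on $t$.

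Second, I would invoke the behavior of $\Delta^\lam$ under left and right translation by $T$. Since $\Delta^\lam$ is a highest-weight matrix coefficient of weight $\lam$, it satisfies $\Delta^\lam(t_1 g t_2) = t_1^\lam\, t_2^\lam\, \Delta^\lam(g)$ for $t_1, t_2 \in T$ (this follows from \eqref{eq-Delta-lam} and the fact that $\Delta^{\omega_\al}$ is the highest-weight coefficient, so it scales by $t^{\omega_\al}$ under both left and right $T$-translation). Combining this with the telescoped expression from the first step, the scalar factors collect into $t^\lam$ from the left translation and $t^{-s_1\cdots s_n(\lam)}$ from the right; hence $(t\cdot y^\lam) = t^{\lam - s_1 s_2 \cdots s_n(\lam)}\, y^\lam$, giving the claimed weight.

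The main obstacle will be bookkeeping the conjugations carefully: one must verify that pushing the torus scalings through the mixed product of unipotent factors $u_{\al_j}$ and Weyl representatives $\bs_{\al_j}$ really does telescope into a clean left-and-right $T$-translation of $g$, with the right translation being exactly $s_1\cdots s_n(\lam)$ rather than some partial product. I would organize this as an induction on $n$, checking the $n=1$ case (where the weight is $\lam - s_1(\lam)$) against the single-factor scaling $t^{\al_1}$, and then carrying the inductive hypothesis through the extra factor, using $s_1\cdots s_{j-1}(\al_j)$ as the correct intermediate weight at each step. A minor subtlety is that the representatives $\bs_\al$ are not canonical torus elements, so the conjugation identities $\bs^{-1} t \bs = t^s$ and the multiplicativity $\overline{w_1 w_2} = \overline{w_1}\,\overline{w_2}$ (valid for length-additive products, as recorded in $\S$\ref{subsec-nota-intro}) must be applied with attention to whether the intermediate products are reduced; but since we only use these identities for the torus-conjugation action, which factors through $W$, this causes no difficulty.
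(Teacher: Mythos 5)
Your proposal is correct and takes essentially the same route as the paper: the telescoping identity you describe is exactly the paper's equation \eqref{eq-t-px}, namely $tp_{\al_1}(x_1)\cdots p_{\al_n}(x_n) = p_{\al_1}(t^{\al_1}x_1)\cdots p_{\al_n}(t^{s_{1}\cdots s_{{n-1}}(\al_n)}x_n)\, t^{s_{1}s_{2}\cdots s_{{n}}}$, after which both arguments apply the bi-$T$-equivariance $\Delta^\lam(t_1gt_2)=t_1^\lam t_2^\lam\Delta^\lam(g)$ to collect the factor $t^{\lam - s_1s_2\cdots s_n(\lam)}$. The only difference is one of detail: the paper asserts the commutation identity without proof, whereas you supply the routine induction (via $tu_\al(c)t^{-1}=u_\al(t^\al c)$ and $\bs_\al^{-1}t\bs_\al = t^{s_\al}$) that verifies it, correctly noting that non-reducedness of $s_1\cdots s_n$ is harmless since the conjugation action on $T$ factors through $W$.
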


\begin{proof} Let $x = (x_1, \ldots, x_n) \in \CC^n$, $q(x) = [p_{\al_1}(x_1), \; \ldots, \; p_{\al_n}(x_n)]
\in \O^\bfu$, and $t \in T$. It
follows from
\begin{equation}\label{eq-t-px}
tp_{\al_1}(x_1)\cdots p_{\al_n}(x_n) = p_{\al_1}(t^{\al_1}x_1)
\cdots p_{\al_n}(t^{s_{1}\cdots s_{{n-1}}(\al_n)}x_n) t^{s_{1}s_{2}\cdots s_{{n}}} \in G
\end{equation}
and \eqref{eq-phi-ga-t} that
\begin{align*}
(t\cdot y^\lam)(q(x)) &= \Delta^\lam(p_{\al_1}(t^{\al_1}x_1)
\cdots p_{\al_n}(t^{s_{1}\cdots s_{{n-1}}(\al_n)}x_n))\\
& = \Delta^\lam\left(t p_{\al_1}(x_1)\cdots p_{\al_n}(x_n) (t^{s_{1}s_{2}\cdots s_{{n}}})^{-1}\right)\\
&= t^{\lam - s_1s_2 \cdots s_n(\lam)} y^\lam(q(x)).
\end{align*}
Thus $y^\lam$ is
$T$-homogeneous with weight $\lam - s_{1}s_{2}\cdots s_{{n}}(\lam)$.
\end{proof}

Let $\alpha \in \Gamma$.
We make some remarks on the function $y^{\omega_\al}$ on $\O^\bfu$
that will be used in the next Proposition \ref{pr-y-lam} and in $\S$\ref{subsec-int-O}.
For the given $\bfu = (s_1, \ldots, s_n) = (s_{\al_1}, \ldots, s_{\al_n})$,
if $\alpha \in \{\al_1, \ldots, \al_n\}$, let
\[
i_\al = {\rm min}\{1 \leq k \leq n: \al_k = \al\} \hs \mbox{and} \hs
j_\al= {\rm max}\{1 \leq k \leq n: \al_k = \al\}.
\]
We will regard $y^{\omega_\al}$ as an element in $\CC[x_1, \ldots, x_n]$ via the parametrization $\phi^\bfu$ of $\O^\bfu$.

\begin{lemma}\label{le-y-xx}
Let $\al \in \Gamma$. Then $y^{\omega_\al} = 1$ if $\al \notin \{\al_1, \ldots, \al_n\}$. Otherwise,
there exist $A_\al, B_\al, C_\al, D_\al \in \CC[x_{i_\al+1}, \ldots, x_{j_\al-1}]$ and
$A_\al \neq 0$, such that
\[
y^{\omega_\al}(x) = A_\al x_{i_\al} x_{j_\al} + B_\al x_{i_\al} + C_\al x_{j_\al} + D_\al
\in \CC[x_{i_\al}, \ldots, x_{j_\al}].
\]
\end{lemma}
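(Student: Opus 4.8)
The plan is to realize $y^{\omega_\al}$ as a matrix coefficient in the fundamental representation $V=V_{\omega_\al}$ of highest weight $\omega_\al$ and to exploit the $\sl_2$-theory attached to the root $\al$. Let $v^+$ be a highest weight vector of $V$ and let $\gamma^+\in V^*$ be the functional dual to $v^+$ that vanishes on every other weight space, so that by the matrix-coefficient description of $\Delta^{\omega_\al}$ recalled in $\S$\ref{subsec-nota-intro},
\[
y^{\omega_\al}(x)=\la \gamma^+,\; p_{\al_1}(x_1)p_{\al_2}(x_2)\cdots p_{\al_n}(x_n)\,v^+\ra .
\]
Set $f=e_{-\al}v^+$, a vector of weight $\omega_\al-\al$. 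The two facts I would use repeatedly are: (i) since $\omega_\al(h_\beta)=\delta_{\al\beta}$, the vector $v^+$ spans a trivial $\sl_2$-string for every simple root $\beta\neq\al$, whence $\bs_\beta v^+=v^+$ and $p_\beta(c)v^+=v^+$ for all $c$; and (ii) for $\beta=\al$ the string through $v^+$ is $\{v^+,f\}$, so that $\bs_\al v^+=c_1 f$ and $p_\al(c)v^+=c_1(c\,v^++f)$ for a nonzero constant $c_1$, while $e_\al^2$ maps every vector into a space with zero $v^+$-component.

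First I would cut down the range of variables. Applying (i) from the right gives $p_{\al_{j_\al+1}}(x_{j_\al+1})\cdots p_{\al_n}(x_n)\,v^+=v^+$, so $y^{\omega_\al}$ does not involve $x_k$ for $k>j_\al$; in particular, if $\al\notin\{\al_1,\dots,\al_n\}$ this already yields $y^{\omega_\al}=\la\gamma^+,v^+\ra=1$. Dually, since $\bs_\beta$ ($\beta\neq\al$) fixes $v^+$ and preserves the weight-$\omega_\al$ line while moving every other weight off it, left multiplication by $p_\beta(x_k)$ leaves the $\gamma^+$-component of any vector unchanged and independent of $x_k$; iterating removes $x_1,\dots,x_{i_\al-1}$. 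Thus $y^{\omega_\al}$ depends only on $x_{i_\al},\dots,x_{j_\al}$, and after relabelling I may assume $i_\al=1$ and $j_\al=n$, i.e.\ $\al_1=\al_n=\al$.

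Next I would extract the dependence on the two extreme variables using (ii). Peeling off the rightmost factor, $p_\al(x_n)v^+=c_1(x_n v^++f)$, so $y^{\omega_\al}=c_1 x_n\,\Phi+c_1\Psi$ with $\Phi=\la\gamma^+,Pv^+\ra$ and $\Psi=\la\gamma^+,Pf\ra$, where $P=p_{\al_1}(x_1)\cdots p_{\al_{n-1}}(x_{n-1})$; both $\Phi,\Psi$ lie in $\CC[x_1,\dots,x_{n-1}]$, which is the affine-linearity in $x_{j_\al}$. Peeling off the leftmost factor and expanding $u_\al(x_1)=\exp(x_1 e_\al)$, only the degree-$0$ and degree-$1$ terms survive, because $e_\al^2$ contributes nothing to the $\gamma^+$-pairing; hence $\Phi$ and $\Psi$ are affine-linear in $x_{i_\al}$ with coefficients in $\CC[x_2,\dots,x_{n-1}]$. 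Combining the two expansions gives $y^{\omega_\al}=A_\al x_{i_\al}x_{j_\al}+B_\al x_{i_\al}+C_\al x_{j_\al}+D_\al$ with all coefficients in $\CC[x_{i_\al+1},\dots,x_{j_\al-1}]$.

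The step I expect to be the real obstacle is the non-vanishing of the leading coefficient $A_\al$, and here I would set up a recursion on the interior sequence $\bfu'=(s_{\al_2},\dots,s_{\al_{n-1}})$. Tracking the coefficient of $x_{i_\al}x_{j_\al}$ through the two peeling steps, and using once more that $\bs_\al$ carries \emph{only} the weight-$\omega_\al$ line to the weight-$(\omega_\al-\al)$ line, I expect to obtain $A_\al=(\text{nonzero constant})\cdot y^{\omega_\al}_{\bfu'}$, where $y^{\omega_\al}_{\bfu'}$ is the analogous function for $\bfu'$ (understood to be $1$ when $\al$ is absent from $\bfu'$). Since $y^{\omega_\al}_{\bfu'}$ is then either $1$ or, by the inductive hypothesis, a nonzero polynomial of the same shape, it is in all cases a nonzero polynomial, giving $A_\al\neq0$ precisely when $\al$ occurs at least twice, i.e.\ when $i_\al<j_\al$. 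The induction runs on $j_\al-i_\al$, which strictly drops upon passing to the interior, with base cases $\al$ absent (where $y^{\omega_\al}=1$) and $\al$ occurring exactly once (where the same peeling arguments give $y^{\omega_\al}=c_1 x_{i_\al}$, a nonzero polynomial).
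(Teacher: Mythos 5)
Your proposal is correct and takes essentially the same route as the paper: both realize $y^{\omega_\al}$ as the highest-weight matrix coefficient of $V_{\omega_\al}$ and use $\sl_2$-theory for the root $\al$ (the two-dimensional $\al$-string through the highest weight vector, triviality of the action of $p_\beta(c)$ for $\beta \neq \al$, and the vanishing of the $v^+$-component of $e_\al^2$) to peel off the extreme factors, which is exactly what the paper's identities \eqref{eq-M-al-1} and \eqref{eq-M-al-3} encode. The only difference is expository: you make explicit the recursion $A_\al = (\mbox{nonzero constant})\cdot y^{\omega_\al}_{\bfu'}$ and the induction on $j_\al - i_\al$ (including the degenerate single-occurrence case), which the paper compresses into the closing sentence that the lemma ``follows by applying \eqref{eq-M-al-1} and \eqref{eq-M-al-3}.''
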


\begin{proof}
Let $V_{\omega_\al}$ be the irreducible representation of $G$ with highest
weight
$\omega_\al$ and let $v_0 \in V_{\omega_\al}$ be a highest weight vector. Let ${\mathcal P}^\prime(\omega_\al)$ be
the set of all weights in $V_{\omega_\al}$ that are not equal to
$\omega_\al$, and for each $\mu \in {\mathcal P}^\prime(\omega_\al)$, let $V_{\omega_\al}(\mu) \subset V_{\omega_\al}$
be the corresponding weight space. Then the
function $\Delta^{\omega_\alpha}$ on $G$ is given by
\begin{equation}\label{eq-Delta-lam-al}
gv_0 -\Delta^{\omega_\al}(g)v_0 \in \sum_{\mu \in {\mathcal P}^\prime_{\omega_\al}} V_{\omega_\al}(\mu), \hs g \in G.
\end{equation}
Let ${\mathcal P}^{\prime\prime}(\omega_\al) = \{\mu \in {\mathcal P}^\prime(\omega_\al): \mu \neq \omega_\al -\al\}$.
Then one has the direct sum decomposition
\[
V_{\omega_\al} =\Cset v_0 + \Cset \overline{s_\alpha} v_0 + \sum_{\mu_\in {\mathcal P}^{\prime\prime}(\omega_\al)} V_{\omega_\al}(\mu).
\]
Let $SL(2, \Cset)$ act on $V_{\omega_\al}$ via the  group homomorphism $SL(2, \Cset) \to G$ determined by
the choices of $e_\al \in \g_\al$ and $e_{-\al} \in \g_{-\al}$ in $\S$\ref{subsec-nota-intro}.
Then  both $\Cset v_0 + \Cset \overline{s_\alpha} v_0$ and $\sum_{\mu_\in {\mathcal P}^{\prime\prime}(\omega_\al)} V_{\omega_\al}(\mu)$ are $SL(2, \Cset)$-invariant, and the resulting representation
 of $SL(2, \CC)$ on $\Cset v_0 + \Cset \overline{s_\alpha} v_0$ is
isomorphic to the standard one on $\Cset^2$. In particular, for any $c \in \Cset$, one has
\[
u_\alpha(c) \overline{s_\alpha} v_0 = c v_0 + \overline{s_\alpha} v_0,
\]
from which it follows (see also \cite[Lemma 7]{Knutson}) that for any $c \in \CC$,
\begin{equation}\label{eq-M-al-1}
\Delta^{\omega_\alpha}(g u_\alpha(c) \overline{s_\alpha}) = c \Delta^{\omega_\alpha}(g) + \Delta^{\omega_\alpha}(g\overline{s_\alpha}), \hs
\Delta^{\omega_\alpha}(u_\alpha(c) \overline{s_\alpha} g) = c \Delta^{\omega_\alpha}(g) + \Delta^{\omega_\alpha}(\overline{s_\alpha} g).
\end{equation}
Similarly, one shows (see \cite[$\S$2.3]{FZ:total}) that if $\alpha' \in \Gamma$ and $\alpha^\prime \neq \alpha$, then
\begin{equation}\label{eq-M-al-3}
\Delta^{\omega_\alpha}(g u_{\alpha'}(c) \overline{{s}_{\alpha^\prime}}) =
\Delta^{\omega_\alpha}(u_{\alpha'}(c) \overline{{s}_{\alpha^\prime}}g)=\Delta^{\omega_\alpha}(g), \hs g \in G, \, c \in \CC.
\end{equation}
Lemma \ref{le-y-xx} now follows by applying \eqref{eq-M-al-1} and \eqref{eq-M-al-3}.
\end{proof}

Recall from \eqref{eq-Ou-e} the Zariski open subset $\O^\bfu_e$ of $\O^\bfu$ defined by
\begin{equation}\label{eq-Ofue}
\O^\bfu_e = \{[p_{\al_1}(x_1), \; p_{\al_2}(x_2), \; \ldots, \; p_{\al_n}(x_n)]:\;
p_{\al_1}(x_1) p_{\al_2}(x_2) \cdots p_{\al_n}(x_n)\in B_-B\}.
\end{equation}
Define $\tau: \O^\bfu_e \to T$ by (recall notation in \eqref{eq-ggg})
\begin{equation}\label{eq-tau}
\tau(\phi^\bfu(x))=
[p_{\al_1}(x_1) p_{\al_2}(x_2) \cdots p_{\al_n}(x_n)]_0, \hs \phi^\bfu(x) \in \O^\bfu_e.
\end{equation}
Note that, by definition, for each $\lam \in \calP^+$, one has
\begin{equation}\label{eq-y-lam-Oe}
y^\lam(\phi^\bfu(x))= (\tau(\phi^\bfu(x)))^\lam, \hs \mbox{for} \;\; \phi^\bfu(x) \in \O^\bfu_e.
\end{equation}
Let $\sigma: \t \to {\mathfrak{X}}^1(\Zu)$ be the action of $\t$ on
$\Zu$ induced by the $T$-action in \eqref{eq-T-Fn}, i.e.,
\begin{equation}\label{eq-sigma-zeta}
\sigma(\zeta) (q) = \frac{d}{ds}|_{s=0} \exp(s\zeta) \cdot q, \hs \zeta \in \t, \; q \in \Zu.
\end{equation}
Recall the map $\#: \t^* \to \t, \lam \mapsto \lam^\#,$ given in
\eqref{eq-sharp}.

\begin{proposition}\label{pr-y-lam}
The set $\{cy^\lam: c \in \CC, \, \lam \in \calP^+\}$ is the set of all regular functions on $\O^\bfu$
that are $T$-homogeneous and Poisson with respect to $\pi_n$,  and for $\lam \in \calP^+$,
\begin{equation}\label{eq-X-y-lam}
H_{\log(y^\lam)} = -\sigma(\lam^\# + s_{1} s_{2} \cdots s_{n}(\lam^\#)).
\end{equation}
\end{proposition}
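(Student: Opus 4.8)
The plan is to set $w=s_1s_2\cdots s_n$ and, for $\lam\in\calP^+$, $\zeta_\lam=\lam^\#+w(\lam^\#)\in\t$, and to prove in turn: (I) the identity \eqref{eq-X-y-lam}; (II) that every $cy^\lam$ is $T$-homogeneous and Poisson; and (III) the converse. The key structural input is that in Bott-Samelson coordinates $\sigma(\zeta)$ is diagonal: by \eqref{eq-t-xj}, $\sigma(\zeta)(x_j)=(s_1\cdots s_{j-1}(\al_j),\,\zeta)\,x_j$ for all $\zeta\in\t$ and all $j$, so $\sigma(\zeta)$ is an algebraic vector field on $\O^\bfu\cong\CC^n$. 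Using $\Delta^\lam=\prod_\al(\Delta^{\omega_\al})^{n_\al}$ from \eqref{eq-Delta-lam} one gets $y^\lam=\prod_\al(y^{\omega_\al})^{n_\al}$; since $H_{\log(fg)}=H_{\log f}+H_{\log g}$ for Poisson $f,g$, and since $\zeta_\lam$ and $\sigma$ are additive in $\lam$, it suffices to prove \eqref{eq-X-y-lam} for $\lam=\omega_\al$. If $\al\notin\{\al_1,\dots,\al_n\}$ then $y^{\omega_\al}=1$ by Lemma \ref{le-y-xx} and each $s_i$ fixes $\omega_\al$, so $\zeta_{\omega_\al}=2\omega_\al^\#$ while $\sigma(2\omega_\al^\#)(x_j)=2\la s_1\cdots s_{j-1}(\al_j),\,\omega_\al\ra x_j=2\la\al_j,\,\omega_\al\ra x_j=0$; thus both sides vanish and I may assume $\al$ occurs in $\bfu$.

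For $\lam=\omega_\al$, diagonality of $\sigma$ reduces \eqref{eq-X-y-lam} to the family of \emph{log-canonical} identities
\[
\{y^{\omega_\al},\,x_k\}_{(\sG,\bfu)}=-\bigl(s_1\cdots s_{k-1}(\al_k),\,\zeta_{\omega_\al}\bigr)\,x_k\,y^{\omega_\al},\qquad k=1,\dots,n.
\]
By Lemma \ref{le-y-xx} one has $y^{\omega_\al}\in\CC[x_{i_\al},\dots,x_{j_\al}]$, affine-linear in each of $x_{i_\al}$ and $x_{j_\al}$, and by \eqref{eq-M-al-3} it agrees with the analogous function for the subword $\bfu_{[i_\al,\,j_\al]}$. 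Combined with the Poisson subchart embeddings of Remark \ref{rk-T-alg}, this lets me compute each bracket by realizing $x_k$ as an endpoint coordinate of a subword and applying the derivation formulas \eqref{eq-ite-1}--\eqref{eq-ite-2}: these produce precisely the asserted log-canonical term (the coefficient emerging as the weight pairing against $\zeta_{\omega_\al}$), plus a remainder $\delta(y^{\omega_\al})$ or $\delta'(y^{\omega_\al})$. The heart of the proof---and the step I expect to be the main obstacle---is showing that these remainders vanish, i.e. that $y^{\omega_\al}$ is \emph{exactly} log-canonical against every coordinate; this is where the rigid two-term structure \eqref{eq-M-al-1} of $\Delta^{\omega_\al}$ (equivalently the precise shape in Lemma \ref{le-y-xx}) must be used, and where the bookkeeping of the subword weight gradings (which are not $T$-equivariant, cf. Remark \ref{rk-T-alg}) is delicate. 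As an independent check one can restrict to the dense open leaf $\O^\bfu_e$, where $y^{\omega_\al}=\tau^{\omega_\al}$ by \eqref{eq-y-lam-Oe}, pass to the log-canonical $(e)$-chart of Lemma \ref{le-e-u} in which $\pi_n$ is monomial, verify the identity there by direct differentiation, and extend to all of $\O^\bfu$ by density.

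Granting \eqref{eq-X-y-lam}, assertion (II) is immediate: for any regular $f$ one has $\{y^\lam,f\}=y^\lam H_{\log(y^\lam)}(f)=-y^\lam\,\sigma(\zeta_\lam)(f)\in(y^\lam)$, so $(y^\lam)$ is a Poisson ideal; hence every $cy^\lam$ is Poisson, and it is $T$-homogeneous by Lemma \ref{le-y-lam-weight}.

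For the converse (III), let $y$ be $T$-homogeneous and Poisson; by Lemma \ref{le-homo-poi} I may assume $y$ is irreducible. The zero locus $\{y=0\}$ is $T$-invariant and is a union of symplectic leaves, while $\O^\bfu_e$ is a single $T$-leaf by \eqref{eq-Ou-e}; hence $\{y=0\}\cap\O^\bfu_e$ is either empty or all of $\O^\bfu_e$, and the latter is impossible for a proper divisor, so $y$ is nonvanishing on $\O^\bfu_e$. Since $\O^\bfu\cong\CC^n$ and $\O^\bfu_e=\{\,x:\prod_\al y^{\omega_\al}(x)\neq0\,\}$ by \eqref{eq-Ofue} and \eqref{eq-y-lam-Oe}, the coordinate ring of $\O^\bfu_e$ is the localization $\CC[x_1,\dots,x_n][(\prod_\al y^{\omega_\al})^{-1}]$, so every prime factor of the polynomial $y$ divides some $y^{\omega_\al}$. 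It remains to see that each $y^{\omega_\al}$ (for $\al$ occurring in $\bfu$) is itself prime and that distinct $y^{\omega_\al}$ are non-associate; then $y=c\prod_\al(y^{\omega_\al})^{m_\al}=cy^\lam$ with $\lam=\sum_\al m_\al\omega_\al\in\calP^+$, as desired. For primality I would argue via the $T$-leaf decomposition \eqref{eq-Fn-T-leaves}: the reduced divisor of $y^{\omega_\al}$ is a union of closures $\overline{\O^\bfu_w}$, and $y^{\omega_\al}$ vanishes on $\O^\bfu_w$ exactly when $\Delta^{\omega_\al}|_{B_-wB}\equiv0$, i.e. (by the highest-weight description of $\Delta^{\omega_\al}$) exactly when $w(\omega_\al)\neq\omega_\al$; using the $T$-leaf dimension count of \cite{LM:flags} one checks that the unique codimension-one leaf with this property is $\O^\bfu_{s_\al}$, and Lemma \ref{le-y-xx} (degree one in $x_{j_\al}$) shows this divisor occurs with multiplicity one, so $y^{\omega_\al}$ is, up to a constant, the prime defining $\overline{\O^\bfu_{s_\al}}$. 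This identification of the prime divisors is the remaining technical point of the converse.
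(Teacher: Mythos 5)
Your overall architecture is sound, and your Parts (II) and (III) are essentially the paper's own argument: a prime factor of a $T$-homogeneous Poisson element cuts out a $T$-invariant Poisson divisor, which cannot meet the open $T$-leaf $\O^\bfu_e$, hence is an irreducible component of $\O^\bfu\setminus\O^\bfu_e$, i.e.\ some $\overline{\O^\bfu_{s_{\al_i}}}$; the paper identifies this component as the zero set of $y^{\omega_{\al_i}}$ directly from \cite[Proposition 2.4]{FZ:total} (rather than via your leaf-dimension count, which leans on the same facts from \cite{LM:flags}), and then, exactly as you do, uses the affine-linear shape in Lemma \ref{le-y-xx} to conclude $y^{\omega_{\al_i}}$ is prime.

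The genuine gap is in Part (I), and you have correctly located it but not closed it. Your primary route cannot work as stated: Lemma \ref{le-iterated-0} (i.e.\ \eqref{eq-ite-1}--\eqref{eq-ite-2}) only controls brackets of a word's two \emph{endpoint} coordinates with functions of the remaining ones, and only modulo the unspecified derivations $\delta,\delta'$; for a coordinate $x_k$ interior to $[i_\al,j_\al]$ it does not apply at all (no subword has $x_k$ as an endpoint while containing the support of $y^{\omega_\al}$), and the natural substitute, Corollary \ref{co-y-f}, is itself a consequence of the proposition, hence circular. Moreover, exact log-canonicity is genuinely special to the full-interval functions $y^\lam=y^\lam_{[1,n]}$ and is precisely the content to be proved: in Example \ref{ex-ss-0} with $n\ge 3$ one has $\{y^{\omega}_{[2,3]},x_1\}=\{x_2x_3-1,\,x_1\}=x_3$, a nonvanishing remainder, so no general structural argument will make the remainders disappear. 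Your fallback ``independent check'' is in fact the paper's actual proof, but as written it omits its substance: before one can ``verify the identity by direct differentiation'' in the $(e)$-chart, one must know what $y^\lam$ \emph{is} there. The paper proves, by induction using the $SL(2)$ identity \eqref{eq-al-al}, the formula \eqref{eq-tn} for $\tau$, hence via \eqref{eq-y-lam-Oe} the monomial expression
\[
y^\lam \;=\; \prod_{i=1}^n \varepsilon_i^{-(s_{i+1}\cdots s_n(\lam),\; h_{\al_i})}
\]
on $\O^{(e)}\cap\O^\bfu$ (equation \eqref{eq-y-ep}), and then needs the root-system identity $\la s_js_{j+1}\cdots s_n(\lam)+s_{j+1}\cdots s_n(\lam),\,\al_j\ra=0$ to convert the log-canonical coefficients produced by Lemma \ref{le-e-u} into $\la \lam+s_1\cdots s_n(\lam),\,\al_j\ra$, i.e.\ into $-\sigma(\lam^\#+s_1\cdots s_n(\lam^\#))$. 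Neither computation appears in your proposal, so both of your routes for (I) remain incomplete; carrying out these two steps (and then extending from the dense open chart by density, as you indicate) would complete the proof along the paper's lines.
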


\begin{proof} Recall from \eqref{eq-phi-e} the coordinate chart $\phi^{(e)}: \CC^n \to \O^{(e)}$, and note that
\[
\O^{(e)} \cap \O^\bfu = \{[u_{-\alpha_1}(\varepsilon_1), \, u_{-\alpha_2}(\varepsilon_2), \, \ldots, \,
u_{-\alpha_n}(\varepsilon_n)]: \varepsilon_i \neq 0,\, i = 1, \ldots,n\} \subset \O^\bfu_e.
\]
For $\alpha \in \Gamma$, let $\al^\vee:  \CC^\times \to T$ be the
co-character of $T$ corresponding to $h_\al \in \t$, so that
\[
(\al^\vee(c))^{\mu} = c^{(\mu, \,h_\al)}, \hs \forall \; c \in \CC^\times, \, \,\mu \in X(T),
\]
where recall that $h_\al \in [\g_\al, \g_{-\al}]$ is such that
$\al(h_\al) = 2$.
We first prove that
\begin{equation}\label{eq-tn}
\tau(\phi^{(e)}(\varepsilon)) = \prod_{i=1}^n (\al_i^\vee(\varepsilon_i^{-1}))^{s_{{i+1}}s_{i+2} \cdots s_{n}}, \hs
\varepsilon =(\varepsilon_1, \ldots, \varepsilon_n)\in (\CC^\times)^n.
\end{equation}
To this end, let $x =(x_1, \ldots, x_n)\in \CC^n$ be such that $\phi^{(e)}(\varepsilon) = \phi^\bfu(x) \in \O^{(e)} \cap \O^\bfu$.
Then for each $j = 1, 2, \ldots, n$, there exist $t_j \in T$ and $u_j \in N$ such that
\[
u_{-\alpha_1}(\varepsilon_1) \cdots u_{-\alpha_{j-1}}(\varepsilon_{j-1})
u_{-\alpha_j}(\varepsilon_j) = p_{\al_1}(x_1) \cdots p_{\al_{j-1}}(x_{j-1}) p_{\al_j}(x_j) t_ju_j,
\]
and $\tau(\phi^{(e)}(\varepsilon)) = t_n^{-1} \in T$ by definition. A calculation in $SL(2, \CC)$ shows that
\begin{equation}\label{eq-al-al}
u_{-\al}(c) = u_\al(c^{-1}) \overline{s_\al} \al^\vee(c) u_\al(c^{-1}), \hs
\forall \; \alpha \in \Gamma,  \; c \in \CC^\times.
\end{equation}
It follows that $t_1 = \al_1^\vee(\varepsilon_1)$, and
\[
t_j = (t_{j-1})^{s_{j}} \al_j^\vee(\varepsilon_j), \hs j = 2, \ldots, n.
\]
Thus \eqref{eq-tn} holds. It now follows from \eqref{eq-y-lam-Oe} and \eqref{eq-tn} that
on $\O^{(e)} \cap \O^\bfu \subset \O^{(e)}$ and in the coordinates $(\varepsilon_1, \ldots, \varepsilon_n)$ on $\O^{(e)}$,
one has
\begin{equation}\label{eq-y-ep}
y^\lam = \prod_{i=1}^n \varepsilon_i^{-(s_{{i+1}}s_{i+2}\,\cdots \,  s_{n}(\lam), \; h_{\al_i})}.
\end{equation}
To prove \eqref{eq-X-y-lam}, it is enough to show that it holds on the Zariski open subset $\O^{(e)} \cap \O^\bfu$
of $\O^\bfu$.
It is easy to see that the coordinate function $\varepsilon_j$ on $\O^{(e)}$ satisfies
\[
\sigma(\zeta) (\varepsilon_j) = -\al_j(\zeta) \varepsilon_j, \hs \;\; \zeta \in \t, \;\ j = 1, 2, \ldots, n.
\]
It is thus enough to prove that, for any $\lam \in \calP^+$,
\begin{equation}\label{eq-y-xi}
\{y^\lam, \; \varepsilon_j\} = \la \lam + s_{1} \cdots s_{{n-1}} s_{n}(\lam), \; \al_j\ra \,y^\lam
\, \varepsilon_j, \hs
j = 1, 2, \ldots, n.
\end{equation}
For $i = 1, 2, \ldots, n$, let $m_i = (s_{{i+1}}\,\cdots \,  s_{{n-1}} s_{n}(\lam), \; h_{\al_i})$.
Let $j = 1, 2, \ldots, n$. By \eqref{eq-y-ep} and Lemma \ref{le-e-u}, one has
\[
\{y^\lam, \; \varepsilon_j\} = \la \beta_j, \; \al_j\ra \,y^\lam\varepsilon_j,
\]
where $\beta_j = -\sum_{i=1}^{j-1} m_i\al_i + \sum_{i=j+1}^n m_i\al_i$. It is easy to see that
\[
\beta_j = \lam + s_1s_2 \cdots s_n(\lam) - s_js_{j+1} \cdots s_n(\lam) - s_{j+1} \cdots s_n(\lam),
\]
and as $\la s_js_{j+1} \cdots s_n(\lam) +s_{j+1} \cdots s_n(\lam), \; \al_j\ra = 0$,
\eqref{eq-y-xi} holds. It follows that
$y^\lam$ is Poisson with respect to $\pi_n$,
and that its log-Hamiltonian vector field on $\O^\bfu$ is given as in \eqref{eq-X-y-lam}.

Assume now that $y$ is any non-zero regular function on $\O^\bfu$ that is
$T$-homogeneous and Poisson with respect to $\pi_n$.
It remains to show that
$y = cy^\lam$ for some $c \in \CC^\times$ and $\lam \in \calP^+$. Let $y_1$ be a prime factor of $y$.
By Lemma \ref{le-homo-poi}, $y_1$ is $T$-homogeneous and Poisson. Let $X_1 = \{q \in \O^\bfu: y_1(q)=0\}$.
Then $X_1$ is an irreducible $T$-invariant Poisson divisor of $(\O^\bfu, \pi_n)$.
On the other hand, consider the $T$-leaf decomposition (see $\S$\ref{subsec-gBC})
\[
\O^\bfu = \bigsqcup_{w \in W} \O^\bfu_w
\]
of $(\O^\bfu, \pi_n)$, where $\O^\bfu_w$ is defined in \eqref{eq-O-bfu-w}, and note that
\[
\O^\bfu\backslash \O^\bfu_e = \bigcup_{\alpha \in \{\al_1, \al_2, \ldots, \al_n\}}
\overline{\O^\bfu_{s_\al}}
\]
is the decomposition of the divisor $\O^\bfu\backslash \O^\bfu_e$ into its irreducible components,
where $\overline{\O^\bfu_{s_\al}}$ is the Zariski closure of $\O^\bfu_{s_\al}$ in $\O^\bfu$.
If $X_1 \cap \O^\bfu_e \neq \emptyset$, as $\O^\bfu_e$ is a single $T$-leaf,
$X_1$ would contain an open subset of
$\O^\bfu$, which is not possible. Thus $X_1$ is an irreducible component of the divisor
$\O^\bfu \backslash \O^\bfu_e$, and hence $X_1 = \overline{\O^\bfu_{s_{\al_i}}}$ for some $i = 1,2,\ldots, n$.
On the other hand, by \cite[Proposition 2.4]{FZ:total}, the Zariski closure
$\overline{B_-s_{\alpha_i} B}$ of $B_-s_{\alpha_i} B$ on $G$ is precisely the zero set of $\Delta^{\omega_{\alpha_i}}$.
It follows that $\overline{\O^\bfu_{s_{\al_i}}}$ is the zero set of the function
$y^{\omega_{\al_i}}$. As $\overline{\O^\bfu_{s_{\al_i}}}$ is irreducible, $y^{\omega_{\al_i}}$
can not have more than one prime factor. By Lemma \ref{le-y-xx}, there exist $1 \leq j \leq n$, $E, F \in
\CC[x_1, \ldots, x_{j-1}]$, $E \neq 0$, such that $y^{\omega_{\alpha_i}} = x_j E + F$, which implies that
$y^{\omega_{\al_i}}$ must be prime itself.   It follows that
$y_1 = cy^{\omega_{\al_i}}$ for some $c \in \CC^\times$. This finishes the proof of
Proposition \ref{pr-y-lam}.
\end{proof}

\begin{remark}\label{rk-y-lam}
{\rm
By Proposition \ref{pr-y-lam} and its proof, the regular functions on $\O^\bfu$ that
are $T$-homogeneous, Poisson with respect to $\pi_n$ and prime, are,  up to scalar multiples, precisely the elements $y^{\omega_\al}$
with $\al \in \{\al_1, \al_2, \ldots, \al_n\}$. Moreover, one has
\begin{equation}\label{eq-O-e-y}
\calO^\bfu_e = \{q \in \calO^\bfu: \; y^{\omega_\alpha}(q) \neq 0 \; \forall \; \alpha \in \{\alpha_1, \ldots, \alpha_n\}\}.
\end{equation}
In particular, $\O^\bfu_e$ is an affine variety with an embedding
\begin{equation}\label{eq-Oe-embedding}
\O^\bfu_e \lrw \CC^{n+1}, \;\;\; q \longmapsto ((\phi^\bfu)^{-1}(q), \; 1/\varphi(q)),
\end{equation}
where $\varphi = \prod_{\alpha \in \{\al_1, \ldots, \al_n\}} y^{\omega_\al}$.
The restriction of $\pi_n$ to $\O^\bfu_e$ then makes $\O^\bfu_e$ into a smooth affine Poisson variety.
\hfill $\diamond$
}
\end{remark}

\begin{corollary}\label{co-y-f}
For any $\lam \in \calP^+$ and for any regular function $f$ on $\O^\bfu$ that is $T$-homogeneous with weight $\lam_f$,
one has
\[
\{y^\lam, \;f\} = -\la \lam + s_1s_2 \cdots s_n(\lam), \; \lam_f\ra\, y^\lam f.
\]
In particular, for $\lam, \lam' \in \calP^+$, one has
\begin{equation}\label{eq-yy-bracket}
\{y^\lam, \;y^{\lam'}\} =
(\la \lam, \; s_1s_2 \cdots s_n(\lam')\ra - \la \lam', \; s_1s_2 \cdots s_n(\lam)\ra)
\; y^\lam y^{\lam'}.
\end{equation}
\end{corollary}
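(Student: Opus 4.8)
The plan is to deduce both identities directly from Proposition \ref{pr-y-lam}, which computes the log-Hamiltonian vector field of $y^\lam$, so that the only work remaining is to unwind the definition of $H_{\log(y^\lam)}$ and to perform two short computations using the $W$-invariance of $\lara_\g$. In this sense there is no genuine obstacle: the substance of the Corollary is contained in Proposition \ref{pr-y-lam}.

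First I would record how the infinitesimal generators $\sigma(\zeta)$ act on $T$-weight vectors. For $\zeta \in \t$ and a $T$-homogeneous $f$ with weight $\lam_f$, the $T$-action $(t\cdot f)(x) = f(t\cdot x) = t^{\lam_f} f(x)$ together with the definition \eqref{eq-sigma-zeta} of $\sigma$ gives
\[
\sigma(\zeta)(f)(x) = \frac{d}{ds}\Big|_{s=0} f(\exp(s\zeta)\cdot x) = (\lam_f, \, \zeta)\, f(x),
\]
which is consistent with the identity $\sigma(\zeta)(\varepsilon_j) = -\al_j(\zeta)\varepsilon_j$ used in the proof of Proposition \ref{pr-y-lam}, since the coordinate $\varepsilon_j$ has weight $-\al_j$.

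Next I would combine this with the definition \eqref{eq-log-Ha} of the log-Hamiltonian vector field, in the form $\{y^\lam, \, f\} = y^\lam H_{\log(y^\lam)}(f)$, and with formula \eqref{eq-X-y-lam}. Using the compatibility $w(\lam^\#) = (w\lam)^\#$ for $w \in W$, a consequence of the $W$-invariance of $\lara_\g$, together with the defining relation \eqref{eq-sharp} of $\#$, which yields $(\lam_f, \, \mu^\#) = \la \lam_f, \, \mu\ra$ for all $\mu \in \t^*$, one finds
\[
\{y^\lam, \, f\} = -y^\lam \, (\lam_f, \; \lam^\# + s_1 s_2 \cdots s_n(\lam^\#)) = -\la \lam + s_1 s_2\cdots s_n(\lam), \; \lam_f\ra \, y^\lam f,
\]
where the last equality also uses the symmetry of $\lara$ on $\t^*$. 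This establishes the first formula.

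Finally, for the second formula I would specialize $f = y^{\lam'}$, whose weight is $\lam' - s_1 s_2\cdots s_n(\lam')$ by Lemma \ref{le-y-lam-weight}. Writing $w = s_1 s_2 \cdots s_n$ for brevity, the first formula gives $\{y^\lam, \, y^{\lam'}\} = -\la \lam + w\lam, \; \lam' - w\lam'\ra \, y^\lam y^{\lam'}$. The one point requiring care is the expansion of this pairing: since $w$ acts as an isometry for $\lara$, the terms $\la \lam, \lam'\ra$ and $\la w\lam, w\lam'\ra$ cancel, leaving $\la \lam, \, w\lam'\ra - \la w\lam, \, \lam'\ra$, and a final use of symmetry to rewrite $\la w\lam, \lam'\ra = \la \lam', w\lam\ra$ produces exactly $\la \lam, \, s_1\cdots s_n(\lam')\ra - \la \lam', \, s_1\cdots s_n(\lam)\ra$, as claimed in \eqref{eq-yy-bracket}.
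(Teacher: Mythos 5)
Your proof is correct and is exactly the argument the paper intends: the Corollary is stated without proof as an immediate consequence of Proposition \ref{pr-y-lam}, and you have simply made explicit the two routine ingredients, namely that $\sigma(\zeta)$ acts on a $T$-weight vector of weight $\lam_f$ by the scalar $(\lam_f,\zeta)$, and that $\#$ intertwines the $W$-actions so that the pairings convert to $\lara$ on $\t^*$. The specialization to $f = y^{\lam'}$ via Lemma \ref{le-y-lam-weight} and the cancellation using $W$-invariance of $\lara$ are likewise exactly what the paper's derivation of \eqref{eq-yy-bracket} requires.
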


\begin{remark}\label{rk-tau}
{\rm
For $w \in W$, equip $T$ with the Poisson structure $\pi_{(\sT, w)}$  given by
\[
\{t^\lam, \; t^{\lam'}\}_{(\sT, w)}  = (\la \lam, \, w(\lam')\ra - \la \lam', \, w(\lam)\ra)\, t^{\lam + \lam'},
\hs \lam, \lam' \in X(T).
\]
It follows from \eqref{eq-yy-bracket} that
$\tau: (\O^\bfu_e, \,\pi_n) \rightarrow (T, \, \pi_{(\sT, s_1s_2 \cdots s_n)})$
is a Poisson morphism, where $\tau: \O_e^\bfu \to T$ is given in \eqref{eq-tau}.
\hfill $\diamond$
}
\end{remark}

\subsection{The collection $\cYGu$ of regular functions on $(\O^\bfu, \pi_n)$}\label{subsec-Y} Let again
\[
\bfu = (s_{\al_1}, \ldots, s_{\al_n}) = (s_1, \ldots, s_n)
\]
be any sequence of simple reflections for $G$, and recall the parametrization
\[
\phi^\bfu: \;\;\CC^n \longrightarrow \O^\bfu, \;\;
\phi^\bfu(x_1, x_2, \ldots, x_n) = [p_{\al_1}(x_1), \; p_{\al_2}(x_2), \; \ldots, \; p_{\al_n}(x_n)],
\]
where recall that $p_\al(c) = u_\al(c) \bs_\al$ for $\al \in \Gamma$ and $c \in \CC$.

\begin{definition}\label{de-class-Y}
{\rm
Define, for $1 \leq i \leq n$, $0 \leq i-1 \leq j \leq n$,
and $\lam \in \calP^+$, the regular function $\yij$ on $\O^\bfu$ by
$y_{[i, j]}^\lam = 1$ when $j = i-1$, and
\begin{equation}\label{eq-y-lam-ij}
\yij([p_{\al_1}(x_1), \; \ldots, \; p_{\al_n}(x_n)]) = \Delta^\lam(p_{\al_i}(x_i) p_{\al_{i+1}}(x_{i+1}) \cdots p_{\al_j}(x_j)),
\hs i \leq j.
\end{equation}
Moreover, set
\begin{equation}\label{eq-cYGu}
\cYGu = \left\{c\, \yij: \; c \in \CC, \;  1 \leq i \leq n, \, 0 \leq i-1 \leq j \leq n,  \;
\lam \in \calP^+\right\}.
\end{equation}
Note that the constant functions and the coordinate functions $x_1, \ldots, x_n$ are all in $\cYGu$.
\hfill $\diamond$}
\end{definition}

\begin{lemma}\label{le-yij-weights}
For $1 \leq i \leq n, \, 0 \leq i-1 \leq j \leq n$,
and $\lam \in \calP^+$, $\yij$ is $T$-homogeneous with weight
$s_1s_2 \cdots s_{i-1}(\lam) - s_1s_2\cdots s_j(\lam) \in X(T)$.
\end{lemma}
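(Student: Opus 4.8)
The plan is to follow the pattern of the proof of Lemma~\ref{le-y-lam-weight}, which is the special case $i = 1$, $j = n$. Since $\yij$ depends only on the coordinates $x_i, \ldots, x_j$, and since by \eqref{eq-phi-ga-t}--\eqref{eq-t-xj} the torus $T$ acts on $\O^\bfu$ in Bott-Samelson coordinates by $t \cdot_\bfu x_k = t^{s_1 s_2 \cdots s_{k-1}(\al_k)} x_k$, the entire computation reduces to controlling how $t$ transforms the partial product $p_{\al_i}(x_i) p_{\al_{i+1}}(x_{i+1}) \cdots p_{\al_j}(x_j)$ appearing in \eqref{eq-y-lam-ij}. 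Concretely, I would compute $(t \cdot \yij)(\phi^\bfu(x)) = \yij(t \cdot \phi^\bfu(x))$, pull a single torus element out to both sides of this partial product, and then invoke the bi-translation law for $\Delta^\lam$ to read off the weight.

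The key algebraic step is the commutation identity for the partial product. I would set $s = t^{s_1 s_2 \cdots s_{i-1}} \in T$. Using $s_1 s_2 \cdots s_{i-1}\, s_i s_{i+1} \cdots s_{k-1} = s_1 s_2 \cdots s_{k-1}$ in $W$, together with $(t^w)^\mu = t^{w(\mu)}$, one rewrites the transformed argument of the $k$-th factor as $t^{s_1 \cdots s_{k-1}(\al_k)} = s^{s_i \cdots s_{k-1}(\al_k)}$ for $i \le k \le j$. This is exactly the shape in which the identity \eqref{eq-t-px}, applied to the subsequence $(s_i, s_{i+1}, \ldots, s_j)$ and the torus element $s$, becomes available; it yields
\[
p_{\al_i}(t \cdot_\bfu x_i)\, p_{\al_{i+1}}(t \cdot_\bfu x_{i+1}) \cdots p_{\al_j}(t \cdot_\bfu x_j)
= s \, \bigl(p_{\al_i}(x_i) \cdots p_{\al_j}(x_j)\bigr) \, s^{-s_i s_{i+1} \cdots s_j}.
\]

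Finally I would apply $\Delta^\lam$ to both sides. Since $\Delta^\lam(g) = [g]_0^\lam$ on the dense open set $B_- B$ and $[t_1 g t_2]_0 = t_1 [g]_0 t_2$ for $t_1, t_2 \in T$, regularity of $\Delta^\lam$ gives $\Delta^\lam(t_1 g t_2) = t_1^\lam t_2^\lam \Delta^\lam(g)$ for all $g \in G$. Applying this with $t_1 = s$ and $t_2 = s^{-s_i \cdots s_j}$, and using $s^\lam = t^{s_1 \cdots s_{i-1}(\lam)}$ together with $(s^{s_i \cdots s_j})^\lam = t^{s_1 \cdots s_{i-1}(s_i \cdots s_j(\lam))} = t^{s_1 \cdots s_j(\lam)}$, one obtains $(t \cdot \yij)(\phi^\bfu(x)) = t^{s_1 \cdots s_{i-1}(\lam) - s_1 \cdots s_j(\lam)} \yij(\phi^\bfu(x))$, which is the claimed weight. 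The degenerate case $j = i-1$ is consistent, since there $\yij \equiv 1$ and the formula returns the zero weight.

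The only real difficulty is the bookkeeping of the two intertwined Weyl-group conventions — the left action $w(\lam)$ on weights and the right action $t^w$ on $T$ — and in particular verifying the identity $t^{s_1 \cdots s_{k-1}(\al_k)} = s^{s_i \cdots s_{k-1}(\al_k)}$ that re-expresses the full-sequence $T$-weights of the coordinates as subsequence $T$-weights twisted by $s = t^{s_1 \cdots s_{i-1}}$; everything else is a direct application of \eqref{eq-t-px} and the bi-translation law. This twisting is precisely the failure of $T$-equivariance of the coordinate inclusion recorded in Remark~\ref{rk-T-alg}, so an essentially equivalent alternative route is to apply Lemma~\ref{le-y-lam-weight} to the subsequence $\bfu_{[i,j]} = (s_i, \ldots, s_j)$ — for which $\yij$ has weight $\lam - s_i \cdots s_j(\lam)$ relative to the $\bfu_{[i,j]}$-action — and then conjugate that weight by $s_1 \cdots s_{i-1}$, giving $s_1 \cdots s_{i-1}(\lam) - s_1 \cdots s_j(\lam)$ as before.
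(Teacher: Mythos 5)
Your proposal is correct and follows essentially the same route as the paper: both reduce to the commutation identity $p_{\al_i}(t^{s_1\cdots s_{i-1}(\al_i)}x_i)\cdots p_{\al_j}(t^{s_1\cdots s_{j-1}(\al_j)}x_j) = t^{s_1\cdots s_{i-1}}\,p_{\al_i}(x_i)\cdots p_{\al_j}(x_j)\,(t^{-1})^{s_1\cdots s_j}$ and then invoke the bi-translation property of $\Delta^\lam$, exactly as in Lemma~\ref{le-y-lam-weight}. The only cosmetic difference is that the paper derives this identity from two prefix applications of \eqref{eq-t-px} (at positions $i-1$ and $j$) followed by cancellation, whereas you obtain it from a single application of \eqref{eq-t-px} to the subsequence $(s_i,\ldots,s_j)$ with the twisted torus element $s = t^{s_1\cdots s_{i-1}}$ (equivalently, via the twist recorded in Remark~\ref{rk-T-alg}).
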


\begin{proof} The statement clearly holds when $j = i-1$. Let $1 \leq i \leq j \leq n$.
For $t \in T$ and $x = (x_1, \ldots, x_n) \in \CC^n$, similar to \eqref{eq-t-px}, one has
\begin{align*}
&tp_{\al_1}(x_1)\cdots p_{\al_{i-1}}(x_{i-1}) = p_{\al_1}(t^{\al_1}x_1)
\cdots p_{\al_{i-1}}(t^{s_{1}\cdots s_{{i-2}}(\al_{i-1})}x_{i-1}) t^{s_{1}\cdots s_{{i-1}}}, \\
&tp_{\al_1}(x_1)\cdots p_{\al_j}(x_j) = p_{\al_1}(t^{\al_1}x_1)
\cdots p_{\al_j}(t^{s_{1}\cdots s_{{j-1}}(\al_j)}x_j) t^{s_{1}s_{2}\cdots s_{{j}}}.
\end{align*}
It follows that
\[
p_{\al_i}(t^{s_1\cdots s_{i-1}(\al_i)}x_i)
\cdots p_{\al_j}(t^{s_{1}\cdots s_{{j-1}}(\al_j)}x_j) =t^{s_{1}\cdots s_{{i-1}}}
p_{\al_i}(x_i)\cdots p_{\al_{j}}(x_{j}) (t^{-1})^{s_1 \cdots s_j}.
\]
Similar to the proof of Lemma \ref{le-y-lam-weight}, one sees that
 $\yij$ is $T$-homogeneous with weight $s_1s_2 \cdots s_{i-1}(\lam) - s_1s_2\cdots s_j(\lam)$.
\end{proof}


Consider now the Poisson structure $\pi_n$ on $\O^\bfu$.
By Remark \ref{rk-y-lam}, we also have the affine Poisson variety $(\O^\bfu_e, \pi_n|_{\O^\bfu_e})$.
Regard every $y \in \cYGu$ also as a regular function on $\O^\bfu_e$ by restriction, we
now prove the first main result of this paper.

\begin{theorem}\label{thm-Y-calS}
Every $y \in \cYGu$ has complete Hamiltonian flows with property $\calQ$ in both $\O^\bfu$ and $\O^\bfu_e$.
\end{theorem}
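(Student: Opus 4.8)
The plan is to reduce the statement to Lemma \ref{le-simple} by exhibiting, for each $y = \yij \in \cYGu$, a shuffled set of generators of $\CC[x_1, \ldots, x_n]$ (and of the coordinate ring of $\O^\bfu_e$) in which the hypothesis \eqref{eq-y-tildex} holds. The key structural input is that $\yij$ depends only on the variables $x_i, x_{i+1}, \ldots, x_j$ (by its definition \eqref{eq-y-lam-ij}) and is $T$-homogeneous of a known weight (Lemma \ref{le-yij-weights}). The natural ordering of generators to try is the shuffle that places $x_i, \ldots, x_j$ \emph{first}, in some order adapted to $y$, followed by the remaining variables; with respect to such an ordering I want each bracket $\{y, \tilde{x}_k\}$ to be of the affine-linear-in-$y$ form demanded by \eqref{eq-y-tildex}.

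First I would treat the two ``ends'' of the block using the iterated-bracket structure of Lemma \ref{le-iterated-0}. The point of that lemma is that bracketing with the first or last variable of a Bott-Samelson block produces a term proportional to $x \cdot y$ (the log-canonical leading term) plus a correction living in the \emph{remaining} variables. Applying this to the sub-word $\bfu_{[i,j]} = (s_i, \ldots, s_j)$ and the Poisson embedding of Remark \ref{rk-T-alg}, the brackets $\{x_i, \yij\}$ and $\{\yij, x_j\}$ each take the form $\kappa\, x\, \yij + (\text{function of the interior variables})$. This is exactly the mechanism that lets a triangular system of ODEs be integrated successively, as in the proof of Lemma \ref{le-simple}. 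The variables $x_k$ with $k < i$ or $k > j$ Poisson-commute with $y$ up to a multiple of $y$: since $y$ is supported on the block $[i,j]$ and is $T$-homogeneous, Corollary \ref{co-y-f} (applied after recognizing the $T$-weight of $x_k$) should give $\{y, x_k\} = a\, y\, x_k$ for the outside variables, placing them harmlessly at the top of the triangular order.

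The main obstacle, and the step requiring genuine care, is organizing the \emph{interior} variables $x_{i+1}, \ldots, x_{j-1}$ into a valid triangular order. Here I cannot simply appeal to log-canonicity; instead I would induct on the length of the block, peeling off $x_i$ (or $x_j$) and using Lemma \ref{le-iterated-0} to control its bracket with $y$ modulo the shorter block $\CC[x_{i+1}, \ldots, x_j]$, inside which the restricted function $y$ is again a member of $\calY^{\bfu_{[i+1,j]}}$ (by the factorization structure of $\Delta^\lam$ and the sub-word Poisson embedding of Remark \ref{rk-T-alg}). The inductive hypothesis then supplies a triangular order for the shorter block, and I append $x_i$ at the appropriate end. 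I expect the correction terms $f_k$ to automatically lie in the algebra generated by the already-ordered generators precisely because of the ``$\in \CC[x_{i+1}, \ldots, x_j]$'' containments in Lemma \ref{le-iterated-0}; verifying that the $T$-homogeneity is preserved under each peeling step (so that Corollary \ref{co-y-f} keeps applying) is the bookkeeping that must be done carefully.

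Finally, for the statement on $\O^\bfu_e$ I would invoke Lemma \ref{le-simple-2} with $g = \varphi = \prod_\alpha y^{\omega_\alpha}$ and the embedding \eqref{eq-Oe-embedding}: since each $y^{\omega_\alpha}$ is Poisson (Proposition \ref{pr-y-lam}) and $T$-homogeneous, Corollary \ref{co-y-f} gives $\{y, \varphi\} = \kappa\, y\, \varphi$ for a constant $\kappa$, which is exactly the hypothesis of Lemma \ref{le-simple-2}. Thus completeness with property $\calQ$ on $\O^\bfu$ transfers to $\O^\bfu_e$ with the extra coordinate $1/\varphi$ behaving as a single exponential, and by Lemma \ref{le-simple-1} the conclusion is independent of the chosen embedding. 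I anticipate the proof in the paper organizes the interior ordering via a clean induction on the word length together with a single appeal to Lemma \ref{le-iterated-0} at each step, which is the crux of the whole argument.
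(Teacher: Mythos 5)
Your overall architecture (shuffle the coordinates so the block $x_i,\ldots,x_j$ comes first, verify the hypothesis of Lemma \ref{le-simple}, then pass to $\O^\bfu_e$ via Lemma \ref{le-simple-2} with $g=\varphi=\prod_\alpha y^{\omega_\al}$) is the paper's, and your final paragraph on $\O^\bfu_e$ is correct. But the engine of your argument has the two key mechanisms swapped, and this is a genuine gap. For the \emph{interior} variables $x_k$, $i\le k\le j$, no induction and no appeal to Lemma \ref{le-iterated-0} is needed or even possible: $\yij$ is precisely the distinguished function $y^\lam$ of the sub-cell $\O^{\bfu_{[i,j]}}$, so Corollary \ref{co-y-f} together with the Poisson-subalgebra embedding of Remark \ref{rk-T-alg} gives $\{y,x_k\}=c_k\,y\,x_k$ exactly, log-canonical with no correction term. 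Your proposed induction is moreover unsound on its own terms: Lemma \ref{le-iterated-0} requires $f$ to be independent of the first (resp.\ last) variable of the block, while $\yij$ genuinely involves both $x_i$ and $x_j$, so it cannot be applied to the block $[i,j]$ itself; and after ``peeling off'' $x_i$ the function $\yij$ does \emph{not} become a member of $\calY^{\bfu_{[i+1,j]}}$, so the inductive hypothesis you invoke is not available.

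Conversely, for the \emph{exterior} variables ($k<i$ or $k>j$) your claim that Corollary \ref{co-y-f} yields $\{y,x_k\}=a\,y\,x_k$ is false: that corollary applies only to $T$-homogeneous elements of $\CC[x_i,\ldots,x_j]$, to which the exterior $x_k$ do not belong, and the brackets really do carry correction terms. Concretely, for $G=SL(2,\CC)$ and $\bfu=(s,s)$ as in Example \ref{ex-ss-0}, take $y=y^{\omega}_{[2,2]}=x_2$; then $\{y,x_1\}=-x_1x_2+1$, which is not log-canonical. The exterior case is exactly where Lemma \ref{le-iterated-0} is needed: one applies it to the \emph{enlarged} blocks $\CC[x_i,\ldots,x_k]$ (for $k>j$) and $\CC[x_k,\ldots,x_j]$ (for $k<i$), in which $y$ is independent of the last (resp.\ first) variable so the hypotheses do hold, together with Lemma \ref{le-yij-weights} to identify the weight; the corrections then lie in the already-ordered variables, which is what makes the shuffle $(x_i,\ldots,x_j,x_{j+1},\ldots,x_n,x_{i-1},\ldots,x_1)$ triangular. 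With the two mechanisms applied to the correct sets of variables, your outline collapses to the paper's three-case proof.
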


\begin{proof}
Identifying $\O^\bfu\cong \CC^n$ by $\phi^\bfu$, we  now apply Lemma \ref{le-simple}.
Let $\lam \in \calP^+$, $1 \leq i \leq j \leq n$,  and
$y = \yij \in \CC[x_1, \ldots, x_n]$. Consider the re-ordering
of the coordinates
\[
(\tilde{x}_1, \;\ldots, \; \tilde{x}_n) = (x_i, \, \ldots, x_j, \, x_{j+1}, \, \ldots, x_n, \, x_{i-1}, \, \ldots, x_1).
\]

{\bf Case 1:} $i \leq k \leq j$. Applying Corollary \ref{co-y-f} and Remark \ref{rk-T-alg} to the Poisson subalgebra
$\CC[x_i, \ldots, x_j]$, one sees
that $\{y, x_k\}$ is log-canonical;

{\bf Case 2:} $j +1 \leq k \leq n$. Applying Lemma \ref{le-yij-weights} and 2) of
Lemma \ref{le-iterated-0}  to the Poisson subalgebra
$\CC[x_i, \ldots, x_j, x_{j+1}, \ldots, x_k]$,
one sees that there exists $c \in \CC$ such that
\[
\{y, x_k\} - c y x_k \in \CC[x_i, \ldots, x_j, x_{j+1}, \ldots, x_{k-1}];
\]

{\bf Case 3:} $1 \leq k \leq i-1$. Applying Lemma \ref{le-yij-weights} and 1) of
Lemma \ref{le-iterated-0}  to the Poisson subalgebra
$\CC[x_k, \ldots, x_{i-1}, x_i, \ldots, x_j]$, one sees that there exists $c' \in \CC$ such that
\[
\{y, x_k\} - c' y x_k \in \CC[x_{k+1}, \ldots, x_i, \ldots, x_j]
\subset \CC[x_i, \, \ldots, x_j, \, x_{j+1}, \, \ldots, x_n, \, x_{i-1}, \, \ldots, x_{k+1}].
\]

\noindent
In all the three cases, the conditions in Lemma \ref{le-simple}
on the Poisson brackets between $y$ and the coordinate functions
$\tilde{x}_1, \;\ldots, \; \tilde{x}_n$ are satisfied. Thus $y$
has complete Hamiltonian flow in $\CC^n \cong \O^\bfu$ with property $\calQ$.

Let $\varphi =  \prod_{\alpha \in \{\al_1, \ldots, \al_n\}} y^{\omega_\al}$. By Corollary \ref{co-y-f},
the Poisson bracket
$\{y, \varphi\}$ is log-canonical.
It now follows from Lemma \ref{le-simple-2} that $y$, as a regular function on the affine Poisson variety
$(\O^\bfu_e, \pi_n|_{\O^\bfu_e})$,
has complete Hamiltonian flow in $\O^\bfu_e$ with property $\calQ$.
\end{proof}

\begin{notation}\label{nota-s-interval}
{\rm
Set $s_{[k, l]} = s_ks_{k+1}\cdots s_l$ for $1 \leq k \leq l \leq n$, and
$s_{[k, l]} = e$ when $k > l$.
\hfill $\diamond$
}
\end{notation}

\begin{lemma}\label{le-y-y} For $1 \leq i' \leq i \leq n, \, 0 \leq i-1  \leq j \leq j' \leq n$, and for any $\lam \in \calP^+$, one has
\[
\left\{\yij, \; \;y_{[i', j']}^{\lam'}\right\}  = \left\langle s_{[1, i-1]}(\lam) - s_{[1, j]}(\lam), \;
s_{[1,i'-1]}(\lam')+s_{[1, j']}(\lam')\right\rangle \; \yij \;y_{[i', j']}^{\lam'}.
\]
\end{lemma}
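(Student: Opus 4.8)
The plan is to reduce the bracket to a single application of Corollary \ref{co-y-f} inside a smaller Bott--Samelson Poisson algebra. Since the hypotheses give $i' \leq i \leq j \leq j'$, both $\yij$ and $y_{[i', j']}^{\lam'}$ lie in the subalgebra $\CC[x_{i'}, \ldots, x_{j'}]$. By Remark \ref{rk-T-alg} (i.e.\ by \cite[Theorem 4.14]{EL:BS}), the inclusion
\[
(\CC[x_{i'}, \ldots, x_{j'}], \, \pi_{(\sG, \bfu_{[i', j']})}) \hookrightarrow (\CC[x_1, \ldots, x_n], \, \piGu)
\]
is a Poisson algebra embedding, so $\{\yij, \, y_{[i', j']}^{\lam'}\}$ may be computed inside $\pi_{(\sG, \bfu_{[i', j']})}$. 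The key observation is that, for the shorter sequence $\bfu_{[i', j']} = (s_{i'}, \ldots, s_{j'})$, the function $y_{[i', j']}^{\lam'}$ is exactly the full-product function $y^{\lam'}$ of Definition \ref{de-y-lam} associated to $\bfu_{[i', j']}$, since its defining product $p_{\al_{i'}}(x_{i'}) \cdots p_{\al_{j'}}(x_{j'})$ runs over the entire sequence. Hence Corollary \ref{co-y-f}, applied to $\bfu_{[i', j']}$, governs the bracket of $y_{[i', j']}^{\lam'}$ against any $T$-homogeneous $f \in \CC[x_{i'}, \ldots, x_{j'}]$.

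Next I would take $f = \yij$ and determine its $T$-weight \emph{for the $\bfu_{[i', j']}$-action}. Here lies the one point requiring care: the weight recorded in Lemma \ref{le-yij-weights} is taken for the $\bfu$-action, whereas Corollary \ref{co-y-f} for $\bfu_{[i', j']}$ must be fed the weight for the $\bfu_{[i', j']}$-action, and these differ because the embedding above is not $T$-equivariant (as noted in Remark \ref{rk-T-alg}). Applying Lemma \ref{le-yij-weights} directly to the sequence $\bfu_{[i', j']}$ --- whose partial products ``up to the start of $[i,j]$'' and ``up to the end of $[i,j]$'' are $s_{[i', i-1]}$ and $s_{[i', j]}$ --- gives the $\bfu_{[i', j']}$-weight of $\yij$ as $s_{[i', i-1]}(\lam) - s_{[i', j]}(\lam)$. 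Corollary \ref{co-y-f} together with the antisymmetry of the bracket then yields
\[
\{\yij, \; y_{[i', j']}^{\lam'}\} = \left\langle \lam' + s_{[i', j']}(\lam'), \; s_{[i', i-1]}(\lam) - s_{[i', j]}(\lam) \right\rangle \, \yij \, y_{[i', j']}^{\lam'}.
\]

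It remains to identify this coefficient with the one asserted, which is the only genuine computation. Using that $\lara$ is $W$-invariant, I would apply $s_{[1, i'-1]}$ to both entries of the inner product and invoke the concatenation identities $s_{[1, i'-1]} s_{[i', j']} = s_{[1, j']}$, $s_{[1, i'-1]} s_{[i', i-1]} = s_{[1, i-1]}$, and $s_{[1, i'-1]} s_{[i', j]} = s_{[1, j]}$, which hold at the level of products of simple reflections by Notation \ref{nota-s-interval}. This turns the first entry into $s_{[1, i'-1]}(\lam') + s_{[1, j']}(\lam')$ and the second into $s_{[1, i-1]}(\lam) - s_{[1, j]}(\lam)$, and the symmetry of $\lara$ reorders the two entries into the stated form. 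The main obstacle is thus purely bookkeeping: correctly recomputing the weight of $\yij$ under the shifted action, and tracking the Weyl-group products when matching the two forms of the coefficient, rather than any substantive analytic difficulty.
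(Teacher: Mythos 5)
Your proposal is correct and follows essentially the same route as the paper's own proof: both reduce to the sub-Poisson-algebra $\CC[x_{i'},\ldots,x_{j'}]$ (viewing $y_{[i',j']}^{\lam'}$ as the full-product function $y^{\lam'}$ on $\O^{(s_{i'},\ldots,s_{j'})}$), apply Corollary \ref{co-y-f} and Lemma \ref{le-yij-weights} for the shorter sequence to get the coefficient $\langle \lam' + s_{[i',j']}(\lam'),\, s_{[i',i-1]}(\lam)-s_{[i',j]}(\lam)\rangle$, and then use the $W$-invariance of $\lara$ (acting by $s_{[1,i'-1]}$) to match the stated form. Your explicit attention to the non-$T$-equivariance of the embedding and to which torus action the weight is computed for is exactly the point the paper handles implicitly by citing Lemma \ref{le-yij-weights} "applied to" the smaller cell.
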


\begin{proof} The statement clearly holds when $j = i-1$. Assume that $1 \leq i' \leq i \leq j
\leq j' \leq n$.
By Corollary \ref{co-y-f} and
Lemma \ref{le-yij-weights} applied to $(\O^{(s_{i'}, s_{i'+1}, \ldots, s_{j'})}, \pi_{j'-i'+1})$, one has
\begin{align*}
\left\{\yij, \; \;y_{[i', j']}^{\lam'}\right\} & = \left\langle \lam' + s_{[i', j']}(\lam'), \;
s_{[i', i-1]}(\lam)-s_{[i', j]}(\lam)\right\rangle \; \yij \;y_{[i', j']}^{\lam'}\\
& =\left\langle s_{[1, i-1]}(\lam) - s_{[1, j]}(\lam), \;
s_{[1,i'-1]}(\lam')+s_{[1, j']}(\lam')\right\rangle \; \yij \;y_{[i', j']}^{\lam'}.
\end{align*}
\end{proof}

\subsection{The Poisson structure $0 \bowtie \piX$ on $T \times X$}\label{subsec-T-Z}
Let $T$ be a complex algebraic torus, and suppose that $(X, \piX)$ is a smooth affine Poisson variety with a $T$-action by Poisson isomorphisms. Denote again the Lie algebra of $T$ by $\t$, and let
$\theta: \t \to {\mathfrak{X}}^1(X)$ be the induced Lie algebra action of $\t$ on $X$, i.e.,
\[
\theta(\zeta)(x) = \frac{d}{ds}|_{s =0} \exp(s\zeta) x, \hs \zeta \in \t, \,\; x \in X.
\]
For
any linear map
$M: \t^* \to \t$, we define the Poisson structure $0 \bowtie_M \piX$ on $T \times X$ by
\begin{equation}\label{eq-M-piX}
0 \bowtie_M \piX = (0, \, \piX) + \sum_{i=1}^r ((M^*(\zeta_i^*))^L, 0) \wedge (0, \theta(\zeta_i)),
\end{equation}
where $\{\zeta_i\}_{i=1}^r$ is any basis of $\t$, $\{\zeta_i^*\}_{i=1}^r$ its dual basis of $\t^*$, and for $\zeta \in \t$,
$\zeta^L$ denotes the left (also right) invariant vector field on $T$ with value $\zeta$ at the identity element.
In particular,
for $\lam \in X(T)$ and $f \in \O_X$,
\[
\{t^\lam, \; f\} = t^\lam \; \theta(M(\lam))(f).
\]

\begin{notation}\label{nota-bpi}
{\rm Assume again that $\lara$ is a symmetric nondegenerate bilinear form on $\t$, and let
$\#: \t^* \to \t$ be as given in \eqref{eq-sharp}.
For  $M = -\#: \t^* \to \t$, the corresponding
Poisson structure on $T \times X$ will be denoted by $0 \bowtie \piX$. Then
\[
0 \bowtie \piX = (0, \, \piX) -\sum_{i=1}^r (h_i^L, \;0) \wedge (0, \;\theta(h_i)),
\]
where $\{h_i\}_{i=1}^r$ is any
orthonormal basis
of $\t$ with respect to $\lara$.
\hfill $\diamond$
}
\end{notation}

Recall again that for $x \in X$, the {\it $T$-leaf} of $\piX$ through $x$ is
the union
$L = \bigcup_{t \in T} t\Sigma_x$,
where $\Sigma_x$ is the symplectic leaf of $\piX$ through $x$, and the {\it leaf stabilizer}
(see \cite[$\S$2]{LM:flags}) of $\t$ at $x$
is the subspace $\t_x$ of $\t$ given by
\[
\t_x = \{\zeta \in \t: \theta(\xi)(x) \in T_x \Sigma_x\}.
\]
(Here and for the the rest of $\S$\ref{subsec-T-Z}, the $T$ in the tangent space $T_xX$ should not
be confused with the torus $T$.)
It is shown in \cite[$\S$2]{LM:flags} that
for a given $T$-leaf of $\piX$, the leaf stabilizer $\t_x$ is independent of $x \in L$
and that the codimension of any symplectic leaf of $\piX$ in $L$ is equal to the co-dimension of
$\t_x$ in $\t$.
Equip $T \times X$ with the diagonal action
\[
t_1 \cdot (t, x) = (t_1t, \, t_1x), \hs t_1, t \in T, \, x \in X.
\]
It is clear that $0 \bowtie \piX$ is $T$-invariant. In the rest of $\S$\ref{subsec-T-Z},
we determine the $T$-leaves and the leaf stabilizers
of $0 \bowtie \piX$ in $T \times X$. We first recall from \cite{Lu:Duke-Poi, LM:flags}
the construction of the {\it Drinfeld Lagrangian subspace} $\l_x$
of $\t \oplus \t$
associated to the $T$-action on $(X, \piX)$.

For any Poisson manifold $(Z, \pi)$ and $z \in Z$, let $\pi_z^\flat: T_z^*Z \to T_zZ$ be given by
\[
\pi_z^\flat(\al_z) (\beta_z) = \pi(z)(\al_z, \beta_z), \hs \al_z, \beta_z \in T_z^*Z,
\]
so that
${\rm Im}(\pi_z^\flat)$ is the tangent space at $z$ to the symplectic leaf of $\pi$ through $z$. Let now $L \subset
X$ be a $T$-leaf of $\piX$, and let $x \in X$. Consider the
linear map
\[
\theta_x: \;\; \t \lrw T_xX, \;\; \theta_x(\zeta) = \theta(\zeta)(x), \hs \zeta \in \t,
\]
and its dual map $\theta_x^*: T^*_xX \to \t^*$.
Recall from \cite{Lu:Duke-Poi} and
\cite[$\S$2]{LM:flags} that associated to the $T$-Poisson manifold $(L, \piX)$ one has the Lie algebroid
$A = (L \times \t) \bowtie T^*L$ over $L$ with the surjective anchor map $\rho =-\theta + \piX^\flat$ from $A$
to the tangent bundle of $L$.
At any $x \in L$, the kernel of $\rho_x: \t \oplus T_x^*L \to T_xL$ is then
\[
\ker \rho_x = \{(\zeta, \, \al_x) \in \t \oplus T_x^*L: \;\; \theta_x(\zeta) = \pi_{\sX, x}^\flat(\al_x)\}.
\]
Consider the vector space $\t \oplus \t^* \cong \t \oplus \t$ with the symmetric bilinear form
\begin{equation}\label{eq-t-t-lara}
\la (\zeta_1, \,\zeta_2), \; (\zeta_1^\prime, \,\zeta_2^\prime)\ra = \la \zeta_1, \, \zeta_2^\prime\ra +
\la \zeta_1^\prime,,\zeta_2\ra, \hs \zeta_1, \,\zeta_2, \,\zeta_1^\prime, \, \zeta_2^\prime \in \t.
\end{equation}
For each $x \in L$, the linear map
\[
\ker \rho_x \lrw \t \oplus \t, \;\; (\zeta, \, \al_x) \longmapsto (\zeta, \;\, (\theta_x^*(\al_x))^\#),
\]
is then injective \cite{dr:homog, Lu:Duke-Poi}, and its image, denoted by
\[
\l_x \; \stackrel{{\rm def}}{=} \; \{(\zeta, \, (\theta_x^*(\al_x))^\#): \; (\zeta, \al_x) \in \ker \rho_x\},
\]
is Lagrangian with respect to the bilinear form $\lara$ on $\t \oplus \t$
given in \eqref{eq-t-t-lara}, and it is called the {\it Drinfeld Lagrangian subspace} of $\t \oplus \t$
associated to $x \in L$ and the $T$-action on $(L, \piX)$.
 By definition, the leaf stabilizer $\t_x$ of $\t$ at $x$ is
$p_1(\l_x)$, where $p_1: \t \oplus \t \to \t$ is the projection to the first factor.
Let $p_2: \t \oplus \t \to \t$ be the projection to the second factor, and introduce
\begin{align}\label{eq-flip-lz}
&{\rm Flip}(\l_x) = \{(\zeta_1, \, \zeta_2): \; (\zeta_2, \, \zeta_1) \in \l_x\} \subset \t \oplus \t,\\
\label{eq-tilde-tz}
&\tilde{\t}_x = p_2(\l_x) = \{ (\theta_x^*(\al_x))^\#: \; \al_x \in T^*_x L, \;
\pi_{\sX, x}^\flat(\al_x) = \theta_x(\zeta)\; \mbox{for some} \; \zeta \in \t\} \subset \t.
\end{align}

\begin{lemma}\label{le-T-leaves-T-X}
With respect to the diagonal action of $T$ on $T \times X$, the $T$-leaves of the Poisson structure $0 \bowtie \piX$
in $T \times X$ are precisely all the $T \times L$, where $L$ is a $T$-leaf of $\piX$ in $X$. Moreover,
for a $T$-leaf $L$ in $X$ and $(t, x) \in T \times L$, the Drinfeld Lagrangian subspace at $(t, x)$
for the diagonal $T$-action on $(T \times L, \, 0 \bowtie \piX)$ is $\l_{(t, x)} = {\rm Flip}(\l_x)$,
the leaf-stabilizer of $\t$ at $(t, x)$ is
$\tilde{\t}_x$, and the rank of the Poisson structure $0 \bowtie \piX$ at $(t, x)$ is
equal to $\dim L + \dim \tilde{\t}_x$.
\end{lemma}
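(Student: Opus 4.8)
The plan is to compute the bundle map $\pi^\flat$ of $\pi := 0 \bowtie \piX$ at a point $(t,x)$ explicitly, and then read off all four assertions from this single formula together with the general leaf-stabilizer theory recalled at the beginning of this subsection. Using the orthonormal-basis expression from Notation \ref{nota-bpi}, identifying $T_tT$ with $\t$ via left translation, and writing a cotangent vector at $(t,x)$ as a pair $(\eta, \al_x) \in \t^* \oplus T_x^*X$, I would first show, via the identity $\sum_i \eta(h_i)h_i = \eta^\#$ for the orthonormal basis $\{h_i\}$, that
\[
\pi^\flat_{(t,x)}(\eta, \al_x) = \left(\bigl((\theta_x^*\al_x)^\#\bigr)^L, \; \pi_{\sX, x}^\flat(\al_x) - \theta_x(\eta^\#)\right) \in T_tT \oplus T_xX,
\]
where $\theta_x(\zeta) = \theta(\zeta)(x)$ and $\theta_x^*\colon T_x^*X \to \t^*$ is its dual. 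This formula is the engine for everything else.

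For the first assertion I would combine the image of $\pi^\flat_{(t,x)}$ with the image of the infinitesimal generator $\widetilde{\theta}$ of the diagonal action, $\widetilde{\theta}_{(t,x)}(\zeta) = (\zeta^L, \theta_x(\zeta))$, whose sum is by definition the tangent space to the $T$-leaf at $(t,x)$. Setting $\al_x = 0$ in the displayed formula gives $\{0\} \oplus \mathrm{Im}(\theta_x)$; adding this to $\widetilde{\theta}_{(t,x)}(\t)$ produces $T_tT \oplus \{0\}$; and setting $\eta = 0$ then yields $\{0\} \oplus \mathrm{Im}(\pi_{\sX,x}^\flat)$. Hence the sum equals $T_tT \oplus (\mathrm{Im}\,\theta_x + \mathrm{Im}\,\pi_{\sX,x}^\flat) = T_{(t,x)}(T \times L)$. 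Since $T \times L$ is clearly $T$-invariant and, by the same formula, is a union of symplectic leaves of $\pi$, I conclude that the $T$-leaf through $(t,x)$ is exactly $T \times L$.

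With the $T$-leaf identified as $T \times L$, I would compute the Drinfeld Lagrangian intrinsically on $T \times L$. The dual of $\widetilde{\theta}_{(t,x)}$ sends $(\eta, \al_x)$ to $\eta + \theta_x^*\al_x$, so $\ker\rho_{(t,x)}$ consists of triples with $\zeta = (\theta_x^*\al_x)^\#$ and $\pi_{\sX,x}^\flat(\al_x) = \theta_x(\zeta + \eta^\#)$, and the associated element of $\t \oplus \t$ is $\bigl((\theta_x^*\al_x)^\#, \, \zeta + \eta^\#\bigr)$. The change of variables $\zeta' = \zeta + \eta^\#$, $\al_x' = \al_x$ turns the second condition into $\pi_{\sX,x}^\flat(\al_x') = \theta_x(\zeta')$, i.e. $(\zeta', \al_x') \in \ker\rho_x$, and turns the element into $\bigl((\theta_x^*\al_x')^\#, \zeta'\bigr)$; since this substitution is a bijection (one recovers $\eta^\# = \zeta' - (\theta_x^*\al_x')^\#$), it follows that $\l_{(t,x)} = {\rm Flip}(\l_x)$. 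The leaf-stabilizer claim is then immediate: $\t_{(t,x)} = p_1(\l_{(t,x)}) = p_1({\rm Flip}(\l_x)) = p_2(\l_x) = \tilde{\t}_x$.

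Finally, for the rank I would invoke the general fact recalled at the start of this subsection—valid for any torus acting by Poisson automorphisms, hence for $(T \times X, \pi)$ with its diagonal action—that the codimension of a symplectic leaf inside its $T$-leaf equals the codimension of the leaf stabilizer in $\t$. Applied to the $T$-leaf $T \times L$ with stabilizer $\tilde{\t}_x$, this gives $\dim(T \times L) - \dim\widetilde{\Sigma}_{(t,x)} = \dim\t - \dim\tilde{\t}_x$, and since $\dim(T \times L) = \dim\t + \dim L$, the rank $\dim\widetilde{\Sigma}_{(t,x)}$ equals $\dim L + \dim\tilde{\t}_x$, as claimed. I expect the only real friction to be bookkeeping: computing $\pi^\flat$ with the correct placement of $\#$ and the dual maps, and—because the Drinfeld Lagrangian is defined relative to a $T$-leaf—carrying out the computation of $\l_{(t,x)}$ intrinsically on $T \times L$ rather than on the ambient $T \times X$. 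Everything conceptual follows formally once the displayed formula for $\pi^\flat_{(t,x)}$ is in hand.
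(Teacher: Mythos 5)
Your proposal is correct and follows essentially the same route as the paper: the same explicit formula for $\pi^\flat_{(t,x)}$ in terms of $(\theta_x^*\al_x)^\#$ and $\pi_{\sX,x}^\flat(\al_x)-\theta_x(a^\#)$, the same identification of the $T$-leaf as $T\times L$ by summing with ${\rm Im}(\tilde{\theta}_{(t,x)})$, the same change of variables $\zeta'=\zeta+\eta^\#$ in $\ker\rho_{(t,x)}$ yielding $\l_{(t,x)}={\rm Flip}(\l_x)$, and the same appeal to the general codimension fact for the rank. If anything, your write-up is slightly more explicit than the paper's at the $T$-leaf step and in spelling out the stabilizer and rank consequences, which the paper compresses into ``the rest follows from $\l_{(t,x)}={\rm Flip}(\l_x)$.''
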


\begin{proof}
Let $\pi = 0 \bowtie \piX$ and $(t, x) \in T \times X$. For $a \in \t^*$, let $a^L$ be the left invariant $1$-form on
$T$ with value $a$ at the identity element.
It follows from the definition of $\pi$ that
\[
\pi_{(t, x)}^\flat(a^L(t), \al_x) = \left(((\theta_x^*(\al_x))^\#)^L(t), \; \pi_{\sX, x}^\flat(\al_x) -
\theta_x(a^\#)\right), \hs a \in \t^*, \, \al_x \in T_x^*X.
\]
Let $\tilde{\theta}_{(t, x)}: \t \to T_{(t, x)}(T \times X), \, \zeta \mapsto (\zeta^L(t), \theta_x(\zeta))$,
and let $L$ be the $T$-leaf of $\piX$ through $x$. It is then clear that
\[
{\rm Im}(\tilde{\theta}_{(t, x)}) + {\rm Im} (\pi_{\sX, x}^\flat) = T_{(t, x)} (T \times L),
\]
from which it follows that $T \times L$ is the $T$-leaf of $\pi = 0 \bowtie \piX$
through $(t, x)$.

Let $\l_{(t, x)} \subset \t \oplus \t$ be the Drinfeld Lagrangian subspace  for the
diagonal $T$-action on $T \times X$. By definition,
\begin{align*}
\l_{(t, x)} & = \left\{(\zeta, \, (a + \theta_x^*(\al_x))^\#): \; \zeta \in \t, \, a \in \t^*, \; \al_x \in T_x^*L, \;
\zeta = (\theta_x^*(\al_x))^\#, \; \right.\\
& \hspace{1.5in} \;\;\left.
\theta_x(\zeta) = \pi_{\sX, x}^\flat(\al_x) -\theta_x(a^\#)\right\},\\
& = \{(\zeta, \, a^\# + \zeta): \; \zeta \in \t, \, a \in \t^*, \; (a^\# + \zeta, \; \zeta) \in \l_x\} = {\rm Flip}(\l_x).
\end{align*}
The rest of statements in Lemma \ref{le-T-leaves-T-X} follows from the fact that $\l_{(t, x)} = {\rm Flip}(\l_x)$.
\end{proof}

Let now $\bfu = (u_1, u_2, \ldots, u_n) \in W^n$ be arbitrary, and consider the generalized Bruhat cell
$\O^\bfu \subset F_n$, equipped with the Poisson structure $\pi_n$ and the $T$-action given in
\eqref{eq-T-Fn}, where $T = B \cap B_-$. One then has the Poisson structure $\bpi$ on $T \times \O^\bfu$.  By definition,
the projections to the two factors
\[
(T \times \O^\bfu, \; \bpi) \lrw (T,\; 0) \hs \mbox{and} \hs
(T \times \O^\bfu, \; \bpi) \lrw (\O^\bfu, \; \pi_n)
\]
are Poisson,  where $(T, 0)$ denotes $T$ with the zero Poisson structure, and for any $\lam \in X(T)$ and any $T$-homogeneous regular function $f$ on $\O^\bfu$ with $T$-weight $\lam_f$,
\begin{equation}\label{eq-t-lam-f}
\{t^\lam, \; f\} = -\la \lam, \; \lam_f\ra\, t^\lam f.
\end{equation}
Let $1 - u_1u_2 \cdots u_n: \t \to \t$ be  given by
$\zeta \mapsto \zeta - u_1u_2 \cdots u_n(\zeta)$, $\zeta \in \t$. Recall that $l(\bfu) = l(u_1) + l(u_2) + \cdots + l(u_n)$.

\begin{proposition}\label{pr-T-O-u}
Equip $T \times \O^\bfu$ with  the diagonal $T$-action.
Then $T \times \O^\bfu_e$ is a single $T$-leaf of $(T \times \O^\bfu, \, 0 \bowtie \pi_n)$.
The leaf-stabilizer of $\t$ in $T \times \O^\bfu_e$ is ${\rm Im}(1 -u_1u_2 \ldots u_n)$,
and the symplectic leaves of $0 \bowtie \pi_n$ in $T \times \O^\bfu$ have dimension equal to
\[
l(\bfu) + \dim \, {\rm Im}(1 -u_1u_2 \ldots u_n).
\]
\end{proposition}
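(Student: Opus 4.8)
The plan is to combine Lemma \ref{le-T-leaves-T-X}, applied to $(\O^\bfu, \pi_n)$ with the $T$-action \eqref{eq-T-Fn}, with the $T$-leaf decomposition \eqref{eq-Fn-T-leaves} of $\pi_n$. By Remark \ref{rk-Fn-Fn} and the $T$-equivariant Poisson isomorphism \eqref{eq-Fn-Fn}, both the leaf-stabilizer and the symplectic rank are preserved, and $u_1u_2\cdots u_n$ equals the product of the simple reflections in the refined sequence; so I may assume $\bfu = (s_{\al_1}, \ldots, s_{\al_n})$ is of Bott-Samelson type and set $w = s_{\al_1}\cdots s_{\al_n}$. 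Since the open subset $\O^\bfu_e$ of \eqref{eq-Ou-e} is the unique open $T$-leaf of $(\O^\bfu, \pi_n)$, Lemma \ref{le-T-leaves-T-X} immediately gives that $T \times \O^\bfu_e$ is a single $T$-leaf of $\bpi$, which is the first assertion.

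By Lemma \ref{le-T-leaves-T-X} it then suffices to compute the leaf-stabilizer $\tilde{\t}_x = p_2(\l_x)$ at one point $x$ of the $T$-leaf $\O^\bfu_e$ and to use that the rank of $\bpi$ at $(t,x)$ equals $\dim \O^\bfu_e + \dim \tilde{\t}_x = l(\bfu) + \dim \tilde{\t}_x$. I would evaluate everything at a point $x$ of the dense open subset $\O^{(e)} \cap \O^\bfu = \{\varepsilon_j \neq 0\}$, where by Lemma \ref{le-e-u} the Poisson structure $\pi_n$ is \emph{log-canonical} in the coordinates $(\varepsilon_1, \ldots, \varepsilon_n)$ and the $T$-action has weight $-\al_j$ on $\varepsilon_j$. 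Identifying $\t$ with $\t^*$ through $\lara$, I introduce the constant matrices $A\colon \t \to \CC^n$, $A\zeta = (\al_j(\zeta))_j$, the Gram matrix $G = (\la \al_i, \al_j\ra)$, and the skew matrix $\Pi$ of the log-canonical bracket, so that $\Pi = R - R^{\top}$ and $G = R + R^{\top}$ for the upper-triangular $R$ with $R_{ij} = \la \al_i, \al_j\ra$ $(i < j)$ and $R_{ii} = \tfrac12\la \al_i, \al_i\ra$. A direct computation of $\theta_x$ and $\pi^\flat_x$ in these coordinates presents $\l_x$ as $\{(\zeta, -\sum_j a_j \al_j)\colon -A\zeta = \Pi \mathbf{a}\}$; since the second projection of a Lagrangian subspace for \eqref{eq-t-t-lara} is the $\lara$-orthogonal of the first projection of $\l_x \cap (\t \oplus 0)$, this gives $\tilde{\t}_x = \{v \in \t \colon Av \in \Pi \ker A^{\top}\}^{\perp}$.

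The heart of the argument is then the \emph{reflection-factorization identity} $1 - w = A^{\top} R^{-1} A$ on $\t$, which I would prove by induction on $n$, peeling off $s_{\al_n}$: writing $w = w' s_{\al_n}$ with $w' = s_{\al_1}\cdots s_{\al_{n-1}}$, one has $1 - w = (1 - w') + w'(1 - s_{\al_n})$, and expanding $A^{\top} R^{-1} A$ by the block form of $R$ reduces the inductive step to $(1-w')\al_n = (A')^{\top}(R')^{-1}(A'\al_n)$, which is the inductive hypothesis applied to $\al_n$, since $A'\al_n = (\la \al_i, \al_n\ra)_{i<n}$ is exactly the off-diagonal column bordering $R$. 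Granting this, $\Pi$ and $2R$ agree on $\ker A^{\top} \subseteq \ker G$, so $\Pi \ker A^{\top} = R \ker A^{\top}$ and hence $\{v \colon Av \in \Pi \ker A^{\top}\} = \{v \colon R^{-1}Av \in \ker A^{\top}\} = \ker(A^{\top} R^{-1} A) = \ker(1 - w) = \t^{w}$. As $w$ is $\lara$-orthogonal, ${\rm Im}(1 - w) = (\ker(1 - w^{-1}))^{\perp} = (\t^{w})^{\perp}$, so $\tilde{\t}_x = (\t^{w})^{\perp} = {\rm Im}(1 - w)$, giving the second assertion, and the dimension of the generic symplectic leaves (those in the open leaf $T \times \O^\bfu_e$) equals $l(\bfu) + \dim {\rm Im}(1 - w)$. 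The only genuine obstacle is bookkeeping: fixing the signs and the $\#$-identifications in the coordinate description of $\l_x$, and verifying the base case and block expansion of the induction; the conceptual content is entirely packaged in the clean identity $1 - w = A^{\top} R^{-1} A$.
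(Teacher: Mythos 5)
Your proof is correct, and its skeleton coincides with the paper's: reduce to Bott-Samelson type via the $T$-equivariant Poisson isomorphism of Remark \ref{rk-Fn-Fn}, obtain the single-$T$-leaf statement and the rank formula from Lemma \ref{le-T-leaves-T-X}, and thereby reduce everything to computing the leaf stabilizer $\tilde{\t}_x = p_2(\l_x)$ at one point of $\O^\bfu_e$. Where you genuinely diverge is in that last computation. The paper harvests machinery it has already built: by Proposition \ref{pr-y-lam} and Lemma \ref{le-y-lam-weight}, each $\lam \in \calP^+$ produces (up to sign) the element $(\lam^\# + w(\lam^\#), \, -\lam^\# + w(\lam^\#))$ of the Drinfeld Lagrangian subspace $\l_x$, where $w = s_1s_2\cdots s_n$, because $y^\lam$ is Poisson with log-Hamiltonian vector field $-\sigma(\lam^\# + w(\lam^\#))$ and $T$-weight $\lam - w(\lam)$; since $\calP^+$ spans $\t^*$ and $\dim \l_x = \dim \t$, these elements exhaust $\l_x$, giving \eqref{eq-l-x-12} and hence $p_2(\l_x) = {\rm Im}(1-w)$ in one stroke, and yielding the full Lagrangian $\l_x$ rather than just its projection. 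You instead compute from scratch in the chart $\phi^{(e)}$ of Lemma \ref{le-e-u}, encoding $\l_x$ by the matrices $A$ and $\Pi = R - R^\top$, and you rest the argument on the factorization identity $1 - w = A^\top R^{-1}A$; combined with your (correct) Lagrangian duality $p_2(\l) = (p_1(\l \cap (\t \oplus 0)))^\perp$, the observation $G = AA^\top$ (so that $\Pi$ and $2R$ agree on $\ker A^\top$), and the $\lara$-orthogonality of $w$, this gives $\tilde{\t}_x = (\t^w)^\perp = {\rm Im}(1-w)$. I checked your induction: the block expansion of $A^\top R^{-1}A$ does reduce the step to $(A')^\top (R')^{-1}(A'\al_n^\#) = (1-w')(\al_n^\#)$, which is the inductive hypothesis, and the base case holds; the sign ambiguities you flag are harmless because every set that enters ($\Pi\ker A^\top$, the images and kernels) is a linear subspace, so the conclusion is unaffected. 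The trade-off is this: the paper's proof is a few lines because Proposition \ref{pr-y-lam} already contains the hard computation — the verification of \eqref{eq-y-xi} in the $\varepsilon$-coordinates is essentially the same telescoping that your induction repackages — whereas your argument is self-contained modulo Lemma \ref{le-e-u}, bypasses the functions $y^\lam$ and their Poisson/weight properties entirely, and isolates a clean linear-algebra identity for $1-w$ that is of some independent interest.
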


\begin{proof} By \cite[Theorem 1.1]{LM:flags}, $\O^\bfu_e$ is a single $T$-leaf of $\pi_n$ for the $T$-action on $\O^\bfu$.
By Lemma \ref{le-T-leaves-T-X}, $T \times \O^\bfu_e$ is a single $T$-leaf of $0 \bowtie \pi_n$
for the diagonal $T$-action on $T \times \O^\bfu$. Let $x \in \O^\bfu_e$. The rest of Proposition \ref{pr-T-O-u}
would follow from Lemma \ref{le-T-leaves-T-X} once we prove that the Drinfeld Lagrangian
subspace $\l_x \subset \t \oplus \t$ for the $T$-action on $(\O^\bfu_e, \pi_n)$ is given by
\begin{equation}\label{eq-l-x-12}
\l_x = \{(\zeta + u_1u_2 \cdots u_n(\zeta), \; \; -\zeta + u_1u_2 \cdots u_n(\zeta)): \; \zeta \in \t\}.
\end{equation}
To prove \eqref{eq-l-x-12}, note that  by Remark \ref{rk-Fn-Fn},
we may assume that $\bfu = (s_1, s_2, \ldots, s_n)$ is a sequence of simple
reflections. By \cite[Theorem 1.1]{LM:flags}, the leaf stabilizer of
$\t$ in $\O^\bfu_e$ is ${\rm Im}(1 + s_1s_2 \ldots, s_n)$.
Consider now the regular functions $y^\lam$ on $\O^\bfu$ defined in $\S$\ref{subsec-y-lam}, where $\lam \in \calP^+$.
By Proposition \ref{pr-y-lam}, for every $\lam \in \calP^+$, one has
\[
\sigma(\lam^\# + s_1s_2 \cdots s_n(\lam^\#)) = -\pi_n^\flat\left(\frac{d y^\lam}{y^\lam}\right).
\]
As the function $y^\lam$ has $T$-weight $\lam - s_1s_2\cdots s_n(\lam)$ by Lemma \ref{le-y-lam-weight},
it follows from definition that $\l_x$ is as described.
\end{proof}

\subsection{The collection $\tcYGu$ of regular functions on $(T \times \O^\bfu, \, 0 \bowtie \pi_n)$}\label{subsec-T-O}
Let again $\bfu = (s_1, \ldots,s_n)$ be any sequence of simple reflections in  $W$, and recall from Definition
\ref{de-class-Y} the
collection $\cYGu$ of regular functions on $\O^\bfu$.

\begin{definition}\label{nota-class-tY}
{\rm
Let $\tcYGu$ be the set of all regular functions on $T \times \O^\bfu$ of the form
$t^\lam \, y$ with $\lam \in X(T)$ and $y \in \cYGu$, i.e,
\[
\tcYGu = \left\{c\,t^\lam\, y_{[i,j]}^{\lam'}: \; c \in \CC, \;  1 \leq i \leq n, \, 0 \leq i-1 \leq j \leq n,  \;
\lam \in X(T), \, \lam' \in \calP^+\right\}.
\]
\hfill $\diamond$}
\end{definition}

Note that both $T \times \O^\bfu$ and $T \times \O^\bfu_e$ are principal
Zariski open subsets in affine spaces and are thus affine varieties.
Denote the restriction of the Poisson structure $\bpi$ to  $T \times \O^\bfu_e$ also  by  $\bpi$.
One then has the (smooth) Poisson affine varieties $(T \times \O^\bfu, \, \bpi)$ and
$(T \times \O^\bfu_e, \, \bpi)$. We also regard $\tilde{y} \in \tcYGu$ as a regular function on $T \times \O^\bfu_e$
by restriction.

\begin{theorem}\label{thm-tcYGu}
Every $\tilde{y} \in \tcYGu$ has complete Hamiltonian flows with property $\calQ$
in both $T \times \O^\bfu$ and $T \times \O^\bfu_e$.
\end{theorem}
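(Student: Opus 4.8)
The plan is to reduce the statement to Lemma \ref{le-simple} and Lemma \ref{le-simple-2}, in parallel with the proof of Theorem \ref{thm-Y-calS}, the one new feature being the torus factor. First I would fix a basis $\mu_1, \ldots, \mu_r$ of $X(T)$ and set $z_k = t^{\mu_k}$ and $w_k = t^{-\mu_k}$, so that $t \mapsto (z_1, \ldots, z_r, w_1, \ldots, w_r)$ embeds $T$ into $\CC^{2r}$ as the subvariety cut out by the equations $z_kw_k = 1$. Together with the Bott-Samelson coordinates $x_1, \ldots, x_n$ on $\O^\bfu$, this realizes $T \times \O^\bfu$ as a smooth affine subvariety of $\CC^{2r+n}$. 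Since any $t^\lam$ is a monomial with non-negative exponents in the $z_k$ and the $w_k$, every $\tilde{y} = t^\lam\, y_{[i,j]}^{\lam'} \in \tcYGu$ becomes a genuine polynomial in these coordinates. By \eqref{eq-t-lam-f} and the formulas of $\S$\ref{subsec-Zu}, all the brackets $\{z_k, x_m\}$, $\{w_k, x_m\}$ and $\{x_m, x_{m'}\}$ are polynomial, so $\bpi$ extends to an algebraic Poisson structure on $\CC^{2r+n}$; a direct check shows each product $z_kw_k$ is a Casimir for this extension, whence $T \times \O^\bfu$ is a Poisson subvariety.

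Next I would record the two relevant bracket computations. Writing $\lam_y$ for the $T$-weight of $y = y_{[i,j]}^{\lam'}$ supplied by Lemma \ref{le-yij-weights}, the Leibniz rule together with \eqref{eq-t-lam-f} and $\{t^\lam, t^\mu\} = 0$ gives
\[
\{\tilde{y}, \; t^\mu\} = \la \mu, \, \lam_y\ra \, t^\mu \, \tilde{y}, \hs \mu \in X(T),
\]
so that the bracket of $\tilde{y}$ with each torus generator $z_k$ and $w_k$ is log-canonical. For a Bott-Samelson coordinate $x_m$, whose $T$-weight is $s_{[1, m-1]}(\al_m)$ by \eqref{eq-t-xj}, the same two rules yield
\[
\{\tilde{y}, \; x_m\} = t^\lam \{y, \, x_m\} - \la \lam, \; s_{[1, m-1]}(\al_m)\ra \, x_m \tilde{y},
\]
and the analysis of $\{y, x_m\}$ in the proof of Theorem \ref{thm-Y-calS} shows that, after the reordering $(x_i, \ldots, x_j, x_{j+1}, \ldots, x_n, x_{i-1}, \ldots, x_1)$ of the $x$'s, one has $\{y, x_m\} = \kappa_m x_m y + f_m$ with $f_m$ a polynomial in the strictly earlier $x$-coordinates.

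I would then apply Lemma \ref{le-simple} on $\CC^{2r+n}$, ordering the generators as $z_1, \ldots, z_r, w_1, \ldots, w_r$ followed by the reordered $x$'s. For the torus generators the bracket with $\tilde{y}$ is log-canonical, so \eqref{eq-y-tildex} holds with vanishing lower-order term. For $x_m$ the two displays above combine into
\[
\{\tilde{y}, \; x_m\} = \bigl(\kappa_m - \la \lam, \; s_{[1, m-1]}(\al_m)\ra\bigr)\, x_m \tilde{y} + t^\lam f_m,
\]
and since $t^\lam$ is a monomial in the $z_k, w_k$ placed first while $f_m$ involves only strictly earlier $x$'s, the term $t^\lam f_m$ lies in the polynomial ring generated by the coordinates preceding $x_m$; thus \eqref{eq-y-tildex} again holds. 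Lemma \ref{le-simple} then shows $\tilde{y}$ has complete Hamiltonian flow with property $\calQ$ on $\CC^{2r+n}$, and, exactly as in the first part of the proof of Lemma \ref{le-simple-2}, every integral curve through a point of the Poisson subvariety $T \times \O^\bfu$ remains in it, giving property $\calQ$ on $T \times \O^\bfu$.

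Finally, for $T \times \O^\bfu_e = \{q \in T \times \O^\bfu: \varphi(q) \neq 0\}$, where $\varphi = \prod_{\al \in \{\al_1, \ldots, \al_n\}} y^{\omega_\al}$ as in Remark \ref{rk-y-lam}, I would verify the hypothesis of Lemma \ref{le-simple-2} with $g = \varphi$. Since each $y^{\omega_\al} = y_{[1, n]}^{\omega_\al}$, Lemma \ref{le-y-y} shows $\{y, \varphi\}$ is log-canonical, while \eqref{eq-t-lam-f} applied to the $T$-homogeneous $\varphi$ shows $\{t^\lam, \varphi\}$ is a constant multiple of $t^\lam \varphi$; by the Leibniz rule $\{\tilde{y}, \varphi\} = \kappa\, \tilde{y}\varphi$ for some $\kappa \in \CC$, and Lemma \ref{le-simple-2} then delivers property $\calQ$ on $T \times \O^\bfu_e$. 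I expect the only real obstacle to be organizational: arranging the generators so that the triangular condition \eqref{eq-y-tildex} holds with the torus prefactor $t^\lam$ absorbed into the lower-order term. This is precisely why both $z_k$ and $w_k = z_k^{-1}$ must be retained as generators, so that $t^\lam$, and hence each $t^\lam f_m$, is polynomial in the coordinates preceding $x_m$.
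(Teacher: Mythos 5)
Your proposal is correct and is essentially the paper's own proof: both arguments polynomialize the torus factor by embedding $T$ as an affine subvariety cut out by Casimir-type relations so that every $\tilde{y} \in \tcYGu$ becomes a genuine polynomial (the paper uses coordinates $\xi_0, \xi_1, \ldots, \xi_r$ with the single relation $\xi_0\xi_1\cdots\xi_r = 1$, you use $z_k, w_k$ with the $r$ relations $z_kw_k = 1$), then apply Lemma \ref{le-simple} with the torus coordinates listed first and the reordered Bott-Samelson coordinates after, and finally descend via Lemma \ref{le-simple-2} to $T \times \O^\bfu$ and to $T \times \O^\bfu_e$ using the log-canonical bracket with $\varphi$. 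The only difference is the cosmetic choice of affine model for $T$ (and, as in the paper, the Jacobi identity for the extended bracket on the ambient affine space is asserted rather than checked); the triangularity argument, the bracket computations, and the two-lemma reduction coincide with the paper's.
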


\begin{proof}
Let $\beta_1, \ldots, \beta_r$ be all the simple roots. Let $(\xi_1, \ldots, \xi_r)$ be the
coordinates on $\CC^r$, and identify $T$ with $(\CC^\times)^r\subset \CC^r$ by
\[
T \ni t \longmapsto (\xi_1(t), \ldots, \xi_r(t)) \in \CC^r,
\]
where $\xi_j(t) = t^{\omega_{\beta_j}}$. Let again $(x_1, \ldots, x_n)$ be the
coordinates on $\O^\bfu$ via the parametrization $\phi^\bfu: \CC^n \to \O^\bfu$.
Note then that by \eqref{eq-t-lam-f}, the Poisson structure $\bpi$ on $T \times \O^\bfu
\cong (\CC^\times)^r \times \CC^n$ extends
to a unique algebraic Poisson structure on $\CC^r \times \CC^n$, still denoted as $\bpi$, such that
for any $T$-homogeneous regular function $f$ on $\O^\bfu$ with $T$-weight $\lam_f$,
\begin{equation}\label{eq-xi-f}
\{\xi_k, \; f\} = -\la \omega_{\beta_k}, \; \lam_f\ra \, \xi_k f, \hs k = 1, \ldots, r.
\end{equation}
Consider $\CC^{1+r+n}$ with coordinates $(\xi_0, \xi_1, \ldots, \xi_r, x_1, \ldots, x_n)$ and extend the
Poisson structure  $\bpi$ on $\CC^{r+n}$ to a Poisson structure $\pi$ on $\CC^{1+r+n}$ such that
\[
\{\xi_0,\, \xi_k\} = 0 \;\;\; \mbox{and} \;\;\; \{\xi_0, \, x_j\} = \la \omega_{\beta_1} + \cdots + \omega_{\beta_r}, \,
s_1s_2\cdots s_{j-1}(\al_j)\ra \, \xi_0 x_j, \;\;\; j = 1, \ldots, n,
\]
and such that the projection $(\CC^{1+r+n}, \pi) \to (\CC^{r+n}, \bpi)$ to the last $(r+n)$-coordinates
is Poisson. It then follows from the definition that the function $\xi_0\xi_1\cdots \xi_r$ on $\CC^{1+r+n}$ is
a Casimir function. Consider now the affine Poisson subvariety $X$ of $(\CC^{1+r+n}, \pi)$ given by
\[
X = \{(\xi_0, \xi_1, \ldots, \xi_r, x_1, \ldots, x_n): \; \xi_0\xi_1\cdots \xi_r = 1\}.
\]
The embedding
$\Phi: (T \times \O^\bfu, \; \bpi) \to (\CC^{1+r+n}, \; \pi)$
given by
\[
(t, q) \longmapsto ((\xi_1(t) \cdots \xi_r(t))^{-1}, \, \xi_1(t), \, \ldots, \, \xi_r(t), \, (\phi^\bfu)^{-1}(q)),
\hs (t, q) \in T \times \O^\bfu,
\]
is then Poisson with $X$ as its image. Let $\tilde{y} = t^\lam y^{\lam'}_{[i, j]} \in \tcYGu$ as in
Definition \ref{nota-class-tY}. Since for each $k = 1 \ldots, r$,
$\xi_k^{-1} = \Phi^*(\xi_0\xi_1 \cdots \xi_{k-1}\xi_{k+1}\cdots \xi_r)$ as regular functions on $T \times \O^\bfu$, there exists a monomial $\mu \in \CC[\xi_0, \xi_1, \ldots, \xi_r]$
such that $\Phi^*\left(\mu  y^{\lam'}_{[i, j]}\right) = \tilde{y}$. Consider now the polynomial
$y = \mu  y^{\lam'}_{[i, j]} \in \CC[\xi_0, \xi_1, \ldots, \xi_r, x_1, \ldots, x_n]$.
By considering a re-ordering $(\tilde{x}_1, \ldots, \tilde{x}_n)$ of the coordinates
$(x_1, \ldots, x_n)$ as in the proof of Theorem \ref{thm-Y-calS}, and consider the coordinates
$(\xi_0, \xi_1, \ldots, \xi_r, \tilde{x}_1, \ldots, \tilde{x}_n)$ on $\CC^{1+r+n}$, one proves
as in Theorem \ref{thm-Y-calS} that $y$ has complete
Hamiltonian flow in $\CC^{1+r+n}$ with property $\calQ$. By Lemma \ref{le-simple-2},
$\tilde{y} = \Phi^*(y)$ has complete Hamiltonian flow in $T \times \O^\bfu$ with property $\calQ$.
Note also that
$\Phi(T \times \O^\bfu_e) = \{x \in X: g(x) \neq 0\}$, where
\[
g(\xi_0, \xi_1, \ldots, \xi_r, x_1, \ldots, x_n) = \!\!\!\prod_{\alpha \in \{\al_1, \ldots, \al_n\}}\!\!\!
y^{\omega_\al}(\phi^\bfu(x_1, \ldots, x_n)) \in \CC[\xi_0, \xi_1, \ldots, \xi_r, x_1, \ldots, x_n].
\]
By Corollary \ref{co-y-f}, $\{y,g\}$ is log-canonical. By Lemma \ref{le-simple-2}, one sees that
$\tilde{y}|_{T \times \O^\bfu_e}$ has complete Hamiltonian flows in $T \times \O^\bfu_e$ with property $\calQ$.
\end{proof}

It follows from Lemma \ref{le-y-y} that for $1 \leq i' \leq i \leq n+1$, $i-1 \leq j \leq j' \leq n$, and
any $\lam_1, \lam_2, \lam_1^\prime, \lam_2^\prime \in \calP^+$,
the Poisson bracket between $t^{\lam_1}y_{[i,j]}^{\lam_2}$ and $t^{\lam^\prime_1}y_{[i',j']}^{\lam^\prime_2}$
is log-canonical. We single out the following cases, which will appear in our
application to double Bruhat cells in $\S$\ref{sec-KZ}, and
for which the Poisson brackets are specially simple.

\begin{notation}\label{nota-y-special}
{\rm
For $1 \leq i \leq n +1$, $i-1 \leq j \leq n$,  and $\lam \in \calP^+$, let
\[
\tilde{y}_{[i,j]}^\lam = t^{-s_{[1, i-1]}(\lam)} y^\lam_{[i, j]} \in \tcYGu,
\]
where $y^\lam_{[i, j]} = 1$ if $j = i-1$.
Note then that with respect to the diagonal $T$-action on $T \times \O^\bfu$,
$\tilde{y}_{[i,j]}^\lam$ is $T$-homogeneous with weight $-s_{[1, j]}(\lam)$ when $i \leq j$ (see Notation \ref{nota-s-interval}).
\hfill $\diamond$
}
\end{notation}

\begin{lemma}\label{le-y-y-special}
For any $\lam, \lam' \in \calP^+$ and $1 \leq i' \leq i \leq n+1$,  $i-1\leq j \leq j' \leq n$, and with respect to the Poisson structure
$0 \bowtie \pi_n$ on $T \times \O^\bfu$, one has
\begin{equation}\label{eq-bracket-y-y}
\left\{\tilde{y}^\lam_{[i, j]}, \, \; \tilde{y}^{\lam'}_{[i', j']}\right\}
= (\la s_{[1, i-1]}(\lam), \; s_{[1, i'-1]}(\lam')\ra - \la s_{[1, j]}(\lam), \; s_{[1, j']}(\lam')\ra)\;
\tilde{y}^\lam_{[i, j]}\,\tilde{y}^{\lam'}_{[i', j']}.
\end{equation}
\end{lemma}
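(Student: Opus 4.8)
The plan is to expand the bracket using the biderivation (Leibniz) property of $0 \bowtie \pi_n$, after writing each factor as a product of a torus character and a function pulled back from $\O^\bfu$: $\tilde{y}^\lam_{[i,j]} = t^{-s_{[1,i-1]}(\lam)}\, y^\lam_{[i,j]}$ and $\tilde{y}^{\lam'}_{[i',j']} = t^{-s_{[1,i'-1]}(\lam')}\, y^{\lam'}_{[i',j']}$. To lighten the notation I would set $P = s_{[1,i-1]}(\lam)$, $Q = s_{[1,j]}(\lam)$, $R = s_{[1,i'-1]}(\lam')$ and $S = s_{[1,j']}(\lam')$, so that by Lemma \ref{le-yij-weights} the functions $y^\lam_{[i,j]}$ and $y^{\lam'}_{[i',j']}$ are $T$-homogeneous, for the $T$-action on $\O^\bfu$, with weights $P-Q$ and $R-S$ respectively. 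The Leibniz rule then expresses $\{\tilde{y}^\lam_{[i,j]},\, \tilde{y}^{\lam'}_{[i',j']}\}$ as a sum of four terms arising from the four pairings among the factors $t^{-P}$, $y^\lam_{[i,j]}$, $t^{-R}$ and $y^{\lam'}_{[i',j']}$, each multiplied by the product $\tilde{y}^\lam_{[i,j]}\,\tilde{y}^{\lam'}_{[i',j']}$.

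Next I would evaluate these terms using the three basic inputs available. Since the projection $(T \times \O^\bfu,\, 0 \bowtie \pi_n) \to (T, 0)$ is Poisson, the character--character term $\{t^{-P}, t^{-R}\}$ vanishes. The two mixed terms $\{t^{-P}, y^{\lam'}_{[i',j']}\}$ and $\{y^\lam_{[i,j]}, t^{-R}\}$ are computed from \eqref{eq-t-lam-f}, using the $\O^\bfu$-weights $R-S$ and $P-Q$ from Lemma \ref{le-yij-weights}; they contribute coefficients $\la P,\, R-S\ra$ and $-\la R,\, P-Q\ra$. The remaining, purely $\O^\bfu$-valued term $\{y^\lam_{[i,j]}, y^{\lam'}_{[i',j']}\}$ is supplied directly by Lemma \ref{le-y-y}, contributing the coefficient $\la P-Q,\, R+S\ra$.

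Finally I would add the three coefficients, $\la P-Q,\, R+S\ra - \la R,\, P-Q\ra + \la P,\, R-S\ra$, and simplify using only the symmetry of $\lara$. Expanding and collecting, the cross terms $\la P, S\ra$ and $\la Q, R\ra$ cancel, leaving exactly $\la P, R\ra - \la Q, S\ra$, which is the claimed coefficient $\la s_{[1,i-1]}(\lam),\, s_{[1,i'-1]}(\lam')\ra - \la s_{[1,j]}(\lam),\, s_{[1,j']}(\lam')\ra$. This cancellation is the only genuine computation and is the step most worth double-checking, but it is entirely routine; there is no conceptual obstacle. The degenerate boundary cases $j = i-1$ or $j' = i'-1$ (in particular $i = n+1$, which forces $j = n$ and $y^\lam_{[i,j]} = 1$) fall out of the same formula: there the relevant factor is a pure character of weight $P = Q$ (resp. $R = S$), Lemma \ref{le-y-y} still applies through its own treatment of the case $j = i-1$, and \eqref{eq-bracket-y-y} reduces correctly.
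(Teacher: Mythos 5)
Your proposal is correct and follows essentially the same route as the paper's proof: the paper also writes each $\tilde{y}$ as a character times a $y_{[\cdot,\cdot]}$, obtains the coefficient $-\la a',\,a-b\ra + \la a-b,\,a'+b'\ra + \la a,\,a'-b'\ra$ (your $-\la R, P-Q\ra + \la P-Q, R+S\ra + \la P, R-S\ra$) from the definition of $0\bowtie\pi_n$ together with Lemma \ref{le-y-y}, and simplifies it to $\la a, a'\ra - \la b, b'\ra$. The only cosmetic difference is that the paper disposes of the degenerate case by noting $j'=i'-1$ forces $j=i-1$ (so both sides vanish), whereas you absorb it into the general formula; both treatments are fine.
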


\begin{proof} If $j' = i'-1$, then $j = i-1$, and both sides of \eqref{eq-bracket-y-y} are zero.
Assume that $j' \geq i'$. Set $a = s_{[1, i-1]}(\lam), b = s_{[1,j]}(\lam),
a'=s_{[1, i'-1]}(\lam')$, and $b' = s_{[1,j']}(\lam')$. It follows from the definition of $0 \bowtie \pi_n$
and Lemma \ref{le-y-y} that
\[
\left\{\tilde{y}^\lam_{[i, j]}, \, \; \tilde{y}^{\lam'}_{[i', j']}\right\} =
c \;\tilde{y}^\lam_{[i, j]}\, \tilde{y}^{\lam'}_{[i', j']},
\]
with $c = -\la a', \, a-b\ra + \la a-b, \, a'+b'\ra + \la a, \, a'-b'\ra =\la a, \, a'\ra -\la b, \, b'\ra$.
\end{proof}


\subsection{Integrable systems on $\O^{(\bfu^{-1}, \bfu)}$}\label{subsec-int-O}
Let again $\bfu = (s_{\alpha_1}, \ldots, s_{\al_n}) =(s_1, \ldots, s_n)$ be any sequence of simple reflections in $W$.
Consider the sequence
\[
(\bfu^{-1}, \bfu) = (s_n, \; \ldots, \; s_2, \; s_1, \; s_1, \; s_2, \; \ldots, s_n)
\]
and the Poisson manifold $(\O^{(\bfu^{-1}, \bfu)}, \pi_{2n})$. Recall
the parametrization $\phi^{(\bfu^{-1}, \bfu)}: \CC^{2n} \to \O^{(\bfu^{-1}, \bfu)}$ given by
\[
\phi^{(\bfu^{-1}, \bfu)}(x_1, \ldots, x_{2n}) = [p_{\al_n}(x_1), \; \ldots,
\; p_{\al_1}(x_n), \; p_{\al_1}(x_{n+1}), \; \ldots, \;
p_{\al_n}(x_{2n})].
\]
For $k = 1, \ldots, n$, define the regular functions $y_k \in \calY^{(\bfu^{-1}, \bfu)}$ on
$\O^{(\bfu^{-1}, \bfu)}$ (see notation in Definition \ref{de-class-Y})
\begin{equation}\label{eq-yk}
y_k = \Delta^{\omega_{\al_k}} (p_{\al_k}(x_{n+1-k})\, \cdots \,
 p_{\al_1}(x_n)\, p_{\al_1}(x_{n+1}) \cdots p_{\al_k}(x_{n+k})) = y^{\omega_{\al_k}}_{[n+1-k, \, n+k]}.
\end{equation}

\begin{theorem}\label{thm-integrable-O}
The functions $y_1, \ldots, y_n$ form an integrable system on
$(\O^{(\bfu^{-1}, \bfu)}, \pi_{2n})$ that has complete
Hamiltonian flows with property $\calQ$.
\end{theorem}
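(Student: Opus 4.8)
The plan is to check, in turn, the four defining properties of an integrable system with complete Hamiltonian flows with property $\calQ$: completeness of each flow, pairwise Poisson commutativity, the matching of the number of functions with half the dimension of the generic symplectic leaf, and functional independence on that leaf. Completeness is immediate, since each $y_k = y^{\omega_{\al_k}}_{[n+1-k,\,n+k]}$ belongs to the collection $\calY^{(\bfu^{-1},\bfu)}$, so Theorem \ref{thm-Y-calS} applies verbatim and shows that $y_k$ has complete Hamiltonian flow with property $\calQ$ on $\O^{(\bfu^{-1},\bfu)}$.

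The heart of the matter is a single combinatorial identity. Writing the doubled word as $(\bfu^{-1},\bfu) = (\sigma_1,\ldots,\sigma_{2n})$ and setting $i_k = n+1-k$, $j_k = n+k$, a telescoping cancellation gives $\sigma_1\cdots\sigma_{j_k} = (s_n\cdots s_1)(s_1\cdots s_k) = s_n s_{n-1}\cdots s_{k+1}$, while $\sigma_1\cdots\sigma_{i_k-1} = s_n s_{n-1}\cdots s_{k+1}$ as well, so that $s_{[1,\,i_k-1]} = s_{[1,\,j_k]}$ in $W$. By Lemma \ref{le-yij-weights} the $T$-weight of $y_k$ is $s_{[1,\,i_k-1]}(\omega_{\al_k}) - s_{[1,\,j_k]}(\omega_{\al_k}) = 0$; every $y_k$ is $T$-invariant. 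To see that the $y_k$ Poisson commute, take $k \leq l$; then the index intervals are nested, $[i_k,j_k]\subseteq[i_l,j_l]$, so Lemma \ref{le-y-y} applies with $y_k$ in the first slot and yields $\{y_k,y_l\} = \langle s_{[1,\,i_k-1]}(\omega_{\al_k}) - s_{[1,\,j_k]}(\omega_{\al_k}),\ \cdot\,\rangle\, y_k y_l = 0$, the first argument of the pairing being zero. Antisymmetry disposes of the case $k > l$.

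Next I would pin down the generic symplectic leaf. The product of all reflections in the doubled word is $(s_n\cdots s_1)(s_1\cdots s_n) = e$, so by \cite[Theorem 1.1]{LM:flags} the leaf stabilizer of $\t$ on the open $T$-leaf $\O^{(\bfu^{-1},\bfu)}_e$ is ${\rm Im}(1+e) = \t$. Hence the symplectic leaves have codimension $0$ inside this $T$-leaf, so the generic symplectic leaf is the open dense set $\O^{(\bfu^{-1},\bfu)}_e$, of dimension $2n$. Thus the $n$ functions $y_1,\ldots,y_n$ are exactly the right number for an integrable system.

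Finally, functional independence. Applying Lemma \ref{le-y-xx} to the sub-generalized-Bruhat-cell carved out by positions $[n+1-k,\,n+k]$ --- whose word is $(s_k,\ldots,s_1,s_1,\ldots,s_k)$, in which $\al_k$ first occurs at position $n+1-k$ and last at position $n+k$ --- each $y_k$ has the form $y_k = A_k\,x_{n+1-k}x_{n+k} + B_k\,x_{n+1-k} + C_k\,x_{n+k} + D_k$ with $A_k,B_k,C_k,D_k \in \CC[x_{n+2-k},\ldots,x_{n+k-1}]$ and $A_k\neq 0$. I would then compute the Jacobian of $(y_1,\ldots,y_n)$ with respect to the $n$ outer coordinates $(x_{n+1},\ldots,x_{2n})$. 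Since $y_k$ involves only $x_{n+1-k},\ldots,x_{n+k}$, one has $\partial y_k/\partial x_{n+l} = 0$ for $l > k$, so this Jacobian is lower triangular with diagonal entries $\partial y_k/\partial x_{n+k} = A_k\,x_{n+1-k} + C_k$; each such entry is a nonzero polynomial because the monomial $A_k x_{n+1-k}$ cannot cancel (the variable $x_{n+1-k}$ occurs in neither $A_k$ nor $C_k$). Its determinant $\prod_{k=1}^{n}(A_k x_{n+1-k} + C_k)$ is therefore a nonzero element of the polynomial ring, hence nonzero on a dense open subset, which meets the open leaf $\O^{(\bfu^{-1},\bfu)}_e$ (on which, being open in $\CC^{2n}$, restriction of differentials changes nothing); this yields functional independence on the generic symplectic leaf. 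The main obstacle is really the identity $s_{[1,\,i_k-1]} = s_{[1,\,j_k]}$ together with the leaf-stabilizer computation: these two facts are what make the construction work, simultaneously forcing Poisson commutativity and fixing the symplectic dimension at $2n$, whereas the triangular-Jacobian step is then routine.
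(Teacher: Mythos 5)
Your proof is correct and follows essentially the same route as the paper's, which condenses the argument into four citations: Lemma \ref{le-y-y} for Poisson commutativity (your telescoping identity $s_{[1,\,i_k-1]}=s_{[1,\,j_k]}$ is precisely why the log-canonical coefficient vanishes), \cite[Theorem 1.1]{LM:flags} for non-degeneracy of $\pi_{2n}$ on $\O^{(\bfu^{-1},\bfu)}_e$, Lemma \ref{le-y-xx} for functional independence (your lower-triangular Jacobian is the detail the paper leaves implicit), and the completeness results of $\S$\ref{sec-GBC}. The only discrepancy is cosmetic: the paper cites Theorem \ref{thm-tcYGu} for completeness, whereas your citation of Theorem \ref{thm-Y-calS} is the more apt one, since the $y_k$ are functions on $\O^{(\bfu^{-1},\bfu)}$ rather than on $T\times\O^{(\bfu^{-1},\bfu)}$.
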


\begin{proof}
By Lemma \ref{le-y-y}, $\{y_k, y_{k'}\} = 0$ for $1\leq k < k' \leq n$.
By \cite[Theorem 1.1]{LM:flags}, the Poisson structure $\pi_{2n}$ is non-degenerate everywhere
in $\O^{(\bfu^{-1}, \bfu)}_e$. It follows from Lemma \ref{le-y-xx} that the functions $y_1, \ldots, y_n$ are
functionally
independent on $\O^{(\bfu^{-1}, \bfu)}$. By Theorem \ref{thm-tcYGu}, $y_1, \ldots, y_n$ form an
integrable system on $(\O^{(\bfu^{-1}, \bfu)}, \pi_{2n})$ that has complete Hamiltonian flows with
property $\calQ$.
\end{proof}

\begin{remark}\label{rk-integrable-T-O}
{\rm
Introduce the regular function $\tilde{y}_k$ on $T \times \O^{(\bfu^{-1}, \bfu)}$
by (see Notation \ref{nota-y-special})
\begin{align}\label{eq-tulde-y-k}
\tilde{y}_k & = t^{-s_n s_{n-1}\cdots s_{k}(\omega_{\al_k})}
\Delta^{\omega_{\al_k}} (p_{\al_{k-1}}(x_{n+2-k})\, \cdots \,
 p_{\al_1}(x_n)\, p_{\al_1}(x_{n+1}) \cdots p_{\al_{k-1}}(x_{n+k-1}))\\
\nonumber
&=\tilde{y}_{[n-k+2, \, n+k-1]}^{\,\omega_{\al_k}}, \hs k = 1, \ldots, n.
\end{align}
As we will see in Example \ref{ex-KZ-system},  when $\bfu$ is a reduced, the
functions $\tilde{y}_1, \ldots, \tilde{y}_n$ form an integrable system on $T \times \O^{(\bfu^{-1}, \bfu)}_e$,
which under the Fomin-Zelevinsky isomorphism (see Definition \ref{de-FZ-map}), is precisely the
Kogan-Zelevinsky integrable system on the double Bruhat cell $G^{u, u}$ defined by $\bfu$, where $u=s_1s_2 \cdots s_n$.
For an arbitrary $\bfu = (s_1, \ldots, s_n)$, not necessarily reduced, we know by
Proposition \ref{pr-T-O-u} that $T \times \O^{(\bfu^{-1}, \bfu)}_e$ is a single $T$-leaf
of the Poisson structure $0 \bowtie \pi_{2n}$ (for the diagonal $T$-action), all the symplectic leaves
have dimension $2n$, and by Lemma \ref{le-y-y-special},
the functions $\tilde{y}_1, \ldots, \tilde{y}_n$ pairwise Poisson commute.
However, we are unable to show that the functions $\tilde{y}_1, \, \ldots, \, \tilde{y}_{n}$
are functionally independent  when restricted to a symplectic leaf of $0 \bowtie \pi_{2n}$, as we need
to better understand the symplectic leaves in $T \times \O^{(\bfu^{-1}, \bfu)}_e$. We will
leave this to future research.
\hfill $\diamond$
}
\end{remark}

\begin{remark}\label{rk-Fn-Fn-2}
{\rm
For any arbitrary sequence $\bfu = (u_1, \ldots, u_n)$ in $W$, by choosing a reduced word for each $u_j$ and
by identifying $\O^\bfu$ with a $\O^{\tilde{\bfu}}$ of Bott-Samelson type as in Remark \ref{rk-Fn-Fn}, one
then has the collections of regular functions $\tilde{\calY}$ on $\O^\bfu$ and $\tcYGu$ on
$T \times \O^\bfu$, and Theorem \ref{thm-Y-calS} and Theorem \ref{thm-tcYGu} apply. Moreover, one has the
integrable system on $\O^{(\bfu^{-1}, \bfu)}$ defined in $\S$\ref{thm-integrable-O}, where
$\bfu^{-1} = (u_n^{-1}, \ldots, u_1^{-1})$, which has complete Hamiltonian flows with property $\calQ$.
\hfill $\diamond$
}
\end{remark}

\begin{example}\label{ex-ss-0}
{\rm
Consider $G = SL(2, \CC)$ and $\bfu_n = (s, s, \ldots, s)$ of length $n$, where $s = s_\alpha$, and $\alpha$
is the only simple root of $SL(2, \CC)$. Take the bilinear form $\lara_\g$ on $\g=\sl(2, \CC)$ such that
$\la \al, \al \ra = 1$. Then the Bott-Samelson Poisson structure associated to $(SL(2, \CC), \bfu_n)$
on $\CC^n$ is explicitly given by (see \cite[$\S$6.2]{EL:BS})
\begin{align*}
&\{x_i, \; x_{i+1}\} = x_ix_{i+1}-1,\hs 1 \leq i \leq n-1,\\
&\{x_i, \; x_j\} = (-1)^{j-i+1} x_ix_j, \hs 1 \leq i < j \leq n, \; j-i \geq 2.
\end{align*}
Let $\omega$ be the only fundamental weight.
To see what the polynomials $y_{[i, j]}^{\omega} \in \CC[x_i, \ldots, x_j]$
are for $1 \leq i \leq j \leq n$, for any integer $k \geq 1$, introduce $E_k \in \ZZ[x_1, \ldots, x_k]$ via the
identity
\begin{equation}\label{eq-def-Ek}
\left(\begin{array}{cc}x_1 & -1 \\ 1 & 0\end{array}\right) \left(\begin{array}{cc}x_2 & -1 \\ 1 & 0\end{array}\right)
\cdots
\left(\begin{array}{cc}x_k & -1 \\ 1 & 0\end{array}\right) =
\left(\begin{array}{cc}E_k (x_1, \ldots, x_k)& \ast \\ \ast & \ast\end{array}\right) \in SL(2, \CC).
\end{equation}
The polynomials $E_k$ are, with a variation of signs,
the well-known {\it Euler continuants}
\cite{Knuth:Art}, and are related to continued fractions via
\[
\frac{E_j(x_1,...,x_j)}{E_{j-1}(x_2,...,x_{j})} = x_1- \frac{1}{x_{2} -
\displaystyle \frac{1}{\cdots - \displaystyle \frac{1}{x_{j-1} - \displaystyle \frac{1}{x_j}}}},
\]
and can be recursively defined by $E_0 = 1$, $E_1(x_1) = x_1$, and
\[
E_j(x_1, \ldots, x_j) = x_jE_{j-1}(x_1, \ldots, x_{j-1})-E_{j-2}(x_1, \ldots, x_{j-2}), \hs j \geq 2.
\]
In our example, as $\displaystyle q_\al(c) = \left(\begin{array}{cc} c & -1 \\ 1 & 0\end{array}\right)$
for $c \in \CC$, it follows from the definition that
\[
y_{[i, j]}^{\omega} = E_{j-i+1}(x_i, x_{i+1}, \ldots, x_j) \in \ZZ[x_i, x_{i+1}, \ldots, x_j], \hs 1 \leq i \leq j \leq n.
\]
As $(\bfu_n^{-1}, \bfu_n) = \bfu_{2n}$, the polynomials $y_1, \ldots, y_n$ forming
a complete integrable system on $\CC^{2n}$ with property $\calQ$, as stated in Theorem \ref{thm-integrable-O}, are given by
\[
y_k = E_{2k}(x_{n-k+1}, \, x_{n-k+2}, \, \ldots, \, x_{n+k}), \hs k = 1, 2, \ldots, n.
\]
For example, for $n = 2$, we have $y_1 = x_2x_3-1$ and $y_2 = x_1x_2x_3x_4-x_1x_2-x_1x_4-x_3x_4+
1$.
For $p=(x_1, x_2, x_3, x_4) \in \CC^4$, the integral curve $\gamma_i: \CC \to \CC^4$ of
the Hamiltonian vector field of $y_i$ through $p$ is given by
\begin{align*}
\gamma_1(c)& = \begin{cases} (x_1+cx_3, \; x_2, \; x_3, \; x_4-cx_2), & \;\; y_1 =0,\\
\left(x_1+x_3(e^{cy_1}-1)/y_1, \;  e^{-cy_1}x_2, \; e^{cy_1}x_3, \; x_4 + x_2(e^{-cy_1}-1)/y_1\right),
& \;\; y_1 \neq 0,\end{cases}\\
\gamma_2(c) & = (x_1e^{-cy_2}, \; x_2e^{cy_2}, \; x_3e^{-cy_2}, \; x_4e^{cy_2}),
\end{align*}
where for $i = 1, 2$, $y_i$ also denotes the value $y_i(p)$ of the function $y_i$ at $p$.
\hfill $\diamond$
}
\end{example}

\section{Complete Hamiltonian flows of Generalized Minors and the Kogan-Zelevinsky integrable systems}\label{sec-KZ}
In $\S$\ref{sec-KZ}, we introduce Bott-Samelson coordinates on double Bruhat cells via
the Fomin-Zelevinsky embeddings, and we
apply results from $\S$\ref{sec-GBC} to establish the completeness of Hamiltonian flows of
all Fomin-Zelevinsky minors and Kogan-Zelevinsky integrable systems on double Bruhat cells.

\subsection{Fomin-Zelevinsky embeddings and Bott-Samelson coordinates on double Bruhat cells}\label{subsec-FZ-map}
For $w \in W$ and a representative $\dw \in N_G(T)$ of $w$, let
\[
C_\dw = (N \dw)\cap(\dw N_-) \hspace{.2in} \mbox{and} \hs
D_\dw = (N_- \dw) \cap (\dw N).
\]
The multiplication map of $G$ then gives diffeomorphisms
\[
C_\dw \times B \longrightarrow BwB, \hs
B_- \times C_\dw \longrightarrow B_-wB_-,  \hs \mbox{and} \hs
D_\dw \times B_- \longrightarrow B_-wB_-.
\]
With the representatives $\overline{w}$ and $\overline{\overline{w}}$ in $N_G(T)$
from $\S$\ref{subsec-nota-intro}
for $w \in W$, one then has
\[
C_{\overline{\overline{w}}^{\, -1}} = C_{\overline{w^{-1}}} = \{g^{-1}: \; g \in D_{\overline{\overline{w}}}\}.
\]

\begin{notation}\label{nota-decompo-g}
{\rm
For $u, v \in W$, we will write each  $g \in G^{u, v}$ uniquely  as
\begin{equation}\label{eq-g-decomp}
g =n\bu n' t' = m \bbv m' t,
\end{equation}
where $n\bu \in C_\bu, \; m\bbv \in D_{\bbv},\;
n' \in N, \;m' \in N_-, \;t, t' \in T$. With $g \in \Guv$ thus written,
\begin{align}\label{eq-g-decomp-0}
&\bu^{-1}g, \; g \bbv^{\,-1},\; (m \bbv)^{-1} n\bu \in N_-TN, \hs
[\bu^{-1}g]_0 =t', \hs [g \bbv^{\,-1}]_0^v = t, \\
\label{eq-g-decomp-2}
&[(m \bbv)^{\,-1} n\bu]_- = m', \hs [(m \bbv)^{\,-1} n\bu]_0 = t(t')^{-1},\hs [(m \bbv)^{\,-1} n\bu]_+ = (n')^{-1}.
\end{align}
\hfill $\diamond$
}
\end{notation}

Fix $u, v \in W$, and recall from $\S$\ref{subsec-gBC} the generalized Bruhat cell
\[
\O^{(v^{-1}, u)} = Bv^{-1}B\times_B BuB/B\subset F_2
\]
and its  Zariski open subset $O^{(v^{-1}, u)}_e = \{[g_1, g_2] \in \O^{(v^{-1}, u)}: \; g_1g_2 \in B_-B\}$.
Note that
\[
C_{\overline{v^{-1}}} \times C_\bu \lrw \O^{(v^{-1}, u)}, \;\; (c_1, c_2) \longmapsto [c_1, c_2],
\]
is an isomorphism. The following is a modification of \cite[Proposition 3.1]{FZ:total}.

\begin{proposition}\label{pr-Guv-Cuv}
For any $u, v \in W$, the map
\begin{equation}\label{eq-de-Fuv}
F^{u, v}: \;\; \Guv \longrightarrow  T \times \O^{(v^{-1}, u)}_e, \;\;\;
g\longmapsto (t, \; [(m \bbv)^{-1}, \; n\bu]),
\end{equation}
where $g \in \Guv$ is as in \eqref{eq-g-decomp}, is a biregular isomorphism, with its inverse given by
\begin{align}\label{eq-Uuv-Guv}
&T \times \O^{(v^{-1}, u)}_e \longrightarrow \Guv: \;\;
(t,\,[(m \bbv)^{-1}, \, n\bu]) \longmapsto \;
n\bu\,[(m \bbv)^{\,-1} n\bu]_+^{-1}[(m \bbv)^{\,-1} n\bu]_0^{-1}t \\
\nonumber &\hspace{3in} \;= m \bbv \,[(m \bbv)^{\,-1} n\bu]_-t,
\end{align}
where $m \bbv \in D_{\bbv}$, and $n\bu \in C_{\bu}$.
\end{proposition}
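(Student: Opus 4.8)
The plan is to exhibit the map in \eqref{eq-Uuv-Guv} as a two-sided inverse of $F^{u, v}$ on the level of points and then observe that both directions are morphisms of varieties, so that the resulting bijection is automatically biregular. All the ingredients are the uniqueness of the three multiplication factorizations recorded just before the statement, namely $C_\bu \times B \cong BuB$ and $D_{\bbv} \times B_- \cong B_-vB_-$ and the isomorphism $C_{\overline{v^{-1}}} \times C_\bu \cong \O^{(v^{-1}, u)}$, together with the uniqueness of the Gaussian decomposition $N_- T N \cong B_-B$. The whole argument turns on a single identity, so the structure is: check well-definedness, check that \eqref{eq-Uuv-Guv} is an honest map into $\Guv$, and then verify the two composites are the identity.

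First I would check that $F^{u, v}$ is well defined and lands in $T \times \O^{(v^{-1}, u)}_e$. Given $g \in \Guv$ written as in \eqref{eq-g-decomp}, the element $n\bu$ lies in $C_\bu$ by construction, while $(m\bbv)^{-1}$ lies in $C_{\overline{v^{-1}}} = \{h^{-1}: h \in D_{\bbv}\}$, so $[(m\bbv)^{-1}, n\bu]$ is a legitimate point of $\O^{(v^{-1}, u)}$. The essential observation is the computation recorded in \eqref{eq-g-decomp-2}: substituting $n\bu = g(t')^{-1}(n')^{-1}$ and $(m\bbv)^{-1}g = m't$ gives
\[
(m\bbv)^{-1} n\bu = m' \, t(t')^{-1} \, (n')^{-1} \in N_- T N = B_-B,
\]
which shows at once that $[(m\bbv)^{-1}, n\bu] \in \O^{(v^{-1}, u)}_e$ and that $m'$, $t(t')^{-1}$, $(n')^{-1}$ are exactly the Gaussian components $[(m\bbv)^{-1}n\bu]_-$, $[(m\bbv)^{-1}n\bu]_0$, $[(m\bbv)^{-1}n\bu]_+$.

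Next I would verify that the candidate inverse in \eqref{eq-Uuv-Guv} is single valued and maps into $\Guv$. Writing $h = (m\bbv)^{-1}n\bu = [h]_-[h]_0[h]_+$ (meaningful precisely because the point lies in $\O^{(v^{-1}, u)}_e$), the identity $n\bu = m\bbv\,[h]_-[h]_0[h]_+$ rearranges to $n\bu[h]_+^{-1}[h]_0^{-1} = m\bbv[h]_-$, so the two expressions for $g$ in \eqref{eq-Uuv-Guv} agree after right multiplication by $t$. The first expression $n\bu[h]_+^{-1}[h]_0^{-1}t$ exhibits the output as a product of $n\bu \in C_\bu$ with an element of $B$, hence lies in $BuB$; the second expression $m\bbv[h]_- t$ exhibits it as a product of $m\bbv \in D_{\bbv}$ with an element of $B_-$, hence lies in $B_-vB_-$; thus the image lies in $\Guv$.

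Finally I would check that the two composites are the identity, which is where the uniqueness statements are used. Starting from $g \in \Guv$ and applying $F^{u, v}$ then \eqref{eq-Uuv-Guv}, the recovered element is $m\bbv\,[h]_- t = m\bbv m' t = g$ by \eqref{eq-g-decomp-2}. Conversely, starting from $(t, [(m\bbv)^{-1}, n\bu])$, the reconstructed $g = m\bbv[h]_- t$ has $m\bbv$ as its $D_{\bbv}$-factor and $[h]_- t \in B_-$ as its $B_-$-factor, so by uniqueness of the $D_{\bbv}\times B_-$ factorization its recovered $T$-part is $t$; likewise $g = n\bu[h]_+^{-1}[h]_0^{-1}t$ has $n\bu$ as its $C_\bu$-factor by uniqueness of the $C_\bu \times B$ factorization, so $F^{u, v}$ returns the original data. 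Since every map in sight---the passage to the factors in \eqref{eq-g-decomp}, the Gaussian decomposition on $B_-B$, and group multiplication---is a morphism of varieties, $F^{u, v}$ and its inverse are regular, proving biregularity. I expect the only subtle point to be the identity \eqref{eq-g-decomp-2}, which does double duty: it simultaneously certifies the open condition $[(m\bbv)^{-1}, n\bu]\in \O^{(v^{-1}, u)}_e$ and pins down the Gaussian components that make the inverse formula consistent; everything else is bookkeeping with the uniqueness of the relevant decompositions.
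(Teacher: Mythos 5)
Your proof is correct, but it follows a genuinely different route from the paper's. The paper does not verify the inverse map directly: it quotes \cite[Proposition 3.1]{FZ:total}, which asserts that $g \mapsto (n, t', m^{-1})$ is a biregular isomorphism from $\Guv$ onto an explicit Zariski open subset $\calU^{u, v}$ of $(N \cap \bu N_- \bu^{\,-1}) \times T \times (N_- \cap \bbv N \bbv^{\,-1})$, identifies $\calU^{u, v}$ with $T \times \O^{(v^{-1}, u)}_e$, and then converts the torus coordinate from $t'$ (the $T$-part of the $BuB$-factorization, which is what Fomin--Zelevinsky's map records) to $t$ via $t = t'[(m\bbv)^{-1}n\bu]_0$; the inverse formula \eqref{eq-Uuv-Guv} is then read off from \eqref{eq-g-decomp-2}. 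You instead re-prove the substance of the cited result: you derive the identity $(m\bbv)^{-1}n\bu = m'\,t(t')^{-1}\,(n')^{-1}$ (which the paper records as \eqref{eq-g-decomp-2} inside Notation \ref{nota-decompo-g} without proof), use it both to place the image in $T \times \O^{(v^{-1}, u)}_e$ and to pin down the Gaussian factors, and then check that the two composites are the identity using uniqueness in the factorizations $C_\bu \times B \cong BuB$, $D_{\bbv} \times B_- \cong B_-vB_-$, $N_- \times T \times N \cong B_-B$, together with the parametrization $C_{\overline{v^{-1}}} \times C_\bu \cong \O^{(v^{-1}, u)}$. The trade-off is this: the paper's citation packages the open image and the biregularity in one stroke, leaving only the change of torus coordinate to be checked, whereas your argument is self-contained, works with $t$ throughout, and makes explicit exactly where each uniqueness statement enters. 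The one point where you gloss --- ``every map in sight is a morphism'' --- silently uses that the four factorization maps above are biregular and not merely bijective; this is legitimate, since these are precisely the isomorphisms the paper records just before the statement, but spelling that out is what replaces the appeal to \cite{FZ:total} in your version.
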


\begin{proof} Consider the Zariski open subset $\calU^{u, v}$ of
\[
(C_\bu \,\bu^{-1}) \times T \times (\bbv C_{\bbv^{\, -1}}) = (N \cap \bu N_- \bu^{\, -1}) \times T \times
(N_- \cap \bbv N \bbv^{\, -1})
\]
consisting of all $(n, t', m) \in (C_\bu \,\bu^{-1}) \times T \times (\bbv C_{\bbv^{\, -1}})$ such that
$m^{-1}n \in \bbv \,B_-B \bu^{-1}$.
Proposition 3.1 of \cite{FZ:total} says that, with $g \in \Guv$ as in \eqref{eq-g-decomp},
\[
\Guv \longrightarrow \calU^{u, v}, \;\; g \longmapsto
(n, \, t',\, m^{-1}), \hs g \in \Guv,
\]
is a biregular isomorphism from $\Guv$ to $\calU^{u, v}$.  It follows that the map
\[
\Guv \lrw T \times \O^{(v^{-1}, u)}_e, \;\; g \longmapsto (t', \; [(m \bbv)^{\, -1}, \; n\bu]), \hs g \in \Guv,
\]
is a biregular isomorphism.
As $t = t'[(m \bbv)^{-1} n \bu]_0$, it follows that $F^{u, v}$
is a biregular isomorphism, and the formula for its inverse
follows from \eqref{eq-g-decomp-2}.
\end{proof}

\begin{remark}\label{rk-F-equi}
{\rm
In the notation of Proposition \ref{pr-Guv-Cuv}, the isomorphism $F^{u, v}$ is $T$-equivariant
if $T$ acts on
$\Guv$ by left translation and on $T \times \O^{(v^{-1}, u)}_e$ by
\[
t_1 \cdot (t, \,q) = (t_1^vt,\, \,t_1^vq), \hs t_1, \,t \in T, \; q \in \O^{(v^{-1}, u)}_e.
\]
If $T$ acts on $G^{u, v}$ by right translation, then $F^{u, v}$ is $T$-equivariant if
$T$ acts on $T \times \O^{(v^{-1}, u)}_e$ by $(t, q) \cdot t_1 = (tt_1, q)$ for $t_1, t \in T$ and $q \in \O^{(v^{-1}, u)}_e$.
\hfill $\diamond$
}
\end{remark}
Recall from \cite{BZ} that
for $u, v \in W$, the quotient $\Guv/T \subset G/T$  of the double Bruhat cell $\Guv$ is called a {\it reduced double Bruhat cell}.

\begin{definition}\label{de-FZ-map}
{\rm
For $u, v \in W$, the biregular isomorphisms
\begin{align}\label{eq-FZ-iso}
\hat{F}^{u, v}:& \;\; \Guv/T \lrw \O^{(v^{-1}, u)}_e, \;\;\; g_\cdot T \longmapsto [(m \bbv)^{-1}, \, n\bu],\\
\label{eq-FZ-2}
F^{u, v}: & \;\; \Guv \lrw T \times \O^{(v^{-1}, u)}_e, \;\;\; g \longmapsto (t, \,[(m \bbv)^{-1}, \; n\bu]),
\end{align}
where, again, $g \in \Guv$ is decomposed as  in \eqref{eq-g-decomp}, will be called, respectively, the
{\it Fomin-Zelevinsky isomorphisms} for $\Guv/T$ and for $\Guv$.
The corresponding open embeddings via
 the embedding $\O^{(v^{-1}, u)}_e \hookrightarrow \O^{(v^{-1}, u)}$, are also denoted by
\begin{equation}\label{eq-FZ-embed}
\hat{F}^{u, v}:\;\; \Guv/T \lrw \O^{(v^{-1}, u)} \hs \mbox{and} \hs
F^{u, v}: \;\; \Guv \lrw T \times \O^{(v^{-1}, u)},
\end{equation}
and will be respectively called the
{\it Fomin-Zelevinsky embeddings} of $\Guv/T$ and $\Guv$.
\hfill $\diamond$}
\end{definition}

Let again $u, v \in W$, and let $a = l(v)$ and $b = l(u)$. Fix reduced words
\begin{equation}\label{eq-v-u-reduced}
\bfv =(s_a, \; s_{a-1}, \; \cdots, \;  s_1) \hs \mbox{and} \hs
\bfu =(s_{a+1},\;  s_{a+2}, \;  \cdots, \;  s_{a+b}),
\end{equation}
of $v$ and $u$, and consider the generalized Bruhat cell $\Ovmu \subset F_{a+b}$, where
\begin{equation}\label{eq-sequence-v-u}
(\bfv^{-1}, \, \bfu) = (s_1, \, s_2, \,\ldots, \, s_a,\,s_{a+1}, \, s_{a+2}, \, \ldots, \, s_{a+b}).
\end{equation}
Consider the isomorphism $\mu: \Ovmu\to \O^{(v^{-1}, u)}$ given by
\begin{equation}\label{eq-mu-2}
\mu([g_1, \,\ldots,\,g_a,\, g_{a+1}, \,\ldots,\, g_{a+b}])=
[g_1g_2 \cdots g_a, \; g_{a+1}g_{a+2} \cdots g_{a+b}],
\end{equation}
where $g_j \in Bs_jB$ for $j = 1, \ldots, a+b$, and define the open embeddings
\begin{align}\label{eq-bf-Fuv-01}
\hat{F}^{\bfu, \bfv}&\stackrel{{\rm def}}{=}\; \mu^{-1} \circ \hat{F}^{u, v}:  \;\; \Guv/T \lrw \Ovmu,\\
\label{eq-bf-Fuv-02}
{F}^{\bfu, \bfv} &\stackrel{{\rm def}}{=}\; \mu^{-1} \circ F^{u, v}: \;\; \Guv \lrw T \times \Ovmu.
\end{align}
Note that the images of $\hat{F}^{\bfu, \bfv}$ and ${F}^{\bfu, \bfv}$ are respectively
$\Ovmu_e$ and $T \times \Ovmu_e$.
Recall also the coordinates $(x_1, \ldots, x_{a+b})$
on $\Ovmu$ given by $\phi^{(\bfv^{-1}, \bfu)}: \CC^{a+b} \to \Ovmu$.

\begin{definition}\label{de-FZ-coordinates}
{\rm
We call $\hat{F}^{\bfu, \bfv}$ (resp. $F^{\bfu, \bfv}$)
the Fomin-Zelevinsky embedding of $\Guv/T$ (resp. of $\Guv$) associated to the
reduced words $\bfu$ and $\bfv$  of $u$ and $v$ in \eqref{eq-v-u-reduced}, and we call
$(x_1, \ldots, x_{a+b})$ (resp. $(t, x_1, \ldots, x_{a+b})$, where $t \in T$,)
the {\it Bott-Samelson coordinates} on $\Guv/T$ (resp. on $\Guv$) associated to the
reduced words $\bfu$ and $\bfv$ of $u$ and $v$ in \eqref{eq-v-u-reduced}.
\hfill $\diamond$
}
\end{definition}

\subsection{Fomin-Zelevinsky minors on $\Guv$ in Bott-Samelson coordinates}\label{subsec-minors}
Let again $u, v \in W$. Recall from \cite{FZ:total} the Fomin-Zelevinsky
twist map
\begin{equation}\label{eq-FZ-twist}
\Guv \longrightarrow G^{u^{-1}, v^{-1}}, \;\;g \longmapsto g' \;\stackrel{{\rm def}}{=} \;
\left([\bu^{-1}g]_-^{-1} \bu^{-1} \, g \bbv^{\,-1} [g\bbv^{\, -1}]_+^{-1}\right)^\theta,
\end{equation}
where $\theta$ is the involutive automorphism  of $G$ satisfying
\[
\theta(t) = t^{-1}, \hs \theta(u_\alpha(c)) = u_{-\al}(c), \hs t \in T, \; \al \in \Gamma, \;  c \in \CC.
\]
Recall from $\S$\ref{subsec-nota-intro} the regular functions $\Delta_{w_1\lam, w_2\lam}$ on $G$,
where $w_1, w_2 \in W$ and $\lam \in \calP^+$.
By \cite{FZ:total}, a {\it twisted generalized minor} on $\Guv$ is a regular
function of the form
\[
g \longmapsto \Delta_{w_1 \omega_\al, \, w_2\omega_\al}(g'), \hs g \in \Guv,
\]
where $w_1, w_2 \in W$ and $\al \in \Gamma$.

\begin{definition}\label{de-FZ-minors}
{\rm
By a {\it Fomin-Zelevinsky twisted generalized minor}, or simply a
{\it Fomin-Zelevinsky minor}, on $\Guv$, we mean a twisted generalized minor on $\Guv$ of the form
\[
g \longmapsto \Delta_{u_2^{-1} \omega_\al, \; v_1\omega_\al}(g'), \hs g \in \Guv,
\]
where $\al \in \Gamma$, and $u_2, v_1 \in W$ are such that $l(u) = l(u_1)+ l(u_2)$, and $l(v) = l(v_1) + l(v_2)$
with $u_1, v_2 \in W$ determined by
$u = u_1 u_2$, $v = v_1 v_2$.
\hfill
$\diamond$
}
\end{definition}

Fixing again the reduced words $\bfu$ and $\bfv$ of $u$ and $v$ as in \eqref{eq-v-u-reduced},
we now express the Fomin-Zelevinsky minors in the corresponding Bott-Samelson coordinates on
$\Guv$.

\begin{notation}\label{nota-Mij}
{\rm
For $i = 0, 1, \ldots, a$, $j = 1, 2, \ldots, b+1$, and $\lam \in \calP^+$, let
\begin{align}\label{eq-v-intervals}
&v_{[1, i]}=\begin{cases} e, & \;\; i = 0,\\ s_{a}s_{{a-1}} \cdots s_{{a+1-i}},& \;\; i = 1, 2, \ldots, a\end{cases},
\hs
v_{[i+1, \, a]} = (v_{[1, i]})^{-1}v,\\
\label{eq-u-intervals}
&u_{[j, \, b]} = \begin{cases}  s_{{a+j}}s_{{a+j+1}} \cdots s_{{a+b}}, & \;\;j = 1, 2, \ldots, b,\\
e, & \;\; j = b+1, \end{cases}
\end{align}
and let $M_{i, j}^\lam$ be the regular function  on $\Guv$ given by
\begin{equation}\label{eq-Mij}
M_{i, j}^\lam(g) \; \stackrel{{\rm def}}{=}\; \Delta_{(u_{[j, \, b]})^{-1}\lam, \; v_{[1, i]}\lam}(g'), \hs g \in \Guv.
\end{equation}
Note in particular that
\[
M_{0, 1}^\lam(g) = \Delta_{u^{-1}\lam, \lam}(g'), \hs
M_{a, b+1}^\lam(g) = \Delta_{\lam, v\lam}(g'), \hs g \in \Guv.
\]
By \eqref{eq-Delta-Delta} in $\S$\ref{subsec-nota-intro}, each
$M_{i,j}^\lam$ is a monomial,
with non-negative powers, of Fomin-Zelevinsky minors on $G^{u, v}$.
Recall also that for $\lam \in \calP^+$, $\Delta^\lam(g) = \Delta_{\lam, \lam}(g)$ for $g \in G$.
\hfill $\diamond$
}
\end{notation}

\begin{lemma}\label{lem-Delta-g}
Let $0\leq i \leq a, 1 \leq j \leq b+1$, and $\lam \in \calP^+$. For
$g \in \Guv$ as in \eqref{eq-g-decomp}, one has
\[
M_{i, j}^\lam(g) = t^{-(v_{[i+1, \, a]})^{-1}(\lam)}
\Delta^\lam
\left(\overline{\overline{v_{[i+1, \, a]}}} \;(m \bbv)^{-1} n \bu \; \overline{u_{[j, \, b]}}^{\, -1}\right).
\]
\end{lemma}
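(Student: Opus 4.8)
The plan is to unwind both sides into the principal‑minor function $\Delta^\lam$ and reduce the identity to the standard invariance properties of $\Delta^\lam$ together with a careful torus bookkeeping. First I would rewrite the left‑hand side: by the definition \eqref{eq-Mij} of $M_{i,j}^\lam$, the formula $\Delta_{w_1\lam, w_2\lam}(h) = \Delta^\lam(\overline{w_1}^{\,-1}h\,\overline{w_2})$, and the identity $\overline{(u_{[j, b]})^{-1}}^{\,-1} = \overline{\overline{u_{[j,\,b]}}}$, one gets
\[
M_{i,j}^\lam(g) = \Delta^\lam\!\left(\overline{\overline{u_{[j,\,b]}}}\; g'\; \overline{v_{[1,i]}}\right).
\]
Next I would simplify the twist $g' = z^\theta$, where $z = [\bu^{-1}g]_-^{-1}\,\bu^{-1}g\,\bbv^{\,-1}[g\bbv^{\,-1}]_+^{-1}$. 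Using $g = n\bu n't' = m\bbv m't$ and the relations \eqref{eq-g-decomp-0}--\eqref{eq-g-decomp-2} (in particular $[\bu^{-1}g]_- = \bu^{-1}n\bu$), a short computation gives $z = (n\bu)^{-1}[g\bbv^{\,-1}]_-[g\bbv^{\,-1}]_0$, and hence, writing $P = (m\bbv)^{-1}n\bu$,
\[
z^{-1} = [g\bbv^{\,-1}]_+\;\bbv\; t^{-1}m'^{-1}P .
\]

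The key device is the behaviour of $\Delta^\lam$ under $\theta$. A direct check on the generators $u_{\pm\al}(c)$ and on $T$ shows that $\theta$ is conjugation by a fixed $t_0\in T$ with $\al(t_0)=-1$ for all $\al\in\Gamma$, composed with the Chevalley involution $y\mapsto(y^{-1})^{\top}$; since the transpose anti‑automorphism $^{\top}$ fixes $\Delta^\lam$, this yields the clean identities $\Delta^\lam(y^\theta) = \Delta^\lam(y^{-1})$ and $\theta(\overline{w}) = t_0\,\overline{w}\,t_0^{-1}$ (so also $\theta(\overline{\overline{w}}) = t_0\,\overline{\overline{w}}\,t_0^{-1}$) for all $w\in W$. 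Applying these to the displayed expression for $M_{i,j}^\lam(g)$, commuting the resulting $t_0$‑factors past the Weyl representatives, and using the $T$‑equivariance $\Delta^\lam(t_1 x t_2) = t_1^\lam t_2^\lam\,\Delta^\lam(x)$, I would reduce to
\[
M_{i,j}^\lam(g) = t_0^{\,u_{[j,b]}^{-1}(\lam)-v_{[1,i]}(\lam)}\;
\Delta^\lam\!\left(\overline{v_{[1,i]}}^{\,-1}\,z^{-1}\,\overline{\overline{u_{[j,\,b]}}}^{\,-1}\right).
\]

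Finally I would insert $z^{-1} = [g\bbv^{\,-1}]_+\,\bbv\,t^{-1}m'^{-1}P$ and extract the answer. Since $\bfv$ is reduced, length‑additivity gives $\bbv = \overline{\overline{v_{[1,i]}}}\,\overline{\overline{v_{[i+1,\,a]}}}$, so the left representative combines with $\bbv$; the unipotent factor $[g\bbv^{\,-1}]_+\in N$ and the factors $m', (n')^{-1}$ carried by $P$ are then absorbed using the invariance of $\Delta^\lam$ under left multiplication by $N_-$ and right multiplication by $N$ (equivalently, the extremal evaluation $\langle\xi_\mu,\rho(n)v_\mu\rangle = 1$ for $n\in N$), while the torus pieces $t,t',t_0$ are collected by $T$‑equivariance. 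Reconciling the single‑bar representative $\overline{v_{[1,i]}}$ with the double‑bar $\overline{\overline{v_{[i+1,\,a]}}}$ through the torus relation $\overline{\overline{w}} = \overline{w}\,c_w$ and the sign values $t_0^{\mu} = (-1)^{\operatorname{ht}(\mu)}$ for $\mu$ in the root lattice produces exactly the factor $t^{-(v_{[i+1,\,a]})^{-1}(\lam)}$ and the minor $\Delta^\lam(\overline{\overline{v_{[i+1,\,a]}}}\,P\,\overline{u_{[j,\,b]}}^{\,-1})$, which is the claim.

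\textbf{Main obstacle.} The substantive difficulty is not any single step but the bookkeeping of the torus and sign contributions: one must verify that the $t_0$‑characters generated by the $\theta$‑manipulation, the discrepancy between $\overline{w}$ and $\overline{\overline{w}}$, and the torus parts of the two factorizations of $g$ all conspire to give precisely $t^{-(v_{[i+1,\,a]})^{-1}(\lam)}$ with no residual sign, and that the unipotent factors $[g\bbv^{\,-1}]_+, m', n'$ are absorbed correctly. Both points hinge on the reduced‑word hypotheses $l(u) = l(u_1)+l(u_2)$ and $l(v) = l(v_1)+l(v_2)$, which are exactly what make the representatives $\overline{\cdot}$ and $\overline{\overline{\cdot}}$ factor multiplicatively.
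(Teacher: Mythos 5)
Your proposal follows, in essence, the paper's own route: your $z^{-1}$ is precisely the auxiliary twist $\hat{g} = ((g')^{-1})^\theta$ that the paper introduces, and your two key identities --- $\Delta^\lam(y^\theta) = \Delta^\lam(y^{-1})$ and the behaviour of the Weyl representatives under $\theta$ --- are exactly the Fomin--Zelevinsky facts ((2.15) and Proposition 2.1 of \cite{FZ:total}) that the paper cites rather than re-derives. Your reduction of $M^\lam_{i,j}(g)$ to a torus factor times $\Delta^\lam\bigl(\overline{v_{[1,i]}}^{\,-1}z^{-1}\,\overline{\overline{u_{[j,\,b]}}}^{\,-1}\bigr)$, and your formula for $z^{-1}$, are both correct. (The $t_0$-bookkeeping you flag as the main obstacle is avoidable: since $\overline{w}^{\,\theta} = \overline{\overline{w}}$ and $\overline{\overline{w}}^{\,\theta} = \overline{w}$, one gets directly $M^\lam_{i,j}(g) = \Delta^\lam\bigl(\overline{\overline{v_{[1,i]}}}^{\,-1}z^{-1}\,\overline{u_{[j,\,b]}}^{\,-1}\bigr)$, with no $t_0$ and no single-bar/double-bar mismatch to reconcile at the end.)

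The genuine gap is in your final step. The factor $[g\bbv^{\,-1}]_+$ lies in $N$ but sits at the \emph{left} end of the word, just after $\overline{v_{[1,i]}}^{\,-1}$, so neither the left $N_-$-invariance nor the right $N$-invariance of $\Delta^\lam$ can remove it, and its conjugate across $\overline{v_{[1,i]}}^{\,-1}$ need not lie in $N_-$. What actually happens is a \emph{cancellation}, not an absorption: writing $m' = m_1m_2$ with $m_1 \in N_-\cap(\bbv^{\,-1}N_-\bbv)$ and $m_2 \in N_-\cap(\bbv^{\,-1}N\bbv)$ --- the refined decomposition that is the paper's key device and is absent from your outline --- one finds $\bbv^{\,-1}[g\bbv^{\,-1}]_+\bbv = t^{-1}m_2t$, hence $[g\bbv^{\,-1}]_+\,\bbv\,t^{-1}(m')^{-1} = \bbv\,t^{-1}m_1^{-1}$. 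The only unipotent factor that is then absorbed is $m_1^{-1}$, and even this is not by invariance alone: one needs $\overline{\overline{v_{[i+1,\,a]}}}\,m_1^{-1}\,\overline{\overline{v_{[i+1,\,a]}}}^{\,-1} \in N_-$, which holds because $m_1 \in N_-\cap(\bbv^{\,-1}N_-\bbv)$ \emph{and} $l(v) = l(v_{[1,i]}) + l(v_{[i+1,\,a]})$ (\cite[VI, 1.6]{Bourbaki}); this is where length-additivity genuinely enters, not merely through multiplicativity of the representatives. Finally, your claim that the factors $m'$ and $(n')^{-1}$ carried by $P$ are absorbed is inconsistent with the formula being proved, which keeps $P = (m\bbv)^{-1}n\bu$ intact; indeed $m'$ as a whole cannot be absorbed, because the conjugate of its $m_2$-part by $\overline{\overline{v_{[i+1,\,a]}}}$ in general fails to lie in $N_-$.
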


\begin{proof}
Introduce (another twist on $\Guv$)
\begin{equation}\label{eq-g-hat}
\Guv \longrightarrow  G^{v, u},\;\;\; g \longmapsto \hat{g} \; \stackrel{{\rm def}}{=} \;((g')^{-1})^\theta =
\left([\bu^{-1}g]_-^{-1} \bu^{-1} \, g \bbv^{\,-1} [g\bbv^{\, -1}]_+^{-1}\right)^{-1}.
\end{equation}
By \cite[(2.15)]{FZ:total}, for every $\alpha \in \Gamma$, one has
\[
\Delta^{\omega_\al}(g) = \Delta^{\omega_\al}((g^{-1})^\theta), \hs \forall \; g \in G.
\]
By \cite[Proposition 2.1]{FZ:total}, for any $w_1, w_2 \in W$ and $\alpha \in \Gamma$, one has
\begin{equation}\label{eq-g-g-1}
\Delta_{w_1 \omega_\al, \, w_2\omega_\al}(g') = \Delta^{\omega_\al}((\overline{w_2}^{\, -1})^\theta \hat{g} \,
\overline{w_1}^\theta) = \Delta^{\omega_\al}(\overline{\overline{w_2}}^{\, -1} \hat{g}\, \overline{\overline{w_1}}),
\hs g \in \Guv.
\end{equation}
Let $g \in \Guv$ be as in \eqref{eq-g-decomp} and further write
\begin{equation}\label{eq-m-m}
m' = m_1m_2, \hs \mbox{where}\;\; m_1 \in N_- \cap (\bbv^{\, -1} N_- \bbv), \;\;
m_2 \in N_- \cap (\bbv^{\, -1} N \bbv).
\end{equation}
It is then easy to see that
\[
\hat{g} = (t^{-1})^{v^{-1}} \bbv\, m_1^{-1} (m \bbv)^{-1} n\bu, \hs g \in \Guv.
\]
Using \eqref{eq-g-g-1}, one has,
\begin{align*}
M_{i, j}^\lam(g)
& = \Delta^\lam\left(\overline{\overline{v_{[1, i]}}}^{\, -1} \; \hat{g} \;
\overline{\overline{(u_{[j, \, b]})^{-1}}}\right)
= t^{-(v_{[i+1, \, a]})^{-1}(\lam)}
\Delta^\lam\left(\overline{\overline{v_{[i+1, \, a]}}} m_1^{-1}\; (m \bbv)^{-1} n\bu\;
\overline{u_{[j, \, b]}}^{\, -1}\right).
\end{align*}
As  $l(v) = l(v_{[1, i]})+ l(v_{[i+1, \, a]})$ and $m_1^{-1} \in N_-
\cap (\bbv^{\, -1} N_- \bbv)$,
$\overline{\overline{v_{[i+1, \, a]}}} m_1^{-1} \overline{\overline{v_{[i+1, \, a]}}}^{\, -1} \in N_-$ by
\cite[VI, 1.6]{Bourbaki},
from which Lemma \ref{lem-Delta-g} follows.
\end{proof}

To express the functions $M_{i, j}^\lam$ on $\Guv$ in the Bott-Samelson coordinates
$(t, x_1, \ldots, x_{a+b})$
defined by the given
reduced words $\bfu$ and $\bfv$ of $u$ and $v$, set
for $1 \leq k \leq l \leq a+b$,
\begin{equation}\label{eq-g-lk}
p_{[k, l]}(x) = p_{\al_k}(x_k) p_{\al_{k+1}}(x_{k+1}) \cdots p_{\al_l}(x_l) \in G,
\hs x =(x_1, \ldots, x_{a+b})\in \CC^{a+b}.
\end{equation}
Here recall that
$p_\al(c) = u_\al(x) \overline{{s}_\al}$ for $\al \in \Gamma$ and $c \in \CC$.
Define $p_{[k, l]}(x) = e$ if $k > l$.

\begin{proposition}\label{pr-M-ij-y}
Let $0\leq i \leq a, 1 \leq j \leq b+1$, and $\lam \in \calP^+$. In the Bott-Samelson coordinates
$(t, x_1, \ldots, x_{a+b})$ of $g \in \Guv$, one has
\[
M_{i, j}^\lam(g) \; \stackrel{{\rm def}}{=}\; \Delta_{(u_{[j, \, b]})^{-1}\lam, \; v_{[1, i]}\lam}(g')=
 t^{-s_1s_2 \cdots s_{a-i}(\lam)} \Delta^\lam(p_{[a+1-i, \, a-1+j]}(x)).
\]
Consequently, with the open Fomin-Zelevinsky embedding $F^{\bfu, \bfv}: G^{u, v} \to T \times \O^{(\bfv^{-1}, \bfu)}$
in \eqref{eq-bf-Fuv-02} and in the notation of Notation \ref{nota-y-special}, one has
\begin{equation}\label{eq-Mij-y}
M_{i, j}^\lam = (F^{\bfu, \bfv})^* \left(\tilde{y}^\lam_{[a+1-i, \, a-1+j]}\right),
\end{equation}
where $\tilde{y}^\lam_{[a+1-i, \, a-1+j]} = t^{-s_1s_2 \cdots s_{a-i}(\lam)} y^\lam_{[a+1-i, \, a-1+j]} \in \widetilde{\calY}^{(\bfv^{-1}, \bfu)}$.
\end{proposition}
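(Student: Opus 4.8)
The plan is to start from Lemma~\ref{lem-Delta-g}, which already expresses
\[
M_{i,j}^\lam(g) = t^{-(v_{[i+1,a]})^{-1}(\lam)}\,\Delta^\lam\left(\overline{\overline{v_{[i+1,a]}}}\,(m\bbv)^{-1}n\bu\,\overline{u_{[j,b]}}^{-1}\right),
\]
and to reduce the statement to a purely unipotent identity. First I would match the torus prefactors: since $v_{[1,i]} = s_a s_{a-1}\cdots s_{a+1-i}$ and $v = v_{[1,i]}\,v_{[i+1,a]}$ with additive lengths, one gets $v_{[i+1,a]} = s_{a-i}s_{a-i-1}\cdots s_1$, hence $(v_{[i+1,a]})^{-1} = s_1 s_2\cdots s_{a-i}$ and $(v_{[i+1,a]})^{-1}(\lam) = s_1 s_2\cdots s_{a-i}(\lam)$. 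This already agrees with the prefactor in the statement, so it remains to prove
\[
\Delta^\lam\left(\overline{\overline{v_{[i+1,a]}}}\,(m\bbv)^{-1}n\bu\,\overline{u_{[j,b]}}^{-1}\right) = \Delta^\lam\left(p_{[a+1-i,\,a-1+j]}(x)\right),
\]
which I abbreviate as $(\star)$.

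To bring Bott-Samelson coordinates into play, I would use the defining property of the embedding~\eqref{eq-bf-Fuv-02}, namely $\mu(\phi^{(\bfv^{-1},\bfu)}(x)) = [(m\bbv)^{-1},\,n\bu]$, which reads $[\,p_{[1,a]}(x),\,p_{[a+1,a+b]}(x)\,] = [\,(m\bbv)^{-1},\,n\bu\,]$ in $\O^{(v^{-1},\, u)}\subset F_2$. Unwinding the $\times_B$-relation produces $b_1,b_2\in B$ with $(m\bbv)^{-1} = p_{[1,a]}(x)\,b_1$ and $n\bu = b_1^{-1}\,p_{[a+1,a+b]}(x)\,b_2$. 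The point I would then establish is that $b_1,b_2\in N$: a product of simple factors $p_\al$ over a reduced word is of the form $n\,\overline{w}$ with $n\in N$ (so $p_{[1,a]}(x)\in N\overline{v^{-1}}$ and $p_{[a+1,a+b]}(x)\in N\bu$), while $(m\bbv)^{-1}\in C_{\overline{v^{-1}}}\subset N\overline{v^{-1}}$ and $n\bu\in C_\bu\subset N\bu$; comparing the two factorizations exhibits $b_1$ and $b_2$ as conjugates of elements of $N$ by Weyl representatives, hence unipotent, and a unipotent element of $B$ lies in $N$. In particular $(m\bbv)^{-1}n\bu = p_{[1,a+b]}(x)\,b_2$ with $b_2\in N$.

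The engine for $(\star)$ is a pair of elementary $SL(2,\CC)$-identities, $\overline{s_\al}^{-1}p_\al(c)\in N_-$ and $p_\al(c)\,\overline{s_\al}^{-1}\in N$. Writing $\overline{\overline{v_{[i+1,a]}}} = \overline{s_{a-i}}^{-1}\cdots\overline{s_1}^{-1}$ and iterating the first identity, where at each stage the root produced stays negative because the relevant prefix of $\bfv^{-1}$ is reduced, yields $\overline{\overline{v_{[i+1,a]}}}\,p_{[1,a-i]}(x)\in N_-$; symmetrically, iterating the second identity gives $p_{[a+j,a+b]}(x)\,\overline{u_{[j,b]}}^{-1}\in N$. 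Substituting $(m\bbv)^{-1}n\bu = p_{[1,a+b]}(x)\,b_2$ and factoring $p_{[1,a+b]}(x) = p_{[1,a-i]}(x)\,p_{[a+1-i,\,a-1+j]}(x)\,p_{[a+j,a+b]}(x)$, the left $N_-$-block can be pulled out and a right $N$-block absorbed by the left-$N_-$ and right-$N$ invariance of $\Delta^\lam$ (immediate from $\Delta^\lam(g)=[g]_0^\lam$), collapsing the left-hand side of $(\star)$ to $\Delta^\lam(p_{[a+1-i,\,a-1+j]}(x))$.

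The one genuinely delicate point, and the step I expect to be the main obstacle, is the interaction of the gluing factor $b_2$ with the right-hand telescoping: after extracting the middle block one is left with $p_{[a+j,a+b]}(x)\,b_2\,\overline{u_{[j,b]}}^{-1}$ rather than $p_{[a+j,a+b]}(x)\,\overline{u_{[j,b]}}^{-1}$, and one must verify that inserting $b_2\in N$ does not alter $\Delta^\lam$. The cleanest fix I see is to process the $u$-end \emph{before} any gluing factor reaches it: since $n\bu\in N\bu$ and $\bu = \overline{u_{[1,j-1]}}\,\overline{u_{[j,b]}}$ with additive lengths (where $u_{[1,j-1]}=s_{a+1}\cdots s_{a+j-1}$), one has $n\bu\,\overline{u_{[j,b]}}^{-1} = n\,\overline{u_{[1,j-1]}}$ with $n\in N$, and then the left end is treated via $(m\bbv)^{-1}=p_{[1,a]}(x)b_1$ with $b_1\in N$, the two interior $N$-factors being reabsorbed using the same reducedness; alternatively one avoids the issue entirely by proving $(\star)$ by induction on $i$ and $j$, peeling off one $p_\al$ at a time and invoking the minor identities~\eqref{eq-M-al-1} and~\eqref{eq-M-al-3}. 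Once $(\star)$ is established, the displayed formula~\eqref{eq-Mij-y} is immediate: by~\eqref{eq-y-lam-ij} the right-hand side of the main formula is $y^\lam_{[a+1-i,\,a-1+j]}(\phi^{(\bfv^{-1},\bfu)}(x))$, so since $F^{\bfu,\bfv}(g) = (t,\,\phi^{(\bfv^{-1},\bfu)}(x))$ one reads off $M_{i,j}^\lam = (F^{\bfu,\bfv})^*\big(\tilde{y}^\lam_{[a+1-i,\,a-1+j]}\big)$ in the notation of Notation~\ref{nota-y-special}.
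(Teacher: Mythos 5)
Your route is essentially the paper's: start from Lemma \ref{lem-Delta-g}, identify $(v_{[i+1,\,a]})^{-1}=s_1s_2\cdots s_{a-i}$, and finish with the two absorption facts $\overline{\overline{v_{[i+1,\,a]}}}\,p_{[1,\,a-i]}(x)\in N_-$ and $p_{[a+j,\,a+b]}(x)\,\overline{u_{[j,\,b]}}^{\,-1}\in N$ (which you, unlike the paper, actually prove). However, there is a genuine gap at exactly the point you flag. Your comparison of representatives only yields $(m\bbv)^{-1}n\bu=p_{[1,\,a+b]}(x)\,b_2$ with $b_2\in N$, and neither of your fixes removes $b_2$. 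The first fix is circular: after the moves $n\bu\,\overline{u_{[j,\,b]}}^{\,-1}=n\,\overline{u_{[1,\,j-1]}}$ and $(m\bbv)^{-1}=p_{[1,a]}(x)\,b_1$, the product $b_1n$ sits in the \emph{interior} of the argument of $\Delta^\lam$, and $\Delta^\lam$ is invariant only under left multiplication by $N_-$ and right multiplication by $N$; interior unipotent factors cannot be ``reabsorbed'', and indeed unwinding $b_1n\,\overline{u_{[1,\,j-1]}}=p_{[a+1,\,a+j-1]}(x)\,p_{[a+j,\,a+b]}(x)\,b_2\,\overline{u_{[j,\,b]}}^{\,-1}$ lands you back on the original problem. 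The induction alternative is only a sketch, and it faces the same obstruction: nothing in it eliminates the unknown factor $b_2$ wedged between $p_{[a+j,\,a+b]}(x)$ and $\overline{u_{[j,\,b]}}^{\,-1}$.

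The missing idea, used implicitly by the paper when it writes $(m\bbv)^{-1}n\bu=p_{[1,\,a+b]}(x)$ as an identity in $G$, is that the gluing factors are trivial: $b_1=b_2=e$. Reducedness gives not only $p_{[1,a]}(x)\in N\overline{v^{-1}}$ but also $p_{[1,a]}(x)\in\overline{v^{-1}}N_-$ (write $p_\al(c)=\overline{s_\al}\,u_{-\al}(\pm c)$ and push the representatives to the left, using that the reversed word $\bfv$ is reduced), hence $p_{[1,a]}(x)\in C_{\overline{v^{-1}}}$; likewise $p_{[a+1,\,a+b]}(x)\in C_\bu$. Since the map $C_{\overline{v^{-1}}}\times C_\bu\to\O^{(v^{-1},u)}$, $(c_1,c_2)\mapsto[c_1,c_2]$, is injective (this is stated just before Proposition \ref{pr-Guv-Cuv}) and $[(m\bbv)^{-1},\,n\bu]=[p_{[1,a]}(x),\,p_{[a+1,\,a+b]}(x)]$, one gets $(m\bbv)^{-1}=p_{[1,a]}(x)$ and $n\bu=p_{[a+1,\,a+b]}(x)$ on the nose, and then the two absorption facts finish the proof exactly as you intend. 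Alternatively, your own conjugation argument in fact shows the stronger containment $b_2\in N\cap\bu^{-1}N\bu$, and length additivity $l(u)=l(u_{[1,\,j-1]})+l(u_{[j,\,b]})$ gives $\overline{u_{[j,\,b]}}\,\bigl(N\cap\bu^{-1}N\bu\bigr)\,\overline{u_{[j,\,b]}}^{\,-1}\subset N$, which would also repair the right-hand end; but neither observation appears in your write-up.
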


\begin{proof} Let $g \in \Guv$ be as in \eqref{eq-g-decomp}.
Writing
\[
(m\bbv)^{-1}n \bu = p_{[1, \, a+b]} (x)= p_{[1, \, a-i]}(x)\,p_{[a-i+1, \, a+j-1]}(x)\,p_{[a+j, \, a+b]}(x),
\]
and noting that $\overline{\overline{v_{[i+1, \, a]}}}\;p_{[1, \, a-i]}(x) \in N_-$ and
$p_{[a+j, \, a+b]}(x) \;\overline{u_{[j, \, b]}}^{\, -1} \in N$,
it follows from Lemma \ref{lem-Delta-g} that
\begin{align*}
M_{i,j}^\lam(g) &= t^{-(v_{[i+1, \, a]})^{-1}(\lam)}
\Delta^\lam
\left(\overline{\overline{v_{[i+1, \, a]}}} \;(m \bbv)^{-1} n \bu \; \overline{u_{[j, \, b]}}^{\, -1}\right)\\
&=t^{-s_1s_2 \cdots s_{a-i}(\lam)} \Delta^\lam(p_{[a+1-i, \, a-1+j]}(x)).
\end{align*}
\end{proof}

\begin{example}\label{ex-KZ-system}
{\rm
Let $u \in W$ be arbitrary.
In \cite[$\S$2.5]{KZ:integrable}, Kogan and Zelevinsky introduced an integrable system
$M_1, M_3, \ldots, M_{2n-1}$ on
$(G^{u, u}, \pist)$ corresponding to each reduced word $\bfu = (s_1, s_2, \ldots, s_n)$ of $u$.
In terms of
our notation in $\S$\ref{subsec-minors},
\[
M_{2k-1}(g) = M_{k-1, k}^{\omega_{\al_k}}(g) = \Delta_{s_ns_{n-1}\cdots s_k\omega_{\al_k}, \,
s_1s_2 \cdots s_{k-1}\omega_{\al_k}}(g'), \hs g \in G^{u, u}, \; k = 1, \ldots, n.
\]
Using the open Fomin-Zelevinsky embedding $F^{{\bfu, \bfu}}: G^{u, u} \to T \times O^{(\bfu^{-1}, \bfu)}$, one thus has
\[
M_{2k-1} = (F^{\bfu, \bfu})^*\left(\tilde{y}_{[n+2-k, \, n+k-1]}^{\,\omega_{\al_k}}\right), \hs
k = 1, \ldots, n.
\]
A  concrete example for $G = SL(3, \CC)$ will be given in
Example \ref{ex-A2-121}.
\hfill $\diamond$
}
\end{example}

\subsection{The Fomin-Zelevinsky embeddings are Poisson}\label{subsec-FZ-Poi-1}
Turning to Poisson structures, note that
since the multiplicative Poisson structure $\pist$ on $G$ is invariant under (both left and right)
translations by elements in $T$,
\[
\hat{\pi}_{\rm st} \; \stackrel{{\rm def}}{=}\; \varpi(\pist)
\]
is a well-defined Poisson structure on $G/T$, where $\varpi: G \to G/T$ is the natural projection.
It is evident that each reduced double Bruhat cell $\Guv/T$ is a Poisson submanifold of $G/T$
with respect to $\hat{\pi}_{\rm st}$.
The restriction of $\hat{\pi}_{\rm st}$ to $\Guv/T$ will also be denoted as $\hat{\pi}_{\rm st}$.

\begin{theorem}\label{thm-FZ-Poi}
For any $u, v \in W$, the open Fomin-Zelevinsky embeddings
\begin{align*}
&\hat{F}^{u, v}: \;\;\; (\Guv/T,  \;\hat{\pi}_{\rm st}) \lrw \left(\O^{(v^{-1}, u)}, \; \pi_2\right),\\
&F^{u, v}: \;\; (\Guv, \; \pist) \lrw \left(T \times \O^{(v^{-1}, u)}, \; 0 \bowtie \pi_2\right)
\end{align*}
are Poisson, inducing biregular Poisson isomorphisms
\begin{align*}
&\hat{F}^{u, v}: \;\; (\Guv/T, \; \hat{\pi}_{\rm st}) \stackrel{\sim}{\lrw} \left(\O^{(v^{-1}, u)}_e,  \;\pi_2\right),\\
&F^{u, v}: \;\;(\Guv,  \;\pist) \stackrel{\sim}{\lrw} \left(T \times \O^{(v^{-1}, u)}_e, \; 0 \bowtie \pi_2\right).
\end{align*}
\end{theorem}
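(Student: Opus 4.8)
The plan is to reduce the statement to a comparison of two explicit families of Poisson brackets. By Proposition~\ref{pr-Guv-Cuv} and Definition~\ref{de-FZ-map}, the maps $F^{u, v}$ and $\hat{F}^{u, v}$ are already biregular isomorphisms onto the open $T$-leaves $T \times \O^{(v^{-1}, u)}_e$ and $\O^{(v^{-1}, u)}_e$, so only compatibility with the Poisson brackets remains. I would fix reduced words $\bfu$ and $\bfv$ of $u$ and $v$ as in \eqref{eq-v-u-reduced} and pass to the Bott-Samelson model via the isomorphism $\mu$ of \eqref{eq-mu-2}, which by \eqref{eq-Fn-Fn} in Remark~\ref{rk-Fn-Fn} is a $T$-equivariant Poisson isomorphism $(\Ovmu, \pi_{a+b}) \to (\O^{(v^{-1}, u)}, \pi_2)$ and hence carries $0 \bowtie \pi_{a+b}$ to $0 \bowtie \pi_2$. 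Thus it suffices to prove that $F^{\bfu, \bfv} = \mu^{-1} \circ F^{u, v}: (\Guv, \pist) \to (T \times \Ovmu, 0 \bowtie \pi_{a+b})$ is Poisson. Since a morphism of smooth affine Poisson varieties is Poisson once $\phi^*\{f, g\} = \{\phi^* f, \phi^* g\}$ holds on a set of algebra generators of the target's coordinate ring, and since the torus characters $\{t^\lam\}$ together with the Bott-Samelson coordinates $x_1, \ldots, x_{a+b}$ generate $\mathcal{O}(T \times \Ovmu)$ (hence, after inverting $\varphi = \prod_\al y^{\omega_\al}$ and using the Leibniz rule, the coordinate ring of the principal open $T \times \Ovmu_e$), it is enough to verify the bracket identity on these generators.

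The core of the argument is the bracket comparison in the $\Ovmu$-direction. Proposition~\ref{pr-M-ij-y} identifies $(F^{\bfu, \bfv})^* \tilde{y}^\lam_{[a+1-i, \, a-1+j]} = M^\lam_{i, j}$, a monomial in the ordinary Fomin-Zelevinsky minors on $\Guv$; since the $y^{\omega_\al}_{[k, k]}$ recover the coordinates $x_k$ by Lemma~\ref{le-y-xx}, matching brackets on the $x_k$ is equivalent to matching them on the minors $M^\lam_{i, j}$. On the target side, Lemma~\ref{le-y-y-special} supplies
\[
\left\{\tilde{y}^\lam_{[i, j]}, \; \tilde{y}^{\lam'}_{[i', j']}\right\}_{0 \bowtie \pi_{a+b}}
= \bigl(\la s_{[1, i-1]}(\lam), \, s_{[1, i'-1]}(\lam')\ra - \la s_{[1, j]}(\lam), \, s_{[1, j']}(\lam')\ra\bigr)\,
\tilde{y}^\lam_{[i, j]}\,\tilde{y}^{\lam'}_{[i', j']}.
\]
On the source side, I would read off the corresponding log-canonical relations $\{M^\lam_{i, j}, M^{\lam'}_{i', j'}\}_{\pist}$ from the Poisson-commutation formulas for generalized minors computed by Kogan and Zelevinsky in \cite[$\S$2.5]{KZ:integrable}, inserting the sign correction of Remark~\ref{rk-compare-KZ} (where $\pist$ is the negative of their structure). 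The task is then to check that, under the index substitution $(i, j) \mapsto (a+1-i, a-1+j)$, the Kogan-Zelevinsky structure constants agree precisely with the coefficient displayed above.

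For the torus direction and the mixed terms I would invoke $T$-equivariance. By Remark~\ref{rk-F-equi}, $F^{u, v}$ intertwines the left and right $T$-actions on $\Guv$ with the stated actions on $T \times \O^{(v^{-1}, u)}_e$, while the mixed term of $0 \bowtie \pi_{a+b}$ is by Notation~\ref{nota-bpi} completely determined by the $T$-action and the form $\lara$, so that $\{t^\lam, f\} = -\la \lam, \lam_f\ra\, t^\lam f$ for any $T$-homogeneous $f$ as in \eqref{eq-t-lam-f}. Matching this with the $\pist$-bracket of the torus-coordinate function $(F^{u, v})^* t^\lam$ against the minors then reduces to the multiplicativity of $\pist$ under the $T$-action, which is exactly the content of the equivariance. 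Once the bracket identity is confirmed on all generators, $F^{\bfu, \bfv}$, and hence $F^{u, v}$, is Poisson. The reduced statement follows formally: $\varpi: \Guv \to \Guv/T$ is a surjective Poisson submersion for $\hat{\pi}_{\rm st} = \varpi(\pist)$, the projection $\mathrm{pr}: (T \times \O^{(v^{-1}, u)}, 0 \bowtie \pi_2) \to (\O^{(v^{-1}, u)}, \pi_2)$ is Poisson, and $\hat{F}^{u, v} \circ \varpi = \mathrm{pr} \circ F^{u, v}$, so $\hat{F}^{u, v}$ is Poisson as well.

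I expect the main obstacle to be the index bookkeeping in the bracket comparison of the second paragraph. The Kogan-Zelevinsky relations are stated in their own conventions for a specific family of minors, so matching them term-by-term with the coefficient from Lemma~\ref{le-y-y-special} — while simultaneously tracking the reversal encoded in $\bfv^{-1}$, the sign of Remark~\ref{rk-compare-KZ}, and the twist \eqref{eq-FZ-twist} defining $g'$ — is delicate and spread across several coordinate systems. This is presumably the reason a more conceptual, coordinate-free proof is deferred to the Appendix.
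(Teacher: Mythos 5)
Your route is in substance the paper's own coordinate proof of Theorem \ref{thm-bf-Fuv-Poi}: pass to the Bott--Samelson model via $\mu$, identify pullbacks of the functions $\tilde{y}^\lam_{[a+1-i,\,a-1+j]}$ with the monomials $M^\lam_{i,j}$ in Fomin--Zelevinsky minors (Proposition \ref{pr-M-ij-y}), and match the log-canonical brackets of Lemma \ref{le-y-y-special} and Lemma \ref{le-Y-Y} against the Kogan--Zelevinsky relations \eqref{eq-M-M-KZ}, with the sign of Remark \ref{rk-compare-KZ}. However, your reduction step has a genuine gap. You propose to verify the bracket identity on the algebra generators $t^\lam, x_1, \dots, x_{a+b}$ of the target's coordinate ring, and you bridge to the minors by asserting that, since $y^{\omega_{\al_k}}_{[k,k]} = x_k$, matching brackets on the $x_k$ is equivalent to matching them on the minors $M^\lam_{i,j}$. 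That bridge fails: the functions whose pullbacks Proposition \ref{pr-M-ij-y} controls are exactly the $\tilde{y}^\lam_{[a+1-i,\,a-1+j]}$, whose intervals all straddle the position between $a$ and $a+1$; the only one-point intervals of this form are $[a,a]$ and $[a+1,a+1]$, so among the coordinates only $x_a$ and $x_{a+1}$ occur in this family. For every other $k$, the pullback $(F^{\bfu,\bfv})^*x_k$ is not a monomial in Fomin--Zelevinsky minors, so the Kogan--Zelevinsky formulas say nothing about its brackets; and on the target side $\{x_k, x_{k'}\}$ is in general not log-canonical (e.g.\ $\{x_1,x_2\} = x_1x_2 - 1$ in Example \ref{ex-ss-0}), so no term-by-term match with \eqref{eq-M-M-KZ} is even possible. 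Checking the identity on the minors is what you actually can do, but then its sufficiency needs an argument you never supply.

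The missing ingredient --- and the one the paper's proof rests on --- is the Fomin--Zelevinsky/Kogan--Zelevinsky fact that the twisted minors $M_k$, $k = 1, \dots, a+b+r$, attached to a double reduced word of $(u,v)$ appended with the $r$ fundamental entries, generate the field of rational functions on $\Guv$. Since both $F^{\bfu,\bfv}_*(\pist)$ and $0 \bowtie \pi_{a+b}$ are algebraic Poisson structures on $T \times \Ovmu_e$, it suffices that their brackets agree on a family of functions whose differentials span the cotangent spaces at generic points; a family generating the function field does this, and coordinate-ring generation is not needed. (Alternatively, one could prove generation on the target side by induction using Lemma \ref{le-y-xx}, solving rationally for the $x_k$ outward from the center in terms of the straddling minors, but that has to be carried out.) A second, smaller gap is your treatment of the torus direction: $T$-equivariance of $F^{u,v}$ makes the pushforward bivector $T$-invariant, but it does not compute $\{(F^{u,v})^*t^\lam, \, (F^{u,v})^*f\}_{\pist}$; two $T$-invariant bivectors can agree on all functions pulled back from $\Ovmu$ and still have different mixed terms. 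In the paper this is not a separate issue, because the $a+b+r$ functions $M_k$ and their counterparts $\tilde{y}$ already mix the $T$- and cell-directions, and all their pairwise brackets are covered at once by \eqref{eq-M-M-KZ}. Your final deduction of the statement for $\hat{F}^{u,v}$ from that for $F^{u,v}$, via the Poisson surjection $\varpi$ and the Poisson projection to the second factor, is fine.
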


Given any reduced words $\bfu$ and $\bfv$ of $u$ and $v$ as in \eqref{eq-v-u-reduced}, recall
the isomorphism $\mu: \Ovmu \to \O^{(v^{-1}, u)}$ defined in \eqref{eq-mu-2}.
By Remark \ref{rk-Fn-Fn}, $\mu: (\Ovmu, \pi_{a+b}) \to (\O^{(v^{-1}, u)}, \pi_2)$ is Poisson.
Thus Theorem \ref{thm-FZ-Poi} is equivalent to the following Theorem \ref{thm-bf-Fuv-Poi}.

\begin{theorem}\label{thm-bf-Fuv-Poi} For any $u, v \in W$ and reduced words $\bfu$ and $\bfv$ of $u$ and $v$,
the open Fomin-Zelevinsky embeddings
\begin{align}\label{eq-bf-Fuv-1}
&\hat{F}^{\bfu, \bfv}:  \;\; (\Guv/T, \; \hat{\pi}_{\rm st}) \lrw \left(\Ovmu, \; \pi_{a+b}\right),\\
\label{eq-bf-Fuv-2}
&{F}^{\bfu, \bfv}: \;\; (\Guv, \; \pist) \lrw \left(T \times \Ovmu, \; 0 \bowtie\pi_{a+b}\right)
\end{align}
are Poisson, inducing biregular Poisson isomorphisms
\begin{align*}
&\hat{F}^{\bfu, \bfv}: \;\; (\Guv/T, \; \hat{\pi}_{\rm st}) \stackrel{\sim}{\lrw} \left(\O^{(\bfv^{-1}, \bfu)}_e,  \;
\pi_{a+b}\right),\\
&F^{\bfu, \bfv}: \;\;(\Guv,  \;\pist) \stackrel{\sim}{\lrw} \left(T \times \O^{(\bfv^{-1}, \bfu)}_e, \;
0 \bowtie \pi_{a+b}\right).
\end{align*}
\end{theorem}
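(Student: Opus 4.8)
The plan is to first reduce Theorem~\ref{thm-bf-Fuv-Poi} to the single assertion that the full embedding $F^{\bfu, \bfv}$ is Poisson, and then to verify the latter by checking that $(F^{\bfu, \bfv})^*$ intertwines the two Poisson brackets on a convenient generating set of the coordinate ring of its image. For the reduction, the definitions \eqref{eq-bf-Fuv-01} and \eqref{eq-bf-Fuv-02} give the commutation $\mathrm{pr} \circ F^{\bfu, \bfv} = \hat{F}^{\bfu, \bfv} \circ \varpi$, where $\varpi: \Guv \to \Guv/T$ and $\mathrm{pr}: T \times \Ovmu \to \Ovmu$ are the natural projections. Since $\hat{\pi}_{\rm st} = \varpi(\pist)$ makes $\varpi$ a surjective Poisson submersion, and $\mathrm{pr}$ is Poisson (by the discussion preceding \eqref{eq-t-lam-f}), a straightforward diagram chase using injectivity of $\varpi^*$ shows that $\hat{F}^{\bfu, \bfv}$ is Poisson as soon as $F^{\bfu, \bfv}$ is.

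Because $F^{\bfu, \bfv}$ is a biregular isomorphism from $\Guv$ onto the smooth affine variety $T \times \Ovmu_e$ (Proposition~\ref{pr-Guv-Cuv}), it is Poisson if and only if $(F^{\bfu, \bfv})^*$ preserves Poisson brackets on a set of algebra generators of the coordinate ring of $T \times \Ovmu_e$. I would take as generators the torus characters $t^\lam$, $\lam \in X(T)$, together with the regular functions $\tilde{y}^{\omega_\al}_{[i, j]}$ of Notation~\ref{nota-y-special} for $\al \in \Gamma$ and the index ranges appearing in Proposition~\ref{pr-M-ij-y}; that these generate follows from the biregularity in Proposition~\ref{pr-Guv-Cuv} together with the Fomin-Zelevinsky factorization theory of \cite{FZ:total}. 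By Proposition~\ref{pr-M-ij-y} (see \eqref{eq-Mij-y}), the pullback of $\tilde{y}^{\omega_\al}_{[a+1-i, \, a-1+j]}$ is exactly the Fomin-Zelevinsky minor $M_{i, j}^{\omega_\al}$ on $\Guv$, while the pullback of $t^\lam$ is the corresponding character of the $T$-factor in the decomposition \eqref{eq-g-decomp}.

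It then remains to match three kinds of brackets. The character--character brackets vanish on both sides. The mixed brackets $\{t^\lam, \tilde{y}^{\omega_\al}_{[i,j]}\}$ are governed on the target by \eqref{eq-t-lam-f} and on the source by the $T$-equivariance of $F^{\bfu, \bfv}$ recorded in Remark~\ref{rk-F-equi}, so these agree once the relevant $T$-weights are bookkept. The essential case is the minor--minor brackets: on the target, Lemma~\ref{le-y-y-special} shows that $\{\tilde{y}^{\omega_\al}_{[i,j]}, \tilde{y}^{\omega_{\al'}}_{[i',j']}\}$ is log-canonical with an explicit coefficient whenever the two intervals are nested, while on the source the formulas of Kogan and Zelevinsky \cite[\S2]{KZ:integrable} give log-canonical brackets among the minors $M_{i,j}^{\omega_\al}$. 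Matching the two coefficients, after inserting the overall sign dictated by Remark~\ref{rk-compare-KZ} and performing the index substitution $(i,j) \mapsto (a+1-i, \, a-1+j)$, would yield the required identity on generators.

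The main obstacle is precisely this last comparison for pairs of intervals that are not nested but \emph{cross}, which genuinely occur among the straddling intervals $[a+1-i, \, a-1+j]$. The restriction device behind Lemma~\ref{le-y-y} --- passing to a sub-generalized Bruhat cell in which one of the two functions becomes a full-product minor $y^\lam$ so that Corollary~\ref{co-y-f} applies --- breaks down when neither interval contains the other, so for crossing pairs the target brackets must instead be extracted directly from the explicit Bott-Samelson formulas of \cite{EL:BS} and checked to remain log-canonical with the coefficient predicted by \cite{KZ:integrable}. It is exactly this reliance on lengthy computations in several coordinate charts that motivates the alternative coordinate-free argument indicated for the Appendix. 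Once the bracket identity is established on generators, the biregular Poisson isomorphisms onto $\Ovmu_e$ and $T \times \Ovmu_e$ follow at once, since $\hat{F}^{\bfu, \bfv}$ and $F^{\bfu, \bfv}$ already have these sets as their images.
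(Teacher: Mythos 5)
Your strategy coincides with the paper's coordinate proof of Theorem~\ref{thm-bf-Fuv-Poi}: identify pullbacks of target functions with twisted minors via Proposition~\ref{pr-M-ij-y}, compute target brackets by the nested-interval Lemmas~\ref{le-y-y-special} and \ref{le-Y-Y}, compute source brackets by \cite[Theorem 2.6]{KZ:integrable}, and compare on a generating family. The gap is in your choice of generating family, and it cannot be patched the way you suggest. Taking \emph{all} the functions $\tilde{y}^{\omega_\al}_{[a+1-i,\,a-1+j]}$ forces you to handle crossing pairs of intervals, and for such pairs the bracket is in general \emph{not} log-canonical: already in Example~\ref{ex-A2-121} one has $x_3=y^{\omega_1}_{[3,3]}$ and $x_4=y^{\omega_1}_{[4,4]}$, whose index pairs $(i,j)=(1,1)$ and $(0,2)$ are incomparable, and $\{x_3,x_4\}=2x_3x_4-2$. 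Correspondingly, \cite[Theorem 2.6]{KZ:integrable} provides no bracket formula for the pullback minors $M_{1,1}^{\omega_1}$ and $M_{0,2}^{\omega_1}$ on $G^{w_0,w_0}$, because that theorem only covers minors $M_k$, $M_{k'}$ attached to a \emph{common} double reduced word; your assertion that the Kogan--Zelevinsky formulas give log-canonical brackets among all the $M_{i,j}^{\omega_\al}$ is false precisely for crossing pairs. So there is no ``coefficient predicted by \cite{KZ:integrable}'' to check against, and the verification you propose to outsource to the formulas of \cite{EL:BS} cannot terminate in a log-canonical identity.

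The missing idea---the actual content of the paper's proof---is to shrink the generating family so that crossing pairs never occur. Fix one shuffle of $\bfu$ and $\bfv$ into a double reduced word of $(u,v)$ and use only the associated minors $M_1,\ldots,M_{a+b+r}$ of \cite{FZ:total}. By \cite{FZ:total, KZ:integrable} these generate the \emph{field of rational functions} on $\Guv$, which suffices: two algebraic bivector fields whose brackets agree on a family generating the function field agree on a dense open set, hence everywhere (so you do not need the stronger, and unproved, claim that your functions generate the coordinate ring). Crucially, the index pairs of this family are monotone, $i(k)\le i(k')$ and $j(k)\le j(k')$ for $k\le k'$, so all the intervals $[a+1-i(k),\,a-1+j(k)]$ that occur are pairwise \emph{nested}; hence Lemma~\ref{le-Y-Y} applies to every pair that must be checked, and by \eqref{eq-Mij-y} its coefficient matches \eqref{eq-M-M-KZ} verbatim. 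With this substitution the rest of your argument goes through: your reduction of the statement for $\hat{F}^{\bfu,\bfv}$ to that for $F^{\bfu,\bfv}$ (the paper phrases this via Remark~\ref{rk-F-equi}, which also absorbs your ``mixed bracket'' bookkeeping) and the final passage to the biregular Poisson isomorphisms onto $\O^{(\bfv^{-1},\bfu)}_e$ and $T\times\O^{(\bfv^{-1},\bfu)}_e$ are fine.
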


We now give a proof of Theorem \ref{thm-bf-Fuv-Poi} using a result of Kogan and Zelevinsky in
\cite{KZ:integrable}  on the
Poisson brackets between certain Fomin-Zelevinsky minors on $\Guv$ and our expressions of the minors in
Bott-Samelson coordinates. A more conceptual proof of Theorem \ref{thm-FZ-Poi} without computations in coordinates
is given in the Appendix.

We first compute the Poisson brackets with respect to $0 \bowtie \pi_{a+b}$ of the functions
$\tilde{y}^\lam_{[a+1-i, \, a-1+j]}$ on $T \times
\O^{(\bfv^{-1}, \bfu)}$  appearing in \eqref{eq-Mij-y}.
\begin{lemma}\label{le-Y-Y}
For $0 \leq i \leq i' \leq a, \, 1 \leq j \leq j' \leq b+1$, and $\lam, \lam' \in \calP^+$,
\[
\left\{\tilde{y}^\lam_{[a+1-i, \, a-1+j]}, \;\; \tilde{y}^{\lam'}_{[a+1-i', \, a-1+j']}\right\} =
c \; \tilde{y}^\lam_{[a+1-i, \, a-1+j]}\;\tilde{y}^{\lam'}_{[a+1-i', \, a-1+j']},
\]
where (see Notation \ref{nota-s-interval})
\begin{align}\label{eq-c-1}
c & = \la s_{[1, a-i]}(\lam), \; s_{[1, a-i']}(\lam')\ra - \la s_{[1, a-1+j]}(\lam), \;s_{[1, a-1+j']}(\lam')\ra\\
\label{eq-c-2}& = \la v_{[1, i]}(\lam), \; v_{[1, i']}(\lam')\ra - \la (u_{[j, \, b]})^{-1}(\lam), \;
(u_{[j', b]})^{-1}(\lam')\ra.
\end{align}
\end{lemma}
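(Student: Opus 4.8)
The plan is to deduce the first equality \eqref{eq-c-1} directly from Lemma \ref{le-y-y-special}, applied to the sequence $(\bfv^{-1}, \bfu)$ of simple reflections in \eqref{eq-sequence-v-u}, which has length $a+b$; the remaining content of the lemma then reduces to the purely combinatorial identity \eqref{eq-c-2}, which I would establish using the $W$-invariance of the bilinear form $\lara$ on $\t^*$.

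First I would match indices. In the notation of Lemma \ref{le-y-y-special} (with the $n$ there equal to $a+b$), the two functions $\tilde{y}^\lam_{[a+1-i,\,a-1+j]}$ and $\tilde{y}^{\lam'}_{[a+1-i',\,a-1+j']}$ correspond to the index pairs $(a+1-i,\,a-1+j)$ and $(a+1-i',\,a-1+j')$. One checks that the hypotheses $1 \leq a+1-i' \leq a+1-i \leq a+b+1$ and $(a+1-i)-1 \leq a-1+j \leq a-1+j' \leq a+b$ of that lemma follow at once from $0 \leq i \leq i' \leq a$ and $1 \leq j \leq j' \leq b+1$. Feeding these indices into Lemma \ref{le-y-y-special} and using $(a+1-i)-1 = a-i$ yields both the log-canonical form of the bracket and the value \eqref{eq-c-1} of the coefficient $c$.

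It then remains to verify that \eqref{eq-c-1} equals \eqref{eq-c-2}, and here I would use the reduced-word bookkeeping from Notation \ref{nota-Mij}. Since $\bfv^{-1} = (s_1, \ldots, s_a)$ and $\bfu = (s_{a+1}, \ldots, s_{a+b})$ are reduced words, one has $s_{[1,a]} = v^{-1}$ and $s_{[a+1,a+b]} = u$ in the notation of Notation \ref{nota-s-interval}. From the reduced factorizations $v = v_{[1,i]} v_{[i+1,a]}$ and $u = s_{[a+1,\,a+j-1]}\, u_{[j,b]}$ I obtain, by the telescoping cancellation of simple reflections, the identities $s_{[1,a-i]} = v^{-1} v_{[1,i]}$ and $s_{[1,a-1+j]} = s_{[1,a]}\, s_{[a+1,\,a+j-1]} = v^{-1} u\, (u_{[j,b]})^{-1}$, together with their primed analogues. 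Substituting these into the two pairings in \eqref{eq-c-1} and invoking $W$-invariance of $\lara$ (valid because $\lara_\g$ is an invariant form, so $v$ acts as an isometry on $\t^*$ in the first pairing and $u^{-1}v$ in the second) removes the common left factors $v^{-1}$ and $v^{-1}u$, transforming \eqref{eq-c-1} into \eqref{eq-c-2}.

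The only points requiring care, rather than presenting any genuine difficulty, are the boundary cases $i = 0$ and $j = b+1$, where $v_{[1,0]} = e$ and $u_{[b+1,b]} = e$; there the displayed identities read $s_{[1,a]} = v^{-1}$ and $s_{[1,a+b]} = v^{-1}u$, which are consistent with the conventions of Notation \ref{nota-s-interval}. Since the whole argument amounts to invoking an already-established log-canonicity formula and then performing a single $W$-invariance cancellation, I expect no real obstacle; the main risk is an index or inversion slip when translating between the two index conventions, so I would track the prefix/suffix factorizations of $v$ and $u$ explicitly.
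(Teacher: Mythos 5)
Your proposal is correct and follows essentially the same route as the paper: the paper likewise obtains \eqref{eq-c-1} by applying Lemma \ref{le-y-y-special} to the sequence $(\bfv^{-1},\bfu)$, and then deduces \eqref{eq-c-2} from the identities $s_{[1,a-i]} = v^{-1}v_{[1,i]}$ and $s_{[1,a-1+j]} = v^{-1}u(u_{[j,\,b]})^{-1}$ (with the $W$-invariance of $\lara$ left implicit, which you spell out). Your index bookkeeping and treatment of the boundary cases $i=0$, $j=b+1$ are accurate.
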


\begin{proof}
By Lemma \ref{le-y-y-special}, the Poisson bracket between the two functions  is log-canonical
with the coefficient
$c$ given as in \eqref{eq-c-1}. For \eqref{eq-c-2}, we only need to note that
$s_{[1, a-i]} = v^{-1} v_{[1, i]}$, $s_{[1, a-i']} = v^{-1} v_{[1, i']}$,
$s_{[1, a-1+j]} = v^{-1}u(u_{[j, \, b]})^{-1}$, and $s_{[1, a-1+j']} = v^{-1}u(u_{[j', b]})^{-1}$.
\end{proof}

\noindent
{\it Proof of Theorem \ref{thm-bf-Fuv-Poi} by computation in coordinates.}
Associated to any double reduced word $(i_1, \ldots, i_{a+b})$ of $(u, v)$,
appended with $r$ entries $i_{a+b+1}, \ldots, i_{a+b+r}$ representing the $r$ simple reflections
in $W$,
Fomin and Zelevinsky defined in \cite{FZ:total}
a sequence $M_k$, $k = 1, \ldots, a+b+r$, of twisted generalized minors on $\Guv$, which are of the form
\[
M_k (g) = \Delta_{\gamma^k, \delta^k}(g'), \hs g \in \Guv, \; k = 1, \ldots, a+b + r,
\]
where $\gamma^k = u_{\geq k} (\omega_{|i_k|})$ and $\delta^k = v_{<k} (\omega_{|i_k|})$ (we refer to
\cite{FZ:total, KZ:integrable}
for the detail on the notation). Moreover, $\{M_k: k = 1, \ldots, a+b+r\}$ generates the
field of rational functions on $\Guv$. It is shown in  \cite[Theorem 2.6]{KZ:integrable} that with respect to the Poisson structure $\pist$ on $\Guv$, one has
\begin{equation}\label{eq-M-M-KZ}
\{M_k, \, M_{k'}\} = (\la \delta^k, \, \delta^{k'}\ra - \la \gamma^k, \, \gamma^{k'}\ra) M_k M_{k'}, \hs k \leq k'.
\end{equation}
Here recall that our Poisson structure $\pist$ on $G$ is the negative of the one used in \cite{KZ:integrable}.

Given reduced words $\bfu$ and $\bfv$  of $u$ and $v$, use any shuffle
of $\bfu$ and $\bfv$ to form a
double reduced word $(i_1, \ldots, i_{a+b})$ of $(u, v)$, and consider the sequence of
functions $M_k$, $k = 1, \ldots, a+b+r$ described above.
In our notation in \eqref{eq-Mij}, it is easy to see that for each $1 \leq k \leq a+b+r$, there exist
$0 \leq i(k) \leq a$ and $1\leq j(k) \leq b+1$ such that
\[
M_k (g)= M_{i(k), j(k)}^{\omega_{|i_k|}}(g) = \Delta_{(u_{[j(k), b]})^{-1}\omega_{|i_k|}, \,
v_{[1, i(k)]}\omega_{|i_k|}}(g'), \hs g \in \Guv,
\]
and $i(k) \leq i({k'})$ and $j(k) \leq j({k'})$ if $k \leq k'$.
 By \eqref{eq-Mij-y}, one has
\[
M_k = (F^{\bfu, \bfv})^* \left(\tilde{y}^{\omega_{|i_k|}}_{[a+1-i(k), a-1+j(k)]}\right), \hs k = 1, \ldots, a+b+r.
\]
By Lemma \ref{le-Y-Y} and comparing with \eqref{eq-M-M-KZ}, one sees immediately that
$F^{\bfu, \bfv}: (\Guv, \pist) \to (T \times \O^{(\bfv^{-1}, \bfu)}, 0 \bowtie \pi_{a+b})$ is Poisson.
The statement about $\hat{F}^{\bfu, \bfv}$ follows from Remark \ref{rk-F-equi} and the definitions of the Poisson
structures $\hat{\pi}_{\rm st}$ and $0 \bowtie \pi_{a+b}$.

\subsection{Completeness of Hamiltonian flows of Fomin-Zelevinsky minors}\label{subsec-complete-FZ-minors}
For $u, v \in W$, regarding $(G^{u, v}, \pist)$ as an affine Poisson variety, we now have one of the main results of the
paper.

\begin{corollary}\label{co-FZ-complete}
For any $u, v \in W$ and with respect to the Poisson structure $\pist$,
every Fomin-Zelevinsky minor on $\Guv$ has complete Hamiltonian flow with property $\calQ$.
\end{corollary}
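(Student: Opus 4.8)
The plan is to assemble the three preceding results of this section---that each Fomin-Zelevinsky minor is a pullback, under the Fomin-Zelevinsky embedding, of a function in $\widetilde{\calY}^{(\bfv^{-1}, \bfu)}$; that every such function has complete Hamiltonian flow with property $\calQ$; and that the Fomin-Zelevinsky embedding is a Poisson isomorphism onto its image---into a single transport-of-structure argument. The only genuinely new observation required is that an arbitrary Fomin-Zelevinsky minor, parametrized by arbitrary length-additive factorizations, can be realized as one of the functions $M_{i, j}^{\omega_\al}$ for a suitable choice of reduced words.

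First I would fix a Fomin-Zelevinsky minor $g \mapsto \Delta_{u_2^{-1}\omega_\al, \, v_1\omega_\al}(g')$ as in Definition \ref{de-FZ-minors}, so that $\al \in \Gamma$, $u = u_1 u_2$ with $l(u) = l(u_1) + l(u_2)$, and $v = v_1 v_2$ with $l(v) = l(v_1) + l(v_2)$. Since these factorizations are length-additive, concatenating reduced words is harmless: choosing $\bfv$ to be a reduced word of $v_1$ followed by one of $v_2$, and $\bfu$ to be a reduced word of $u_1$ followed by one of $u_2$, produces reduced words of $v$ and $u$ as in \eqref{eq-v-u-reduced}. Setting $i = l(v_1)$ and $j = l(u_1) + 1$, so that $0 \leq i \leq a$ and $1 \leq j \leq b+1$, the definitions in \eqref{eq-v-intervals} and \eqref{eq-u-intervals} give $v_{[1, i]} = v_1$ and $u_{[j, \, b]} = u_2$; hence the chosen minor is exactly $M_{i, j}^{\omega_\al}$ in the notation of Notation \ref{nota-Mij}.

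Next I would invoke Proposition \ref{pr-M-ij-y}, which yields
\[
M_{i, j}^{\omega_\al} \;=\; (F^{\bfu, \bfv})^*\!\left(\tilde{y}^{\omega_\al}_{[a+1-i,\, a-1+j]}\right),
\qquad
\tilde{y}^{\omega_\al}_{[a+1-i,\, a-1+j]} \in \widetilde{\calY}^{(\bfv^{-1}, \bfu)}.
\]
By Theorem \ref{thm-tcYGu} this function has complete Hamiltonian flow with property $\calQ$ on $(T \times \O^{(\bfv^{-1}, \bfu)}_e, \, 0 \bowtie \pi_{a+b})$, while by Theorem \ref{thm-bf-Fuv-Poi} the map $F^{\bfu, \bfv}$ restricts to a biregular Poisson isomorphism of $(\Guv, \pist)$ onto $(T \times \O^{(\bfv^{-1}, \bfu)}_e, \, 0 \bowtie \pi_{a+b})$. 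Since property $\calQ$ is invariant under biregular Poisson isomorphisms of affine Poisson varieties by Lemma \ref{le-simple-1}, it follows that $M_{i, j}^{\omega_\al}$, and hence the original Fomin-Zelevinsky minor, has complete Hamiltonian flow with property $\calQ$ on $(\Guv, \pist)$.

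I do not expect a serious obstacle here, as all the substantive analytic and Poisson-geometric content is carried by Theorems \ref{thm-tcYGu} and \ref{thm-bf-Fuv-Poi}. The one point that deserves care is the combinatorial bookkeeping of the first step: one must confirm that the length-additivity of $u = u_1 u_2$ and $v = v_1 v_2$ is precisely what is needed to realize an arbitrary Fomin-Zelevinsky minor as an admissible $M_{i, j}^{\omega_\al}$. A secondary point is to apply the biregular-isomorphism form of Lemma \ref{le-simple-1} correctly: an integral curve $\gamma$ of the Hamiltonian vector field of $M_{i, j}^{\omega_\al}$ on $\Guv$ is carried by the Poisson isomorphism $F^{\bfu, \bfv}$ to an integral curve of the Hamiltonian vector field of $\tilde{y}^{\omega_\al}_{[a+1-i,\, a-1+j]}$, whose coordinate components lie in $\calQ$; composing with the regular inverse $(F^{\bfu, \bfv})^{-1}$ returns $\gamma$ and keeps its components in $\calQ$ because $\calQ$ is an algebra closed under polynomial substitution.
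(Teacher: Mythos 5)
Your proposal is correct and follows essentially the same route as the paper's own proof: Proposition \ref{pr-M-ij-y} to identify the minor with an element of $\widetilde{\calY}^{(\bfv^{-1}, \bfu)}$, Theorem \ref{thm-tcYGu} for completeness with property $\calQ$, and Theorem \ref{thm-bf-Fuv-Poi} together with Lemma \ref{le-simple-1} to transport the conclusion back to $(\Guv, \pist)$. You are in fact slightly more careful than the paper on the one combinatorial point: the paper opens with ``choose any reduced words,'' but since Notation \ref{nota-Mij} only produces minors whose $v_1$ is a prefix and whose $u_2$ is a suffix of the chosen words, one must, exactly as you do, pick $\bfv$ and $\bfu$ adapted to the length-additive factorizations $v = v_1v_2$ and $u = u_1u_2$ in order to realize an arbitrary Fomin-Zelevinsky minor as some $M_{i,j}^{\omega_\al}$.
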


\begin{proof}
Choose any reduced words $\bfu$ and $\bfv$ of $u$ and $v$ and
consider the corresponding Fomin-Zelevinsky Poisson isomorphism
$F^{\bfu, \bfv}: (\Guv, \pist) \to (T \times \O^{(\bfv^{-1}, \bfu)}_e, 0 \bowtie \pi_{a+b})$.
By Proposition \ref{pr-M-ij-y}, every Fomin-Zelevinsky minor on $\Guv$ is identified, via $F^{\bfu, \bfv}$, with
a regular function $\tilde{y} \in \widetilde{\calY}^{(\bfv^{-1}, \bfu)}$, which, by
Theorem \ref{thm-tcYGu}, has complete Hamiltonian flow in $T \times \O^{(\bfv^{-1}, \bfu)}_e$
with property $\calQ$.
\end{proof}

For $u \in W$, recall from Example \ref{ex-KZ-system} the Kogan-Zelevinsky integrable systems on $G^{u, u}$ defined
using reduced words of $u$.

\begin{corollary}\label{co-KZ-complete}
For every $u \in W$, all the Kogan-Zelevinsky integrable systems on $G^{u, u}$ have complete Hamiltonian flows
with property $\calQ$.
\end{corollary}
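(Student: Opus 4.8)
The plan is to verify, for each reduced word $\bfu = (s_1, \ldots, s_n)$ of $u$, the three conditions in Definition~\ref{de-calS} for the Kogan--Zelevinsky system $M_1, M_3, \ldots, M_{2n-1}$ on $(G^{u, u}, \pist)$. First I would recall from Example~\ref{ex-KZ-system} that for each $k = 1, \ldots, n$ the function $M_{2k-1}$ equals $M_{k-1, k}^{\omega_{\al_k}}$, which by Definition~\ref{de-FZ-minors} is a Fomin--Zelevinsky minor on $G^{u, u}$; this single observation is what links the present statement to the machinery developed above.

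For the functional independence on generic symplectic leaves and for the pairwise Poisson commuting, I would invoke the theorem of Kogan and Zelevinsky \cite{KZ:integrable}, which is exactly the assertion that $M_1, M_3, \ldots, M_{2n-1}$ form an integrable system on $(G^{u, u}, \pist)$. If one prefers a self-contained argument, the Poisson commuting also follows from Lemma~\ref{le-Y-Y}, applied after transporting the $M_{2k-1}$ to the functions $\tilde{y}_{[n+2-k, \, n+k-1]}^{\,\omega_{\al_k}}$ via Proposition~\ref{pr-M-ij-y}, while the functional independence follows as in Theorem~\ref{thm-integrable-O}, using Lemma~\ref{le-y-xx} together with the non-degeneracy of $\pi_{2n}$ on $\O^{(\bfu^{-1}, \bfu)}_e$.

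The remaining condition --- that each $M_{2k-1}$ has complete Hamiltonian flow with property $\calQ$ --- I would deduce at once from Corollary~\ref{co-FZ-complete}, since every $M_{2k-1}$ is a Fomin--Zelevinsky minor on $G^{u, u}$. Combining the three conditions with Definition~\ref{de-calS} then gives the claim.

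I do not expect a genuine obstacle in this corollary: the entire analytic content is already carried out in Corollary~\ref{co-FZ-complete}, which rests on Theorem~\ref{thm-tcYGu} and on the Fomin--Zelevinsky embedding being Poisson (Theorem~\ref{thm-bf-Fuv-Poi}), while the integrable-system structure is the theorem of Kogan and Zelevinsky. An equivalent and perhaps more conceptual packaging, which I might present instead, is to note via Proposition~\ref{pr-M-ij-y} that the entire system $(M_1, \ldots, M_{2n-1})$ is the pullback under the Poisson isomorphism $F^{\bfu, \bfu}\colon (G^{u, u}, \pist) \lrw (T \times \O^{(\bfu^{-1}, \bfu)}_e, \, 0 \bowtie \pi_{2n})$ of the functions $\tilde{y}_1, \ldots, \tilde{y}_n$ from Remark~\ref{rk-integrable-T-O}; since these form an integrable system on $T \times \O^{(\bfu^{-1}, \bfu)}_e$ in the reduced case and property $\calQ$ is invariant under biregular Poisson isomorphisms by Lemma~\ref{le-simple-1}, the claim follows immediately.
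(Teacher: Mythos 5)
Your main argument is correct and is essentially the paper's own proof: the paper states this corollary with no separate argument precisely because Example~\ref{ex-KZ-system} identifies each $M_{2k-1}$ as the Fomin--Zelevinsky minor $M_{k-1,k}^{\omega_{\al_k}}$, the integrable-system structure (Poisson commutativity and functional independence on generic leaves) is taken from the theorem of Kogan and Zelevinsky in \cite{KZ:integrable}, and completeness with property $\calQ$ is then immediate from Corollary~\ref{co-FZ-complete}.

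One caution about your two optional ``self-contained'' alternatives: both have a subtle circularity or gap. Functional independence must be checked on generic symplectic leaves of $\pist$ in $G^{u,u}$, which under $F^{\bfu,\bfu}$ correspond to leaves of $0 \bowtie \pi_{2n}$ in $T \times \O^{(\bfu^{-1},\bfu)}_e$, \emph{not} to leaves of $\pi_{2n}$ in $\O^{(\bfu^{-1},\bfu)}$; so the argument of Theorem~\ref{thm-integrable-O} (which concerns the functions $y_k$ and the Poisson structure $\pi_{2n}$) does not transfer directly to the functions $\tilde{y}_k$, which carry $T$-dependent prefactors. Indeed, Remark~\ref{rk-integrable-T-O} makes clear that in the paper the independence of $\tilde{y}_1,\ldots,\tilde{y}_n$ on leaves of $0\bowtie\pi_{2n}$ in the reduced case is itself a \emph{consequence} of the Kogan--Zelevinsky theorem transported through $F^{\bfu,\bfu}$ (and is open for non-reduced $\bfu$), so presenting it as an independent input to prove the corollary would be circular. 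Your primary route, which credits the integrability to \cite{KZ:integrable}, avoids this entirely and should be the one you keep.
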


\subsection{Integrable systems on $(G^{u, u}/T, \hat{\pi}_{\rm st}$)}\label{subsec-KZ-system}
Let again $u \in W$ be arbitrary, and let $\bfu = (s_1, s_2, \ldots, s_n)$ be any reduced word of $u$.
Consider the open Fomin-Zelevinsky embedding
\[
\hat{F}^{(\bfu, \bfu)}: \;\; (G^{u, u}/T, \; \hat{\pi}_{\rm st}) \lrw (\O^{(\bfu^{-1}, \bfu)}, \; \pi_{2n}).
\]
Recall from $\S$\ref{subsec-int-O} the
integrable system $(y_1, \ldots, y_n)$ on $(\O^{(\bfu^{-1}, \bfu)}, \; \pi_{2n})$. Define
\[
\hat{M}_k = (\hat{F}^{(\bfu, \bfu)})^* y_k, \hs k = 1, \ldots, n,
\]
Then $(\hat{M}_1, \ldots, \hat{M}_n)$ is an integrable system on $G^{u, u}/T$ that has complete Hamiltonian flows with
property $\calQ$. To express the regular function $\hat{M}_k$ on $G^{u, u}/T$ using
the Fomin-Zelevinsky twist map $G^{u, u} \to G^{u^{-1}, u^{-1}}, g \mapsto g'$, note 
by Proposition \ref{pr-M-ij-y} that for $g \in G^{u, u}$, 
\begin{align*}
\hat{M}_k(gT) &= t^{s_n s_{n-1}\cdots s_{k+1}(\omega_{\alpha_k})} 
\Delta_{s_ns_{n-1} \cdots s_{k+1}\omega_{\alpha_k}, \; s_1s_2 \cdots s_k\omega_{\alpha_k}}(g')\\ 
& = t^{s_n s_{n-1}\cdots s_{k+1}(\omega_{\alpha_k})} M_{k, k+1}^{\omega_{\al_k}}(g),
\end{align*}
where recall from \eqref{eq-g-decomp-0} that $t = [g\overline{\overline{u}}^{\,-1}]_0^u$,
and the function $M_{k, k+1}^{\omega_{\al_k}}$ on $G^{u, u}$ is defined in Notation \ref{nota-Mij}. It is easy to see that
$t = ([\overline{\overline{u}}\,g']_0^u)^{-1}$. One thus has, for $k = 1, \ldots, n$,  
\[
\hat{M}_k(gT) = \left(\left[\overline{\overline{u}} g'\right]_0^{-1}\right)^{s_1s_2 \cdots s_{k}(\omega_{\al_k})}
\Delta_{s_ns_{n-1} \cdots s_{k+1}\omega_{\al_k}, \, s_1s_2 \cdots s_k\omega_{\al_k}}(g'), \hs g \in G^{u, u}.
\]

\subsection{Examples}\label{subsec-examples}

We give two examples to illustrate the Fomin-Zelevinsky embeddings and how
Fomin-Zelevinsky minors and their Hamiltonian flows can be expressed in Bott-Samelson coordinates.
The Poisson brackets in
Example \ref{ex-A2-121} and Example \ref{ex-G2-212} are computed using the GAP computer program written by B. Elek.

\begin{example}\label{ex-A2-121}
{\rm
Consider $G=SL(3,\mathbb{C})$ with the standard choices of maximal torus $T$
and the simple roots $\al_1$ and $\al_2$, and choose the bilinear form on $\g$ such that the
induced bilinear form $\lara$ on $\t^*$ satisfies  $\la \al_2, \al_2\ra = \la \al_1, \al_1\ra = 2$.
Let $w_0$ be the longest element in the Weyl group, and
take the reduced word $\bfu = (s_{\al_1}, s_{\al_2}, s_{\al_1})$ of $w_0$.
Let $\omega_1$ and $\omega_2$ be the fundamental weights corresponding to $\al_1$ and $\al_2$,
and consider Bott-Samelson coordinates $(a, b, x_1, \ldots,x_6)$ on $T \times \O^{(\bfu^{-1}, \bfu)}$,
where
\[
a(t) = t^{\omega_1} \hs \mbox{and} \hs b(t) =
t^{\omega_2}, \hs t \in T.
\]
For $g = (a_{ij})_{i, j = 1, 2, 3}$ and
$1 \leq i < k \leq 3$ and $1 \leq j < l \leq 3$, let $\Delta_{ik,jl}(g)$ be the
determinant of the submatrix of $g$ formed by the $i$'th and the $k$'th row and the $j$'th and the $l$'th column.
Recall that $g \in
G^{w_0, w_0}$ if and only if $a_{13}a_{31}\Delta_{12,23}\Delta_{23,12} \neq 0$.
A direct calculation shows that the open Fomin-Zelevinsky embedding
\[
F^{\bfu, \bfu}: \;\;G^{w_0, w_0} \lrw T \times \O^{(\bfu^{-1}, \bfu)} \cong (\CC^\times)^2 \times \CC^6
\]
is explicitly given by $SL(3, \CC) \ni g =(a_{ij}) \mapsto (a, b, x_1, \ldots, x_6)$, with
\begin{align*}
&a=1/\Delta_{12,23}, \hs b=1/a_{13}, \hs x_1=\Delta_{13,23}/\Delta_{12,23}, \hs x_2=a_{33}/a_{13}, \hs x_3=a_{23}/a_{13},\\ &x_4=\Delta_{13,12}/\Delta_{23,12}, \hs x_5=\Delta_{12,12}/\Delta_{23,12}, \hs x_6=a_{21}/a_{31}.
\end{align*}
The Zariski open subset $T \times \O^{(\bfu^{-1}, \bfu)}_e$ of $T \times \O^{(\bfu^{-1}, \bfu)}$ is defined in the coordinates
$(a, b, x_1, \ldots, x_6)$ by $y_1 \neq 0$ and $y_2 \neq 0$, where
\[
y_1 = x_1x_3x_4x_6-x_2x_4x_6-x_1x_6-x_1x_3x_5+x_2x_5+1  \hs \mbox{and} \hs
y_2 = x_2x_5-x_3x_4+1.
\]
One can see that $F^{\bfu, \bfu}(G^{w_0, w_0}) \subset T \times \O^{(\bfu^{-1}, \bfu)}_e$ directly by checking that
\[
y_1 (F^{\bfu, \bfu}(g)) = \frac{1}{a_{31} \Delta_{12, 23}} \hs \mbox{and} \hs
y_2 (F^{\bfu, \bfu}(g)) = \frac{1}{a_{13} \Delta_{23, 12}}, \hs g \in G^{w_0, w_0},
\]
and one can also solve for the inverse of $F^{\bfu, \bfu}: G^{w_0, w_0} \to T \times \O^{(\bfu^{-1}, \bfu)}_e$ directly.

In the coordinates $(a, b, x_1, \ldots, x_6)$, the Poisson structure
$0 \bowtie \pi_6$ on $T \times \O^{(\bfu^{-1}, \bfu)}$ has
\begin{align*}
&\{x_{1},x_{2}\} = -x_{1}x_{2}, \hhs
\{x_{1},x_{3}\} = x_{1}x_{3}-2x_2, \hhs
\{x_{1},x_{4}\} = -x_{1}x_{4}, \\
&\{x_{1},x_{5}\} = x_{1}x_{5}-2x_4, \hhs
\{x_{1},x_{6}\} = 2x_{1}x_{6}-2,\\
&\{x_{2},x_{3}\} = -x_{2}x_{3}, \hhs
\{x_{2},x_{4}\} = x_{2}x_{4}, \hhs
\{x_{2},x_{5}\} = 2x_{2}x_{5}-2x_{3}x_{4}+2,\\
&\{x_{2},x_{6}\} = x_{2}x_{6}-2x_3, \hhs
\{x_{3},x_{4}\} = 2x_{3}x_{4}-2, \hhs
\{x_{3},x_{5}\} = x_{3}x_{5},\\
&\{x_{3},x_{6}\} = -x_{3}x_{6},\hhs
\{x_{4},x_{5}\} = -x_{4}x_{5}, \hhs
\{x_{4},x_{6}\} = x_{4}x_{6}-2x_5,\hhs
\{x_{5},x_{6}\} = -x_{5}x_{6},\\
&\{a, \;x_j\} = -\la \omega_1, \lam_j\ra \,a x_j,  \hs \{b, x_j\} = -\la \omega_2, \lam_j\ra\, b x_j, \hs j = 1, \ldots, 6,
\hhs \mbox{where}\\
&\lam_1 = \al_1, \;\lam_2 = \al_1 + \al_2, \; \lam_3 = \al_2, \;
\lam_4 = -\al_2, \; \lam_5 = -\al_1-\al_2, \; \lam_6 = -\al_1.
\end{align*}
One computes directly that
the regular functions $M_1, \ldots, M_6$ on $G^{w_0, w_0}$ defined by Kogan and Zelevinsky in
\cite[Corollary 2.7]{KZ:integrable}
are given by
\begin{align*}
&M_1 = b, \hs M_2 = bx_4, \hs M_3 = a, \hs M_4 = ax_5, \\
&M_5 =ab^{-1}(x_3x_4-1), \hs M_6 = ab^{-1}(x_3x_4x_6-x_6-x_3x_5),
\end{align*}
and that $M_1, M_3, M_5$ form an integrable system on $G^{w_0, w_0}$.
For $i = 1, 3, 5$, denote by
$c \mapsto \phi_{i, c} \in {\rm Diff}((\CC^\times)^2 \times \CC^6)$, $c \in \CC$,
the Hamiltonian flow of $M_i$.  A direct calculation gives that for
$p = (a,b,x_1,x_2,x_3,x_4,x_5,x_6) \in (\CC^\times)^2 \times \CC^6$,
\begin{align*}
&\phi_{1,c}(p)=(a,\;b,\;x_1,\;x_2e^{-cM_1},\;x_3e^{-cM_1},\;x_4e^{cM_1},\; x_5e^{cM_1},\;
x_6),\\
&\phi_{3, c}(p) =
(a,\; b,\; x_1e^{-cM_3},\; x_2e^{-cM_3},\; x_3,\; x_4,\; x_5e^{cM_3},\; x_6e^{cM_3}),\\
&\phi_{5, c}(p) =
(a,\; b,\; x_1e^{-cM_5}+ab^{-1}x_2x_4(e^{cM_5}-e^{-cM_5})/M_5,\; x_2,\; x_3e^{-cM_5},\; x_4e^{cM_5},\\
&\hs \hs \hs \hs x_5,\; x_6e^{cM_5}+ab^{-1}x_3x_5(e^{-cM_5}-e^{cM_5})/M_5), \hs \;\mbox{if} \;\;  M_5 \neq 0, \\
&\phi_{5, c}(p) =
(a,\; b,\; x_1+2ab^{-1}x_2x_4c,\; x_2,\; x_3,\; x_4,\;x_5,\; x_6-2ab^{-1}x_3x_5c), \hs \;\mbox{if} \;\;  M_5 = 0,
\end{align*}
where for $i = 1, 3, 5$, $M_i$ also denotes the value $M_i(p)$ of the function $M_i$ at $p$.
\hfill $\diamond$
}
\end{example}

\begin{example}\label{ex-G2-212}
{\rm
Consider $G_2$ with the simple roots $\al_1$ and $\al_2$ and
$\la \al_2, \al_2\ra = 3\la \al_1, \al_1\ra = 6$.
Take $\bfu = (s_{\al_2}, s_{\al_1}, s_{\al_2})$ so that
$(\bfu^{-1}, \bfu) = (s_{\al_2}, s_{\al_1}, s_{\al_2}, s_{\al_2}, s_{\al_1}, s_{\al_2})$.
The Poisson structure $\pi_6$ on
$\O^{(\bfu^{-1}, \bfu)}$ in the Bott-Samelson coordinates $(x_1, \ldots, x_6)$ is given by
\begin{align*}
\{x_{1},x_{2}\} &= -3x_{1}x_{2}, \hhs
\{x_{1},x_{3}\} = -6x_{2}^3-3x_{1}x_{3}, \hhs
\{x_{1},x_{4}\} = 3x_{1}x_{4}, \\
\{x_{1},x_{5}\} &= -6x_{2}^2x_{4}+3x_{1}x_{5}, \hhs
\{x_{1},x_{6}\} = -18x_{2}^2x_{5}^2+6x_{1}x_{6}-18x_{2}x_{5}+6x_{3}x_{4}-6,\\
\{x_{2},x_{3}\} &= -3x_{2}x_{3}, \hhs
\{x_{2},x_{4}\} = 3x_{2}x_{4}, \hhs
\{x_{2},x_{5}\} = 2x_{2}x_{5}-2x_{3}x_{4}+2,\\
\{x_{2},x_{6}\} &= -6x_{3}x_{5}^2+3x_{2}x_{6}, \hhs
\{x_{3},x_{4}\} = 6x_{3}x_{4}-6, \hhs
\{x_{3},x_{5}\} = 3x_{3}x_{5},\\
\{x_{3},x_{6}\} &= 3x_{3}x_{6},\hhs
\{x_{4},x_{5}\} = -3x_{4}x_{5}, \hhs
\{x_{4},x_{6}\} = -6x_{5}^3-3x_{4}x_{6},\hhs
\{x_{5},x_{6}\} = -3x_{5}x_{6}.
\end{align*}
By Theorem \ref{thm-integrable-O}, one has the integrable system $\{y_1,y_2, y_3\}$ on $\O^{(\bfu^{-1}, \bfu)}$, where $y_1=x_3x_4-1$, $y_2=x_2x_5-x_3x_4+1$, and
\begin{align*}
&y_3=x_1x_3x_4x_6-x^3_2x_4x_6-x_1x_6
-x_1x_3x^3_5+x^3_2x^3_5+3x^2_2x^2_5-3x_2x_3x_4x_5\\
& \;\; \;\;\;\;+3x_2x_5+x^2_3x^2_4-2x_3x_4+1.
\end{align*}
For $i=1,2,3$, denote by $c \mapsto \varphi_{i, c} \in {\rm Diff}(\CC^6)$, $c \in \CC$, the Hamiltonian flow of $y_i$.
For $p = (x_1,x_2,x_3,x_4,x_5,x_6) \in \CC^6$, a direct calculation gives
\begin{align*}
&\varphi_{1,c}(p)=(x_1 + x^3_2x_4(e^{6cy_1}-1)/y_1,\;x_2,\;x_3e^{-6cy_1},\;
x_4e^{6cy_1},\; x_5,\;x_6 + x_3x^3_5(e^{-6cy_1}-1)/y_1), \\
&\hs \;\;\hs  \hs \;\mbox{if} \;\;  y_1 \neq 0,\\
&\varphi_{1,c}(p)=(x_1+6x^3_2x_4c,\;x_2,\;x_3,\;x_4,\; x_5,\;
x_6-6x_3x^3_5c), \;\; \mbox{if} \;\;  y_1 = 0,\\
&\varphi_{2, c}(p) =
(x_1,\; x_2e^{-2cy_2},\; x_3e^{-6cy_2},\; x_4e^{6cy_2},\; x_5e^{2cy_2},\; x_6),\\
&\varphi_{3, c}(p) =
(x_1e^{-6cy_3},\; x_2e^{-6cy_3},\; x_3e^{-12cy_3},\; x_4e^{12cy_3},\; x_5e^{6cy_3},\; x_6e^{6cy_3}),
\end{align*}
where for $i = 1, 2, 3$, $y_i$ also denotes the value $y_i(p)$ of the function $y_i$ at $p$.
\hfill $\diamond$
}
\end{example}

\appendix
\section{The Fomin-Zelevinsky embeddings are Poisson}\label{appendix-FZ-Poisson}
In this appendix we give a proof of Theorem \ref{thm-FZ-Poi} without computations in coordinates.
We first recall a construction of mixed product Poisson
structures given in \cite{LM:mixed} and express some Poisson structures related to
the standard Poisson Lie group $(G, \pist)$
as mixed product Poisson structures. We refer to \cite{LM:mixed} for basic facts on Poisson Lie groups and Poisson Lie group
actions needed in this appendix.

Throughout the appendix, if $(X, \pi)$ is a Poisson manifold and if $X_1$ is a Poisson submanifold with respect to
$\pi$, the restriction of $\pi$ to $X_1$ will also be denoted by $\pi$. If $A$ is a Lie group with Lie
algebra $\a$, for $\xi \in \a$, $\xi^L$ (resp. $\xi^R$) will denote the left (resp. right) invariant vector field on
$A$ with value $\xi$ at the identity of $A$.

\subsection{Quotient and mixed product Poisson structures}\label{subsec-fact-mixed}
Assume first that $(A, \piA)$ is a Poisson Lie group with a right and a left Poisson action
\[
(Z, \piZ) \times (A, \piA) \lrw (Z, \piZ) \hs \mbox{and} \hs
(A, \piA) \times (Y, \piY) \lrw (Y, \piY)
\]
on Poisson manifolds $(Z, \piZ)$ and $(Y, \piY)$. Suppose that the diagonal action
\[
(Z \times Y) \times A \lrw Z \times Y, \;\;(z, y) \cdot a = (za, \; a^{-1}y)
\]
is free and that the quotient, denoted by $Z \times_A Y$, is a manifold. Then \cite[$\S$7]{LM:mixed}
the projection of the
product Poisson structure $\piZ \times \piY = (\piZ, 0) +(0, \piY)$ on $Z \times Y$
is a well-defined Poisson structure, called the {\it quotient} Poisson structure, on $Z \times_A Y$.

Assume now that a pair of dual Poisson Lie groups $(A, \piA)$ and $(A^*, \piAs)$ act respectively
on the Poisson manifolds $(Y, \piY)$ and $(X, \piX)$ by left and right Poisson actions
\[
\lam: \;\; (A, \piA) \times (Y, \piY) \longrightarrow (Y, \piY) \hs
\mbox{and} \hs
\rho: \;\; (X, \piX) \times (A^*, \piAs) \longrightarrow (X, \piX).
\]
Let $\a$ be the Lie algebra of $A$, so that the Lie algebra of $A^*$ is identified with the dual vector space $\a^*$ of $\a$.
Denote the corresponding Lie algebra actions by
\begin{align*}
&\rho: \;\; \a^* \lrw {\mathfrak{X}}^1(X), \;\; \rho(\xi^*)(x) =  \frac{d}{dt}|_{t = 0} (x \exp(t\xi^*)), \hs x \in X, \, \xi^* \in \a^*,\\
&\lam: \;\; \a \lrw {\mathfrak{X}}^1(Y), \;\;\lam(\xi)(y) = \frac{d}{dt}|_{t = 0} (\exp(t\xi)y), \hs y \in Y, \, \xi \in \a.
\end{align*}
Then \cite[$\S$2]{LM:mixed} the bivector field $\piX \times_{(\rho, \lam)} \piY$ on the product manifold $X \times Y$ given by
\begin{equation}\label{eq-mixed}
\piX \times_{(\rho, \lam)} \piY = (\piX, 0) + (0, \piY) -\sum_{i=1}^m (\rho(\xi_i^*), \, 0) \wedge (0, \; \lam(\xi_i))
\end{equation}
is a Poisson structure on $X \times Y$, called the {\it mixed product of $\piX$ and $\piY$ associated to $(\rho, \lam)$}, where $(\xi_i)_{i=1}^m$ is any basis for $\a$ and $(\xi^*_i)_{i=1}^m$ the dual basis for $\a^*$. Here, for
$V \in {\mathfrak{X}}^1(X)$ and $U \in {\mathfrak{X}}^1(Y)$, $(V, 0) \wedge (0, U) \in {\mathfrak{X}}^2(X \times Y)$ is
such that
\[
((V, 0) \wedge (0, U))(f, g) = V(f)U(g)
\]
for functions $f$ on $X$ and $g$ on $Y$.
In the same setting, let $\lam_A$ be the left action of $A$ on itself by
left multiplication, and
equip $X \times A$ with the mixed product Poisson structure
$\pi_{\sX \times \sA} = \piX \times_{(\rho, \lam_\sA)} \piA$.
One then has the (free) right Poisson action
\[
(X \times A, \; \pi_{\sX \times \sA}) \times (A, \piA) \longrightarrow
(X \times A, \; \pi_{\sX \times \sA}), \;\; ((x, a), a') \longmapsto (x, aa'),
\]
of the Poisson Lie group $(A, \piA)$, which, together with the left Poisson action $\lam$ of $(A, \piA)$ on $(Y, \piY)$,
gives rise to the quotient Poisson structure, denoted by $\pi$, on
$(X \times A)\times_A Y$. Let $\varpi: X \times A \times Y \to (X \times A)\times_A Y$ be the quotient map,
and set $[(x, a), y] = \varpi(x, a, y)$ for $(x, a, y)  \in X \times A \times Y$.
Note  that one has the diffeomorphism
\[
\Psi: \;\; (X \times A)\times_A Y \lrw X \times Y, \;\; [(x, a), y] \longmapsto (x, \, ay), \hs x \in X, \, a \in A, \, y \in Y.
\]

\begin{lemma}\label{le-mixed-general}
As Poisson structures on $X \times Y$, one has $\Psi(\pi) = \piX \times_{(\rho, \lam)} \piY$.
\end{lemma}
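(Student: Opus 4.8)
The plan is to realize $\Psi(\pi)$ as the pushforward, under a single map, of the \emph{product} Poisson structure $\pi_{\sX \times \sA} \times \piY$ on $X \times A \times Y$, and then to evaluate that pushforward along the slice where the $A$-coordinate is the identity, where the Poisson Lie group bivector $\piA$ vanishes. First I would observe that the composite
\[
\Phi \;=\; \Psi \circ \varpi: \;\; X \times A \times Y \lrw X \times Y, \qquad (x, a, y) \longmapsto (x, \, ay),
\]
is invariant under the diagonal action $(x,a,y)\cdot a' = (x, aa', (a')^{-1}y)$, and hence factors through $\varpi$ exactly as $\Psi\circ\varpi$. Since by the construction recalled in $\S$\ref{subsec-fact-mixed} the quotient Poisson structure is $\pi = \varpi(\pi_{\sX \times \sA} \times \piY)$ and $\Psi$ is a diffeomorphism, one has $\Psi(\pi) = \Phi(\pi_{\sX \times \sA} \times \piY)$. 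This reduces the lemma to computing the pushforward of the product structure along $\Phi$.

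Next I would use that $\Phi$ is a surjective submersion, so that $\Phi(\pi_{\sX \times \sA} \times \piY)$ at a point $(x, y)$ equals $d\Phi_p$ applied to $(\pi_{\sX \times \sA} \times \piY)(p)$ for \emph{any} $p$ in the fiber $\Phi^{-1}(x,y)$, the result being independent of $p$ precisely because this pushforward is already known to be well defined. The decisive choice is $p = (x, e, y)$, with $e$ the identity of $A$: every fiber meets the slice $X \times \{e\} \times Y$, and at such a point one has $\piA(e) = 0$ together with $\lam_\sA(\xi_i)(e) = \xi_i$ for $\xi_i \in \a$. Consequently $\pi_{\sX \times \sA}(x,e)$ collapses, via \eqref{eq-mixed} applied to $\pi_{\sX \times \sA} = \piX \times_{(\rho, \lam_\sA)} \piA$, to $\piX(x) - \sum_{i=1}^m \rho(\xi_i^*)(x) \wedge \xi_i$, so that the full bivector at $(x,e,y)$ is
\[
\piX(x) \;+\; \piY(y) \;-\; \sum_{i=1}^m \rho(\xi_i^*)(x) \wedge \xi_i .
\]

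Finally I would compute the differential $d\Phi_{(x,e,y)}: T_xX \oplus \a \oplus T_yY \to T_xX \oplus T_yY$: it is the identity on the $X$- and $Y$-directions and sends $\xi \in \a = T_eA$ to $\lam(\xi)(y) \in T_yY$. Pushing the three terms above forward term by term then produces exactly $(\piX, 0) + (0, \piY) - \sum_{i=1}^m (\rho(\xi_i^*), 0) \wedge (0, \lam(\xi_i))$ at $(x,y)$, which is $\piX \times_{(\rho, \lam)} \piY$ by \eqref{eq-mixed}. The one genuine subtlety — and the step I would treat most carefully — is the reduction to the slice $a = e$: it is the vanishing $\piA(e) = 0$, a standard feature of Poisson Lie groups, that removes the entire $\piA$-contribution and turns the mixed cross term of $\pi_{\sX \times \sA}$ into the $\xi_i$-legs that $d\Phi$ then carries onto $\lam(\xi_i)(y)$. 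Independence of this fiberwise computation from the chosen representative is automatic from the well-definedness of $\pi$, and the remaining bookkeeping of signs and dual-basis conventions in matching with \eqref{eq-mixed} is routine.
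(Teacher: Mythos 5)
Your proposal is correct and follows essentially the same route as the paper's own proof: the paper likewise evaluates the quotient structure at the representative $(x,e,y)$, uses that $\pi_{\sA}$ vanishes at the identity so that $\pi_{\sX\times\sA}(x,e)$ reduces to $(\piX(x),0)-\sum_i(\rho(\xi_i^*),0)\wedge(0,\xi_i^R)$, and pushes this forward through $\Psi\circ\varpi$ to land on $\piX \times_{(\rho,\lam)}\piY$. Your write-up merely makes explicit the well-definedness of the fiberwise evaluation and the computation of $d(\Psi\circ\varpi)$ at the slice $a=e$, which the paper compresses into ``a straightforward calculation.''
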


\begin{proof}
Let $e$ be the identity element of $A$, and let  $(x, y) \in X \times Y$.
By definition and a straightforward calculation, one has
\begin{align*}
\Psi(\pi([(x, e), y]))& = \Psi(\varpi(\pi_{\sX \times \sA}(x, e), \, 0) + \varpi(0, \, \piY(y)))\\
& = (\piX(x), \; 0) + (0, \; \piY(y)) -\sum_{i=1}^m \Psi(\varpi(\rho(\xi^*_i), 0, 0)) \wedge \Psi(\varpi(0, \xi_i^R, 0))\\
& =(\piX \times_{(\rho, \lam)} \piY)(x, y).
\end{align*}
\end{proof}

\subsection{Drinfeld double of the Poisson Lie group $(G, \pist)$}\label{subsec-more-pist}
We now turn to a complex semi-simple Poisson Lie group
$(G, \pist)$ as defined in $\S$\ref{subsec-pist}.
As both $B$ and $B_-$ are Poisson submanifolds of $G$ with respect to $\pist$, we have
the Poisson Lie subgroups  $(B, \pist)$ and $(B_-, \pist)$ of $(G, \pist)$.

Denote the Lie algebras of $B, B_-, N$ and $N_-$ respectively
by $\b, \b_-, \n$ and $\n_-$, and
define a non-degenerate bilinear pairing between $\b_-$ and $\b$ by
\begin{equation}\label{eq-bb-pairing-0}
\la x_- + x_0, \;\, y_+ + y_0\ra_{(\b_-, \b)} = \frac{1}{2}\la x_-,\; y_+\ra_\g + \la x_0, \;y_0\ra_\g, \hs
\end{equation}
where $x_- \in \n_-,\,
\, x_0, \,y_0 \in \t, \, y_+ \in \n$. Then
$(B_-, \pist)$ and $(B, -\pist)$ form a pair of dual  Poisson Lie groups
with respect to the pairing $\lara_{(\b_-, \b)}$  (see \cite[$\S$2.3]{LM:mixed}).

The {\it Drinfeld double} of the  Poisson Lie group $(G, \pist)$  is the product Lie group $G \times G$ together with the multiplicative
Poisson structure $\Pist$ given by
\begin{equation}\label{eq-Pist}
\Pist = (\pist, 0) + (0, \pist) + \mu_1  + \mu_2,
\end{equation}
where
\begin{equation}\label{eq-mu-12}
\mu_1 = \sum_{i=1}^m (\xi_i^R, 0) \wedge (0, x_i^R), \hs \hs
\mu_2 = -\sum_{i=1}^m (\xi_i^L, 0) \wedge (0, x_i^L),
\end{equation}
and
$\{x_i\}_{i=1}^m$ is any basis of $\b_-$ and $\{\xi_i\}_{i=1}^m$ its dual basis of $\b$ under the
pairing $\lara_{(\b_-, \b)}$ in \eqref{eq-bb-pairing-0}.
See \cite{etingof-schiffmann} and \cite[$\S$6.1]{LM:flags}.
See also \cite[$\S$6.2]{LM:mixed} and \cite[$\S$4.2]{LM:groupoids} for an interpretation of
$\Pist$ as a mixed product Poisson structure on $G \times G$ defined by Poisson Lie group actions.

As a Drinfeld double of $(G, \pist)$, the Poisson Lie group $(G \times G, \Pist)$ contains
$(G, \pist)$, identified with the diagonal of $G \times G$, as a Poisson Lie subgroup
\cite[$\S$6.2]{LM:mixed}.  Moreover, any
 left $(B \times B)$ and right $(B_- \times B_-)$-invariant submanifold of $G \times G$ is
a Poisson submanifold with respect to $\Pist$.
We will use these facts in $\S$\ref{subsec-several-mixed} to decompose some Poisson structures related $\pist$ as product Poisson structures.

\subsection{Some mixed product Poisson structures associated to $(G, \pist)$}\label{subsec-several-mixed}
Consider the flag manifold $G/B$ with the Poisson structure $\pi_1$. Then each Bruhat cell
$BwB/B$, $w \in W$, is a Poisson submanifold with respect to $\pi_1$.
We will also consider the quotient manifold $B_-\backslash G$ with the quotient Poisson structure $\pi_{-1}^\prime$
of $\pist$.
Then, again, for each $w \in W$, $B_-\backslash B_-wB_-$ is a Poisson submanifold of
$(B_-\backslash G, \; \pi_{-1}^\prime)$.

Fix $w \in W$ and let $\dw \in N_G(T)$ be any representative of $w$. Recall
from $\S$\ref{subsec-FZ-map} that
\[
C_\dw = (N \dw)\cap(\dw N_-),
\]
and that the multiplication map of $G$ gives diffeomorphisms
\[
C_\dw \times B \longrightarrow BwB \hs \mbox{and} \hs B_- \times C_\dw \lrw B_-wB_-.
\]
By \cite[Lemma 14]{LM:groupoids}, one has the anti-Poisson isomorphism
\begin{equation}\label{eq-I-dw}
I_{\dw}: \;\; (B\dw B/B, \, \pi_1) \lrw (B_-\backslash B_-\dw B_-, \, \pi_{-1}^\prime), \;\;\;
c_\cdot B \longmapsto {B_-}_\cdot c, \hs c \in C_{\dw}.
\end{equation}

\begin{notation}\label{nota-rho-dw}
{\rm Let $\rho_\dw$ be the right action of $B_-$ on $B\dw B/B$ obtained via the right $B_-$ action on
$B_-\backslash B_-wB_-$ through $I_\dw$. More precisely, $\rho_\dw$ denotes the right Poisson action
\begin{align}\label{eq-rho-dw}
\rho_\dw: & \;\; (BwB/B, \;\pi_1) \times (B_-,\; -\pist) \lrw (BwB/B, \;\pi_1),\\
\nonumber & \;\; (c_\cdot B, \; b_-) \longmapsto c'_\cdot B, \hs c, c' \in C_\dw, \, b_-, \, b_-^\prime \in B_-,\;
cb_- = b_-^\prime c',
\end{align}
of the Poisson Lie group $(B_-, -\pist)$. Let $\lam_\sB$ be the left action of the Poisson Lie group
$(B, \pist)$ on itself by left translation. Note that
$(B_-, -\pist)$ and $(B, \pist)$ form a pair of dual Poisson Lie groups under the pairing
$-\la \, ,\, \ra_{(\b_-, \b)}$ in \eqref{eq-bb-pairing-0}.
\hfill $\diamond$
}
\end{notation}

Consider now the isomorphism
\begin{equation}\label{eq-Jdw}
J_{\dw}: \;\; BwB \lrw (BwB/B) \times B, \;\; \; cb \longmapsto (c_\cdot B, \; b), \hs c \in C_\dw, \; b \in B.
\end{equation}

\begin{lemma}\label{le-Jdw}
As Poisson structures on $(BwB/B) \times B$, one has
\begin{equation}\label{eq-Jdw-1}
J_{\dw}(\pist) = \pi_1 \times_{(\rho_\dw, \lam_\sB)} \pist = (\pi_1, 0) + (0, \pist) +\sum_{i=1}^m
(\rho_\dw(x_i), 0) \wedge (0, \xi_i^R),
\end{equation}
where $\{x_i\}_{i=1}^m$ is any basis of $\b_-$ and $\{\xi_i\}_{i=1}^m$ its dual of $\b$ under the
pairing in \eqref{eq-bb-pairing-0}.
\end{lemma}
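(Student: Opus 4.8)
The plan is to read the right-hand side of \eqref{eq-Jdw-1} as the mixed product Poisson structure $\pi_1 \times_{(\rho_\dw,\, \lam_\sB)} \pist$ of $\S$\ref{subsec-fact-mixed}, associated to the dual pair of Poisson Lie groups $(B, \pist)$ and $(B_-, -\pist)$ from Notation \ref{nota-rho-dw}, acting respectively on $(B, \pist)$ by left translation $\lam_\sB$ and on $(BwB/B, \pi_1)$ by $\rho_\dw$. Indeed, unwinding \eqref{eq-mixed} with $\a = \b$, $\a^* = \b_-$, and the pairing $-\lara_{(\b_-, \b)}$, and passing to a basis $\{x_i\}$ of $\b_-$ with $\lara_{(\b_-, \b)}$-dual basis $\{\xi_i\}$ of $\b$, the mixed term becomes $\sum_i (\rho_\dw(x_i), 0) \wedge (0, \xi_i^R)$ (the sign of the pairing flipping the sign in \eqref{eq-mixed}), so the two sides of \eqref{eq-Jdw-1} agree as \emph{formulas}; it remains to prove that $(J_\dw)_*\pist$ equals this bivector. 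Throughout I write $g = cb$ with $c \in C_\dw$, $b \in B$, and use that $J_\dw$ is the composite of $\mu^{-1}$, where $\mu\colon C_\dw \times B \to BwB$, $(c,b) \mapsto cb$, with the identification $C_\dw \cong BwB/B$, $c \mapsto c_\cdot B$.

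First I would invoke the multiplicativity of $\pist$ to write $\pist(cb) = (l_c)_* \pist(b) + (r_b)_* \pist(c)$. Since $(B, \pist)$ is a Poisson Lie subgroup, $\pist(b)$ is the Poisson tensor of $B$, and since $J_\dw \circ l_c$ is the inclusion $b' \mapsto (c_\cdot B, b')$, one gets $(J_\dw)_* (l_c)_* \pist(b) = (0, \pist(b))$, producing the summand $(0, \pist)$. For the second term I would use the splitting $T_c(BwB) = H_c \oplus V_c$ with horizontal part $H_c = T_c C_\dw$ (tangent to the section) and vertical part $V_c = (l_c)_*\b$ (tangent to the fibre $cB$), and decompose $\pist(c) = P_{HH} + P_{HV} + P_{VV}$ accordingly. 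As $\varpi\colon (BwB, \pist) \to (BwB/B, \pi_1)$ is Poisson with $\ker d\varpi_c = V_c$, the part $P_{HH}$ pushes forward to $(\pi_1, 0)$; and tracking $(r_b)_*$ through $\mu^{-1}$ sends a horizontal–vertical term $X \wedge (l_c)_*\xi$ to $(\bar X, 0)\wedge(0, \xi^R)$. Thus \eqref{eq-Jdw-1} reduces to two pointwise identities: $P_{VV} = 0$, and the horizontal–vertical tensor $P_{HV}$, viewed in $T_{c_\cdot B}(BwB/B) \otimes \b$, equals $\sum_i \rho_\dw(x_i)(c_\cdot B) \otimes \xi_i$.

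The main obstacle is precisely this last identification. Using $\pist(c) = (l_c)_*\Lambda_{\rm st} - (r_c)_*\Lambda_{\rm st}$ I would compute $(l_{c^{-1}})_*\pist(c) = \Lambda_{\rm st} - \mathrm{Ad}_{c^{-1}}\Lambda_{\rm st}$, so that $P_{VV}$ is the $\wedge^2\b$-component of this element; since $\Lambda_{\rm st}$ itself has no $\wedge^2\b$-component (every term $e_{-\al}\wedge e_\al$ involves $e_{-\al}\in\n_-$), the vanishing of $P_{VV}$ reduces to showing that $\mathrm{Ad}_{c^{-1}}\Lambda_{\rm st}$ has none either, which I would derive from $c \in (N\dw) \cap (\dw N_-)$. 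The heart of the matter is then to recognize the remaining $\b \wedge \n_-$-component of $\Lambda_{\rm st} - \mathrm{Ad}_{c^{-1}}\Lambda_{\rm st}$ as the infinitesimal generator of $\rho_\dw$: concretely, by differentiating the defining refactorization $c\exp(t x_i) = b_-'(t)\, c'(t)$ of $\rho_\dw$ from Notation \ref{nota-rho-dw} and comparing. Here the anti-Poisson isomorphism $I_\dw$ of \eqref{eq-I-dw} and the duality pairing \eqref{eq-bb-pairing-0} are what tie the dressing-type action $\rho_\dw$ to the $r$-matrix data encoded in $\Lambda_{\rm st}$; equivalently, one shows that the dressing action of the dual group $(B_-, -\pist)$ determined by the section $C_\dw$ of the principal Poisson bundle $BwB \to BwB/B$ is exactly $\rho_\dw$. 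Once these two pointwise identities are established, summing the three contributions yields \eqref{eq-Jdw-1}.
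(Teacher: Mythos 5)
Your proposal takes a genuinely different route from the paper's proof. The paper never touches the $r$-matrix at all: it quotes the analogous mixed-product decomposition for $(B_-w^{-1}B_-, \pist)$, namely $J^\prime_{\Theta(\dw)}(\pist) = \pist \times_{(\rho_{\sB_-}, \lam_+)}(-\pi_1)$ from \cite[Remark 9]{LM:groupoids}, and then writes $J_\dw$ as a composition of five maps built from the involutive anti-automorphism $\Theta$ (which preserves $\pist$), the factor switch, and the anti-Poisson isomorphism $I_\dw$ of \eqref{eq-I-dw}; the proof is then bookkeeping of how the three pieces of the mixed tensor transform under each map, ending with the same dual-basis sign observation you make about $-\la\,,\,\ra_{(\b_-,\b)}$. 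Your proposal instead proves the formula from scratch. Its skeleton is sound: multiplicativity correctly reduces everything to points $c \in C_\dw$, the splitting $T_c(BwB) = T_cC_\dw \oplus (l_c)_*\b$ correctly reduces \eqref{eq-Jdw-1} to the two pointwise claims $P_{VV}=0$ and $P_{HV} = \sum_i \rho_\dw(x_i) \otimes \xi_i$, and your claim (a) is exactly the coisotropy of $C_\dw$ in $(G,\pist)$, a fact the paper itself invokes elsewhere via \cite[Lemma 10]{LM:groupoids}. Your derivation of (a) does go through (write $c = \dw n_-$ with $n_- \in N_-$; then ${\rm Ad}_{c^{-1}}\Lambda_{\rm st} = {\rm Ad}_{n_-^{-1}}{\rm Ad}_{\dw^{-1}}\Lambda_{\rm st}$ and every term has one leg in $\n_-$), but with one subtlety you should make explicit: the $\wedge^2\b$-component must be taken relative to the splitting $(l_{c^{-1}})_*T_cC_\dw \oplus \b$, not $\n_- \oplus \b$; this is harmless only because $(l_{c^{-1}})_*T_cC_\dw = \n_- \cap {\rm Ad}_{\dw^{-1}}\n \subset \n_-$ and the whole bivector lies in $\wedge^2$ of $(l_{c^{-1}})_*T_c(BwB)$, so the two notions of component coincide.

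The genuine shortfall is step (b), which you rightly call the heart of the matter but leave as a plan rather than a proof. It is not a step that would fail: the identity is true, and it can be established by your method, most efficiently by noting that under the identification $\g^* \cong \g$ via $\lara_\g$ the map $\Lambda_{\rm st}^\#$ is (up to normalization) the difference of the projections onto $\n$ and $\n_-$, while differentiating $c\exp(tx_i) = b^\prime_-(t)c^\prime(t)$ expresses $\rho_\dw(x_i)(c_\cdot B)$ as $d\varpi$ of the $T_cC_\dw$-component of $(l_c)_*x_i$ relative to $T_c(B_-wB_-) = (r_c)_*\b_- \oplus T_cC_\dw$; equating the two is then a nontrivial exercise with ${\rm Ad}_c$ and these projections. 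But as written, your proposal asserts rather than carries out precisely the Poisson--Lie-theoretic content that Lemma \ref{le-Jdw} encodes --- the same content the paper deliberately avoids re-deriving by outsourcing it to \cite[Remark 9]{LM:groupoids}. So: right strategy, correct reduction and correct treatment of $P_{VV}$, but the decisive computation identifying the mixed term with the dressing-type action $\rho_\dw$ is missing.
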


\begin{proof}
We prove Lemma \ref{le-Jdw} using a similar result for $(B_-wB_-, \pist)$  in \cite[Remark 9]{LM:groupoids}.
Consider the map
\[
J_\dw^\prime: \;\; B_-wB_- \lrw B_- \times (BwB/B), \;\; b_-c \longmapsto (b_-, \; c_\cdot B), \hs b_- \in B_-, \;
c \in C_\dw.
\]
Let $\rho_{\sB_-}$ be the right Poisson action of the Poisson Lie group $(B_-, \pist)$ on itself by
right translation, and let $\lam_+$ be the left Poisson action of the Poisson Lie group
$(B, -\pist)$ on $(BwB/B, -\pi_1)$ by left translation. Recall that $(B_-, \pist)$ and $(B, -\pist)$ form
a pair of dual Poisson Lie groups under the pairing $\la \, , \, \ra_{(\b_-, \b)}$ of their Lie algebras.
Define the Poisson structure $\pi'$ on $B_- \times (BwB/B)$ by
\[
\pi' = \pist \times_{(\rho_{\sB_-}, \, \lam_+)} (-\pi_1) = (\pist, 0) + (0, -\pi_1) -\sum_{i=1}^m
(x_i^L, 0) \wedge (0, \lam_+(\xi_i)).
\]
It is shown in \cite[Remark 9]{LM:groupoids} that $J_\dw^\prime(\pist) = \pi'$ as Poisson structures on $B_- \times
(BwB/B)$. Consider (see \cite[$\S$1.5]{FZ:total}) the involutive anti-automorphism $\Theta$ of $G$ determined by
\[
\Theta(t) = t, \;\;\; t \in T, \hs \Theta(u_\al(c)) = u_{-\al}(-c), \hs \al \in \Gamma, \; c \in \CC.
\]
It is easy to see from the definition that $\Theta(\pist) = \pist$. Consider now the sequence of Poisson
isomorphisms
\begin{align*}
(BwB, \; \pist) &\stackrel{\Theta}{\lrw} (B_- w^{-1}B_-, \; \pist) \stackrel{J_{\Theta(\dw)}^\prime}{\lrw}
(B_- \times (Bw^{-1}B/B), \; \pi')\\
& \stackrel{\Theta \times \Theta}{\lrw} (B \times (B_-\backslash B_- w B_-), \; \pi^{(2)})
\stackrel{S}{\lrw}((B_-\backslash B_-wB_-) \times B, \; \pi^{(3)})\\
&\stackrel{I_\dw^{-1} \times {\rm Id}_\sB}{\lrw} ((BwB/B) \times B, \; \pi^{(4)}),
\end{align*}
where $S$ is the map that switches the two factors, and $I_\dw$ is given in \eqref{eq-I-dw}.  It is easy to
see that $J_\dw$ is the composition of the above five maps. A direct calculation and the facts that
$\Theta$ is Poisson and $I_\dw$ is anti-Poisson
 show that
\[
\pi^{(4)} = (\pi_1, 0) + (0, \pist) + \sum_{i=1}^m( \rho_\dw(x_i), 0) \wedge (0, \xi_i^R).
\]
As $\{-x_i\}_{i=1}^m$ and $\{\xi_i\}_{i=1}^m$ are bases of $\b_-$ and $\b$ that are dual to each other under the
pairing $-\la \,, \, \ra_{(\b_-, \b)}$ between $\b_-$ and $\b$, we know that
$J_\dw(\pist) = \pi^{(4)} = \pi_1 \times_{(\rho_\dw, \lam_\sB)} \pist$.
\end{proof}

Let now $(Y, \piY)$ be any Poisson manifold with a left Poisson action
\[
\lam: \;\; (B, \pist) \times (Y, \piY) \lrw (Y, \piY)
\]
by the Poisson Lie group $(B, \pist)$. Consider the quotient manifold $B w B\times_B Y$ with
the quotient Poisson structure
$\pi \stackrel{{\rm def}}{=} \varpi(\pist \times \piY)$,
where $\varpi: BwB \times Y \to BwB \times_B Y$ is the natural projection. On the other hand, note that one has the
isomorphism
\begin{equation}\label{eq-Jw}
J_{\dw, \sY}: \;\; BwB \times_B Y \lrw (BwB/B) \times Y, \;\;\; [cb, \, y] \longmapsto (c_\cdot B, \; by),
\end{equation}
where $c \in C_\dw, \, b \in B$ and $y \in Y$.

\begin{lemma}\label{le-BwB-Y}
As Poisson structures on $(BwB/B) \times Y$, one has
\[
J_{\dw, \sY}(\pi) = \pi_1 \times_{(\rho_\dw, \lam)} \piY = (\pi_1, 0) + (0, \piY) +\sum_{i=1}^m
(\rho_\dw(x_i), 0) \wedge (0, \lam(\xi_i)),
\]
where, again,  $\{x_i\}_{i=1}^m$ is any basis of $\b_-$ and $\{\xi_i\}_{i=1}^m$ its dual the basis of $\b$
under the
pairing $\lara_{(\b_-, \b)}$ in \eqref{eq-bb-pairing-0}.
\end{lemma}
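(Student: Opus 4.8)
The plan is to deduce Lemma~\ref{le-BwB-Y} from Lemma~\ref{le-Jdw} and the general quotient-versus-mixed-product identity of Lemma~\ref{le-mixed-general}, with no computation in coordinates. The key observation is that $BwB\times_B Y$ is precisely the construction $(X\times A)\times_A Y$ appearing in Lemma~\ref{le-mixed-general}, once $BwB$ is trivialized as $(BwB/B)\times B$ by the map $J_\dw$ of \eqref{eq-Jdw}; the present lemma is thus the ``extension of scalars'' of Lemma~\ref{le-Jdw} from the special fibre $B$ to an arbitrary $(B,\pist)$-space $Y$.

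To set up the application, I would take $A=B$ with $\piA=\pist$ and its dual $A^*=B_-$ with $\piAs=-\pist$, which form a pair of dual Poisson Lie groups under $-\lara_{(\b_-,\b)}$ (cf. Notation~\ref{nota-rho-dw}); put $X=BwB/B$ with $\piX=\pi_1$; take $\rho=\rho_\dw$, the right Poisson action of $(B_-,-\pist)$ on $(BwB/B,\pi_1)$ of \eqref{eq-rho-dw}; and let $\lam$ be the given left Poisson action of $(B,\pist)$ on $(Y,\piY)$. With $\lam_\sB$ the left translation action of $B$ on itself, Lemma~\ref{le-Jdw} identifies the mixed product $\pi_{\sX\times\sA}=\pi_1\times_{(\rho_\dw,\lam_\sB)}\pist$ on $X\times A=(BwB/B)\times B$ with $J_\dw(\pist)$, so that $J_\dw$ is a Poisson isomorphism $(BwB,\pist)\to((BwB/B)\times B,\,\pi_{\sX\times\sA})$. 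Under this isomorphism the right $B$-action $((x,a),a')\mapsto(x,aa')$ on $X\times A$ becomes right translation on $BwB$; hence the quotient $(X\times A)\times_A Y$ of Lemma~\ref{le-mixed-general} is identified with $BwB\times_B Y$, the quotient Poisson structure $\pi$ of Lemma~\ref{le-mixed-general} is identified with $\pi=\varpi(\pist\times\piY)$, and tracing the definitions the map $\Psi: [(x,a),y]\mapsto(x,ay)$ becomes exactly $J_{\dw,\sY}: [cb,y]\mapsto(c_\cdot B,by)$ of \eqref{eq-Jw}. Lemma~\ref{le-mixed-general} then yields $J_{\dw,\sY}(\pi)=\Psi(\pi)=\pi_1\times_{(\rho_\dw,\lam)}\piY$ at once, and the well-definedness of the mixed product as a Poisson structure is part of the same general statement.

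The only delicate point---and the step I expect to require the most care---is matching the sign conventions so that the final formula carries a plus sign. The mixed product \eqref{eq-mixed} is written with a minus sign using a basis $\{\xi_i\}$ of $\b$ and the dual basis $\{\xi_i^*\}$ of $\b_-$ with respect to $-\lara_{(\b_-,\b)}$, whereas the statement of Lemma~\ref{le-BwB-Y} uses a basis $\{x_i\}$ of $\b_-$ and its dual $\{\xi_i\}$ of $\b$ with respect to $\lara_{(\b_-,\b)}$. These two dualities differ by a sign, so $\xi_i^*=-x_i$, and substituting this into \eqref{eq-mixed} converts the leading minus into the plus appearing in the displayed expansion $(\pi_1,0)+(0,\piY)+\sum_{i=1}^m(\rho_\dw(x_i),0)\wedge(0,\lam(\xi_i))$. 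Once this sign check is in place, the explicit form of the mixed product follows directly from its definition, completing the argument.
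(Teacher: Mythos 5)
Your proposal is correct and takes essentially the same approach as the paper: the paper's own proof is the single sentence ``The assertion follows from Lemma \ref{le-Jdw} and Lemma \ref{le-mixed-general},'' and your argument is exactly this deduction with the implicit details made explicit. In particular, your identification of $BwB\times_B Y$ with $(X\times A)\times_A Y$ via $J_\dw$, the matching of $\Psi$ with $J_{\dw,\sY}$, and the sign check ($\xi_i^*=-x_i$ because the duality pairing between $\b_-$ and $\b$ here is $-\lara_{(\b_-,\b)}$, turning the minus in \eqref{eq-mixed} into the plus in the statement) are all consistent with the paper's conventions, as confirmed by the final step of the paper's proof of Lemma \ref{le-Jdw}.
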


\begin{proof}
The assertion follows from Lemma \ref{le-Jdw} and
Lemma \ref{le-mixed-general}.
\end{proof}

\subsection{Fomin-Zelevinsky embeddings for reduced double Bruhat cells}\label{subsec-Luv-Ouv}
Let $u, v \in W$ and recall that the reduced double Bruhat cell $\Guv/T \subset G/T$ has the Poisson
structure $\hat{\pi}_{\rm st}$, which is the projection to $G/T$ of the Poisson structure $\pist$ on $G$.
Recall from $\S$\ref{subsec-gBC} and $\S$\ref{subsec-FZ-map} the generalized Bruhat cell
$\O^{(v^{-1}, u)} \subset F_2$ and its Zariski open subset $\O^{(v^{-1}, u)}_e$, and recall from
$\S$\ref{subsec-FZ-map} the Fomin-Zelevinsky embedding
\[
\hat{F}^{u, v}: \;\;  \Guv/T \lrw \O^{(v^{-1}, u)}, \;\;
\hat{F}^{u, v}(gT) = [(m \bbv)^{-1}, \; n\bu],\hs g \in \Guv,
\]
which induces a biregular isomorphism from $\Guv/T$ to $\O^{(v^{-1}, u)}_e$. Here again
 $g \in \Guv$ is uniquely written as $g = n\bu n't' = m\bbv \, m' t$ with
$n\bu \in C_\bu, \, (m\bbv)^{-1} \in C_{\bbv^{\, -1}}, \, n' \in N, \, m' \in N_-$ and $t, t' \in T$.
The following Proposition
\ref{pr-Guv-Ouv-reduced} is the first part of Theorem \ref{thm-FZ-Poi}.

\begin{proposition}\label{pr-Guv-Ouv-reduced} For any $u, v \in W$,  the Fomin-Zelevinsky embedding
\[
\hat{F}^{u, v}: \;\;  (\Guv/T,\, \,\hat{\pi}_{\rm st}) \lrw (\O^{(v^{-1}, u)}, \; \pi_2)
\]
is Poisson.
\end{proposition}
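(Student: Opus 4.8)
The plan is to identify both sides with mixed product Poisson manifolds and then to recognize $\hat{F}^{u,v}$ as a combination of the two canonical Poisson projections of $(G,\pist)$. First I would apply Lemma \ref{le-BwB-Y} with $w=v^{-1}$, $Y=BuB/B$, and $\lam$ the left-translation Poisson action of $(B,\pist)$ on $BuB/B$: the isomorphism $J:=J_{\overline{v^{-1}},\,BuB/B}$ of \eqref{eq-Jw} then identifies $(\O^{(v^{-1},u)},\pi_2)$ with
\[
\left((Bv^{-1}B/B)\times(BuB/B),\;\; \pi_1\times_{(\rho_{\overline{v^{-1}}},\,\lam)}\pi_1\right),
\]
where $\rho_{\overline{v^{-1}}}$ is the right $(B_-,-\pist)$-action of Notation \ref{nota-rho-dw}. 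It therefore suffices to show that $\Phi:=J\circ\hat{F}^{u,v}$ is Poisson into this mixed product.

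Next I would compute $\Phi$ explicitly. Writing $g\in\Guv$ as in \eqref{eq-g-decomp}, one has $(m\bbv)^{-1}\in C_{\overline{v^{-1}}}$, so $J$ sends $[(m\bbv)^{-1},\,n\bu]$ to the pair of cosets $((m\bbv)^{-1}B,\,gB)$; that is,
\[
\Phi(gT)=\left((m\bbv)^{-1}B,\;\; gB\right).
\]
The second component is the restriction to $\Guv$ of the canonical Poisson projection $\varpi_{G/B}\colon(G,\pist)\to(G/B,\pi_1)$, hence Poisson onto $(BuB/B,\pi_1)$. For the first component, since $(m\bbv)^{-1}=(m't)\,g^{-1}$ with $m't\in B_-$, the anti-Poisson isomorphism $I_{\overline{v^{-1}}}$ of \eqref{eq-I-dw} carries $(m\bbv)^{-1}B$ to $B_-\,g^{-1}$. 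Thus the first component is the composition of inversion $g\mapsto g^{-1}$, the projection $(G,\pist)\to(B_-\backslash G,\pi_{-1}^\prime)$, and $I_{\overline{v^{-1}}}^{-1}$; as inversion and $I_{\overline{v^{-1}}}$ are each anti-Poisson while the projection is Poisson, the first component is Poisson onto $(Bv^{-1}B/B,\pi_1)$ as well.

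With both legs Poisson, the proposition reduces to matching the \emph{cross term}: for $\phi$ pulled back from $Bv^{-1}B/B$ and $\psi$ from $BuB/B$, one must show that $\{\Phi^*\phi,\Phi^*\psi\}_{\pist}$ equals the contribution of the mixed term $-\sum_i(\rho_{\overline{v^{-1}}}(x_i),0)\wedge(0,\lam(\xi_i))$, where $\{x_i\}$ and $\{\xi_i\}$ are dual bases of $\b_-$ and $\b$ under the pairing \eqref{eq-bb-pairing-0}. To produce this coupling conceptually I would pass to the Drinfeld double $(G\times G,\Pist)$ of $\S$\ref{subsec-more-pist}: the two legs of $\Phi$ originate from the two $G$-factors, and the mixing bivectors $\mu_1,\mu_2$ of \eqref{eq-mu-12}, which are built precisely from \eqref{eq-bb-pairing-0}, feed the dressing actions $\rho_{\overline{v^{-1}}}$ and $\lam$ through Lemma \ref{le-mixed-general}. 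Finally, all maps above are invariant under the right $T$-action on $\Guv$ (which only alters the $B_-$-factor $m't$), so the entire computation descends from $(\Guv,\pist)$ to $(\Guv/T,\hat{\pi}_{\rm st})$; this descent is routine.

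The hard part will be exactly this cross-term identification: the two natural projections of $(G,\pist)$ do not Poisson-commute, and their coupling, together with the inversion that appears in the first leg, must be shown to reproduce the dressing term of the mixed product with the sign dictated by \eqref{eq-bb-pairing-0}. Keeping careful track of the several anti-Poisson maps (inversion and $I_{\overline{v^{-1}}}$) and of how the right $(B_-,-\pist)$-action $\rho_{\overline{v^{-1}}}$ emerges from the double is where the real work lies; once the coupling is matched with $-\sum_i(\rho_{\overline{v^{-1}}}(x_i),0)\wedge(0,\lam(\xi_i))$, Proposition \ref{pr-Guv-Ouv-reduced} follows.
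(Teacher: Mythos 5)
Your setup is precisely the paper's: the reduction via Lemma \ref{le-BwB-Y} (with $w=v^{-1}$, $\dw=\bbv^{\,-1}$, $Y=BuB/B$), the formula $\Phi(gT)=\left((m\bbv)^{-1}B,\;gB\right)$, and the identification of the two legs as $I_{\bbv^{\,-1}}^{-1}\circ\left(g\mapsto B_-g^{-1}\right)$ and $g\mapsto gB$ all appear, in the same roles, in the paper's proof. However, the step you defer as ``the hard part'' is the entire content of the proposition, and the paper closes it not by matching cross terms of brackets leg by leg, but by a factorization that makes the coupling automatic. Concretely: since the diagonal embedding $(G,\pist)\to(G\times G,\Pist)$ is Poisson and $\Pist$ descends to the quotient $(G\times G)/(B\times B_-)=G/B\times G/B_-$, the map $\phi_1:\,gT\mapsto(gB,\,gB_-)$ is Poisson from $(G/T,\hat{\pi}_{\rm st})$ to $(G/B\times G/B_-,\,\Pi_1)$, where
\[
\Pi_1=(\pi_1,0)+(0,\pi_{-1})+\sum_{i=1}^m(\lam_+(\xi_i),\,0)\wedge(0,\,\lam_-(x_i)),
\]
with $\{x_i\}$, $\{\xi_i\}$ dual bases of $\b_-$ and $\b$ under \eqref{eq-bb-pairing-0}. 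The flip-and-invert map $\phi_2:(gB,\,hB_-)\mapsto(B_-h^{-1},\,gB)$ then satisfies
\[
\phi_2(\Pi_1)=(-\pi_{-1}^\prime)\times_{(\rho_-,\lam_+)}\pi_1 ,
\]
and $I_{\bbv^{\,-1}}^{-1}\times{\rm Id}$, being anti-Poisson on the first factor, carries this to $\pi_1\times_{(\rho_{\bbv^{\,-1}},\lam_+)}\pi_1$. Since $\hat{F}^{u,v}$ is exactly the composition of $\phi_1|_{\Guv/T}$, $\phi_2$, $I_{\bbv^{\,-1}}^{-1}\times{\rm Id}$, and the inverse of the isomorphism of Lemma \ref{le-BwB-Y}, it is Poisson as a composition of Poisson maps; no separate verification of the two legs, and no cross-term matching, is needed.

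Two further corrections to your sketch. First, Lemma \ref{le-mixed-general} is not the mechanism by which the mixing bivectors of the double ``feed the dressing actions'': that lemma concerns the quotient $(X\times A)\times_A Y\cong X\times Y$ and enters only inside the proof of Lemma \ref{le-BwB-Y}. The coupling on $G/B\times G/B_-$ comes instead from the explicit form of $\Pi_1$ above, i.e.\ from pushing the terms $\mu_1,\mu_2$ of \eqref{eq-mu-12} down the $B\times B_-$ quotient, so if you pursue your route this is the computation you must actually perform. Second, the detour through $(\Guv,\pist)$ followed by a ``routine'' descent to $\Guv/T$ is unnecessary: $\hat{\pi}_{\rm st}$ is by definition the projection of $\pist$, and $\phi_1$ is defined, and Poisson, on all of $G/T$ directly, so the argument can and should be run downstairs from the start.
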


\begin{proof} Let $(G \times G, \Pist)$ be the Drinfeld double of the Poisson Lie group $(G, \pist)$, and let
$\Pi_1$ be the quotient of $\Pist$ to $(G \times G)/(B\times B_-) = G/B \times G/B_-$. As the diagonal embedding
$(G, \pist) \to (G \times G, \Pist)$ is Poisson, the map
\[
\phi_1: \;\; (G/T, \pist) \lrw (G/B \times G/B_-, \; \Pi_1), \;\; g_\cdot T \longmapsto (g_\cdot B, g_\cdot B_-),
\]
is Poisson. Let $\pi_{-1}$ be the projection of $\pist$ to $G/B_-$, and let $\lam_+$ and $\lam_-$ be
the respective left actions of $G$ on $G/B$ and $G/B_-$ by left translation.  In the notation of \eqref{eq-Pist}, one has
\[
\Pi_1= (\pi_1, 0) + (0, \pi_{-1}) +\sum_{i=1}^m (\lam_+(\xi_i), \, 0) \wedge (0, \, \lam_-(x_i)).
\]
Let $\phi_2: G/B \times G/B_- \to (B_-\backslash G) \times (G/B), (g_\cdot B, h_\cdot B_-) \mapsto
({B_-}_\cdot h^{-1}, \, g_\cdot B)$. Then
\[
\phi_2(\Pi_1) = (-\pi_{-1}^\prime, 0) + (0, \pi_1) +\sum_{i=1}^m (\rho_-(x_i), 0)\wedge (0, \, \lam_+(\xi_i))
=(-\pi_{-1}^\prime) \times_{(\rho_-, \lambda_+)} \pi_1,
\]
where $\rho_-$ is the right Poisson Lie group action
\[
\rho_-: \;\; (B_-\backslash G, \,-\pi_{-1}^\prime) \times (B_-, \,-\pist) \lrw
(B_-\backslash G, \,-\pi_{-1}^\prime), \;\;\; ((B_-)_\cdot g, \;b_-) \longmapsto
(B_-)_\cdot gb_-.
\]
Restricting $\phi_1$ to $(\Guv/T, \hat{\pi}_{\rm st})$ and using $I_{\bbv^{\, -1}}$ in \eqref{eq-I-dw},
one has the Poisson morphism
\[
(I_{\bbv^{\, -1}})^{-1} \circ \phi_2 \circ (\phi_1|_{\Guv/T}): \;
(\Guv/T, \hat{\pi}_{\rm st}) \to \left((Bv^{-1}B/B) \times (BuB/B), \; \pi_1 \times_{(\rho_{\bbv^{\, -1}}, \lam_+)}
\pi_1\right).
\]
Taking now $w = v^{-1}, \dw = \bbv^{\, -1}$, and $(Y, \piY) = (BuB/B, \pi_1)$ in
Lemma \ref{le-BwB-Y}, one sees that
\[
\hat{F}^{u, v} = (J_{\bbv^{\, -1}})^{-1} \circ (I_{\bbv^{\, -1}})^{-1} \circ \phi_2 \circ (\phi_1|_{\Guv/T})
\]
is Poisson from $(\Guv/T,\, \hat{\pi}_{\rm st})$ to $(\O^{(v^{-1}, u)}, \pi_2)$.
\end{proof}

\subsection{Double Bruhat cells vs reduced double Bruhat cells}
Recall that the projection map $\varpi: (G, \pist) \to (G/T,
\hat{\pi}_{\rm st})$ is Poisson by definition. For $u, v \in W$, we now give another relation
between the two Poisson manifolds
$(\Guv, \pist)$ and $(\Guv/T, \, \hat{\pi}_{\rm st})$.

\begin{lemma}\label{le-bvb-T}
Equip $T$ with the zero Poisson structure. For any $v \in W$, the map
\[
(B_- v B_-, \, \pist) \longrightarrow (T, \, 0), \;\; g \longmapsto [g\bbv^{\, -1}]_0^v, \hs g \in B_- v B_-,
\]
is Poisson.
\end{lemma}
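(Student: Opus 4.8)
The plan is to reduce the assertion to the single statement that the Levi projection $p\colon (B_-,\pist)\to(T,0)$, $b_-\mapsto[b_-]_0$, is a Poisson map, using the mixed-product description of $\pist$ on $B_-vB_-$ from $\S$\ref{subsec-several-mixed}. Write $\psi\colon B_-vB_-\to T$ for the map in question, $\psi(g)=[g\bbv^{-1}]_0^v$. Since $T$ carries the zero Poisson structure, $\psi$ is Poisson if and only if $\{\psi^*(t^\lambda),\,\psi^*(t^\mu)\}=0$ for all $\lambda,\mu\in X(T)$.

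First I would identify the pullbacks $\psi^*(t^\lambda)$ explicitly. Using the diffeomorphism $B_-\times C_{\bbv}\to B_-vB_-$, $(b_-,c)\mapsto b_-c$, and $C_{\bbv}=(N\bbv)\cap(\bbv N_-)$, write $g=b_-c$ with $c=n\bbv$ for some $n\in N$; then $g\bbv^{-1}=b_-n$, and if $b_-=m't_0$ with $m'\in N_-$ and $t_0\in T$, then $g\bbv^{-1}=m'\,t_0\,n$ is the $N_-TN$-factorization of $g\bbv^{-1}\in B_-B$. Hence $[g\bbv^{-1}]_0=t_0=[b_-]_0$, so that $\psi^*(t^\lambda)(g)=([b_-]_0)^{v(\lambda)}$ depends only on the $B_-$-factor $b_-$.

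Next I would transport the Poisson structure. As recorded in the proof of Lemma \ref{le-Jdw} (from \cite[Remark 9]{LM:groupoids}), the map $J'_{\bbv}\colon(B_-vB_-,\pist)\to(B_-\times(BvB/B),\pi')$, $b_-c\mapsto(b_-,c_\cdot B)$, is Poisson, where $\pi'=(\pist,0)+(0,-\pi_1)-\sum_i(x_i^L,0)\wedge(0,\lambda_+(\xi_i))$. Under $J'_{\bbv}$ the functions $\psi^*(t^\lambda)$ and $\psi^*(t^\mu)$ become functions of the first factor alone, namely $(b_-,c_\cdot B)\mapsto[b_-]_0^{v\lambda}$ and $[b_-]_0^{v\mu}$. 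Both the term $(0,-\pi_1)$ and every mixed term $(x_i^L,0)\wedge(0,\lambda_+(\xi_i))$ have their second leg a vector field on $BvB/B$, which annihilates functions independent of that factor; therefore $\{\psi^*(t^\lambda),\psi^*(t^\mu)\}$ collapses to $\{[b_-]_0^{v\lambda},[b_-]_0^{v\mu}\}$ computed on $(B_-,\pist)$.

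Finally, the core point is that $p\colon(B_-,\pist)\to(T,0)$, $b_-\mapsto[b_-]_0$, the projection along $N_-$, is Poisson. As $p$ is a group homomorphism and $\pist$ is multiplicative, it suffices to verify the infinitesimal condition that $(\wedge^2 dp_e)(\delta_{\b_-}(X))=0$ for all $X\in\b_-$, where $dp_e\colon\b_-\to\t$ kills $\n_-$ and $\delta_{\b_-}$ is the cobracket of $\b_-$. Since $B_-$ is a Poisson--Lie subgroup, $\delta_{\b_-}(X)$ is the (automatically $\wedge^2\b_-$-valued) expression $\mathrm{ad}_X\Lambda_{\rm st}$ with $\Lambda_{\rm st}=\sum_\alpha\tfrac{\la\al,\al\ra}{2}\,e_{-\al}\wedge e_\al$. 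In each summand of $\mathrm{ad}_X(e_{-\al}\wedge e_\al)=[X,e_{-\al}]\wedge e_\al+e_{-\al}\wedge[X,e_\al]$ one wedge-leg is a root vector ($e_\al$ or $e_{-\al}$), whose $\t$-component vanishes; hence the $\wedge^2\t$-component of $\delta_{\b_-}(X)$ is zero and $p$ is Poisson onto $(T,0)$. Thus $\{p^*(t^\chi),p^*(t^{\chi'})\}_{\pist}=0$ for all $\chi,\chi'\in X(T)$, and taking $\chi=v\lambda$, $\chi'=v\mu$ furnishes the vanishing needed above; combining the three steps gives the claim. The main obstacle is the careful bookkeeping of the $N_-TN$-factorization together with the $v$-twist in the first step, and correctly tracking which legs of $\pi'$ survive on factor-$B_-$ functions; the concluding Poisson--Lie computation is short.
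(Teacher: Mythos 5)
Your proof is correct, but it takes a genuinely different route from the paper's. The paper argues by coisotropicity: by \cite[Lemma 10]{LM:groupoids}, $N_-$ and $C_{\bbv}$ are coisotropic in $(G,\pist)$, so by multiplicativity the product $N_-C_{\bbv}=N_-\bbv N_-$ is coisotropic; writing $g=g_1t$ with $g_1\in N_-\bbv N_-$ and $t\in T$, multiplicativity also gives $\pist(g)=r_t\pist(g_1)$, and since the map $g\mapsto[g\bbv^{\,-1}]_0^v$ is constant on $N_-\bbv N_-$ and intertwines right $T$-translations, its pushforward of $\pist$ vanishes identically. You instead exploit the complementary factorization $B_-vB_-=B_-\cdot C_{\bbv}$: transporting $\pist$ through $J^\prime_{\bbv}$ by \cite[Remark 9]{LM:groupoids} (the same input the paper quotes in proving Lemma \ref{le-Jdw}), you observe that the character pullbacks live on the $B_-$-factor, where the second-factor and mixed legs of $\pi^\prime$ act by zero, thereby reducing the lemma to the assertion that the Levi projection $p\colon(B_-,\pist)\to(T,0)$ is Poisson, which you verify infinitesimally via $\wedge^2 dp_e(\mathrm{ad}_X\Lambda_{\rm st})=0$, each wedge leg of $\mathrm{ad}_X\Lambda_{\rm st}$ containing a root vector. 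Both routes are sound and of comparable weight, and both rest on \cite{LM:groupoids}, just on different statements there. The paper's is more economical: no Lie bialgebra formalism and no bookkeeping of which legs of the mixed-product structure survive on factor functions. Yours makes the ultimate source of the vanishing explicit --- the cobracket of $\b_-$ has no $\wedge^2\t$-component --- and isolates a reusable structural fact about the Borel-to-torus projection, meshing naturally with the mixed-product machinery already set up in the appendix. Two hygiene points to keep visible in a final write-up: the reduction to characters uses that the differentials of the $t^\lam$ span each cotangent space of $T$, and passing from the infinitesimal (bialgebra) criterion to the global Poisson property of $p$ requires connectedness of $B_-$; both of course hold.
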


\begin{proof}
By \cite[Lemma 10]{LM:groupoids}, both $N_-$ and $C_{\bbv} = (N \bbv) \cap (\bbv N_-)$
are coisotropic submanifolds of $G$ with respect to the
Poisson structure $\pist$. It follows by the multiplicativity of $\pist$ that
$N_- \bbv N_- = N_- C_{\bbv}$  is also coisotropic with respect to $\pist$. Writing $g \in B_- v B_-$ as $g = g_1 t$, where $g_1 \in  N_- \bbv N_-$ and $t \in T$,
one has $\pist(g) = r_t \pist(g_1)$, so
$\varpi(\pist(g)) = \varpi(\pist(g_1)) = 0$.
\end{proof}

It is clear that the $T$-action on $G/T$ by left translation preserves the Poisson structure $\hat{\pi}_{\rm st}$.
Recall from $\S$\ref{subsec-T-O} that associated to any linear map $M: \t^*\to \t$ one has the Poisson structure
$0 \bowtie_M \hat{\pi}_{\rm st}$ on $T \times (G/T)$. By slight abuse of notation, for $v \in W$,  we set
\[
0 \bowtie_v \hat{\pi}_{\rm st} = 0 \bowtie_M \hat{\pi}_{\rm st}.
\]
to be the Poisson structure on $T \times (G/T)$ associated to the linear map
\[
M: \;\;\; \t^* \longrightarrow \t, \;\; M(\lam) = -v(\lam^\#), \hs \lam \in \t^*.
\]
It is clear that  $T \times ((B_-vB_-)/T)$ and $T \times (\Guv/T)$, $u \in W$,
are Poisson submanifolds of $(T \times (G/T), \;0 \bowtie_v \hat{\pi}_{\rm st})$.

\begin{proposition}\label{pr-nu}
The maps
\begin{align*}
&\nu': \;\; (B_-vB_-, \pist) \longrightarrow (T \times (B_-vB_-/T), \;
0 \bowtie_v \hat{\pi}_{\rm st}), \;\; \;g \longmapsto
\left([g\bbv^{\, -1}]_0^v, \; g_\cdot T\right),\\
&\nu: \;\; (\Guv, \; \pist) \longrightarrow (T \times (\Guv/T), \; 0 \bowtie_v \hat{\pi}_{\rm st}),\;\;\; g \longmapsto
\left([g\bbv^{\, -1}]_0^v, \; g_\cdot T\right),
\end{align*}
are Poisson isomorphism.
\end{proposition}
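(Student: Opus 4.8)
The plan is to prove the statement first for $\nu'$ on all of $B_-vB_-$, and then deduce it for $\nu$ by restriction: since $\Guv=(BuB)\cap(B_-vB_-)$ and $BuB$ is a Poisson submanifold of $(G,\pist)$ (see $\S$\ref{subsec-pist}), $\Guv$ is a Poisson submanifold of $(B_-vB_-,\pist)$, and correspondingly $T\times(\Guv/T)$ is a Poisson submanifold of $(T\times(B_-vB_-/T),\,0\bowtie_v\hat{\pi}_{\rm st})$; as $\nu=\nu'|_{\Guv}$, the assertion for $\nu$ follows once it is proved for $\nu'$. First I would check that $\nu'$ is a biregular isomorphism. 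Writing $g\in B_-vB_-$ as $g=m\bbv\,m't$ with $m\bbv\in D_{\bbv}$, $m'\in N_-$, $t\in T$ gives a product decomposition $B_-vB_-\cong D_{\bbv}\times N_-\times T$, under which $\varpi$ forgets $t$ and, by \eqref{eq-g-decomp-0}, the first component of $\nu'$ is $\kappa(g):=[g\bbv^{\,-1}]_0^{v}=t$. Right translation by $t_0\in T$ sends $t\mapsto tt_0$ while fixing the coset, so $\kappa(gt_0)=\kappa(g)t_0$ and $\varpi(gt_0)=\varpi(g)$; hence $\nu'$ is a bijection whose inverse sends $(t,gT)$ to $m\bbv\,m'\,t$, where $m\bbv\,m'$ is the (regular) canonical representative of $gT$ in $N_-\bbv N_-$. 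Thus $\nu'$ is biregular.

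For the Poisson property, I would use that $0\bowtie_v\hat{\pi}_{\rm st}$ is assembled from the zero structure on $T$, the structure $\hat{\pi}_{\rm st}$ on $G/T$, and a single mixed term. Since Poisson brackets are derivations and functions that are products of pullbacks from the two factors generate the coordinate ring, it suffices to compare three families of brackets on $(B_-vB_-,\pist)$ with those of $0\bowtie_v\hat{\pi}_{\rm st}$: (i) $\{\varpi^*f,\varpi^*g\}$, (ii) $\{\kappa^*\phi,\kappa^*\psi\}$, and (iii) the mixed brackets $\{\kappa^*\phi,\varpi^*f\}$. Family (i) matches by the defining property $\hat{\pi}_{\rm st}=\varpi(\pist)$ (see $\S$\ref{subsec-FZ-Poi-1}), and family (ii) matches by Lemma \ref{le-bvb-T}, which gives $\{\kappa^*\phi,\kappa^*\psi\}_{\pist}=0$, as demanded by the vanishing of the Poisson structure on the $T$-factor.

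The crux, and the main obstacle, is family (iii). Taking $\phi=t^{\lam}$ with $\lam\in\calP^+$ and $f$ a right-$T$-invariant function on $G$, and recalling from $\S$\ref{subsec-T-Z} that $\{t^{\lam},f\}_{0\bowtie_M\piX}=t^{\lam}\,\theta(M(\lam))(f)$ with $M(\lam)=-v(\lam^\#)$, while the generator of left translation on $G/T$ by $\zeta\in\t$ lifts to $\zeta^{R}$, I must establish
\[
\{[g\bbv^{\,-1}]_0^{v\lam},\; \varpi^*f\}_{\pist}\;=\;-\,[g\bbv^{\,-1}]_0^{v\lam}\,\bigl(v(\lam^\#)\bigr)^{R}(\varpi^*f).
\]
Because $\kappa^*(t^{\lam})(g)=[g\bbv^{\,-1}]_0^{v\lam}$ is nowhere vanishing on $B_-vB_-$, this is the same as identifying its log-Hamiltonian vector field with $-(v(\lam^\#))^{R}$ on right-$T$-invariant functions. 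I would verify it by computing the left and right gradients of $[g\bbv^{\,-1}]_0^{v\lam}$ against $\pist(g)=l_g\Lambda_{\rm st}-r_g\Lambda_{\rm st}$, using the Gauss decomposition $g\bbv^{\,-1}\in N_-TN$, the coisotropy of $N_-\bbv N_-$ and the bi-$T$-invariance of $\pist$ exactly as in the proof of Lemma \ref{le-bvb-T}: the only surviving contribution should come from the $\t$-component $v(\lam^\#)$ of the left gradient, the $\n$- and $\n_-$-contributions being annihilated against $\Lambda_{\rm st}$ on the coisotropic factor.

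Alternatively, and more in the coordinate-free spirit of this appendix, one can recognize $(B_-vB_-,\pist)$ as a mixed product in the sense of $\S$\ref{subsec-fact-mixed}: the right $T$-translation action is Poisson (because $\pist$ is $T$-bi-invariant, with $(T,0)$ a Poisson Lie subgroup), and together with the section $N_-\bbv N_-$ it exhibits $B_-vB_-$ as $T\times(B_-vB_-/T)$ with the mixed structure, the coefficient $-v(\lam^\#)$ arising from the left $T$-weight $v\lam$ of $\kappa^*(t^{\lam})$. In either route the genuine difficulty is pinning down precisely this coefficient $-v(\lam^\#)$ rather than $-\lam^\#$; this is where the twist by $v$ in the definition of $\kappa$ and the asymmetry between the left and right gradients of $[g\bbv^{\,-1}]_0^{v\lam}$ must be accounted for carefully.
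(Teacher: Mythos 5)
Your first half coincides with the paper's own proof: the reduction of $\nu$ to $\nu'$, the biregularity of $\nu'$ read off from $g=m\bbv m't$, and your bracket families (i) and (ii), which are disposed of by exactly the two facts the paper uses (the definition $\hat{\pi}_{\rm st}=\varpi(\pist)$ and Lemma \ref{le-bvb-T}). Where you genuinely diverge is the crux (iii): the paper never computes a mixed bracket at all. It embeds $(B_-vB_-,\pist)$ diagonally into $(G\times(B_-vB_-),\Pist)$, where $\Pist$ is the Drinfeld double structure \eqref{eq-Pist}, and pushes this explicit bivector forward under the map $\kappa(g',g)=\bigl(([g\bbv^{\,-1}]_0^v)^{-1},\,g'_\cdot T\bigr)$: Lemma \ref{le-bvb-T} sends the product part to $(0,\hat{\pi}_{\rm st})$, the $\n_-$-parts of the mixed terms $\mu_1,\mu_2$ of \eqref{eq-mu-12} are killed by $\kappa$, the surviving $\t$-part of $\mu_2$ is killed as well, and the $\t$-part of $\mu_1$ lands exactly on the mixed term of $0\bowtie_v\hat{\pi}_{\rm st}$, the twist by $v$ arising from $\kappa(0,x^R)=((v^{-1}(x))^R,0)$ for $x\in\t$. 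So the coefficient $-v(\lam^\#)$ that you correctly identify as the whole difficulty is never "computed" in the paper; it is read off from a pushforward of vector fields. Your generator-by-generator route is more elementary, but it obliges you to actually prove identity (iii).

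And there your sketch, as literally written, would fail. You pair gradients against $\Lambda_{\rm st}$ and claim that "the only surviving contribution comes from the $\t$-component $v(\lam^\#)$ of the left gradient, the $\n$- and $\n_-$-contributions being annihilated." But $\Lambda_{\rm st}=\sum_{\al\in\Delta^+}\tfrac{\la\al,\al\ra}{2}(e_{-\al}\otimes e_{\al}-e_{\al}\otimes e_{-\al})$ has no Cartan component, so $\t$-components of gradients pair to \emph{zero} against it and can never be what survives; in the $\Lambda_{\rm st}$-expansion it is precisely the nonvanishing $\n$-components of the gradients of $\chi:=\kappa^*(t^{\lam})$ that carry the entire answer, and they are not annihilated. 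The mechanism you describe is the correct one for the \emph{other} expression of the same bivector in \eqref{eq-pist}, namely $\pist=l_gr_{\rm st}-r_gr_{\rm st}$, whose Cartan term $\sum_i h_i\otimes h_i$ is exactly what lets the $\t$-components speak. Concretely, set $D\phi(g)(\xi)=\tfrac{d}{ds}|_{s=0}\phi(\exp(s\xi)g)$ and $D'\phi(g)(\xi)=\tfrac{d}{ds}|_{s=0}\phi(g\exp(s\xi))$, so that $\{\phi,\psi\}=r_{\rm st}(D'\phi,D'\psi)-r_{\rm st}(D\phi,D\psi)$ (legitimate, since the symmetric part of $r_{\rm st}$ is ${\rm ad}$-invariant and cancels between the two terms). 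From $g=m\bbv m't$ and the paper's conventions one checks $D\chi|_{\n_-}=D'\chi|_{\n_-}=0$ (left and right $N_-$-translations do not move the $T$-factor $t$; this is the precise content behind the coisotropy you invoke), $D'\chi|_{\t}=\chi\,\lam$, and $D\chi|_{\t}=\chi\,v(\lam)$. Since the root part of $r_{\rm st}$ is $\sum_{\al\in\Delta^+}\la\al,\al\ra\,e_{-\al}\otimes e_{\al}$, it pairs the $\n_-$-restriction of the \emph{first} gradient against the $\n$-restriction of the second, hence contributes nothing when the first entry is $\chi$ (this also disposes of any ambiguity from extending $\chi$, $F$ off $B_-vB_-$, as the ambiguous $\n$-derivatives only ever appear multiplied by these vanishing terms); the Cartan part then gives
\begin{equation*}
\{\chi,\,F\}=\chi\,\bigl(D'F(\lam^\#)-DF(v(\lam^\#))\bigr),
\end{equation*}
and the right $T$-invariance of $F=\varpi^*f$ kills $D'F(\lam^\#)$, leaving exactly your identity (iii). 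With this replacement of $\Lambda_{\rm st}$ by $r_{\rm st}$, your argument closes.
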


\begin{proof} As $\nu$ is the restriction of $\nu'$ to $\Guv/T$, it is enough to prove the assertion for $\nu'$.
Note that if $g \in B_-vB_-$ is written as $g = m \bbv m' t$, where $m, m' \in N_-$ and $t \in T$, then
$[g\bbv^{\, -1}]_0^v = t$. It follows that $\nu'$ is an isomorphism.
Consider now the map
\[
\kappa: \;\; G \times (B_-vB_-) \longrightarrow T \times (G/T), \;\; (g', g) \longmapsto
\left(([g\bbv^{\, -1}]_0^v)^{-1}, \; g^\prime_\cdot T\right),
\]
and equip $G \times (B_-v B_-)$ with the Poisson structure $\Pist$.
By Lemma \ref{le-bvb-T},
$\kappa$ maps the bi-vector field $(\pist, 0) + (0, \pist)$ on
$G \times (B_-vB_-)$ to $(0, \hat{\pi}_{\rm st})$. Moreover, it is clear that for any $x \in \n_-$,
$\kappa(0, x^R) = \kappa(0, x^L) = 0$. Thus
$\kappa(\mu_1) = \kappa(\mu_1^\prime)$ and $\kappa(\mu_2) = \kappa(\mu_2^\prime)$, where
$\mu_1$ and $\mu_2$ are given in \eqref{eq-mu-12}, and
\[
\mu_1^\prime = \sum_{i=1}^r (\xi_i^R, 0) \wedge (0, x_i^R), \hs \text{and} \hs \mu_2^\prime
 = -\sum_{i=1}^r (\xi_i^L, 0) \wedge (0, x_i^L),
\]
with $\{x_i\}_{i = 1, \ldots, r}$ and $\{\xi_i\}_{i = 1, \ldots, r}$  any bases of $\t$ such that
$\la x_i, \; \xi_j\ra = \delta_{i, j}$ for $i, j = 1, \ldots, r$.
On the other hand, it is clear that $\kappa(x^L, 0) = 0$ for every $x \in \t$. Thus $\kappa(\mu_2^\prime) = 0$,
and we only need to compute $\kappa(\mu_1^\prime)$. It is easy to see that for any $x \in \t$,
\[
\kappa(x^R, 0) = (0, \theta(x)) \hs \mbox{and} \hs
\kappa(0, x^R) = ((v^{-1}(x))^R, 0),
\]
where $\theta(x) \in {\mathfrak{X}}^1(G/T)$ is given by
\[
\theta(x)(g_\cdot T) = \frac{d}{ds}|_{s = 0} (\exp(sx)g_\cdot T), \hs g \in G,
\]
and recall that $x^R$ is the right (also left) invariant vector field on $T$ with value $x$ at the
identity element of $T$. Thus
\[
\kappa(\mu_1^\prime) = -\sum_{i=1}^r ((v^{-1}(x_i))^L, 0) \wedge (0, \theta(\xi_i)).
\]
Let $\{h_i\}$ be an orthonormal basis of $\t$ with respect to $\lara$. Then
\[
\kappa(\Pist) = (0, \hat{\pi}_{\rm st}) - \sum_{i=1}^r (h_i^R, 0) \wedge (0, \theta(v(h_i))) = 0 \bowtie_v \hat{\pi}_{\rm st}.
\]
It follows that $\kappa: (G \times (B_-vB_-), \, \Pist) \to (T \times (G/T), \, 0 \bowtie_v \hat{\pi}_{\rm st})$
is a Poisson map. As
\[
\iota: \;\; (B_-vB_-, \pist) \longrightarrow (G \times (B_-vB_-), \, \Pist), \;\; g \longmapsto (g, \, g),
\hs g \in B_-vB_-,
\]
is Poisson, $\nu' = \kappa \circ \iota:  (B_-vB_- , \pist)\to (T \times (B_-vB_-/T), 0 \bowtie_v \hat{\pi}_{\rm st})$
is Poisson.
\end{proof}

\subsection{Fomin-Zelevinsky embeddings for double Bruhat cells}\label{subsec-proof-Poi}
Let again $u, v \in W$ and recall the Fomin-Zelevinsky embedding
\[
F^{u, v}: \;\; \Guv \lrw T \times \O^{(v^{-1}, u)}, \;\; F^{u, v}(g) = ([g \bbv^{\, -1}]_0^v, \; \hat{F}^{u, v}(g_\cdot T)),
\]
where $\hat{F}^{u, v}: \Guv/T \to \O^{(v^{-1}, u)}$ is the Fomin-Zelevinsky embedding for $\Guv/T$.
Recall from Notation \ref{nota-bpi} the definition of the Poisson structure $0 \bowtie \pi_2$ on $T \times \O^{(v^{-1}, u)}$.
The following Proposition
\ref{pr-Guv-Ouv} is the second part of Theorem \ref{thm-FZ-Poi}.

\begin{proposition}\label{pr-Guv-Ouv} For any $u, v \in W$,  the Fomin-Zelevinsky embedding
\[
{F}^{u, v}: \;\;  (\Guv,\, \, \pist) \lrw (\O^{(v^{-1}, u)}, \; 0 \bowtie \pi_2)
\]
is Poisson.
\end{proposition}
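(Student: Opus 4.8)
The plan is to realize $F^{u, v}$ as a composition of maps each of which has already been shown to be Poisson, thereby avoiding any coordinate computation. From the formula $F^{u, v}(g) = ([g\bbv^{\,-1}]_0^v, \; \hat{F}^{u, v}(g_\cdot T))$ recalled just above, and from the Poisson isomorphism
\[
\nu: \;\; (\Guv, \; \pist) \lrw (T \times (\Guv/T), \; 0 \bowtie_v \hat{\pi}_{\rm st}), \quad g \longmapsto \left([g\bbv^{\,-1}]_0^v, \; g_\cdot T\right),
\]
of Proposition \ref{pr-nu}, one reads off the factorization $F^{u, v} = ({\rm Id}_T \times \hat{F}^{u, v}) \circ \nu$. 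Since $\nu$ is Poisson, it suffices to prove that
\[
{\rm Id}_T \times \hat{F}^{u, v}: \;\; (T \times (\Guv/T), \; 0 \bowtie_v \hat{\pi}_{\rm st}) \lrw (T \times \O^{(v^{-1}, u)}, \; 0 \bowtie \pi_2)
\]
is Poisson.

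I would then treat the two summands of each mixed Poisson structure separately. On the ``product'' part the map sends $(0, \hat{\pi}_{\rm st})$ to $(0, \pi_2)$, which is exactly Proposition \ref{pr-Guv-Ouv-reduced}, asserting that $\hat{F}^{u, v}\colon (\Guv/T, \hat{\pi}_{\rm st}) \to (\O^{(v^{-1}, u)}, \pi_2)$ is Poisson. It remains to match the mixed terms. Writing $\theta$ for the $T$-action on $G/T$ by left translation and $\theta'$ for the $T$-action on $\O^{(v^{-1}, u)}$ in \eqref{eq-T-Fn}, the computation in the proof of Proposition \ref{pr-nu} gives
\[
0 \bowtie_v \hat{\pi}_{\rm st} = (0, \hat{\pi}_{\rm st}) - \sum_{i=1}^r (h_i^L, 0) \wedge (0, \theta(v(h_i))),
\]
with $\{h_i\}_{i=1}^r$ an orthonormal basis of $\t$, whereas by Notation \ref{nota-bpi},
\[
0 \bowtie \pi_2 = (0, \pi_2) - \sum_{i=1}^r (h_i^L, 0) \wedge (0, \theta'(h_i)).
\]

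The decisive step, which I expect to be the main point of the argument, is that $\hat{F}^{u, v}$ intertwines the two $T$-actions with precisely the $v$-twist needed to reconcile the two mixed terms. By Remark \ref{rk-F-equi}, $\hat{F}^{u, v}$ carries the left-translation action of $t_1 \in T$ on $\Guv/T$ to the action $q \mapsto t_1^v \cdot q$ on $\O^{(v^{-1}, u)}$, where $t_1^v = \dw^{-1} t_1 \dw$ for $w = v$. Differentiating and using that the map $t_1 \mapsto t_1^v$ corresponds to $\zeta \mapsto v^{-1}(\zeta)$ on $\t$ (as follows from the $W$-invariance of $\lara$ and $(t^v)^\lam = t^{v(\lam)}$), one obtains $(\hat{F}^{u, v})_*(\theta(\zeta)) = \theta'(v^{-1}(\zeta))$ for all $\zeta \in \t$. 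Applying ${\rm Id}_T \times \hat{F}^{u, v}$, which is the identity on the $T$-factor, to the mixed term of $0 \bowtie_v \hat{\pi}_{\rm st}$ then yields
\[
-\sum_{i=1}^r (h_i^L, 0) \wedge (0, \theta'(v^{-1}(v(h_i)))) = -\sum_{i=1}^r (h_i^L, 0) \wedge (0, \theta'(h_i)),
\]
which is exactly the mixed term of $0 \bowtie \pi_2$. The only delicate bookkeeping is to verify that the $v$-twist built into the map $M(\lam) = -v(\lam^\#)$ defining $0 \bowtie_v \hat{\pi}_{\rm st}$ is cancelled exactly by the $v$-twist in the $T$-equivariance of $\hat{F}^{u, v}$; it is this cancellation that forces the target to carry the untwisted structure $0 \bowtie \pi_2$ with $M = -\#$. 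With both summands matched, $F^{u, v}$ is Poisson, and restricting to the open images recovers the biregular Poisson isomorphism onto $T \times \O^{(v^{-1}, u)}_e$.
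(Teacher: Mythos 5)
Your proposal is correct and follows essentially the same route as the paper's own proof: the paper likewise deduces Proposition \ref{pr-Guv-Ouv} from the Poisson isomorphism $\nu$ of Proposition \ref{pr-nu}, the $v$-twisted $T$-equivariance of $\hat{F}^{u,v}$, and the definitions of $0 \bowtie_v \hat{\pi}_{\rm st}$ and $0 \bowtie \pi_2$ (with Proposition \ref{pr-Guv-Ouv-reduced} handling the untwisted part). Your writeup merely makes explicit the factorization $F^{u,v} = ({\rm Id}_T \times \hat{F}^{u,v}) \circ \nu$ and the cancellation of the $v$-twists in the mixed terms, which the paper leaves to the reader.
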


\begin{proof} It is easy to see that
the map $\hat{F}^{u, v}: \Guv/T \to \O^{(v^{-1}, u)}$ is $T$-equivariant, where $\Guv/T$ has the $T$-action by
\[
t \circ (g_\cdot T) = (t^{v^{-1}}g)_\cdot T, \hs t \in T, \; g \in \Guv,
\]
and $\O^{(v^{-1}, u)} \subset F_2$ has the $T$-action given in \eqref{eq-T-Fn}. It now follows from Proposition \ref{pr-nu}
and the definitions of the Poisson structures $0 \bowtie_v \hat{\pi}_{\rm st}$ and $0 \bowtie \pi_{2}$ that
$F^{u, v}$ is Poisson.
\end{proof}


\begin{thebibliography}{99}
\bibitem{Arnold:ODE}
V. Arnold, {\it Ordinary Differential Equations}, translation from Russian by Roger Cooke, Springer-Verlag, 1992.

\bibitem{BZ}
A. Berenstein and A. Zelevinsky, Tensor product multiplicities, canonical bases and totally positive varieties,
{\it Invent. Math.} {\bf 143} (2001), 77 - 128.


\bibitem{BFZ}
A. Berenstein, S. Fomin, and A. Zelevinsky,
Cluster algebras III, Upper bounds and double Bruhat cells, {\it Duke Math. J.} {\bf  126} (2005), 1 - 52.

\bibitem{Bourbaki}
N. Bourbaki, {\it Groupes et alg$\grave{e}$bras de Lie}, Ch. IV-Vi, Hermann, Paris, 1968.



\bibitem{chari-pressley}
V. Chari and A. Pressley, {\it A guide to quantum groups}, Cambridge University press, 1994.

\bibitem{Co-1}
M. Colarusso,  The Orbit Structure of the Gelfand-Zeitlin Group on $n \times n$ Matrices, {\it Pacific J.
Math.} {\bf 250} (1) (2011), 109 - 138.

\bibitem{Co-Sam-1}
M. Colarusso and S. and Evens, On Algebraic Integrability of Gelfand-Zeitlin Fields, {\it Trans.
Groups} {\bf 15} (1) (2010),  46 - 71.

\bibitem{Co-Sam-2}
M. Colarusso and S. and Evens, K-orbits on the Flag Variety and Strongly Regular Nilpotent
Matrices, {\it Selecta. Math. (N.S.)} {\bf 18} (1) (2012), 159 - 177.

\bibitem{Co-Sam-3}
M. Colarusso and S. and Evens, The Gelfand-Zeitlin Integrable System and $K$-orbits on the Flag
Variety, in {\it Symmetry: representation theory and its applications}, Prog. Math., {\bf 257}, 85-119,
Birk\"auser, New York, 2014.

\bibitem{Co-Sam-4}
M. Colarusso and S. and Evens,  Eigenvalue Coincidences and $K$-orbits, I, {\it J. Algebra}, {\bf 422} (2015),
611 - 632.

\bibitem{dr:quantum}
V. G. Drinfeld,  Quantum groups, {\it Proceedings of the International Congress of Mathematicians}, {\bf 1} (2) (Berkeley, Calif., 1986), 798 - 820, Amer. Math. Soc., Providence, RI, 1987.

\bibitem{dr:homog}
V. G. Drinfeld, On Poisson homogeneous spaces of Poisson-Lie groups, {\em Theo. Math. Phys.} {\bf 95} (2) (1993), 226 - 227.

\bibitem{Dudas}
O. Dudas, Note on the Deodhar decomposition of a double Schubert cell, arXiv:0807.2198.

\bibitem{Balazs:thesis}
B. Elek, {\it Computing the standard Poisson structure on Bott-Samelson varieties in coordinates}, Mphil Thesis, the University of Hong Kong, 2012.

\bibitem{EL:BS}
B. Elek, and J.-H. Lu, Bott-Samelson varieties and Poisson Ore extensions,
arXiv:1601.00047.

\bibitem{etingof-schiffmann}
P. Etingof and O. Shiffmann, {\it Lectures on quantum groups}, 2nd edition, international press, 2002.




\bibitem{FZ:total}
S. Fomin and A. Zelevinsky, Double Bruhat cells and total positivity, {\it J. Amer. Math. Soc.} 12 (1999), 335 - 380.

\bibitem{GSV:book}
M. Gekhtman, M. Shapiro, and  A. Vainshtein, {\it Cluster algebras and Poisson geometry}, Mathematical Surveys and
Monographs, the AMS, {\bf 167} (2010).

\bibitem{Misha-Milen}
M. Gekhtman and M. Yakimov, Completeness of Determinantal Hamiltonian flows on the matrix affine space, {\it Lett. Math. Phys.
} {\bf 90} (2009), 161 - 173.


\bibitem{GY}
K. Goodearl and M. Yakimov, The Berenstein-Zelevinsky quantum cluster algebra conjecture, arXiv:1602.00498.



\bibitem{GY:PNAS}
K. Goodearl and M. Yakimov, Quantum cluster algebras and quantum nilpotent algebras,
{\it Proc. Natl. Acad. Sci.} USA {\bf 111} (27)  (2014), 9696 - 9703.

\bibitem{GY:AMSbook}
K. Goodearl and M. Yakimov, {\it Quantum cluster algebra structures on quantum nilpotent algebras}, Memoirs Amer. Math. Soc.
{\bf 247} (1169) (2017).

\bibitem{GY:Poi}
K. Goodearl and M. Yakimov, Cluster algebra structures on Poisson nilpotent algebras, in preparation.

\bibitem{hodges}
T. J. Hodges and T. Levasseur, Primitive ideals of $C_q[SL(3)]$, {\it Comm. Math. Phys.} {\bf 156} (3) (1993), 581 - 605.

\bibitem{reshe-4}
T. Hoffmann, J. Kellendonk, N.  Kutz, and N. Reshetikhin, Factorization dynamics and Coxeter-Toda lattices, {\it Comm. Math. Phys.} {\bf 212} (2) (2000), 297 - 321.


\bibitem{KZ:integrable}
M. Kogan and A. Zelevinsky, On symplectic leaves and integrable systems in standard complex semisimple Poisson Lie groups,
{\it Int. Math. Res. Not.} {\bf 32} (2002), 1685 - 1702.

\bibitem{KW1}
B. Kostant and N. Wallach, Gelfand-Zeitlin theory from the perspective of classical mechanics. I, in: {\em Studies in Lie
Theory, Prog. Math.}, vol. 243, Birkh\"auser, 2006, 319 - 364.

\bibitem{KW2}
B. Kostant and N. Wallach, Gelfand-Zeitlin theory from the perspective of classical mechanics. II, {\em The Unity of
Mathematics, Prog. Math.}, vol. 244, Birkh\"auser, 2006, 387 - 420.

\bibitem{Knuth:Art}
D. Knuth, {\it The art of computer programming,} volumes 1 and 2, 1968, 1969.

\bibitem{Knutson}
A. Knutson, Frobenius splitting, point-counting, and degeneration, arXiv:0911.4941.



\bibitem{Camille-Anne-Pol}
C. Laurent-Gengoux, A. Pichereau, and P. Vanhaecke, {\it Poisson structures}, Grundlehren der mathematischen Wissenschaften,
Vol. 347, Springer, 2013.

\bibitem{Lu:Duke-Poi}
J.-H. Lu, Poisson homogeneous spaces and Lie algebroids associated to Poisson
actions, {\it Duke Math. J.} {\bf 86} (2) (1997), 261 - 304.



\bibitem{LM:mixed}
J.-H. Lu and V. Mouquin,  Mixed product Poisson structures associated to Poisson Lie groups and Lie bialgebras,
{\it Intern. Math. Res. Notices} {\bf 2016}, 1 - 58.

\bibitem{LM:flags}
J.-H. Lu and V. Mouquin, On the T-leaves of some Poisson structures related to products of flag varieties,
{\it Adv. Math.} {\bf 306} (2017), 1209 - 1261.

\bibitem{LM:groupoids}
J.-H. Lu and V. Mouquin, Double Bruhat cells and symplectic groupoids, {\it Trans. Groups}  (2017),  doi.org/10.1007/s00031-017-9437-6.


\bibitem{MR:flag}
R. Marsh and K. Rietsch, Parametrizations of flag varieties, {\it Rep. Theory}, AMS, {\bf 8} (2004), 212 - 242.


\end{thebibliography}
\end{document}